\documentclass[11pt]{amsart}
\usepackage[T1]{fontenc}
\usepackage{lmodern,microtype,amsmath,amssymb,amsthm,mathrsfs,esint}
\usepackage[a4paper,margin=1in]{geometry}
\usepackage{booktabs}
\usepackage[hidelinks]{hyperref}
\hypersetup{
bookmarksdepth=2,
pdftitle={On Critical Dimensions for Compactness in the Boundary Yamabe Problem, II},
pdfauthor={Liuwei Gong, Seunghyeok Kim, Monica Musso, Juncheng Wei},
pdfsubject={Compactness and critical dimensions for the boundary Yamabe equation},
pdfkeywords={boundary Yamabe problem, compactness, blow-up, critical dimension, manifolds with boundary}
}
\allowdisplaybreaks
\numberwithin{equation}{section}
\newtheorem{theorem}{Theorem}[section]
\newtheorem{maintheorem}{Theorem}
\newtheorem{masstheorem}{Theorem}

\newtheorem{maincorollary}{Corollary}
\newtheorem{proposition}[theorem]{Proposition}
\newtheorem{lemma}[theorem]{Lemma}
\newtheorem{corollary}[theorem]{Corollary}
\theoremstyle{definition}
\newtheorem{definition}[theorem]{Definition}
\newcommand{\R}{\mathbb R}
\newcommand{\dd}{\,\mathrm d}
\newcommand{\tr}{\operatorname{tr}}
\newcommand{\Ric}{\operatorname{Ric}}
\newcommand{\Div}{\operatorname{div}}
\newcommand{\A}{\mathcal A}
\newcommand{\B}{\mathcal B}
\newcommand{\N}{\mathcal N}
\newcommand{\E}{\mathcal E}
\newcommand{\norm}[1]{\left\lVert#1\right\rVert}
\makeatletter
\def\paragraph{\@startsection{paragraph}{4}%
  \z@\z@{-\fontdimen2\font}{\normalfont\scshape}}
\def\l@section{\@tocline{1}{.15em \@plus\p@}{0pt}{}{\normalfont}}
\def\l@subsection{\@tocline{2}{0pt}{1.5em}{}{\normalfont}}
\makeatother

\title[Critical Dimensions for the Boundary Yamabe Problem, II]{On Critical Dimensions for Compactness\\ in the Boundary Yamabe Problem, II}
\author{Liuwei Gong}
\address[Liuwei Gong]{Department of Mathematics, Chinese University of Hong Kong, Shatin, NT, Hong Kong}
\email{lwgong@math.cuhk.edu.hk}

\author{Seunghyeok Kim}
\address[Seunghyeok Kim]{Department of Mathematics and Research Institute for Natural Sciences, College of Natural Sciences, Hanyang University, 222 Wangsimni-ro Seongdong-gu, Seoul 04763, Republic of Korea}
\email{shkim0401@hanyang.ac.kr}
\email{shkim0401@gmail.com}

\author{Monica Musso}
\address[Monica Musso]{Department of Mathematical Sciences, University of Bath, Bath BA2 7AY, United Kingdom}
\email{mm2683@bath.ac.uk}

\author{Juncheng Wei}
\address[Juncheng Wei]{Department of Mathematics, Chinese University of Hong Kong, Shatin, NT, Hong Kong}
\email{wei@math.cuhk.edu.hk}

\date{\today}
\subjclass[2020]{Primary 53C21, 35J61; Secondary 35B33, 35B44}
\keywords{Boundary Yamabe problem, compactness, blow-up, critical dimension, manifolds with boundary}
\begin{document}
\begin{abstract}
We determine the sharp compactness ranges for the scalar-flat and minimal-boundary Yamabe problems on smooth compact manifolds of positive conformal type, excluding the conformal round hemisphere.
For zero scalar curvature and positive constant boundary mean curvature, compactness holds through dimension $14$ for general boundary and dimension $21$ for umbilic boundary.
For positive scalar curvature and zero boundary mean curvature, the corresponding upper dimensions are $14$ and $20$.
Together with the noncompactness examples in Part~I, these results identify the transition dimensions in both boundary classes.
For positive scalar curvature, we also prove compactness through dimension eight for every fixed real boundary mean curvature, and obtain higher-dimensional ranges when this curvature is near zero or sufficiently large and positive.
The proof combines scalar-correction estimates for the full conformal Fermi metric expansion with a geometric formula expressing the logarithmic coefficient of the corrected energy as a negative sum of squares.
\end{abstract}

\maketitle
\tableofcontents

\section{Introduction}\label{sec:introduction}
Let $(M^N,g)$ be a smooth compact connected Riemannian manifold with nonempty boundary, $N\ge3$.
The boundary Yamabe problem asks for a metric in the conformal class of $g$ with constant scalar curvature and constant boundary mean curvature.
Its variational formulation builds on Cherrier~\cite{Cherrier} and Escobar~\cite{EscobarAnnals,EscobarJDG}.
We study compactness of the set of positive solutions for a fixed background metric and fixed curvature constants: every sequence of solutions has a subsequence converging in $C^k(M)$, for every fixed $k$, to a positive solution.

We use the outward unit normal $\nu$, the Laplacian $\Delta_g=\operatorname{div}_g\nabla_g$, and boundary mean curvature $H_g$.
Set $c_N:=(N-2)/(4(N-1))$ and define
\[
 L_g:=-\Delta_g+c_NR_g,\qquad B_g:=\partial_\nu+\frac{N-2}{2}H_g.
\]
For constants $(\sigma,\kappa)$, we use the coefficients of Part~I~\cite[(1.1)]{PartI}:
\begin{equation}\label{eq:PDE}
 \begin{cases}
 L_gu=\dfrac{N(N-2)}4\sigma u^{(N+2)/(N-2)} &\text{on }M,\\
 u>0 &\text{on }M,\\
 B_gu=\dfrac{N-2}2\kappa u^{N/(N-2)} &\text{on }\partial M.
 \end{cases}
\end{equation}
Thus $\hat g=u^{4/(N-2)}g$ has $R_{\hat g}=\sigma N(N-1)$ and $H_{\hat g}=\kappa$.
We consider $(\sigma,\kappa)=(0,2)$ and $(\sigma,\kappa)=(1,\kappa)$ with $\kappa\in\R$ fixed, the Type-II and Type-I equations of Part~I, respectively.
For any fixed $\kappa>0$, multiplication by $(\kappa/2)^{(N-2)/2}$ identifies the scalar-flat solution set with that for mean curvature $2$.
Positive conformal type means that the first eigenvalue of $L_g$ with $B_gv=0$ is positive.
The boundary is umbilic when its trace-free second fundamental form vanishes identically.

For the scalar-flat problem, Escobar~\cite{EscobarAnnals} established existence under several geometric hypotheses by minimizing the quotient associated with the Sobolev trace inequality.
Marques~\cite{Marques2005,Marques2007} and Almaraz~\cite{AlmarazExistence} treated further cases using refined test functions and conformal Fermi coordinates.
S.-Y.~S.~Chen~\cite{ChenScalar} extended the higher-dimensional analysis and proved the remaining minimization results conditional on the corresponding positive mass theorem.
Mayer--Ndiaye~\cite[Theorem~1.1]{MayerNdiayeScalar} obtained existence in the remaining positive-type umbilic cases by the barycenter method.
Together with the preceding results, this yields existence for manifolds with finite Sobolev quotient, as stated in \cite[Theorem~1.2]{MayerNdiayeScalar}.

For constant scalar curvature and minimal boundary, Escobar~\cite{EscobarJDG} proved existence in the low-dimensional and nonumbilic cases and in several umbilic cases.
Brendle--Chen~\cite{BrendleChen} treated the remaining higher-dimensional umbilic geometry and established existence of a minimizer whenever the associated Green metric has positive mass.
Mayer--Ndiaye~\cite[Theorem~1.1]{MayerNdiayeMinimal} supplied a barycenter argument for existence in those cases; their combined existence conclusion is \cite[Theorem~1.2]{MayerNdiayeMinimal}.

The problem with nonzero constant scalar and boundary mean curvatures was studied by Escobar~\cite{Escobar1996} and Han--Li~\cite{HanLi,HanLi2000}.
The Han--Li problem asks for a conformal metric of positive constant scalar curvature and any fixed real boundary mean curvature.
Chen--Sun~\cite{ChenSun} developed a variational existence theory for these two curvature conditions, and Chen--Ruan--Sun~\cite[Theorem~1.1]{ChenRuanSun} proved the Han--Li assertion in dimensions $3\le N\le7$ and under several higher-dimensional geometric hypotheses.

For closed manifolds, the compactness theorem of Khuri--Marques--Schoen~\cite{KMS} combines local Pohozaev inequalities with positive mass.
In the scalar-flat boundary problem, Felli--Ould Ahmedou~\cite{FelliAhmedou,FelliAhmedou2005} proved compactness for locally conformally flat manifolds with umbilic boundary and for three-dimensional manifolds with umbilic boundary, respectively.
Almaraz~\cite[Theorem~1.2]{AlmarazCompactness} obtained compactness for $N\ge7$ when the trace-free second fundamental form is nowhere zero on the boundary.
Almaraz--de Queiroz--Wang~\cite{AQW} established three-dimensional compactness for general boundary.
Kim--Musso--Wei~\cite[Theorem~1.1]{KMW} proved compactness for $N=4,5$ without a boundary-geometry restriction and for $N=6$ when the trace-free second fundamental form is nowhere zero on the boundary.
Ghimenti--Micheletti~\cite{GhimentiMicheletti2020,GhimentiMicheletti2021} proved compactness for umbilic boundary under Weyl-curvature nonvanishing assumptions.
For positive scalar curvature, Disconzi--Khuri~\cite[Section~3]{DK} developed the ambient conformal-normal-coordinate analysis of boundary geometry, while Almaraz--Wang~\cite{AW} proved three-dimensional compactness for arbitrary constant boundary mean curvature.
Almaraz's scalar-flat noncompactness examples~\cite{AlmarazBlowup} and the constructions in Part~I~\cite{PartI} show why the dimension ranges matter.

The positive mass argument originates in the minimal-hypersurface proof of Schoen--Yau~\cite{SchoenYau} and the spinorial proof of Witten~\cite{Witten}.
The recent theorem of Brendle--Wang~\cite{BW} removes the dimension and spin restrictions from the smooth boundaryless result.
For nonnegative scalar curvature, the required decay and rigidity conclusions follow from the smooth case of Khuri--Wang--Wang~\cite[Theorem~1.1]{KWW}.
The doubling and smoothing arguments of Almaraz--Barbosa--de Lima~\cite[Section~4]{ABdL} and Miao~\cite[Theorem~1]{Miao} then yield the boundary version used here.

\begin{masstheorem}[Positive mass and rigidity; informal statement]\label{in:PMT}
In every dimension $N\ge3$, a complete connected scalar-flat manifold with minimal boundary and asymptotically Euclidean or half-space ends has nonnegative mass at each end, under the decay assumptions specified in Section~\ref{sec:prelim-mass}.
If one end has zero mass, the manifold is isometric to Euclidean space or Euclidean half-space, according to the type of that end.
\end{masstheorem}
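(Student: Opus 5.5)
The plan is to derive the statement from the boundaryless positive mass theorem of Brendle--Wang~\cite{BW}, in the form with decay and rigidity given by Khuri--Wang--Wang~\cite[Theorem~1.1]{KWW}, by reflecting across the boundary. Let $(X,g)$ be complete, connected, scalar-flat, with minimal boundary $\partial X$, and fix an end $E$. If $E$ is asymptotically Euclidean and lies away from $\partial X$, the mass at $E$ is handled by the doubling below, which converts everything into a boundaryless problem. If $E$ is asymptotically half-space, its boundary portion is asymptotic to $\partial\R^N_+$, and the double of $E$ across that portion is asymptotically Euclidean.

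\emph{Step 1 (doubling).} Form $\tilde X=X\cup_{\partial X}X$, carrying the metric $\tilde g$ obtained by reflection. Since $H_g=0$, $\tilde g$ is Lipschitz across $\partial X$ and the mean curvatures of the two sides agree. In the sense of Miao~\cite{Miao}, this means the distributional scalar curvature has no singular part along the corner. A half-space end of $X$ doubles to an AE end of $\tilde X$ whose ADM mass is twice the boundary mass of the original end, i.e.\ the mass including the $\partial X$ flux term, as in Almaraz--Barbosa--de Lima~\cite[Section~4]{ABdL}. An AE end of $X$ becomes two isometric AE ends of $\tilde X$, each with the same mass.

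\emph{Step 2 (smoothing).} Following Miao~\cite[Theorem~1]{Miao} and \cite[Section~4]{ABdL}, mollify $\tilde g$ in a collar of $\partial X$ to obtain smooth metrics $g_\delta$. Their scalar curvature is bounded below by $-C$ on a $\delta$-neighbourhood and is $o(1)$ in $L^{N/2}$. Then conformally deform, solving $-\Delta_{g_\delta}u+c_N R^-_{g_\delta}u=0$ with $u\to1$ at the ends, to obtain smooth metrics with $R\ge0$. The resulting masses satisfy $m(g_\delta')=m(\tilde g)+o(1)$ as $\delta\to0$, so the boundaryless theorem~\cite{BW,KWW} gives $m(\tilde g)\ge0$. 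Hence each end of $X$ has nonnegative mass. The decay assumptions of Section~\ref{sec:prelim-mass} are exactly what is needed to keep the ADM mass well defined and stable under this procedure; checking that they survive the doubling is routine.

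\emph{Step 3 (rigidity).} I expect this to be the main obstacle, because Miao's smoothing only preserves the inequality. Suppose one end has zero mass. First I would show that $R_g\equiv0$ forces stationarity. If the metric is perturbed by $(1+\epsilon\phi)^{4/(N-2)}$, with $\phi$ solving the linearized problem with Neumann condition $B_g\phi=0$ and a suitable source term, the mass strictly decreases to become negative unless $\Ric_g\equiv0$ and the second fundamental form of $\partial X$ vanishes. This is the standard Schoen--Yau variational argument, adapted to the boundary as in \cite{ABdL}, and it contradicts Step 2. With $\Ric=0$ and $\partial X$ totally geodesic, $\tilde g$ is $C^{1,1}$ and Ricci-flat, hence smooth by elliptic regularity in harmonic coordinates. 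The boundaryless rigidity~\cite{BW,KWW} then gives $(\tilde X,\tilde g)\cong\R^N$ with a reflection isometry. The fixed set of that reflection is a totally geodesic hyperplane, so $X$ is a half-space when the end is of half-space type. If the end is AE, the boundary is compact and yields a second copy of the doubled end, which contradicts $\tilde X\cong\R^N$ having one end unless $\partial X=\emptyset$; in that case $X\cong\R^N$.
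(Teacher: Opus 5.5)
Your outline is the paper's route: doubling, Miao's corner smoothing, the smooth theorem of \cite{BW,KWW}, and rigidity proved on the original zero-mass metric as in \cite[Lemmas~4.3--4.4]{ABdL}. However, two steps fail as you state them.

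The first is Step~2 at a half-space end. Miao's Theorem~1 treats a corner along a \emph{compact} hypersurface. Here the interface $\partial X$ runs out to infinity, and $\tilde g$ is only Lipschitz along it throughout the asymptotic region. So $\tilde g$ does not in general satisfy \eqref{eq:appendix-AF}: the bound on $r^2|\partial^2\tilde g|$ fails across the interface, contrary to your claim that the decay survives the doubling routinely. Mollifying in a collar of $\partial X$ also alters the metric near infinity. A uniform bound $R_{g_\delta}\ge-C$ on a collar of infinite volume gives no $L^{N/2}$ smallness of $R^-_{g_\delta}$, and neither integrability of $R_{g_\delta}$ nor $m(g'_\delta)\to m(\tilde g)$ follows. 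What is missing is the preliminary approximation step, for which the paper cites \cite[Proposition~4.1]{ABdL}. You first replace $g$, keeping the curvature conditions and moving the mass arbitrarily little, by a metric whose reflection is smooth outside a compact set. For instance, take it conformally flat there, with a conformal factor that is harmonic and Neumann on the plane, so that it reflects evenly. The corner is then compact, and only then do Miao's smoothing and \cite{KWW} apply.

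The second is the variation in Step~3, which cannot work as written. Take a purely conformal factor $1+\epsilon\phi$ with $B_g\phi=0$ and $\phi\to0$ at the ends. It preserves $R\ge0$ only if $-\Delta\phi\ge0$. Then $\phi\ge0$ by the minimum principle, so its $r^{2-N}$ coefficients, and hence the mass changes, are nonnegative; such variations never see $\Ric$ or $\mathrm{II}$. The Schoen--Yau argument instead uses a non-conformal direction $g+sk$ with $k$ compactly supported. This is followed by the conformal correction restoring $R=0$ and $H=0$, whose linearized boundary condition carries the source $H'(k)$ rather than $B_g\phi=0$. With one end, the flux identity and \eqref{crit:eq:EHfirst} make the mass derivative a positive multiple of $\int_X\langle\Ric,k\rangle+\int_{\partial X}\langle\mathrm{II},k^T\rangle$. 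This is negative for $k=-\eta\Ric$ supported in the interior, and, once $\Ric\equiv0$, for $k$ with $k^T=-\eta\,\mathrm{II}$ on $\partial X$. With several ends, the same computation is weighted by the harmonic Neumann function $w$ tending to $1$ at the chosen end and $0$ at the others. The divergence terms then leave cross terms in $\nabla w$ of no definite sign. So you must first exclude the other ends, for example with the harmonic factor $1-\epsilon+\epsilon w$. This factor preserves $R=0$ and $H=0$ and strictly lowers the mass of the chosen end whenever another end exists. Finally, an AE end does not make $\partial X$ compact, since a half-space end may coexist with it; your two-copies contradiction nonetheless works for any nonempty boundary.
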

The precise statement and references are given in Theorem~\ref{thm:positive-mass-full}.
This establishes the positive-mass and rigidity statement assumed in the announcement of our compactness results in Part~I~\cite[Remark~1.1]{PartI}.
For the Green metrics used below, Lemma~\ref{lem:auxmass} and the proof of Theorem~\ref{thm:main} verify the required decay and completeness; scalar- and mean-curvature integrability follow from their vanishing.

\begin{maintheorem}\label{thm:main}
Assume that $(M,g)$ has positive conformal type and is not conformally equivalent to the round hemisphere.
Positive solutions of \eqref{eq:PDE} satisfy
\[
 C^{-1}\le u\le C,\qquad \|u\|_{C^{2,\alpha}(M)}\le C,
 \qquad 0<\alpha<1,
\]
in each of the following cases:
\begin{center}
\begin{tabular}{@{}lcc@{}}\toprule
Curvature pair & General boundary & Umbilic boundary\\\midrule
$(0,2)$ & $3\le N\le14$ & $3\le N\le21$\\
$(1,0)$ & $3\le N\le14$ & $3\le N\le20$\\
$(1,\kappa)$, any fixed $\kappa\in\R$ & $3\le N\le8$ & $3\le N\le8$\\\bottomrule
\end{tabular}
\end{center}
Here $C$ depends on $(M,g)$, the curvature pair, and $\alpha$.
In the last row it is uniform for $\kappa$ in a compact interval.
The solution set is compact in $C^k(M)$ for every fixed $k$.
\end{maintheorem}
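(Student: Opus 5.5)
We argue by contradiction, following the Schoen--Khuri--Marques scheme as adapted to boundaries by Almaraz and Kim--Musso--Wei. Suppose $u_i$ solves \eqref{eq:PDE} with $\max u_i\to\infty$. If instead $u_i$ stays bounded above, then the lower bound follows from the Harnack inequality and the Hopf lemma together with positive conformal type, and the $C^{2,\alpha}$ bounds follow from elliptic boundary regularity. Note also that for the pair $(0,2)$ interior blow-up is excluded. For $(1,\kappa)$ it is likewise excluded by the closed-manifold compactness theory combined with Theorem~\ref{in:PMT} applied to interior ends.

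So blow-up concentrates at boundary points $x_i\to x_0\in\partial M$. We first run the standard reduction to \emph{isolated simple} blow-up points. This uses the classification of entire solutions on the half-space: bubbles for $(0,2)$, and for $(1,\kappa)$ the spherical-cap profiles depending on $\kappa$. It also uses a blow-up analysis in conformal Fermi coordinates, where $g$ is normalised so that $H=0$ and $\det g=1+O(|x|^{m})$ near $x_0$. Isolated points then satisfy the Harnack-type estimate $u_i(x_i)u_i(x)\le C|x-x_i|^{2-N}$, and the local Pohozaev identity on half-balls $B_\rho^+(x_i)$ turns simplicity into a sign condition.

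The core step is a refined expansion
\[
 u_i=\varepsilon_i^{-\frac{N-2}2}\bigl(U+\phi_i\bigr)(\varepsilon_i^{-1}x)+\text{error},
\]
where $U$ is the standard bubble and $\phi_i$ is a correction solving the linearized problem with source terms given by the full conformal Fermi expansion of $g$. In the general case the leading sources are the trace-free second fundamental form $\pi_{ab}$ and its derivatives; in the umbilic case they are the boundary Weyl tensor and $R_{NaNb}$ terms. We would build $\phi_i$ term by term, kill the kernel directions $\partial_a U$ and $x\cdot\nabla U+\frac{N-2}2U$ by orthogonality, and prove the pointwise estimate $|u_i-U_{\varepsilon_i}-\phi_i|\le C\varepsilon_i^{\theta}(\ldots)$ using the scalar-correction estimates advertised in the abstract. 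Substituting this into the Pohozaev identity, the boundary term $P(u_i,\rho)$ must be $\ge0$ in the limit. Its interior contribution equals $-\varepsilon_i^{2d}\bigl(\sum\text{squares}\bigr)(\log\varepsilon_i^{-1}\text{ factor})$ plus a remainder, where $d$ is the vanishing order of the relevant tensors at $x_0$. This is where the geometric formula writing the logarithmic coefficient of the corrected energy as a negative sum of squares enters.

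The dimension thresholds ($14$ general, $21$ or $20$ umbilic, $8$ for arbitrary $\kappa$) are exactly where this negative coefficient dominates the next-order error terms. Consequently $P\ge0$ forces those tensors to vanish to high order at $x_0$, specifically $\pi$ to order $\lfloor (N-6)/2\rfloor$, or the Weyl tensor to the corresponding order. Under that vanishing, $\varepsilon_i^{-(N-2)/2}$-rescaling gives $u_i(x_i)u_i\to aG$, where $G$ is a Green function with the boundary condition. Lemma~\ref{lem:auxmass} then gives that $G^{4/(N-2)}g$ is asymptotically flat with a half-space end whose decay is sufficient, and the Pohozaev limit equals a negative multiple of its mass. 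Theorem~\ref{thm:positive-mass-full} yields mass $\ge0$, with equality only for the Euclidean half-space, which corresponds to the round hemisphere and is excluded. This contradicts $P\ge0$ unless the mass is zero, and in either case we have a contradiction. Isolated simple points are then shown to be finite and separated, so a global contradiction follows.

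For the $(1,\kappa)$ row with $N\le8$ only the leading $\pi$ and Weyl orders are needed. The $\kappa$-dependent cap profiles must be handled uniformly; we do this by noting that the Pohozaev sign is continuous in $\kappa$ on compact intervals, which gives the uniformity of $C$. Higher $C^k$ compactness follows by bootstrapping.

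The main obstacle is the third step: controlling the correction $\phi_i$ and all cross terms in the Pohozaev expansion up to dimension $21$. This requires the full Fermi expansion and the sum-of-squares identity. We would first try to verify the squares structure at the quadratic level in $\pi$ and $W$, and then induct on the vanishing order.
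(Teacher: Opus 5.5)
Your outer architecture (isolated simple boundary blow-up points, a local sign condition, vanishing of the Fermi jets, then the positive mass theorem for the Green metric) matches the paper's. A direct Pohozaev expansion could in principle replace the reduced energy $\mathscr A_p^\rho$ and the Wronskian identity \eqref{eq:Wronskian}; the paper's route buys that it only compares $u_i$ with an \emph{exact} projected solution at the mass scale (Lemma~\ref{lem:expanding}), instead of proving refined pointwise expansions.

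The genuine gap is the step that proves the theorem and fixes the dimension ranges, and the mechanism you propose for it is wrong.
\begin{itemize}
\item The negative sum of squares \eqref{crit:eq:residue} is only the logarithmic coefficient at the critical degree $d=(N-2)/2$ for even $N$. It holds in \emph{every} even dimension, so it cannot be what breaks down at $N=15$, $21$ or $22$.
\item For the ordinary degrees $1\le d\le d_N^{\rm ord}$, the corrected energy gives a finite quadratic form $P=-(\mathsf DE+E\mathsf D)$ on the full spaces $\mathscr J_d$. This covers all scalar-, vector- and tensor-derived tangential types, every normal power $t^j$, and the exact scalar correction with its Robin or Neumann boundary response in \eqref{lc:eq:schur}.
\item This form has no dimension-independent sum-of-squares structure. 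Its positivity is a dimension-dependent fact that stops being available exactly at the transition dimensions, where Part~I builds noncompact examples. The paper establishes it by assembling every block exactly and checking all $LDL^{\mathsf T}$ pivots (Theorem~\ref{thm:ordinary-endpoints}).
\item Inducting on the vanishing order also fails. At the moving centers $p_i$ the lower jets are only small, and nothing prevents $|J_{d'}(p_i)|\sim\varepsilon_i^{d-d'}$ for $d'<d$. Then the entries of $P$ pairing different total degrees within one tangential harmonic type are as large as the diagonal ones, so coercivity is needed for the whole matrix, as in \eqref{eq:Qcoercive}.
\item For the row $(1,\kappa)$, continuity in $\kappa$ gives uniformity only after positivity is known at each fixed $\kappa$. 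That is not automatic: compactness fails for very negative $\kappa$ once $N\ge9$. The paper proves it with the explicit determinant certificates of Theorem~\ref{ord:thm:main}, using two-sided bounds on the boundary response in the mixed $tA$--$t^2B$ block, and treats the $tC(x)$ sectors and $H^W$ separately. So more than the leading $\mathring{\mathrm{II}}$ and Weyl terms enter.
\end{itemize}

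Two further points. First, the vanishing you state ($\mathring{\mathrm{II}}$ to order $\lfloor(N-6)/2\rfloor$, or Weyl) does not meet the hypothesis of Lemma~\ref{lem:auxmass}. That lemma needs $J_d(p)=0$ for all $1\le d\le d_N$, i.e.\ $g_p=\delta+O(r^{d_N+1})$. This includes the normal Taylor coefficients and $\bar\nabla^j\mathring{\mathrm{II}}(p)=0$ up to $j=d_N-1=\lfloor(N-4)/2\rfloor$. With one order less, the inverted Green metric in general has decay order at most $(N-2)/2$, violating \eqref{eq:appendix-AF}, so its mass is not available. Second, a sum of squares yields strict negativity only after an injectivity argument. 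Lemma~\ref{lc:lem:kernel} shows that $T_dh=0$ forces $h=\mathcal L_X\delta+f\delta$. It then uses the boundary Gauss-lemma condition $x_ah_{ab}(x,0)=0$ in \eqref{crit:eq:fermi} to exclude tangential gauge directions. Your outline has no counterpart to the ordinary positivity computation or to this kernel argument. These, rather than the error terms in a Pohozaev expansion, are where the theorem is decided.
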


\begin{maincorollary}\label{thm:continuation}
Under the assumptions of Theorem~\ref{thm:main}, the same compactness conclusion holds for $(\sigma,\kappa)=(1,\kappa)$ in the following additional ranges.
There are positive constants $\delta_N$ and $K_N$, depending also on the boundary class, such that
\[
\begin{array}{c|cc}
 &\text{General boundary}&\text{Umbilic boundary}\\\hline
 |\kappa|<\delta_N&3\le N\le14&3\le N\le20\\
 \kappa>K_N&3\le N\le14&3\le N\le21.
\end{array}
\]
The estimates are uniform on compact subsets of the indicated parameter intervals.
\end{maincorollary}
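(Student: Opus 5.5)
The plan is to treat Corollary~\ref{thm:continuation} as a perturbation of Theorem~\ref{thm:main}, by continuity in $\kappa$. All the analytic ingredients (local blow-up analysis, isolated simple blow-up, sign of the Pohozaev term, positive mass) are uniform in $\kappa$ on compact sets. What we need to track is how $\kappa$ enters the local bubble profile and the correction terms. We argue by contradiction throughout. Suppose there are $\kappa_i\to\kappa_\ast$ in the indicated interval and solutions $u_i$ of \eqref{eq:PDE} with $(\sigma,\kappa)=(1,\kappa_i)$ and $\max u_i\to\infty$. The standard reductions from the proof of Theorem~\ref{thm:main} apply uniformly in $\kappa_i$: blow-up points are isolated, and then isolated simple. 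Interior blow-up is excluded by the closed-manifold theory, because positive mass holds by Theorem~\ref{thm:positive-mass-full}. So each blow-up point lies on $\partial M$ and the local profile is the half-space bubble with boundary curvature $\kappa_\ast$. This profile is a truncated sphere cap whose height parameter depends on $\kappa_\ast$.

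\emph{Small $|\kappa|$.} We view the Pohozaev (energy) identity near a boundary blow-up point as a function of $\kappa$. The key quantity is the logarithmic coefficient of the corrected energy. For $\kappa=0$ the paper shows it is a negative sum of squares of the trace-free second fundamental form and the relevant Weyl components, and it is strictly negative unless those vanish to the required order. Its coefficients are continuous functions of $\kappa$, because the bubble and its linearized corrections depend smoothly on $\kappa$ near $0$ and the nondegeneracy of the linearization persists. So strict negativity, and the vanishing-order conclusion it forces, persist for $|\kappa|<\delta_N$ whenever $N\le14$ (general boundary) or $N\le20$ (umbilic boundary). Once the local obstruction forces enough vanishing, the remaining steps are the same as before. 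We pass to the Green-function rescaling, where the limit metric is scalar-flat with minimal boundary. Then the Pohozaev sign together with positive mass and rigidity (Theorem~\ref{thm:positive-mass-full}) gives $(M,g)$ conformal to the hemisphere, which is excluded.

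\emph{Large $\kappa$.} For $\kappa\to+\infty$ we rescale: $v=(\kappa/2)^{(N-2)/2}u$ solves $L_gv=\tfrac{N(N-2)}4(2/\kappa)^2v^{(N+2)/(N-2)}$ and $B_gv=(N-2)v^{N/(N-2)}$. This is a small-$\sigma$ perturbation of the Type-II problem $(0,2)$. The relevant bubble is a large sphere cap converging to the scalar-flat half-space bubble. The Type-II logarithmic coefficient is strictly negative in the range $N\le14$ / $N\le21$ of Theorem~\ref{thm:main}, so by continuity it stays negative for $\sigma'=(2/\kappa)^2$ small, that is, for $\kappa>K_N$. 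The $C^0$ bound on $v$ then gives one on $u$, uniformly on compact $\kappa$-sets. Elliptic estimates and Harnack give the lower bound and the $C^{2,\alpha}$ and $C^k$ compactness, exactly as in Theorem~\ref{thm:main}.

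\emph{Main obstacle.} The delicate step is showing that the correction construction and the sum-of-squares formula are \emph{uniform} in $\kappa$ near the endpoint, $\kappa=0$ or $\sigma'=0$, up to the critical dimension. In the top dimensions ($20$, $21$, $14$) the logarithmic coefficient is exactly the term that decides compactness. So we must check that every $\kappa$-dependent error term is $o(1)$ relative to this coefficient times $\log$ of the concentration scale, and does not compete with it. I would first prove that the linearized operator around the $\kappa$-bubble has kernel and cokernel depending continuously on $\kappa$. From this, the expansion coefficients of the correction are continuous, and hence so is the quadratic form in the Weyl and second-fundamental-form data. Coercivity of that form at the endpoint then yields coercivity for nearby parameters, which produces the constants $\delta_N$ and $K_N$.
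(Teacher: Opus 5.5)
Your overall route matches the paper's Proposition~\ref{prop:local-continuation}. That is: continuity from the endpoint $(1,0)$ for small $|\kappa|$, and, for large $\kappa$, the rescaling $v=(\kappa/2)^{(N-2)/2}u$ to a small-$\sigma$ perturbation of the $(0,2)$ problem, realized there by the bubble family $U_\sigma=(\sigma/4+|x|^2+(t+1)^2)^{-(N-2)/2}$.

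However, you apply the continuity argument to the wrong object. The logarithmic coefficient is not what limits the dimension. By Theorem~\ref{crit:thm:critical} and Lemma~\ref{lem:critical-leading-term} it is a negative sum of squares in \emph{every} even dimension. After division by the positive factor $2^{\sigma(N-2)}$ (or $1$ for $U_\sigma$) it is independent of the curvature pair, so it needs no perturbation at all. In odd dimensions, e.g.\ $N=21$, there is no logarithmic term. It also controls only the top jet $J_{d_N}$ in even dimensions.

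The bounds $14$, $20$, $21$ come from the ordinary degrees $2d<N-2$. They are the positive definiteness of the finitely many scale matrices $P=-(\mathsf DE+E\mathsf D)$ in Theorem~\ref{thm:ordinary-endpoints}, certified at the endpoint curvature pairs. As written, your step ``strict negativity persists, hence the required vanishing'' does not yield $J_d(p)=0$ for the ordinary degrees. That vanishing is exactly what Corollary~\ref{cor:jets} and the mass bound in Lemma~\ref{lem:auxmass} require.

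The repair is the paper's argument. At fixed $N$ there are finitely many ordinary blocks, each with a positive gap at the endpoint. The source maps and the projected Jacobi inverse on $Y$ are continuous in operator norm in the parameter, after identifying complements via \eqref{eq:dual}. The ordinary integrals and their first two scale derivatives are dominated by integrable powers. Hence these gaps persist in a neighborhood. The critical block, the $O(1)$ cross terms and the $O(\sqrt{\log R}+1)$ remainder are absorbed uniformly as in Theorem~\ref{thm:localcoercivity}.

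Your ``main obstacle'' paragraph points toward this, but it should concern the ordinary quadratic forms, not the logarithmic one. No comparison of $\kappa$-dependent errors against $\log$ of the scale is needed.

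Note also that the reductions to isolated simple and separated blow-up points are not $\kappa$-independent preliminaries. Propositions~\ref{prop:simple} and~\ref{prop:separate} use the sign restriction of Proposition~\ref{prop:flatsign}, which rests on the coercivity inequality \eqref{eq:Qcoercive} in the given parameter range. So that inequality must be established first and then fed through the whole proof of Theorem~\ref{thm:main}, which is what the paper's short proof does.
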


Theorem~\ref{thm:main} and Part~I~\cite[Theorem~1]{PartI} give complementary dimension ranges for general and umbilic boundary at the scalar-flat and minimal-boundary endpoints.
Part~I~\cite[Theorem~1]{PartI} also gives noncompact examples in every dimension $N\ge9$ for sufficiently negative mean curvature.
Thus dimension eight is the largest range in which compactness holds for every fixed real mean curvature under the hypotheses of Theorem~\ref{thm:main}.
The parameter intervals in Corollary~\ref{thm:continuation} follow from the continuity argument in Proposition~\ref{prop:local-continuation}.

Cordero-Misteli~\cite[Theorem~1.1]{CM} proves compactness for positive scalar curvature and minimal boundary in dimensions $3\le N\le14$ for general boundary and $3\le N\le20$ for umbilic boundary, together with noncompactness in the complementary dimensions.
These ranges agree with Theorem~\ref{thm:main} and Part~I~\cite[Theorem~1]{PartI}.
His theorem also establishes compactness through dimension $18$ and noncompactness from dimension $19$ for locally conformally flat metrics.
The present paper additionally treats zero scalar curvature with positive mean curvature, and positive scalar curvature with nonzero constant mean curvature, in the ranges of Theorem~\ref{thm:main} and Corollary~\ref{thm:continuation}.
We obtained our compactness results and the noncompactness constructions of Part~I independently of~\cite{CM}.

For positive scalar curvature and minimal boundary, the noncompactness constructions use different polynomial metric profiles and coordinate choices.
Part~I~\cite[Lemma~4.5]{PartI} uses tangential trace-free exponential metrics in Gaussian normal coordinates, with exactly minimal background boundary.
Cordero-Misteli~\cite[Section~4]{CM} uses ambient conformal normal coordinates, represents the boundary as a graph over its tangent hyperplane, and then flattens this graph vertically.
At the first nonumbilic dimension $N=15$, the profile in Part~I~\cite[(4.13)]{PartI} has degree six, whereas the corresponding degree in \cite[(9.50)]{CM} is five.
At the first umbilic dimension $N=21$, these degrees are nine in Part~I~\cite[(4.14)]{PartI} and eight in \cite[(9.50)]{CM}.
Part~I uses the same respective degrees throughout $N\ge15$ and $N\ge21$; Cordero-Misteli also uses lower degrees in some subsequent dimension ranges, as listed in \cite[(9.50)]{CM}.
The degree in \cite[(9.50)]{CM} is that of the first-order metric tensor after flattening; the polynomial defining the boundary graph may have degree one higher because its first derivatives enter that tensor.
For compactness, we impose the additional boundary geodesic-coordinate condition in \eqref{eq:fermi-normalization} and work on the full conformal Fermi tensor space \eqref{crit:eq:fermi}.

Both compactness calculations retain the scalar correction and the cross terms between distinct polynomial components of the same tangential harmonic type.
Here $h$ denotes a polynomial metric perturbation, represented by the model metric $g=\exp(h)$.
Within a fixed tangential harmonic type, write $h=\sum_i a_i h_i$, where the $h_i$ form a basis of homogeneous polynomial tensors satisfying \eqref{crit:eq:fermi}, and set $a=(a_i)$.
If $E=(E_{ij})$ is the corresponding symmetric matrix of the corrected quadratic energy, then $a^{\mathsf T}Ea=\sum_i a_i^2E_{ii}+2\sum_{i<j}a_i a_jE_{ij}$.
The terms with $i<j$ pair distinct basis tensors, which may have different total degrees.
See \eqref{eq:quadratic-action}, \eqref{lc:eq:scale}, and \cite[Section~A.5]{CM}.

For even $N$, the homogeneous degree $d=(N-2)/2$ is called \emph{critical}: its quadratic radial integral diverges logarithmically.
It equals the last retained degree $d_N$ in \eqref{eq:degrees}; for odd $N$, $d_N$ remains in the strictly integrable range.
At the critical degree, Cordero-Misteli~\cite[Lemmas~A.23--A.24]{CM} determines the leading scalar correction on the hemisphere and inserts its angular energy into the full logarithmic coefficient matrices.
For a homogeneous polynomial metric tensor $h$ of this degree, Theorem~\ref{crit:thm:critical} uses Kelvin inversion and a boundary-preserving Bianchi gauge to obtain \eqref{crit:eq:residue}, a negative multiple of $\int_{S^{N-1}_+}(|\nabla_ST_dh|^2+d^2|T_dh|^2)$.
If this integral vanishes, then $T_dh=0$; Lemma~\ref{lc:lem:kernel} shows that the original homogeneous polynomial tensor $h$ is identically zero, provided it satisfies the conformal Fermi conditions \eqref{crit:eq:fermi}.
The proof includes the planar boundary terms and estimates the spherical boundary and cutoff terms.

\medskip
\paragraph{The local quadratic form.}
Part~I constructs noncompact examples by choosing polynomial metric perturbations for which the corrected quadratic energy has a suitable local extremum.
Compactness requires a sign on the entire space of polynomial tensors allowed by conformal Fermi coordinates.
This space contains scalar-, vector-, and tensor-derived polynomials multiplied by powers $t^j$ of the normal coordinate.
The total degree and boundary conditions are specified in \eqref{eq:degrees} and \eqref{crit:eq:fermi}: at $t=0$ the boundary radial identity must hold, and umbilicity removes the coefficient of $t$ but not the higher odd powers.
Within each tangential harmonic type, the quadratic form retains the pairings between all these components.
The scalar-derived terms generally have nonzero divergence, so their correction contributes to the quadratic energy.

We evaluate that contribution by the decomposition into a particular solution and a boundary correction used in Part~I~\cite[Section~5.1]{PartI}.
A rational multiple of the bubble solves the interior linearized equation exactly and leaves a Robin or Neumann residual on the boundary.
The solution with this residual determines the remaining inverse pairing through Green's identity.
For the scalar-flat equation, the resulting matrix entries are rational after normalization.
For minimal boundary, they are rational polynomials in $1/\pi$, evaluated by exact telescoping of the boundary spectral series.
The finite positivity checks are implemented in the self-contained program \texttt{verify\_compactness.py}, included with the arXiv submission and described in the proof of Theorem~\ref{thm:ordinary-endpoints}.
For arbitrary fixed mean curvature through dimension eight, the homogeneous degrees satisfying $2d<N-2$ are at most two; direct bounds on the boundary correction prove the required sign.

\medskip
\paragraph{The logarithmic term and comparison with Green functions.}
The Taylor expansion \eqref{eq:degrees} must be controlled through $d_N=\lfloor(N-2)/2\rfloor$ to obtain the vanishing in Corollary~\ref{cor:jets} and the Green-metric decay in Lemma~\ref{lem:auxmass}.
In even dimensions, $d_N$ is the critical degree, and its quadratic energy has radial factor $\int_1^R r^{2d_N+1-N}\dd r=\log R$.
Thus the convergent whole-half-space integrals for $2\deg h<N-2$ in Part~I~\cite[Proposition~3.1]{PartI} do not cover this last coefficient.
Theorem~\ref{crit:thm:critical} supplies this coefficient; together with the matrices for $2d<N-2$, it controls the negative scale derivative in \eqref{eq:Qcoercive}, uniformly for coefficient vectors varying along the blow-up sequence.

To apply this inequality to solutions, we use the localized metric \eqref{eq:localized-metric}, obtained by cutting off $h_p$ from \eqref{eq:fermi-normalization}, including its Taylor remainder; this metric agrees with the original metric near $p$ and is Euclidean outside a small half-ball.
Theorem~\ref{thm:tame} constructs the correction $w(k)$ in \eqref{eq:nonlinear-projected}: $U+w(k)$ satisfies the weak equation for every test function in $Y$, the complement of the Jacobi kernel defined after \eqref{eq:dual}.
Its differentiated remainder estimate yields the scale inequality in Proposition~\ref{prop:nonlinear}.
Theorem~\ref{thm:matching} identifies the scale derivative with the regular Green-function difference.
Corollary~\ref{cor:jets} proves the joint conformal Fermi-coordinate formulation of Weyl vanishing and vanishing of the trace-free second fundamental form at a boundary blow-up point: $J_d(p)=0$ for $1\le d\le d_N$, or equivalently $g_p=\delta+O(r^{d_N+1})$.
In particular, $\nabla^jW_g(p)=0$ for $0\le j\le d_N-2$ when $N\ge6$, and $\bar\nabla^j\mathring{\mathrm{II}}_g(p)=0$ for $0\le j\le d_N-1$ when $N\ge4$, where $\bar\nabla$ is the boundary covariant derivative.
The corresponding interior Weyl-vanishing result is recalled in Proposition~\ref{prop:interiorlocal}.
Lemma~\ref{lem:auxmass} then bounds the localized Green metric's mass by a quantity tending to zero with the localization radius.
Propositions~\ref{prop:simple} and~\ref{prop:separate} then show that every blow-up point is isolated simple and that distinct selected maxima remain a fixed positive distance apart.
Theorem~\ref{thm:positive-mass-full} excludes blow-up in the final proof of Theorem~\ref{thm:main}.

\medskip
\paragraph{Organization.}
Section~\ref{sec:preliminaries} collects the standard geometric and analytic results, including the positive mass theorem, and fixes the notation used below.
Sections~\ref{sec:ordinary-endpoints} and~\ref{sec:ordinary} prove the quadratic coefficient inequalities for the scalar-flat and minimal-boundary problems and for arbitrary constant mean curvature, respectively.
Section~\ref{sec:critical} computes the logarithmic coefficient and proves coercivity of the negative scale derivative.
Sections~\ref{sec:nonlinear-family} and~\ref{sec:local-comparison} establish the nonlinear estimates, the comparison with Green functions, and the mass estimate for the localized metric.
Section~\ref{sec:compactness} excludes accumulation of blow-up points and proves the compactness results.

\medskip
\noindent\textsc{Statement on the Use of AI.}
ChatGPT and Codex (OpenAI) were used to assist with the organization and wording of the manuscript, notation checks, and preparation and checking of the submission files.
The analytic arguments and computer-assisted calculations are documented in the text and verification materials.
The authors are responsible for the mathematical content and references.

\medskip
\noindent\textsc{Acknowledgement.}
L. Gong and J. Wei gratefully acknowledge support from the Research Grants Council of Hong Kong (RGC) through the project: \emph{On Fujita equation in the critical or supercritical regime}.
S. Kim was supported by the Basic Science Research Program through the National Research Foundation of Korea (NRF), funded by the Ministry of Science and ICT (RS2025-00558417), and by Samsung Science and Technology Foundation under Project Number SSTF-BA2601-01.

\section{Preliminaries}\label{sec:preliminaries}
We write $y=(x,t)\in\R^{N-1}\times[0,\infty)$, with inward normal coordinate $t$ and $m=N-1$.
Indices $i,j$ range from $1$ to $N$, and $a,b$ are tangential; repeated indices are summed.
The outward normal on the half-space is $-\partial_t$.
Polynomial degree means total degree in $(x,t)$.
The spaces of homogeneous coefficients carry fixed Euclidean norms.

For a symmetric matrix field $h$ in these coordinates, $\exp(h)$ denotes the metric whose coefficient matrix is the matrix exponential of $h$.
All metric exponentials below use this convention.
Choose a fixed smooth radial cutoff $\chi$ equal to one on $[0,1]$ and zero on $[2,\infty)$, and set $\chi_R(y):=\chi(|y|/R)$.

\subsection{Bubbles and the linearized equation}\label{sec:source}
In the energy and linearized equations, we use $p:=(N+2)/(N-2)$ and $q:=N/(N-2)$.
For the curvature pairs in Section~\ref{sec:introduction}, define the bubbles as in Part~I~\cite[Section~2.2]{PartI}:
\begin{equation}\label{eq:bubble}
 U(x,t):=
 \begin{cases}
 \bigl(|x|^2+(t+1)^2\bigr)^{-(N-2)/2}, &(\sigma,\kappa)=(0,2),\\
 \displaystyle\left(\frac{2}{1+|x|^2+(t+\kappa)^2}\right)^{(N-2)/2}, &\sigma=1.
 \end{cases}
\end{equation}
For the two cases $\sigma\in\{0,1\}$ in \eqref{eq:bubble}, $U(y)=2^{\sigma(N-2)/2}|y|^{2-N}+O(|y|^{1-N})$ as $|y|\to\infty$.
The model equation is
\[
 \begin{cases}
 -\Delta U=\dfrac{N(N-2)}4\sigma U^p &\text{in }\R^N_+,\\
 \partial_\nu U=\dfrac{N-2}2\kappa U^q &\text{on }\partial\R^N_+.
 \end{cases}
\]
The translated and dilated bubbles are
\[
 U_{\xi,\varepsilon}(x,t):=\varepsilon^{-(N-2)/2}U\left(\frac{x-\xi}{\varepsilon},\frac{t}{\varepsilon}\right).
\]
For $(\sigma,\kappa)=(0,2)$, the bubble is harmonic and its Jacobi boundary operator is $\partial_\nu-NU^{2/(N-2)}$.
For $(1,0)$, the Jacobi boundary condition is Neumann.
Every scale derivative keeps the curvature pair fixed.

We use the energy functional $\A_g$, related by $\A_g=c_N^{-1}E_g$ to the energy $E_g$ in Part~I~\cite[(2.1)]{PartI} for the same curvature pair.
This normalization keeps the scalar-curvature coefficient in the geometric expansions equal to one.
Explicitly,
\begin{equation}
 \begin{aligned}
 \A_g(v):={}&\int_{\R^N_+}(c_N^{-1}|\nabla v|_g^2+R_gv^2)\dd v_g
 +2(N-1)\int_{\partial\R^N_+} H_gv^2\dd\sigma_g\\
 &-\sigma(N-1)(N-2)\int_{\R^N_+}(v_+)^{p+1}\dd v_g
 -2(N-2)\kappa\int_{\partial\R^N_+}(v_+)^{q+1}\dd\sigma_g.
 \end{aligned}
\end{equation}
Its scalar derivative is $2/c_N$ times the weak equation associated with \eqref{eq:PDE}.
At $U$, the scalar Hessian multiplied by $c_N/2$ is
\begin{equation}
 \B(v,w):=\int_{\R^N_+}\nabla v\cdot\nabla w
 -\frac{\sigma N(N+2)}4\int_{\R^N_+} U^{4/(N-2)}vw
 -\frac{N\kappa}2\int_{\partial\R^N_+} U^{2/(N-2)}vw.
\end{equation}

Set $\E:=\dot H^1(\R^N_+)$, with norm $\|v\|_{\E}:=\|\nabla v\|_{L^2(\R^N_+)}$.
The classification of positive half-space solutions is due to Li--Zhu~\cite{LiZhu}.
The following energy-space formulation of Jacobi nondegeneracy is recorded in Part~I~\cite[Section~2.2]{PartI}, using \cite[Lemmas~2.1--2.2]{AW} and \cite[Section~2.2]{KMW}.
\begin{lemma}[Classification and Jacobi kernel]\label{lem:model-kernel}
Every positive solution of the model half-space equation is one of the bubbles $U_{\xi,\varepsilon}$.
The kernel of $\B$ on $\E$ is spanned by
\[
 Z_0:=\frac{N-2}{2}U+y\cdot\nabla U,\qquad Z_a:=\partial_aU\quad(1\le a<N).
\]
\end{lemma}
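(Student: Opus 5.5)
The lemma has two assertions: the classification of positive solutions, and the kernel of $\B$ on $\E$. I would prove them separately and then check that they are consistent.

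\emph{Classification.} This is the half-space Liouville theorem of Li--Zhu~\cite{LiZhu}. Let $u$ be a positive solution of $-\Delta u=\frac{N(N-2)}4\sigma u^p$ in $\R^N_+$ with $\partial_\nu u=\frac{N-2}2\kappa u^q$ on $\partial\R^N_+$.
\begin{enumerate}
\item Run the moving sphere method. Kelvin transforms about boundary points preserve both the interior and the boundary equations, because both nonlinearities carry critical exponents.
\item Conclude that $u$ has the form $c\,(|x-\xi|^2+(t+t_0)^2+\lambda)^{-(N-2)/2}$.
\item Determine the constants from the equations. The interior equation fixes the relation between $c$ and $\lambda$: it forces $\lambda=0$ when $\sigma=0$ and $\lambda>0$ when $\sigma=1$. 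The boundary condition fixes $t_0$ relative to the scale and $\kappa$.
\end{enumerate}
This recovers exactly the family $U_{\xi,\varepsilon}$ in \eqref{eq:bubble}. Note that for $\sigma=0$ the sign $\kappa=2>0$ is needed, otherwise no positive solution exists.

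\emph{Kernel, setup.} The functions $Z_0$ and $Z_a$ lie in the kernel because they are derivatives of the two invariant families. Differentiating $U_{\xi,\varepsilon}$ in $\varepsilon$ at $\varepsilon=1$ gives $-Z_0$, and differentiating in $\xi_a$ gives $-Z_a$. Both families solve the model equation, and tangential translation and dilation leave it invariant. The decay $|Z|\lesssim |y|^{2-N}$ and $|\nabla Z|\lesssim|y|^{1-N}$ places them in $\E$. Normal translation is not a symmetry, so no $\partial_tU$ appears.

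\emph{Kernel, reverse inclusion.} For the converse, I would transfer the problem to a compact model. Let $v\in\E$ satisfy $\B(v,\cdot)=0$.
\begin{enumerate}
\item Use elliptic regularity, with the weights $U^{4/(N-2)}$ and $U^{2/(N-2)}$ lying in the appropriate Lorentz spaces, to get that $v$ is smooth and decays like $|y|^{2-N}$.
\item Apply a conformal map (an inversion or stereographic-type projection) sending $\R^N_+$ with metric $U^{4/(N-2)}|dy|^2$ to a geodesic ball or spherical cap $\Omega$ in $S^N$, or to the flat unit ball when $\sigma=0$. Conformal covariance of $(L_g,B_g)$ turns the linearized problem into an eigenvalue problem.
\item For $\sigma=0$ this becomes a Steklov problem for harmonic functions on the unit ball: $\partial_\nu\phi=\lambda\phi$ with $\lambda$ equal to the degree-one value. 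The Steklov eigenfunctions there are the harmonic homogeneous polynomials, so the eigenspace is exactly the restrictions of the $N$ coordinate functions.
\item For $\sigma=1$, the problem is $-\Delta_{S^N}\phi=N\phi$ on the cap with Robin condition $\partial_\nu\phi=c\kappa\phi$. Separate variables in the polar angle from the cap centre: the angular part is a spherical harmonic $Y_\ell$ on $S^{N-1}$, and the radial part solves a Legendre-type ODE.
\item The boundary condition selects $\ell\in\{0,1\}$. The mode $\ell=0$ gives one solution and $\ell=1$ gives $N-1$ solutions, for $N$ in total. These correspond to $Z_0$ and $Z_a$.
\end{enumerate}
This follows \cite[Lemmas~2.1--2.2]{AW} and \cite[Section~2.2]{KMW}.

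The main obstacle is excluding additional kernel elements in the mixed case $\sigma=1$ for arbitrary $\kappa$. One has to verify that for $\ell\ge2$ the radial ODE solution that is regular at the pole never satisfies the Robin condition, and that the extra $\ell=0$ and $\ell=1$ solutions (in particular the image of $\partial_tU$) fail it. For this step I would use a Sturm comparison or monotonicity argument on the logarithmic derivative of the Legendre-type functions as a function of $\ell$.
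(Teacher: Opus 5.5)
Your outline follows the paper at the level of references. The paper gives no argument of its own: classification is \cite{LiZhu}, and nondegeneracy is quoted from Part~I via \cite[Lemmas~2.1--2.2]{AW} and \cite[Section~2.2]{KMW}. Your $\sigma=0$ Steklov argument is correct.

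The step that fails is step~5 of the $\sigma=1$ case. When you separate variables about the cap centre, the angular factor is a harmonic on $S^{N-1}$, and its degree-one harmonics form an $N$-dimensional space. The $\ell=0$ mode contributes \emph{nothing}:
\begin{itemize}
\item For the cap $\{\theta<\theta_0\}$ the boundary mean curvature is $\kappa=\cot\theta_0$.
\item Conformal covariance turns the Jacobi condition $B_J(U\phi)=0$ into exactly $\partial_\theta\phi=\kappa\phi$ at $\theta=\theta_0$.
\item The pole-regular $\ell=0$ solution $\cos\theta$ would then need $-\sin\theta_0=\cot\theta_0\cos\theta_0$, which never holds.
\end{itemize}
So the kernel is all of $\{\sin\theta\,Y_1(\omega)\}$, i.e.\ the $N$ linear functions orthogonal to the cap axis. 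This matches your own $\sigma=0$ computation, where the constant mode also fails. For $\kappa=0$ it is also the paper's remark that the Neumann degree-one harmonics on the hemisphere give the $N$ Jacobi fields.

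Rotations about the cap centre mix $Z_0$ with the $Z_a$. The split ``dilation in degree $0$, $N-1$ translations in degree $1$'' belongs to the tangential decomposition in $x\in\R^{N-1}$ on the half-space, not to the cap-centred one. Your stated obstacle is therefore misframed. The pole-regular solution is one-dimensional for each $Y_\ell$, so there is no extra $\ell=1$ solution to exclude. The image of $\partial_tU$ is, modulo the kernel, a nonzero multiple of the $\ell=0$ function $\cos\theta$.

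The remaining point, excluding $\ell\ge2$ for every $\kappa$, is settled by a ground-state transform with respect to $\sin\theta$, not by a comparison in $\ell$. Write the regular radial factor as $f_\ell=\sin\theta\,w$. Since $\kappa=\cot\theta_0$, the Robin condition becomes $w'(\theta_0)=0$, and
\[
 (\sin^{N+1}\theta\,w')'=(\lambda_\ell-\lambda_1)\sin^{N-1}\theta\,w,
 \qquad \lambda_\ell=\ell(\ell+N-2).
\]
\begin{itemize}
\item For $\ell\ge2$: $w\sim\theta^{\ell-1}>0$ and $\sin^{N+1}\theta\,w'\to0$ at the pole, so $w'>0$ on all of $(0,\pi)$.
\item For $\ell=0$: $w=\cot\theta$, so $w'<0$.
\item For $\ell=1$: $w$ is constant.
\end{itemize}
This treats every $\theta_0\in(0,\pi)$, hence every $\kappa\in\R$, at once. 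It is the cap analogue of the ground-state argument in Lemma~\ref{ord:lem:positive}. Removability at the boundary point corresponding to infinity follows from your decay bound.

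Finally, your side remark in the classification is false. For $\sigma=0$ and $\kappa\le0$, the function $u=a+\frac{N-2}{2}|\kappa|a^{N/(N-2)}t$ with $a>0$ is a positive solution. This is harmless, since only $(0,2)$ occurs in the lemma. In Li--Zhu's harmonic case it is positivity together with $\kappa>0$ that rules out such $t$-dependent solutions.
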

Choose a nonnegative $\zeta\in C_c^\infty(\overline{\R^N_+})$ that is positive on a nonempty open set, and define
\[
 \mathsf G_{jk}:=\int_{\R^N_+}\zeta Z_jZ_k,\qquad
 \theta_j:=\zeta\sum_{\ell=0}^{N-1}(\mathsf G^{-1})_{j\ell}Z_\ell.
\]
The Gram matrix is positive definite, and
\begin{equation}\label{eq:dual}
 \int_{\R^N_+}\theta_jZ_k=\delta_{jk},\qquad 0\le j,k<N.
\end{equation}
We use the closed complement $Y:=\{w\in\E:\int_{\R^N_+}\theta_jw=0\text{ for }0\le j<N\}$.
By the projected inverse construction in \cite[Section~2.2]{PartI}, $\B:Y\to Y^*$ is invertible, with inverse norm locally uniform in $\kappa$.
The negative scalar direction is retained; only the Jacobi kernel is removed.
Sobolev, trace, and Hardy inequalities apply on $\E$; in particular,
\begin{equation}
 \int_{\R^N_+}\frac{|w|^2}{(1+|y|)^2}+\int_{\partial\R^N_+}\frac{|w|^2}{1+|x|}\le C\|w\|_{\E}^2.
\end{equation}
See \cite{EscobarTrace} and \cite[Appendix~C.1]{PartI} for the trace and weighted forms used here.

\medskip
\paragraph{The minimal-boundary model on the hemisphere.}
For $(\sigma,\kappa)=(1,0)$, stereographic compactification by $U^{4/(N-2)}\delta$ is the unit round hemisphere.
If $w=Uv$, conformal covariance gives
\[
 \int_{\R^N_+}|\nabla w|^2
 =\int_{S^N_+}\left(|\nabla v|^2+\frac{N(N-2)}4v^2\right),\qquad
 \B(w,w)=\int_{S^N_+}(|\nabla v|^2-Nv^2).
\]
The boundary operator is Neumann.
Constants give the negative direction.
The degree-one Neumann harmonics correspond to the $N$ Jacobi fields.
All remaining Neumann harmonics have a positive spectral gap.
Thus the constrained inverse exists on the complement defined by \eqref{eq:dual}.

The separated boundary response is given in \cite[Lemma~5.4]{PartI}:
\begin{lemma}\label{min:responselemma}
Let $Y_L$ be a spherical harmonic of degree $L$ on the equator $S^{N-1}$.
The regular solution of $(-\Delta_{S^N}-N)v=0$ in the hemisphere with normal derivative $Y_L$ has boundary value $r_LY_L$, where
\begin{equation}
 r_L=\frac{\Gamma((L-1)/2)\Gamma((L+N)/2)}
 {2\Gamma(L/2)\Gamma((L+N+1)/2)},\qquad L\ge2.
\end{equation}
Degree $L=1$ is excluded by the Jacobi compatibility condition.
At $L=0$ the response is zero, not a projection of the negative mode.
\end{lemma}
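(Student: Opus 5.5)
We separate variables in polar coordinates about the north pole and reduce the statement to a connection formula for a Gauss hypergeometric function. Write a point of $S^N_+$ as $(\sin\rho\,\omega,\cos\rho)$ with $\omega\in S^{N-1}$ and $\rho\in[0,\pi/2]$. Here $\rho=0$ is the pole and $\rho=\pi/2$ is the equator, and the outward normal on the equator is $\partial_\rho$. For the ansatz $v=f(\rho)Y_L(\omega)$, the equation $(-\Delta_{S^N}-N)v=0$ becomes
\[
 f''+(N-1)\cot\rho\,f'-\frac{L(L+N-2)}{\sin^2\rho}f+Nf=0 .
\]
Regularity at $\rho=0$ selects the solution behaving like $\sin^L\rho$. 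Writing $f=\sin^L\rho\,F(\sin^2\rho)$ turns this into the hypergeometric equation with parameters
\[
 a=\frac{L-1}{2},\qquad b=\frac{L+N}{2},\qquad c=L+\frac N2 .
\]
This uses that $N=\nu(\nu+N-1)$ with $\nu=1$, so $a=(L-\nu)/2$ and $b=(L+\nu+N-1)/2$. The regular solution is therefore $f=\sin^L\rho\,{}_2F_1(a,b;c;\sin^2\rho)$, up to a constant. I would check the parameters once by substituting the ansatz into the ODE.

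Next we read off the Dirichlet and Neumann data at the equator. Since $c-a-b=\tfrac12$, apply the standard connection formula $z\mapsto1-z$:
\[
 {}_2F_1(a,b;c;z)=A\,{}_2F_1\!\left(a,b;\tfrac12;1-z\right)+B\,(1-z)^{1/2}\,{}_2F_1\!\left(c-a,c-b;\tfrac32;1-z\right),
\]
\[
 A=\frac{\Gamma(c)\Gamma(\tfrac12)}{\Gamma(c-a)\Gamma(c-b)},\qquad B=\frac{\Gamma(c)\Gamma(-\tfrac12)}{\Gamma(a)\Gamma(b)} .
\]
Here $1-z=\cos^2\rho$ and $(1-z)^{1/2}=\cos\rho$. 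The first term is a smooth function of $\cos^2\rho$, so its $\rho$-derivative vanishes at $\pi/2$. The second term has $\rho$-derivative $-B$ at $\pi/2$, and $\sin^L\rho$ contributes no derivative there. Hence $f(\pi/2)=A$ and $\partial_\nu f=f'(\pi/2)=-B$.

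Normalizing the Neumann datum to $Y_L$, the boundary value is $r_L=-A/B$. Using $\Gamma(\tfrac12)/\Gamma(-\tfrac12)=-\tfrac12$, together with $c-a=(L+N+1)/2$ and $c-b=L/2$, this gives
\[
 r_L=\frac{\Gamma(\tfrac{L-1}{2})\Gamma(\tfrac{L+N}{2})}{2\Gamma(\tfrac L2)\Gamma(\tfrac{L+N+1}{2})}.
\]
This is exactly the formula in the statement.

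The two excluded degrees are handled by the same computation.
\begin{itemize}
\item For $L=1$ we have $a=0$, so $B=0$. The regular solution has zero Neumann data; it is a Jacobi field, consistent with the compatibility condition.
\item For $L=0$ we have $a=-\tfrac12$ and $b=c$, so $F=(1-z)^{1/2}=\cos\rho$. The solution then has zero boundary value with nonzero Neumann data, so the response is $0$.
\end{itemize}
For $L\ge2$ the solution is unique. By the hemisphere spectral discussion preceding the lemma, the Neumann kernel of $-\Delta-N$ consists only of the degree-one harmonics, so no homogeneous solution can be added in degree $L$.

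The main obstacle is bookkeeping rather than analysis. The points to verify are the hypergeometric parameters, the sign convention of the outward normal (so that $r_L>0$), and that the $\Gamma(-\tfrac12)$ factor produces precisely the factor $2$ in the denominator.
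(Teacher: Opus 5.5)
Your proposal is correct. The substitution gives $a=(L-1)/2$, $b=(L+N)/2$, $c=L+N/2$. Since $c-a-b=\tfrac12$, the $z\mapsto1-z$ connection formula applies. Then $r_L=-A/B$, combined with $\Gamma(\tfrac12)/\Gamma(-\tfrac12)=-\tfrac12$, yields exactly the stated formula. For $L\ge2$ one has $B<0<A$, so $r_L>0$, which is consistent with the spectral gap.

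The paper does not prove this lemma; it imports it from Part~I~\cite[Lemma~5.4]{PartI}. Your separation-of-variables computation therefore serves as a self-contained derivation.

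Two small points are worth making explicit:
\begin{itemize}
\item The function $\sin^L\rho\,Y_L(\omega)$ is a homogeneous harmonic polynomial in the ambient coordinates. Hence the separated solution is smooth at the pole.
\item The absence of a Neumann kernel in degrees $L\ne1$ is what identifies the a priori non-separated regular solution with the separated one, since the latter is the unique regular solution.
\end{itemize}

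As a consistency check, the displayed formula already gives $r_0=0$ because $1/\Gamma(0)=0$. This matches your direct $L=0$ computation with $f=\cos\rho$.
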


\medskip
\paragraph{Radial integrals.}
After angular integration in the tangential variables, the scalar-flat and minimal-boundary coefficients reduce to the following integrals of monomials in $r=|x|$ and $t$ against the corresponding bubble weights.
Euler's beta integral \cite[(5.12.1)]{DLMF} yields these formulas for $m=N-1$, $R,j_1\ge0$, and $2\nu>m+R+j_1+1$; see also \cite[Lemma~5.3]{PartI}:
\begin{equation}\label{lc:eq:moment}
 \begin{aligned}M_{R,j_1}^{\nu}&=\int_0^\infty\!\int_0^\infty
 r^{m-1+R}t^{j_1}\big(r^2+(t+1)^2\big)^{-\nu}\dd r\dd t
\\&=\tfrac12 B(a,\nu-a)B(j_1+1,2\nu-2a-j_1-1),\quad a=(m+R)/2.\end{aligned}
\end{equation}
\begin{equation}\label{min:moment}
\begin{aligned}
 M^I_{R,j_1,\nu}
 &=\int_0^\infty\!\int_0^\infty
 r^{m-1+R}t^{j_1}(1+r^2+t^2)^{-\nu}\,dr\,dt\\
 &=\frac{\Gamma((m+R)/2)\Gamma((j_1+1)/2)
 \Gamma(\nu-(m+R+j_1+1)/2)}{4\Gamma(\nu)}.
\end{aligned}
\end{equation}
At $t=0$, the corresponding boundary integral for either model is $M_{R,0}^{\nu,\partial}:=\int_0^\infty r^{m-1+R}(1+r^2)^{-\nu}\dd r=\tfrac12 B((m+R)/2,\nu-(m+R)/2)$, for $2\nu>m+R$.

\subsection{Conformal Fermi coordinates}\label{sec:quadratic}
Fix an integer $K>N+4$.
Let $\Psi_p$ denote the conformal Fermi coordinate map from a half-ball to a neighborhood of a boundary center $p$.
The conformally normalized metric can be chosen so that
\begin{equation}\label{eq:fermi-normalization}
 g_p=\exp(h_p),\qquad (h_p)_{Ni}=0,\qquad
 \tr h_p=O(|y|^K),\qquad x_a(h_p)_{ab}(x,0)=0.
\end{equation}
The conformal Fermi normalization is due to Marques~\cite[Proposition~3.1]{Marques2005}; see \cite[Proposition~2.2]{AlmarazExistence} and the formulation in \cite[Lemma~2.2]{KMW}.
The last identity is the boundary Gauss lemma, applied also to the matrix logarithm.
It is the additional boundary geodesic-coordinate condition relative to the Gaussian-coordinate ansatz of Part~I~\cite[Section~4.1]{PartI}, which does not require the boundary coordinates to be geodesic.
Normalize the conformal factor to be one with zero tangential gradient at $p$; its normal derivative normalizes mean curvature.
The finite-order normalization and the boundary exponential maps can be chosen smoothly in the center $p$.
On a compact boundary chart, these choices have common ellipticity and finite derivative bounds.

Put
\begin{equation}\label{eq:degrees}
 d_N=\left\lfloor\frac{N-2}{2}\right\rfloor,
 \qquad d_N^{\rm ord}=\left\lfloor\frac{N-3}{2}\right\rfloor.
\end{equation}
The ordinary degrees are $1\le d\le d_N^{\rm ord}$, equivalently $2d<N-2$.
For even $N$, the remaining degree $d_N=(N-2)/2$ is critical; for odd $N$, $d_N=d_N^{\rm ord}$.
Let $\mathscr J_d$ be the space of symmetric degree-$d$ homogeneous polynomial tensors satisfying
\begin{equation}\label{crit:eq:fermi}
 h_{Ni}=0,\qquad\tr h=0,\qquad x_a h_{ab}(x,0)=0.
\end{equation}
Its umbilic subspace $\mathscr J_d^{\rm u}$ also satisfies $\partial_t h(x,0)=0$.
This removes the normal power $t^1$, not odd powers $t^3,t^5,\ldots$.
The jets of \eqref{eq:fermi-normalization} satisfy
\[
 h_p=\sum_{d=1}^{d_N}J_d(p)+q_p,\qquad
 J_d(p)\in\mathscr J_d,\qquad q_p=O(|y|^{d_N+1}),
\]
with differentiated remainders; in the umbilic case $J_d(p)\in\mathscr J_d^{\rm u}$.
Indeed, umbilicity and the high-order trace condition give the vanishing of the normal-linear coefficient at every retained order.
At degrees one and two,
\[
 J_1=tA,\qquad J_2=H^W(x)+tC(x)+t^2B.
\]
The tangential degree-two normal-coordinate tensor is the boundary curvature tensor; trace freeness removes its Ricci contraction, leaving the Weyl tensor.
The remaining terms are the normal Taylor coefficients.
Fix coefficient norms on every $\mathscr J_d$.

For families of background metrics on a fixed coordinate half-ball, all local constants below are uniform under common ellipticity and $C^{K+2,\alpha}$ bounds, with $\kappa$ in a fixed compact interval.
These assumptions are imposed whenever varying or rescaled metrics are considered.

\subsection{First and second metric variations}\label{sec:action}
The first-variation formula \cite[Proposition~2.1]{ABdL}, applied to the compactified round cap or ball, gives the following stationarity statement; its tangential trace-free case is \cite[Proposition~3.2]{PartI}.
\begin{lemma}[First variation with respect to the metric]\label{lem:stationarity}
For every smooth compactly supported metric variation $k$, $D_g\A_\delta(U)[k]=0$.
Its mixed scalar--metric derivative annihilates the Jacobi kernel.
For $\hat g:=U^{4/(N-2)}g$, conformal covariance and integration by parts give
\[
 \A_g(U)={}\int_{\R^N_+} \bigl(R_{\hat g}-\sigma(N-1)(N-2)\bigr)\dd v_{\hat g} +2\int_{\partial\R^N_+} \bigl((N-1)H_{\hat g}-(N-2)\kappa\bigr)\dd\sigma_{\hat g}.
\]
\end{lemma}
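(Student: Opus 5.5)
We prove the three assertions of Lemma~\ref{lem:stationarity} in the order: the integral identity first, then stationarity, then the mixed derivative.

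\textbf{Integral identity.} Write $\hat g=U^{4/(N-2)}g$. Conformal covariance gives
\[
 L_gU=c_NR_{\hat g}U^{p}, \qquad B_gU=\tfrac{N-2}{2}H_{\hat g}U^{q}.
\]
Integrate $c_N^{-1}|\nabla U|_g^2+R_gU^2$ by parts. The interior term becomes
\[
 c_N^{-1}\int U L_gU\dd v_g=\int R_{\hat g}U^{p+1}\dd v_g .
\]
For the boundary term, recall that the outward normal is $\nu$. The boundary contribution $-c_N^{-1}\int U\partial_\nu U$ combines with $2(N-1)\int H_gU^2$, since
\[
 c_N^{-1}\tfrac{N-2}{2}=2(N-1).
\]
The result is
\[
 c_N^{-1}\int U B_gU\dd\sigma_g=2(N-1)\int H_{\hat g}U^{q+1}\dd\sigma_g .
\]
Finally use $U^{p+1}\dd v_g=\dd v_{\hat g}$ and $U^{q+1}\dd\sigma_g=\dd\sigma_{\hat g}$. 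This yields the displayed formula, with the two curvature-constant terms carried along unchanged. Decay of $U$ at infinity, like $|y|^{2-N}$, makes the boundary term at infinity vanish, because $|U\nabla U|\,R^{N-1}\sim R^{2-N}\to0$.

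\textbf{Stationarity.} Differentiate in $g$ at $g=\delta$ in the direction $k$, keeping $U$ fixed. By the integral identity this is the derivative of the total curvature functional
\[
 \int (R_{\hat g}-c_1)\dd v_{\hat g}+2\int\bigl((N-1)H_{\hat g}-c_2\bigr)\dd\sigma_{\hat g}
\]
in the direction $\hat k=U^{4/(N-2)}k$. Here $\hat g_0$ is the round cap, or ball, or hemisphere, obtained by compactification.

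By the first-variation formula \cite[Proposition~2.1]{ABdL}, this derivative equals
\[
 -\int\langle \Ric_{\hat g}-\tfrac12R_{\hat g}\hat g+\tfrac{c_1}{2}\hat g,\hat k\rangle
 +\int\langle \mathrm{II}_{\hat g}-H_{\hat g}\hat g+\dots,\hat k\rangle ,
\]
up to normalization. For the round model: $\hat g_0$ is Einstein with $R=\sigma N(N-1)$, so the interior integrand vanishes for the chosen $c_1=\sigma(N-1)(N-2)$; the boundary is totally umbilic with constant $H=\kappa$, so $\mathrm{II}=\kappa\hat g|_{\partial}$ and the boundary integrand vanishes as well.

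The one subtle point is the matching of constants, which I expect to be the main check. For example, the coefficient of $\hat g$ in the interior gives $-\tfrac12 R+\Ric$-trace terms combined with $\tfrac{c_1}{2}$, and these cancel exactly because $(N-1)(N-2)/2=\bigl(N(N-1)-N(N-1)\cdot 2/N\bigr)/2$ on the Einstein metric. Compact support of $k$ is needed so that no boundary contribution appears at infinity, that is, at the pole of the compactification.

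\textbf{Mixed derivative.} Each Jacobi field $Z_j$ is $\partial_s U_s$ for a family $U_s$ of translated or dilated bubbles. Each $U_s$ is again a solution of the model equation, so the previous paragraph gives $D_g\A_\delta(U_s)[k]=0$ for every $s$. Differentiating in $s$ at $s=0$ gives
\[
 D_vD_g\A_\delta(U)[k,Z_j]=0 .
\]
The families $U_s$ are not compactly supported, but $k$ is. Hence differentiation under the integral is justified by the decay of $U_s$ and its $s$-derivative on $\operatorname{supp}k$.
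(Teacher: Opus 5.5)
Your proposal is correct and follows the paper's route. The conformal-covariance identity rewrites $\A_g(U)$ as the total-curvature functional of $\hat g$ with volume and area terms, the first-variation formula \eqref{crit:eq:EHfirst} on the compactified round cap or ball gives stationarity, and differentiating stationarity along the bubble family $U_{\xi,\varepsilon}$ gives the mixed statement. For that last step, what you need is that each $U_{\xi,\varepsilon}^{4/(N-2)}\delta$ is isometric to the same cap; merely solving the equation is not enough.

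Two small corrections:
\begin{itemize}
\item With outward $\nu$, the integration-by-parts boundary term is $+c_N^{-1}\int U\partial_\nu U$, not $-c_N^{-1}\int U\partial_\nu U$.
\item The boundary tensor should be $\mathrm{II}-H_{\tr}\gamma+(N-2)\kappa\gamma$, with $H_{\tr}=(N-1)H$ rather than $H$. It vanishes for $\mathrm{II}=\kappa\gamma$ because $1-(N-1)+(N-2)=0$, which is the boundary constant check you left undone.
\end{itemize}
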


The source and quadratic expansion are those of \cite[Propositions~3.1--3.2]{PartI} and \cite[Lemma~3.3]{PartI}.
\begin{proposition}[Metric expansion and scalar correction]
Let $h$ be smooth and compactly supported, with $h_{Ni}=0$ and $\tr h=0$.
The path $\exp(s h)$ has determinant one and Gaussian normal block form, so
\[
 H_{\exp(s h)}
 =-\frac{1}{2(N-1)}\partial_t\log\det(\exp(s h^T))=0
 \quad\text{on }t=0.
\]
The scalar-curvature coefficient of $s^2$ and the first scalar source are
\begin{equation}\label{lc:eq:rawsource}
 \begin{aligned}
 R^{(2)}(h)&=-\tfrac14|\partial h|^2+\tfrac12|\Div h|^2
              -\partial_i(h_{ij}(\Div h)_j),\\
 S_h&=h_{ij}U_{ij}+(\partial_i h_{ij})U_j
                  +c_N(\partial_i\partial_jh_{ij})U.
 \end{aligned}
\end{equation}
The scalar-curvature expansion is $R_{\exp(s h)}=s\partial_i\partial_jh_{ij} +s^2R^{(2)}(h)+O(s^3)$.
The quadratic energy after this correction is
\begin{equation}\label{eq:quadratic-action}
 \begin{aligned}
  F(h)&=\mathcal Q(h)+c_N^{-1}\int_{\R^N_+} S_hv_h,\qquad
 \B(v_h,\psi)=-\int_{\R^N_+} S_h\psi,\\
 \mathcal Q(h)&=\frac{1}{2c_N}\int_{\R^N_+}|h\nabla U|^2+\int_{\R^N_+} R^{(2)}(h)U^2.
 \end{aligned}
\end{equation}
The corrector belongs to $Y$, and its equation holds for all $\psi\in\E$.
The coefficient $F$ is independent of the Jacobi complement and of the second derivative of the metric path.
For the same bubble and curvature pair, $F(h)=c_N^{-1}F_h(0,1)$, where $F_h$ is the corrected energy in Part~I~\cite[Proposition~3.1]{PartI}.
This identity applies to compactly supported tensors and to polynomial tensors satisfying $2\deg h<N-2$.

Polarizing the raw coefficient gives
\begin{equation}\label{lc:eq:rawpolar}
 \begin{aligned}
 \mathcal Q_{ij}={}&\frac{1}{2c_N}\int_{\R^N_+}(h_i\nabla U)\cdot(h_j\nabla U)
 -\frac14\int_{\R^N_+}\partial h_i\cdot\partial h_jU^2\\
 &+\frac12\int_{\R^N_+}(\Div h_i)\cdot(\Div h_j)U^2
 +\frac12\int_{\R^N_+}[h_i\Div h_j+h_j\Div h_i]\cdot\nabla(U^2).
 \end{aligned}
\end{equation}
\end{proposition}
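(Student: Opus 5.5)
We plan to prove the proposition by expanding $\A_{\exp(sh)}(U+v)$ to second order in $(s,v)$ and then eliminating $v$ through the constrained linear problem. First we settle the boundary identity. Since $h_{Ni}=0$, the metric $\exp(sh)$ has block form $\begin{pmatrix}\exp(sh^T)&0\\0&1\end{pmatrix}$, so $t$ is a Gaussian normal coordinate. The mean curvature with respect to the outward normal $-\partial_t$ is then $-\frac1{2(N-1)}\partial_t\log\det\exp(sh^T)$. Because $\det\exp(sh^T)=e^{s\tr h}=1$, this vanishes. The same observation gives $\dd v_{\exp(sh)}=\dd y$ and $\dd\sigma=\dd x$. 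Consequently only the interior Dirichlet term $c_N^{-1}|\nabla v|^2_g$ and the term $R_gv^2$ depend on $s$ at fixed $v$.

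Next we compute the derivatives of these two terms. For the gradient term, $g^{ij}=\exp(-sh)^{ij}=\delta_{ij}-sh_{ij}+\tfrac{s^2}2(h^2)_{ij}+O(s^3)$. For the scalar curvature, the standard first and second variation formulas give $R'=\partial_i\partial_jh_{ij}-\Delta\tr h=\partial_i\partial_jh_{ij}$. We expand the second-order term in the divergence form \eqref{lc:eq:rawsource}, using $\tr h=0$ and the determinant-one condition, which removes all $\partial\tr h$ terms.

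We then collect coefficients. The $s^2$ coefficient at $v=0$ is
\[
\tfrac1{2c_N}\!\int(h^2)_{ij}U_iU_j+\int R^{(2)}U^2=\mathcal Q(h),
\]
since $(h^2)_{ij}U_iU_j=|h\nabla U|^2$. The mixed $s\,v$ coefficient is $2\int(-c_N^{-1}h_{ij}U_iv_j+\partial_i\partial_jh_{ij}Uv)$. Integrating by parts moves the derivative off $v$. The planar boundary term is $h_{Nj}U_jv=0$. The result is $2c_N^{-1}\int S_hv$, up to the sign convention fixed by writing $\B(v_h,\psi)=-\int S_h\psi$. Here $S_h$ carries the factor $c_N$ on $\partial_i\partial_jh_{ij}U$ after multiplying by $c_N$. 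The Hessian in $v$ is $(2/c_N)\B$.

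Minimizing, or equivalently taking the critical point on $Y$, of
\[
s^2\mathcal Q+2s\,c_N^{-1}\!\int S_hv+c_N^{-1}\B(v,v)
\]
gives $v=s\,v_h$ and the value $s^2F(h)$. The corrector $v_h$ exists in $Y$ because $\B:Y\to Y^*$ is invertible.

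The equation holds for all $\psi\in\E$, and not only for $\psi\in Y$, because $\int S_hZ_j=0$. This is exactly the statement in Lemma~\ref{lem:stationarity} that the mixed derivative annihilates the Jacobi kernel. It also follows by differentiating the identity $D_g\A_\delta(U_{\xi,\varepsilon})[k]=0$ in $(\xi,\varepsilon)$. With this, changing the complement alters $v_h$ only by a kernel element, and $\int S_hZ_j=0$ shows that $F$ is unchanged.

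The second derivative of the metric path contributes only through $D_g\A_\delta(U)[\ddot g]$, which is zero by Lemma~\ref{lem:stationarity}. The polarization \eqref{lc:eq:rawpolar} is bilinear algebra, using $-\int\partial_i(h_{ij}(\Div h)_j)U^2=\int h\Div h\cdot\nabla(U^2)$, with vanishing boundary term since $h_{Nj}=0$.

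\textbf{Main obstacle.} The hard step is extending all of this to polynomial $h$ with $2\deg h<N-2$. We would apply the compactly supported result to $\chi_Rh$ and let $R\to\infty$. The integrands decay like $r^{2d}|\nabla U|^2\sim r^{2d+2-2N}$, which is integrable since $2d<N-2$. The source satisfies $S_h=O(r^{d-N})$, so it lies in $\E^*$ by the Hardy inequality; hence $v_{\chi_Rh}\to v_h$ in $\E$. The cutoff terms are supported where $r\sim R$ and are $O(R^{2d+2-N})\to0$. Finally, comparing with Part~I: $\A=c_N^{-1}E$ gives $F=c_N^{-1}F_h(0,1)$ directly, after matching the quadratic coefficient in the two normalizations.
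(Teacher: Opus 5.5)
Your proposal is correct and follows essentially the route the paper takes, where it cites Part~I for the expansion: a second-order expansion of $\A_{\exp(sh)}(U+v)$ using $\det\exp(sh)=1$, elimination of the corrector on $Y$ with kernel orthogonality from Lemma~\ref{lem:stationarity}, and cutoff approximation for polynomial tensors with $2\deg h<N-2$ (the paper phrases this last step through the bilinear bound \eqref{eq:quadratic-bounded} in the $\N$-norm, which your Hardy estimate on $S_h$ reproduces). Two small points: you should actually carry out the $R^{(2)}$ computation rather than assert it, and the corrector is a critical point, not a minimizer, of the quadratic form, since $\B$ keeps a negative direction on $Y$; the computation itself is short, because with $\Gamma^l_{il}=0$ the $s^2$ coefficient along $\exp(sh)$ is $-\frac14|\partial h|^2+\frac12\partial_lh_{ij}\partial_ih_{lj}-\frac12\partial_i\partial_l(h^2)_{il}$, which rearranges exactly to \eqref{lc:eq:rawsource}.
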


For every fixed curvature pair considered here, define
\begin{equation}\label{eq:Nnorm}
 \N(k)^2:=\int_{\R^N_+} U^2\left(|\partial k|^2+\frac{|k|^2}{(1+|y|)^2}\right).
\end{equation}
The source estimate follows from the Sobolev--Hardy inequalities and the projected inverse, as in \cite[Lemma~3.3]{PartI} and \cite[Lemma~C.3]{PartI}; the same estimate applies to the raw pairing \eqref{lc:eq:rawpolar}.
\begin{lemma}[Bound for the corrected quadratic form]
For tangential trace-free compactly supported tensors $k,\ell$, the form \eqref{eq:quadratic-action} satisfies
\begin{equation}\label{eq:quadratic-bounded}
 | F(k,\ell)|\le C\N(k)\N(\ell).
\end{equation}
The same bound holds for polynomial tensors with finite $\N$-norm by cutoff approximation.
\end{lemma}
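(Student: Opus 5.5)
We split $F(k,\ell)$ into the raw pairing $\mathcal Q(k,\ell)$ from \eqref{lc:eq:rawpolar} and the correction term $c_N^{-1}\int_{\R^N_+}S_kv_\ell$, and bound each by $C\N(k)\N(\ell)$. We work first with compactly supported tangential trace-free $k,\ell$ and pass to polynomial tensors only at the end.

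\emph{The raw pairing.} We use the bubble decay $|\nabla U|\le C(1+|y|)^{-1}U$, $|\nabla^2U|\le C(1+|y|)^{-2}U$ and $|\nabla(U^2)|\le C(1+|y|)^{-1}U^2$, valid for all curvature pairs in \eqref{eq:bubble} with $\kappa$ in a compact interval. With these, each term of \eqref{lc:eq:rawpolar} is an integral of $U^2$ times a product of one factor from $\{|\partial k|,|k|/(1+|y|)\}$ and one from the same set for $\ell$:
\begin{itemize}
\item the first term is bounded by $\int U^2|k||\ell|(1+|y|)^{-2}$;
\item the $\partial k\cdot\partial\ell$ and divergence terms are bounded by $\int U^2|\partial k||\partial\ell|$;
\item the last term is bounded by $\int U^2(1+|y|)^{-1}(|k||\partial\ell|+|\ell||\partial k|)$.
\end{itemize}
Cauchy--Schwarz then gives $|\mathcal Q(k,\ell)|\le C\N(k)\N(\ell)$. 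No boundary integral appears in \eqref{lc:eq:rawpolar}, because the divergence term $\partial_i(h_{ij}(\Div h)_j)$ was already integrated against $U^2$.

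\emph{The correction.} Since $\B(v_\ell,\psi)=-\int S_\ell\psi$ holds for all $\psi\in\E$ and $v_\ell\in Y$, we first show $\|S_k\|_{\E^*}\le C\N(k)$. This does not follow pointwise, since $S_k$ contains $c_N(\partial_i\partial_jk_{ij})U$, which has two derivatives on $k$. We therefore integrate by parts once in that term and in $(\partial_ik_{ij})U_j\psi$, moving one derivative onto $U\psi$:
\[
 \int (\partial_i\partial_jk_{ij})U\psi=-\int(\partial_jk_{ij})\partial_i(U\psi)+\text{boundary terms}.
\]
The boundary terms involve $(\partial_jk_{Nj})U\psi$ on $t=0$, which vanish because $k_{Ni}=0$ and tangential derivatives preserve this. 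Every remaining term has the form $\int f\,U\,\partial\psi$ or $\int f\,\partial U\,\psi$, with $f$ equal to $|k|/(1+|y|)$ or $|\partial k|$, and possibly with $\nabla^2U\sim U/(1+|y|)^{2}$ times $|k|$. Cauchy--Schwarz together with the Hardy inequality $\int|\psi|^2(1+|y|)^{-2}\le C\|\psi\|_\E^2$ bounds each of these by $C\N(k)\|\psi\|_\E$.

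The projected inverse $\B:Y\to Y^*$ is bounded, with norm locally uniform in $\kappa$. Hence $\|v_\ell\|_\E\le C\|S_\ell\|_{\E^*}\le C\N(\ell)$, and
\[
 \Bigl|\int S_kv_\ell\Bigr|\le\|S_k\|_{\E^*}\|v_\ell\|_\E\le C\N(k)\N(\ell).
\]
The only place requiring care is this integration by parts: one must check the normal-boundary contribution, which vanishes since $k_{Ni}=0$. It is also cleaner to test directly against $\psi$ than to estimate $S_k$ pointwise.

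\emph{Polynomial tensors.} For a polynomial $k$ with $\N(k)<\infty$, set $k_R=\chi_Rk$. Then $\N(k-k_R)\to0$ by dominated convergence, because $|\partial\chi_R|\le C/R\sim C/(1+|y|)$ on the support of $\partial\chi_R$. The bilinear bound just proved shows that $F(k_R,\ell_R)$ is Cauchy, so $F(k,\ell)$ is defined as its limit and inherits \eqref{eq:quadratic-bounded}. When $2\deg h<N-2$, this limit agrees with the convergent integrals, since both $\mathcal Q$ and $S_k$ converge absolutely in that range.
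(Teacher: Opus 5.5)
Your proposal is correct and follows essentially the same route as the paper. The paper bounds the source in $\E^*$ by the Sobolev--Hardy inequalities, applies the projected inverse on $Y$, and estimates the raw pairing \eqref{lc:eq:rawpolar} by the same weighted Cauchy--Schwarz argument; Section~\ref{sec:ordinary-endpoints} then extends the bound to polynomial tensors by cutoff convergence in $\N$. One small precision: the boundary term $\int_{\partial\R^N_+}(\partial_jk_{Nj})U\psi$ vanishes because $k_{Ni}\equiv0$ on the whole half-space, so the normal derivative $\partial_Nk_{NN}$ also vanishes; it is not merely that tangential derivatives preserve the boundary condition.
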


For a smooth domain $\Omega$, define $\mathcal S_\Omega(g):=\int_\Omega R_g\dd v_g+2\int_{\partial\Omega}H_{\tr,g}\dd\sigma_g$, where $H_{\tr,g}:=(N-1)H_g$.
For compactly supported variations on $\R^N_+$, write $\mathcal S:=\mathcal S_{\R^N_+}$.
For the smooth-boundary variation formula \cite[Proposition~2.1]{ABdL}, take a covariant variation $L$ and set $X=L(\nu,\cdot)^T$, with the tangential one-form identified with a vector by the induced metric $\gamma$.
The identities
\begin{align*}
 R'(L)&=-\langle\Ric,L\rangle+
                  \Div(\Div L-\dd\tr L),\\
 2H'_{\tr}(L)&=(\dd\tr L-\Div L)(\nu)
                  -\Div_{\partial}X-\langle\mathrm{II},L^T\rangle
\end{align*}
yield, for a compactly supported variation on the smooth half-space,
\begin{equation}\label{crit:eq:EHfirst}
 \begin{aligned}
 D\mathcal S_g[L]={}&
 -\int_{\R^N_+}\langle\Ric-\tfrac12Rg,L\rangle\dd v_g\\
 &-\int_{\partial\R^N_+}
       \langle\mathrm{II}-H_{\tr}\gamma,L^T\rangle\dd\sigma_g.
 \end{aligned}
\end{equation}

\subsection{The Pohozaev identity}
We use the identity in the normalization of \eqref{eq:PDE}, with outward normal $-\partial_t$ on the flat disk; compare \cite[Proposition~2.4]{AW}.
For a half-ball, write $S_r^+=\partial B_r\cap\{t>0\}$ and $D_r=B_r\cap\{t=0\}$.
If $-\Delta u=\sigma N(N-2)u^p/4+f$ and $\partial_\nu u=\frac{N-2}{2}\kappa u^q+b$ on the flat disk, define
\begin{equation}\label{eq:Pohozaev}
 \begin{aligned}
 \mathcal P(u,r)={}&\int_{S_r^+}
 \left(r(\partial_ru)^2-\frac r2|\nabla u|^2+\frac{N-2}{2} u\partial_ru\right)
 +\frac{\sigma(N-2)^2r}{8}\int_{S_r^+}u^{p+1}\\
 &+\frac{\kappa(N-2)^2r}{4(N-1)}\int_{\partial D_r}u^{q+1}.
 \end{aligned}
\end{equation}
Then, with $X_u=\frac{N-2}{2} u+y\cdot\nabla u$,
\begin{equation}
 \mathcal P(u,r)=-\int_{B_r^+}X_uf-\int_{D_r}X_ub.
\end{equation}
For a harmonic Neumann function $G=a r^{2-N}+b+O(r)$, the quadratic boundary flux therefore satisfies
\begin{equation}
 \lim_{r\downarrow0}\mathcal P_0(G,r)
 =-\frac{(N-2)^2}{2}|S^{N-1}_+|ab,
\end{equation}
where $\mathcal P_0$ denotes the first integral in \eqref{eq:Pohozaev}.

\subsection{Local blow-up estimates}
We use the standard bubble-selection result in \cite[Proposition~3.5]{AW} and \cite[Proposition~3.2]{KMW}.
For the scalar-flat equation, positive conformal type permits a fixed representative with $R_g>0$ and $H_g=0$, obtained from the positive first eigenfunction of $L_g$ with homogeneous $B_g$ condition.
In that representative, $\Delta_gu=c_NR_gu>0$, so every local maximum lies on the boundary.

\begin{proposition}[Local maxima and bubble neighborhoods]\label{prop:basiclocal}
Fix the approximation accuracy and a sufficiently large bubble-neighborhood parameter $L$.
For a sequence of solutions with unbounded supremum, one can choose finitely many local maxima $P_i=\{a_{i,j}\}$, with scales $\varepsilon_{i,j}$, whose $L\varepsilon_{i,j}$ neighborhoods are disjoint and whose rescalings approximate the corresponding whole-space or half-space bubbles.
Their heights are comparable to $\varepsilon_{i,j}^{-(N-2)/2}$, and
\begin{equation}\label{eq:selectionbound}
 u_i(x)\,d_g(x,P_i)^{(N-2)/2}\le C.
\end{equation}
For a boundary-type bubble, its boundary center $p_{i,j}$ satisfies $d_g(a_{i,j},p_{i,j})\le C_\kappa\varepsilon_{i,j}$.
The constants are uniform under the local bounds in Section~\ref{sec:quadratic}; the number of selected points may depend on $i$.
\end{proposition}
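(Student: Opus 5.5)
This is the standard Schoen-type selection argument, adapted to manifolds with boundary as in \cite[Proposition~3.5]{AW} and \cite[Proposition~3.2]{KMW}. The plan has four steps.

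\emph{Step 1: first point and rescaling.} Let $a_i$ be a point where $u_i$ attains its maximum; for the scalar-flat pair we may take the $R_g>0$, $H_g=0$ representative, so the maximum lies on $\partial M$. Set $\varepsilon_i:=u_i(a_i)^{-2/(N-2)}$. Rescale in conformal Fermi coordinates (boundary case) or normal coordinates (interior case) by $v_i(y):=\varepsilon_i^{(N-2)/2}u_i(\Psi(\varepsilon_i y))$. Then $v_i\le v_i(0)=1$ and $v_i$ solves a uniformly elliptic problem with nonlinear Robin boundary condition. This problem has uniform $C^{K+2,\alpha}$ coefficient bounds by Section~\ref{sec:quadratic}, and the bounds are uniform for $\kappa$ in a compact interval.

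\emph{Step 2: limit and bubble identification.} Elliptic $L^p$ and Schauder estimates up to the boundary, together with Harnack, give $C^{2,\alpha}_{\rm loc}$ convergence. The limit is one of three positive bounded entire solutions: of the whole-space equation if $\operatorname{dist}(a_i,\partial M)/\varepsilon_i\to\infty$; of the half-space equation after a translation if this ratio stays bounded; or of the model with $\sigma=0$, which is then harmonic. By Caffarelli--Gidas--Spruck in the interior and Lemma~\ref{lem:model-kernel} (Li--Zhu) on the half-space, the limit is a bubble. In the half-space case, the explicit formula \eqref{eq:bubble} places its maximum at horizontal distance zero from the boundary center and at height $O(1)$ in the rescaled variables (for instance height $\max(0,-\kappa)$ when $\sigma=1$). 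This gives $d_g(a_{i,j},p_{i,j})\le C_\kappa\varepsilon_{i,j}$, with $C_\kappa$ uniform for $\kappa$ in a compact set, and shows that the heights are comparable to $\varepsilon^{-(N-2)/2}$.

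\emph{Step 3: iterative selection and the bound \eqref{eq:selectionbound}.} Suppose finitely many points $P_i$ have been chosen and $\max_M u_i\,d_g(\cdot,P_i)^{(N-2)/2}\to\infty$. Take a point $b_i$ maximizing $u_i(x)d_g(x,P_i)^{(N-2)/2}$. Apply the standard doubling (Pol\'a\v{c}ik--Quittner--Souplet) lemma to find a nearby point $b_i'$ at which $u_i$ dominates, up to a factor $2$, on a ball of radius $\gg u_i(b_i')^{-2/(N-2)}$. This ball is disjoint from the previously chosen $L\varepsilon$-neighborhoods, because the large weight forces $d_g(b_i',P_i)\gg\varepsilon(b_i')$. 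Rescaling at $b_i'$ and repeating Steps~1--2 produces a new bubble; take its local maximum as the next point.

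\emph{Step 4: termination.} Each selected bubble carries energy at least a fixed quantum $c_0>0$, namely the Sobolev or trace constant for the limiting model, on its disjoint neighborhood. Under a uniform energy bound this caps the number of points. Without such a bound, stop instead once the weighted quantity $u_i\,d_g(\cdot,P_i)^{(N-2)/2}$ is bounded by the fixed constant $C$ determined by the accuracy and $L$. The number of points may then depend on $i$, exactly as stated.

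The main obstacle I expect is the boundary case of Step~2, where the point drifts toward $\partial M$ at rate comparable to $\varepsilon$. Here one must control the boundary term $\frac{N-2}{2}\kappa u^{q}$ uniformly in $\kappa$, and check that the limit problem has no degenerate alternative, such as a solution that is constant along the boundary. I would handle this by appealing directly to the Li--Zhu classification, using the boundedness $v_i\le 2$ near the chosen point, after checking that Fermi-coordinate rescaling preserves the boundary condition up to $O(\varepsilon)$ errors.
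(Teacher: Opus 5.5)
Your outline is the standard Schoen-type selection that the paper invokes by citation. For $\sigma=1$ it works, modulo the bookkeeping in the second paragraph. The genuine gap is the scalar-flat pair $\sigma=0$, from the second selected point on.

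In Step~2 you assert that the whole-space alternative gives a bubble by Caffarelli--Gidas--Spruck. For $\sigma=0$, however, the whole-space limit equation is $\Delta v=0$, and a positive harmonic function on $\R^N$ is constant by Liouville's theorem. The limit is then $v\equiv1$, which is not a bubble and has no nearby local maximum to select. You avoid this only for the first point, because a global maximum lies on $\partial M$ in the representative with $R_g>0$, $H_g=0$.

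The point $b_i'$ of Step~3 is not a local maximum, and nothing in your argument forces $d_g(b_i',\partial M)=O(\varepsilon(b_i'))$. For instance, take a superposition of many small boundary bubbles with spacing $s$ over a boundary patch of size $\ell\gg s$. The boundary sum of $|x-p_j|^{2-N}$ grows like $\ell/s$. As a result, $u\,d_g(\cdot,P)^{(N-2)/2}$ is larger by a factor of order $(\ell/s)^{(N-2)/2}$ at interior points of height $\ell$ than anywhere on $\partial M$. So maximizing over $M$ can stall, with the weighted quantity still unbounded and no bubble to add.

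The missing idea is to use $\Delta_gu=c_NR_gu>0$ at every step, not only the first. The paper signals this by noting, just before the proposition, that every local maximum then lies on the boundary. For $\sigma=0$, proceed as follows:
\begin{itemize}
\item Maximize the weighted quantity over $\partial M$ only.
\item Since the rescaled $v$ is bounded by $2^{(N-2)/2}$ on the flat part, its Neumann data $(N-2)v^{q}$ are bounded there. A Harnack inequality for the Neumann problem, together with $v(0)=1$, then bounds $v$ on each fixed half-ball.
\item Classify the half-space limit by Li--Zhu.
\end{itemize}
In this case \eqref{eq:selectionbound} is obtained on $\partial M$.

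Two smaller points concern Steps~3--4. First, termination. The energy quantum is unavailable, and ``stop once the weighted quantity is bounded by $C$'' presupposes that this eventually happens. The actual reason is that, for each fixed $i$, the selected neighborhoods are disjoint with radii at least $L(\max u_i)^{-2/(N-2)}$, so compactness of $M$ allows only finitely many. Because the number of steps depends on $i$, prove the one-step claim by contradiction over all solutions and all admissible finite sets $P$, then iterate it for each solution separately, rather than along one fixed sequence.

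Second, disjointness. The bound $d_g(b_i',P_i)\gg\varepsilon(b_i')$ does not by itself separate the new neighborhood from an earlier, much fatter one $B(a,L\varepsilon_a)$ with $\varepsilon_a\gg\varepsilon(b_i')$. For this you need the bubble approximation on a larger ball, say of radius $2L\varepsilon_a$. You also need a threshold $C$ exceeding the weighted supremum of $u$ on these approximation regions; this is where the dependence of $C$ on the accuracy and on $L$ enters.
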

The isolated-simple condition uses the following fixed-coordinate convention, as in \cite[Definition~3.1]{KMW}.
\begin{definition}\label{def:fermisimple}
For an isolated boundary-type blow-up sequence, fix its boundary centers, the smoothly chosen conformal Fermi charts, and the associated conformally transformed solutions $u_i$.
The isolation bound is $u_i(z)\le C|z|^{-(N-2)/2}$ on a fixed punctured half-ball; the bubble scale is normalized so that $\varepsilon_i^{(N-2)/2} u_i(\varepsilon_i y)$ tends to $U_{\kappa_i}$.
Define
\[
 \overline u_i(r)=\frac1{|S^{N-1}_+|}
                 \int_{S^{N-1}_+}u_i(r\omega)\,\dd\omega.
\]
The blow-up point is \emph{isolated simple} in these conformal Fermi coordinates if, for some fixed $r_0>0$ and all sufficiently large $i$, $r^{(N-2)/2}\overline u_i(r)$ has exactly one critical point in $(0,r_0)$.
The spheres and measure are Euclidean in this fixed Fermi chart.
The critical-point condition is imposed in this fixed chart; after a change of center or conformal representative, we use the pointwise estimates justified in Section~\ref{sec:comparison}.
\end{definition}
The standard interior estimates near an isolated simple point are \cite[Propositions~2.3--2.5]{KMS}.
The Weyl-vanishing theorem and the local sign restriction are \cite[Theorem~6.1]{KMS} and \cite[Theorem~7.1]{KMS}, respectively:
\begin{proposition}[Weyl vanishing and local sign at interior blow-up points]\label{prop:interiorlocal}
Let $3\le N\le24$, $\sigma>0$ fixed, and let $x_i\to x$ be an isolated simple interior blow-up point for smooth metrics $g_i\to g$ on a fixed open ball.
If a fixed positive multiple of $u_i(x_i)u_i$ converges to $H$ away from $x$, then in the limiting conformal normal coordinates
\begin{equation}
 \liminf_{r\downarrow0}\mathcal P^{\rm int}_0(H,r)\ge0,\qquad
 \mathcal P^{\rm int}_0(H,r)=\int_{|z|=r}
 \left(\frac{N-2}{2} H\partial_rH-\tfrac r2|\nabla H|^2+r(\partial_rH)^2\right).
\end{equation}
For $6\le N\le24$, with $\varepsilon_i$ the interior bubble scale,
\[
 |\nabla^\ell W_{g_i}|^2(x_i)
 \le C\varepsilon_i^{N-6-2\ell}
       |\log\varepsilon_i|^{-\mathbf 1_{\{2\ell=N-6\}}},
 \qquad 0\le\ell\le\left\lfloor\frac{N-6}{2}\right\rfloor.
\]
These Weyl derivatives vanish at $x$ in the limit.
For a flat limiting metric, $H=a r^{2-N}+b+O(r)$, $a>0$, implies $b\le0$, without any rate assumption on metric flattening.
\end{proposition}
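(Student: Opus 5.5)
The statement is a transcription of \cite[Theorems~6.1 and~7.1]{KMS} into the normalization of \eqref{eq:PDE}, so I would prove it by reduction to those theorems rather than by a new argument. The first step is to match conventions. Multiplying $u_i$ by the constant $(\sigma N(N-2)/4)^{(N-2)/4}/(n(n-2))^{(N-2)/4}$, with $n=N$, turns the interior equation $L_{g_i}u_i=\frac{N(N-2)}4\sigma u_i^p$ into the Khuri--Marques--Schoen form $L_{g_i}u=n(n-2)u^p$. The admissible $\sigma$ vary in a compact subset of $(0,\infty)$, so this rescaling is harmless: isolated simple blow-up (Definition~\ref{def:fermisimple}, interior version) and the convergence of a fixed positive multiple of $u_i(x_i)u_i$ are invariant under it. I would then pass to conformal normal coordinates centered at $x_i$ for $g_i$, smoothly in $i$. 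These converge to conformal normal coordinates for $g$ at $x$ because $g_i\to g$ in $C^{K+2,\alpha}$, and this is exactly the setting of \cite[Propositions~2.3--2.5]{KMS}. This gives the upper bound $u_i\le C u_i(x_i)^{-1}|z|^{2-N}$, the bubble approximation, and convergence of $u_i(x_i)u_i$ to a Green-type function $H$ in $C^2_{\rm loc}$ of the punctured ball.

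The Weyl estimate is then \cite[Theorem~6.1]{KMS}. Its proof applies the Pohozaev identity on $B_\delta$, expands $u_i$ around the bubble with the correction from the linearized equation, and uses the positivity of the quadratic form in the Weyl jets for $N\le24$. I would state it with the interior scale $\varepsilon_i=u_i(x_i)^{-2/(N-2)}$; the logarithmic loss occurs exactly at the critical order $2\ell=N-6$. Letting $i\to\infty$ and using $\varepsilon_i\to0$ gives the vanishing of $\nabla^\ell W_g(x)$ for $\ell\le\lfloor(N-6)/2\rfloor$. The local sign $\liminf_{r\downarrow0}\mathcal P^{\rm int}_0(H,r)\ge0$ is \cite[Theorem~7.1]{KMS}. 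Its proof compares the Pohozaev identity for $u_i$ on $B_r$, whose left side is asymptotically nonnegative by the same quadratic-form positivity, with the boundary flux of $H$ after multiplying by $u_i(x_i)^2$.

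For the final assertion I would compute directly. For a flat limiting metric $H$ is harmonic near $x$ up to the removable error, and I write $H=ar^{2-N}+b+O(r)$. Inserting this into $\mathcal P^{\rm int}_0$ on $|z|=r$, the $a^2$ terms cancel identically (the flux of the fundamental solution). The mixed terms give $-\frac{(N-2)^2}{2}|S^{N-1}|ab+O(r)$, which is the interior analogue of the boundary formula after \eqref{eq:Pohozaev}. The sign inequality with $a>0$ then forces $b\le0$.

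The point needing care, and the only real obstacle, is the claim ``without any rate assumption on metric flattening.'' When $g$ is flat only in the limit, the terms in the Pohozaev identity coming from $g_i-\delta$ must be shown to vanish as $i\to\infty$ for each fixed $r$, and only afterwards may one let $r\to0$. I would handle this with the order of limits used in \cite[Theorem~7.1]{KMS}. For fixed $r$, the metric error terms are bounded by $\|g_i-g\|_{C^2}$ times quantities controlled by the isolated simple bounds. These vanish as $i\to\infty$ because $g$ itself is flat, so no decay rate of $g_i-\delta$ in terms of $\varepsilon_i$ is ever required.
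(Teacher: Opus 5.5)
Your reduction is the same as the paper's, which simply cites \cite[Theorems~6.1 and~7.1]{KMS}. Your constant rescaling is harmless, and your flux computation $\mathcal P^{\rm int}_0(ar^{2-N}+b+O(r),r)=-\tfrac{(N-2)^2}{2}|S^{N-1}|ab+O(r)$ is correct. The flaw is in your last paragraph, the step you single out as the only real obstacle.

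It is false that, after multiplication by $u_i(x_i)^2\sim\varepsilon_i^{2-N}$, the metric terms in the Pohozaev identity on $B_r$ are bounded by $C\|g_i-\delta\|_{C^2}$. These terms are concentrated in the bubble core. In conformal normal coordinates the terms linear in the metric cancel at leading order. The quadratic Weyl-jet term does not cancel. It contributes an amount comparable to $|W_{g_i}(x_i)|^2\varepsilon_i^{6-N}$, with an extra factor $|\log\varepsilon_i|$ when $N=6$.

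This quantity is unbounded for $N\ge6$ unless $|W_{g_i}(x_i)|$ already obeys the $\ell=0$ case of the Weyl estimate. When it does obey it, the quantity is bounded but nothing makes it tend to zero. So your order-of-limits argument either imposes a flattening rate relative to $\varepsilon_i$, which the statement explicitly excludes, or it fails. Flatness of the limit metric does not make these terms disappear. On an annulus away from the bubble they do vanish, but there the Pohozaev identity carries no sign information.

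The missing idea is the one you already invoked for the local sign. For $N\le24$ the retained jets enter the normalized Pohozaev identity through a positive quadratic form. Their contribution therefore has the favorable sign and can be discarded however large it is. Only the Taylor remainder beyond the retained order must be small, after absorbing mixed jet--remainder terms by Young's inequality. Its contribution is of order $\varepsilon_i^{N-2}$ times a factor that tends to zero with $\|g_i-\delta\|$ or with the radius.

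This is exactly how the paper proves the boundary analogue, Proposition~\ref{prop:flatsign}. In \eqref{eq:flatnonlinear} the jets appear in $c\mathscr N_i$ with a sign. Only the tail $C\tau_i^2\varepsilon^{N-2}\rho^{\omega_N}$ has to vanish after division by $\varepsilon_i^{N-2}$.

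The first assertion, as stated and as in \cite[Theorem~7.1]{KMS}, applies to every convergent sequence $g_i\to g$, including a flat limit. The final assertion therefore needs nothing beyond your flux computation. You should delete the crude bound rather than rely on it.
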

The Green expansion \cite[(9.2)]{KMS}, its mean-zero identities \cite[(9.3)]{KMS}, and the local sign above imply the following interior consequence.
The argument is local near the pole and is unaffected by a boundary disjoint from that coordinate neighborhood.
\begin{proposition}\label{prop:interiormass}
Suppose $p\in M^\circ$ is an isolated simple blow-up point on a positive-type manifold in dimension $3\le N\le24$, and its normalized outer limit is $G=G_p+H$, where $G_p$ has singular coefficient one, $H$ is regular near $p$, and $H(p)\ge0$.
Then the scalar-flat Green metric $G_p^{4/(N-2)}g$ has an asymptotically Euclidean end of order strictly greater than $(N-2)/2$ and its unnormalized mass is nonpositive.
Here $\sigma>0$; compact minimal boundary components of the Green metric are allowed.
\end{proposition}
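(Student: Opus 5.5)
The plan is to combine the Weyl-vanishing conclusion of Proposition~\ref{prop:interiorlocal} with the Green-function expansion \cite[(9.2)]{KMS}, and then read off the sign of the mass from the local Pohozaev inequality of the same proposition. Since $p$ is interior, everything takes place in a small ball of conformal normal coordinates centred at $p$ and disjoint from $\partial M$. The boundary therefore enters only through existence and positivity of $G_p$, which is supplied by positive conformal type with the boundary condition $B_g$.

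\emph{Step 1: decay of the Green metric.} Proposition~\ref{prop:interiorlocal} gives $\nabla^\ell W_g(p)=0$ for $0\le\ell\le\lfloor(N-6)/2\rfloor$ when $6\le N\le24$; for $N\le5$ there is nothing to prove. By the Lee--Parker structure of conformal normal coordinates, as used in \cite[Section~9]{KMS}, this Weyl vanishing yields $g_{ij}=\delta_{ij}+O(r^{d+1})$ with $d=\lfloor(N-2)/2\rfloor$, so $2(d+1)>N-2$. Consequently $G_p=r^{2-N}(1+\psi)+A+O(r)$, where $\psi$ is a finite sum of homogeneous terms of degrees between $d+1$ and $N-3$. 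These terms satisfy the mean-zero identities \cite[(9.3)]{KMS} on spheres.

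Under Kelvin inversion $z\mapsto z/|z|^2$, the metric $G_p^{4/(N-2)}g$ becomes $\delta+O(|z|^{-(d+1)})$ with matching derivative bounds. Hence it has an asymptotically Euclidean end of order $d+1>(N-2)/2$. It is scalar-flat, and it is complete away from the (possibly) minimal compact boundary, because $G_p>0$ on $M\setminus\{p\}$ and $B_gG_p=0$ on $\partial M$. In these coordinates the unnormalized mass is a fixed positive dimensional multiple of $A$, so it suffices to prove $A\le0$.

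\emph{Step 2: the Pohozaev flux.} The normalized outer limit is $G=G_p+H=r^{2-N}(1+\psi)+(A+H(p))+O(r)$. Insert this into $\mathcal P^{\rm int}_0(G,r)$ and expand the quadratic expression as $r\downarrow0$:
\begin{itemize}
\item The pure $r^{2-N}$ terms cancel identically, since $r^{2-N}$ is the flat fundamental solution.
\item The cross term between $r^{2-N}$ and the constant gives, exactly as in the half-space computation after \eqref{eq:Pohozaev} but over the full sphere, $-\tfrac{(N-2)^2}{2}|S^{N-1}|\,(A+H(p))$.
\item Cross terms between $r^{2-N}$ and $r^{2-N}\psi$ are bounded by $r^{N-1}\cdot r^{2(2-N)}\cdot r^{k}=r^{k+3-N}$ with $k\le N-3$. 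These need not vanish on size grounds, but they integrate to zero by the mean-zero identities \cite[(9.3)]{KMS}.
\item The $\psi^2$ terms are $O(r^{2(d+1)+2-N})\to0$, because $2(d+1)>N-2$.
\item The remaining terms involve $O(r)$ corrections or the difference between the flat and $g$-metric flux; they are $o(1)$ by the same decay.
\end{itemize}

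\emph{Step 3: conclusion.} The local sign $\liminf_{r\downarrow0}\mathcal P^{\rm int}_0(G,r)\ge0$ from Proposition~\ref{prop:interiorlocal} then yields $A+H(p)\le0$. Since $H(p)\ge0$, we get $A\le0$, which is the nonpositivity of the mass.

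The main obstacle is Step 2: I must verify that every cross term of order between $r^{2-N}\cdot r^{d+1}$ and $r^{-1}$ in the flux has vanishing spherical average, not merely a small one. I would first try to use the precise structure in \cite[(9.2)--(9.3)]{KMS}. The correction $\psi$ solves, order by order, a flat Poisson equation whose source is built from the metric jet. Pairing its homogeneous pieces against the radial singular part then reduces, via Green's identity on the sphere, to spherical averages of divergence-type expressions, which vanish.
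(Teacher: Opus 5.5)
Your proposal is correct and is essentially the paper's argument. It combines the Weyl vanishing of Proposition~\ref{prop:interiorlocal}, the Green expansion \cite[(9.2)]{KMS} with its mean-zero identities \cite[(9.3)]{KMS} (which also make the mass a positive multiple of $A$), and the local Pohozaev sign in a ball disjoint from $\partial M$, giving $A\le -H(p)\le 0$.

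One correction to Step~2: the cross term between $r^{2-N}$ and $r^{2-N}\psi_k$ has flux exactly $-\tfrac{N-2}{2}\,k\,r^{k+2-N}\int_{S^{N-1}}\psi_k$, so it is of order $r^{k+2-N}$, not $r^{k+3-N}$. For $k\le N-3$ it would therefore diverge, and the mean-zero identities are indispensable rather than merely convenient.
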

\subsection{Mass of Green metrics}\label{sec:prelim-mass}
We use the following precise form of Theorem~\ref{in:PMT}.
An asymptotically Euclidean or half-space end is diffeomorphic to $\R^N\setminus\overline B_1$ or $\R^N_+\setminus\overline B_1^+$, respectively, and in the corresponding coordinates satisfies
\begin{equation}\label{eq:appendix-AF}
 |g_{ij}-\delta_{ij}|+r|\partial g_{ij}|+r^2|\partial^2g_{ij}|=O(r^{-\tau}),
 \qquad \tau>\frac{N-2}{2},\qquad r=|x|\to\infty.
\end{equation}
This is the decay condition in \cite[(1.1)]{ABdL} and \cite[Definition~2.2]{KWW}.
We allow finitely many ends of either type and finitely many compact boundary components, as in \cite[Remark~1.5]{ABdL}.

For a half-space end, the unnormalized mass in its asymptotic chart is
\begin{equation}\label{eq:appendix-mass}
 m(g):=\lim_{r\to\infty}\left[
 \int_{S_r^+}(\partial_jg_{ij}-\partial_i g_{jj})\frac{x_i}{r}\dd\sigma
 +\int_{\partial S_r^+}g_{aN}\frac{x_a}{r}\dd\ell\right].
\end{equation}
Here $S_r^+:=\{x\in\R^N_+:|x|=r\}$, the measures are Euclidean, $1\le i,j\le N$, and $1\le a<N$.
For a Euclidean end, replace $S_r^+$ by $S_r$ and omit the equatorial integral; see \cite[Definition~1.1]{ABdL} and \cite[(2.6)]{ABdL}.
Under \eqref{eq:appendix-AF} and integrability of the scalar and mean curvatures, these limits are coordinate invariant by Bartnik~\cite{Bartnik} and \cite[Section~3]{ABdL}.
Scalar flatness and minimality below ensure the required integrability.

The boundary positive mass argument is \cite[Section~4]{ABdL}, including the Euclidean ends and compact minimal boundary components in \cite[Remark~1.5]{ABdL}.
Its smooth positive mass input is now supplied in every dimension by \cite[Theorem~1.1]{KWW}, with empty singular set.
The approximation and corner-smoothing steps are \cite[Proposition~4.1]{ABdL} and \cite[Theorem~1]{Miao}; rigidity is obtained for the original zero-mass metric by \cite[Lemmas~4.3--4.4]{ABdL}, not by assuming that the original boundary has a smooth reflection double.
\begin{theorem}[Positive mass and rigidity]\label{thm:positive-mass-full}
Let $(M^N,g)$ be a complete connected smooth scalar-flat manifold, $N\ge3$, with minimal boundary and finitely many asymptotically Euclidean or half-space ends satisfying \eqref{eq:appendix-AF}.
The mass of each end is nonnegative.
If one end has zero mass, $M$ has only that end and is isometric to Euclidean space or Euclidean half-space, according to its type.
In particular, a compact boundary component excludes zero mass.
\end{theorem}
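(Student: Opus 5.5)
The statement is Theorem~\ref{thm:positive-mass-full}, and I would prove it by reduction to the boundaryless positive mass theorem. The route is the doubling argument of Almaraz--Barbosa--de Lima, with the dimension restriction removed by the smooth case of Khuri--Wang--Wang~\cite[Theorem~1.1]{KWW}.

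\emph{Reduction to a single end and nonnegativity.} Fix one end, say a half-space end, and let $m$ be its mass \eqref{eq:appendix-mass}. First I would perturb $g$ conformally, in the manner of \cite[Proposition~4.1]{ABdL}, so that near infinity the metric is harmonically flat and $\partial M$ is totally geodesic there. The mass must change by an arbitrarily small amount under this step, and scalar flatness together with minimality of the boundary must be kept in the appropriate weak sense. Minimality is what makes a conformal correction with zero Neumann data possible. This is where the decay \eqref{eq:appendix-AF} with $\tau>(N-2)/2$ and the integrability of $R$ and $H$ enter, since they guarantee that the mass is well defined and continuous under the perturbation.

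Next I double $M$ across $\partial M$. A half-space end doubles to an asymptotically Euclidean end with twice the mass. Euclidean ends and compact boundary components double to extra ends and closed gluing loci. The doubled metric is only Lipschitz across the gluing hypersurface, but its two sides have equal mean curvature there, namely zero. Miao's corner smoothing \cite[Theorem~1]{Miao} then gives smooth metrics with scalar curvature bounded below by a small error, and the mass converges. After a conformal change that restores nonnegative scalar curvature, the smooth positive mass theorem \cite[Theorem~1.1]{KWW}, applied with empty singular set, yields mass $\ge0$ for every end. Letting the smoothing parameters tend to zero gives $m\ge0$.

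\emph{Rigidity.} Suppose one end has zero mass. Following \cite[Lemmas~4.3--4.4]{ABdL}, I would show that $g$ is Ricci flat in the interior and that $\partial M$ is totally geodesic. The argument is variational: if $\operatorname{Ric}\neq0$ somewhere, or $\mathrm{II}\neq0$ somewhere, then a compactly supported variation $L$ chosen using \eqref{crit:eq:EHfirst}, followed by a conformal restoration of $R=0$ and $H=0$, strictly lowers the mass. That contradicts nonnegativity.

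With $\operatorname{Ric}=0$ and totally geodesic boundary, the doubled metric is smooth enough ($C^{1,1}$, and then smooth by Ricci-flat elliptic regularity). It is Ricci flat with a zero-mass end, so the rigidity part of \cite{KWW} makes it flat $\R^N$ with a single end. Hence $M$ is Euclidean space or the half-space, according to the type of the end, and it has only that end. If $M$ had a compact boundary component, its double would have a compact totally geodesic hypersurface inside $\R^N$, which is impossible. This proves the final sentence of the theorem.

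\emph{Main obstacle.} The hard step is the rigidity argument, carried out on the original metric rather than on a smooth double. One has to check that the mass-decreasing variations can be built while preserving minimality of the boundary, and that they are compatible with the corner smoothing. The first thing I would try is to construct the conformal correction $u$ solving $L_gu=0$ with $B_gu=0$ and $u\to1$ at the end. Its expansion coefficient then gives the mass drop, and the sign is controlled by the first variation formula.
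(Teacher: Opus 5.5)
Your proposal takes the same route as the paper, which proves this theorem by citation. The ingredients match: the approximation and doubling argument of \cite[Section~4]{ABdL} with Miao's corner smoothing \cite[Theorem~1]{Miao}, the smooth positive mass theorem \cite[Theorem~1.1]{KWW} as the input valid in every dimension, and rigidity obtained for the original zero-mass metric via \cite[Lemmas~4.3--4.4]{ABdL}. Your final doubling is used only after Ricci-flatness and total geodesicity are established, which agrees with the paper's caveat. One minor point: the mass-decreasing variations in the rigidity step need not be compatible with any corner smoothing, because the nonnegativity you have already proved applies directly to the smooth, conformally corrected perturbed metric.
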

The conformal change of the flux in \eqref{eq:appendix-mass} gives the standard comparison formula below, in the same unnormalized convention.
\begin{lemma}\label{lem:masschange}
Let $G$ and $G_\rho$ have the same singular coefficient one and solve the same local conformal Robin equation near $p$.
Assume the backgrounds agree near $p$, have the expansion $g=\delta+O(r^{d_N+1})$ with differentiated bounds, and their Green metrics have asymptotically half-space ends with the decay in \eqref{eq:appendix-AF}.
If $(G-G_\rho)(p)=b_\rho$, their end masses satisfy
\begin{equation}
 m(G^{4/(N-2)}g)-m(G_\rho^{4/(N-2)}g_\rho)
 =4(N-1)|S^{N-1}_+|b_\rho.
\end{equation}
The difference $G-G_\rho$ is assumed regular at $p$.
\end{lemma}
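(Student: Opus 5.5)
The plan is to express the first Green metric as a conformal deformation of the second and compute the change of the flux \eqref{eq:appendix-mass} directly. Since the backgrounds agree near $p$, set $\hat g:=G_\rho^{4/(N-2)}g_\rho$ and $\varphi:=G/G_\rho$. Then on a punctured half-ball around $p$,
\[
 G^{4/(N-2)}g=\varphi^{4/(N-2)}\hat g .
\]
Both metrics have asymptotically half-space ends satisfying \eqref{eq:appendix-AF}, and they are scalar-flat with minimal boundary. By the coordinate invariance of the mass (Bartnik and \cite[Section~3]{ABdL}), we may compute both masses in a single asymptotic chart. We take the chart obtained from the conformal Fermi coordinates $y$ at $p$ by the inversion $z=y/|y|^2$; this chart preserves the half-space $\{t\ge0\}$.

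\textbf{Step 1: expansion of $\varphi$.} Since $G$ and $G_\rho$ have singular coefficient one and $g=\delta+O(r^{d_N+1})$ with differentiated bounds, the local Robin equation gives $G_\rho=|y|^{2-N}(1+\eta)$ with $\eta=o(1)$ and $|y|^{k}|\partial^k\eta|=o(1)$. The difference $G-G_\rho$ is regular at $p$, so $G-G_\rho=b_\rho+O(|y|)$ with a bounded gradient. Hence
\[
 \varphi-1=\frac{G-G_\rho}{G_\rho}=b_\rho|y|^{N-2}+o(|y|^{N-2}),
\]
with the corresponding derivative bounds. In inverted coordinates, using $|y|^{N-2}=|z|^{2-N}$, this becomes
\[
 \varphi=1+b_\rho|z|^{2-N}+\omega,\qquad |\omega|+|z||\partial\omega|=o(|z|^{2-N}).
\]
This is the step I expect to need the most care. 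One must check that the inversion does not spoil the derivative bounds; it does not, since inversion maps the weighted $C^1$ bounds $|y|^k\partial^k$ to the same type of bounds. One must also check that the lower-order structure of $G_\rho$ contributes only $o(|z|^{2-N})$ terms; here the flatness $g=\delta+O(r^{d_N+1})$ and the elliptic expansion of the Green function are used.

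\textbf{Step 2: flux computation.} Write
\[
 \varphi^{4/(N-2)}\hat g_{ij}-\hat g_{ij}
 =\frac{4b_\rho}{N-2}|z|^{2-N}\delta_{ij}+E_{ij}.
\]
The error $E_{ij}$ collects the $o(|z|^{2-N})$ terms coming from $\omega$ and from the nonlinearity of $\varphi^{4/(N-2)}$. It also contains the product $\bigl(\varphi^{4/(N-2)}-1\bigr)(\hat g_{ij}-\delta_{ij})=O(|z|^{2-N-\tau})$, whose first derivatives are $O(|z|^{1-N-\tau})$. Integrated over $S_r^+$, of area $\sim r^{N-1}$, and over $\partial S_r^+$, all error contributions tend to zero.

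For the main term $f\delta_{ij}$ with $f=c|z|^{2-N}$ and $c=4b_\rho/(N-2)$, we have
\[
 \partial_jg_{ij}-\partial_ig_{jj}=(1-N)\partial_if,\qquad
 \partial_rf=-(N-2)c\,r^{1-N}.
\]
The interior flux is therefore
\[
 (N-1)(N-2)c\,|S^{N-1}_+|=4(N-1)|S^{N-1}_+|b_\rho .
\]
The equatorial term $g_{aN}$ receives no contribution from the diagonal conformal factor, up to the same vanishing errors. Subtracting the two masses gives the stated identity.
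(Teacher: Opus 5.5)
Your proposal is correct and is the argument the paper points to when it says the lemma follows from the conformal change of the flux in \eqref{eq:appendix-mass}. You write $G^{4/(N-2)}g=\varphi^{4/(N-2)}G_\rho^{4/(N-2)}g_\rho$ with $\varphi=1+b_\rho|z|^{2-N}+o(|z|^{2-N})$ in the inverted Fermi chart. The flux of $\tfrac{4b_\rho}{N-2}|z|^{2-N}\delta_{ij}$ then gives $4(N-1)|S^{N-1}_+|b_\rho$, and the cross terms, nonlinear terms and equatorial terms all vanish in the limit. The bound $\eta=o(1)$ with weighted derivatives that you need in Step~1 also follows directly from the assumed decay \eqref{eq:appendix-AF} of $G_\rho^{4/(N-2)}g_\rho$ in that chart.
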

\section{The scalar-flat and minimal-boundary quadratic forms}
\label{sec:ordinary-endpoints}
In this section, $(\sigma,\kappa)=(0,2)$ or $(1,0)$, $N\ge5$, and $m=N-1$.
Put $s=|x|^2$.
We compute on the full polynomial spaces \eqref{crit:eq:fermi}, with degrees at most $d_N^{\rm ord}$.
These are the ordinary degrees, for which the quadratic integrals converge.
The calculation has three parts.
We first describe the scalar-, vector-, and tensor-derived tensors satisfying \eqref{crit:eq:fermi} and compute their contractions in the source and raw quadratic coefficient.
We then evaluate the scalar correction with the Robin or Neumann inverse.
Finally, we assemble the matrices of the negative scale derivative and prove their positivity.

Tangential spherical integration makes distinct harmonic degrees and the three tensor types orthogonal.
Within each type, all copies of a given harmonic degree, including different normal powers, are retained in one matrix.
Thus the computation includes their mixed terms.
\subsection{Decomposition of polynomial metric tensors}
A nonzero homogeneous polynomial tensor of degree $d\ge1$ has finite $\N$-norm precisely when $2d<N-2$, since the radial integrand at infinity has order $r^{2d+1-N}$.

For ordinary polynomials, \eqref{eq:quadratic-action} and \eqref{eq:quadratic-bounded} extend by cutoff convergence in $\N$.
Critical polynomials are kept cut off and treated in Section~\ref{sec:critical}.

\begin{proposition}[Polynomial decomposition and contractions]\label{prop:polynomial-decomposition}
For $1\le d\le d_N^{\rm ord}$, the full Fermi spaces $\mathscr J_d$ in \eqref{crit:eq:fermi} decompose into the scalar-, vector-, and tensor-derived families described below.
Different tangential harmonic degrees and tensor types are orthogonal for the raw and corrected quadratic forms.
Within a fixed harmonic type, all multiplicity components are retained; their contractions and Gram matrices are the formulas in the proof.
\end{proposition}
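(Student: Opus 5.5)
The plan is to decompose $\mathscr J_d$ under the action of $O(N-1)$ on the tangential variables $x$, with $t$ treated as an invariant parameter. Since $h_{Ni}=0$, an element $h\in\mathscr J_d$ is a symmetric tangential tensor $h_{ab}(x,t)=\sum_{j}t^jh^{(j)}_{ab}(x)$, where each $h^{(j)}$ is homogeneous of degree $d-j$ in $x$. We apply the standard decomposition of symmetric $2$-tensor fields on $\R^{N-1}$ with polynomial coefficients, writing each $h^{(j)}$ as a sum of three families:
\[
 h^{(j)}_{ab}=\bigl(\partial_a\partial_b-\tfrac{1}{m}\delta_{ab}\Delta_x\bigr)(s^k\phi)+(\text{lower }\delta_{ab}\text{ and }x_ax_b\text{ terms})+\partial_{(a}(s^kV_{b)})+\cdots+s^kT_{ab}.
\]
Here $\phi$ is a harmonic polynomial, $V$ is a divergence-free harmonic vector polynomial, and $T$ is a transverse trace-free harmonic tensor polynomial. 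We will parametrize concretely by $x_ax_b\,s^k\phi$, $\delta_{ab}s^{k}\phi$, $x_a\partial_b\phi$, $\partial_a\partial_b\phi$, and so on. We then impose $\tr h=0$ and the boundary Gauss condition $x_ah_{ab}(x,0)=0$; both conditions are $O(N-1)$-equivariant, so they act within each harmonic type. This gives, for each tangential harmonic degree $L$ and each type, a finite explicit basis $\{h_i\}$ indexed by the normal power $j$ and the power $k$ of $s$. The umbilic subspace is obtained by deleting the $j=1$ components. We would list the scalar family (which needs the most care, since it contains several independent invariant tensors $\delta_{ab}$, $x_ax_b$, $x_{(a}\partial_{b)}$, $\partial_a\partial_b$ multiplying $s^k\phi$), the vector family, and the tensor family, and check by a dimension count that their sum exhausts $\mathscr J_d$.

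Orthogonality across types and degrees follows from Schur's lemma. The bubble $U$ is radial in $x$, and the operators entering $S_h$, $\mathcal Q$, and $\B$ commute with tangential rotations. As a result, the source $S_h$ of a vector- or tensor-derived $h$ has no scalar $O(N-1)$-component. In fact we expect $S_h$ to vanish for the tensor family and, after the divergence contraction, to have only a vector part for the vector family, so $v_h$ is nonzero only in the scalar family, where it carries harmonic degree $L$. The pairings \eqref{lc:eq:rawpolar} and $\int S_hv_{h'}$ between different isotypic components vanish after integration over $S^{m-1}$, because they are invariant bilinear pairings between inequivalent irreducible representations. Within a fixed type and degree $L$, all multiplicity components (different $j,k$) are kept.

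For the contractions, we compute $h\nabla U$, $\partial h$, $\Div h$, $\partial_i\partial_jh_{ij}$ and $S_h$ in terms of $\phi$ (resp.\ $V$, $T$), $s$, and $t$. We use $U=U(s,t)$, $x_a\partial_a\phi=L\phi$, $\Delta_x\phi=0$, and the spherical averages $\fint|\nabla\phi|^2=L(2L+m-2)\fint\phi^2/|x|^2$, together with their vector and tensor analogues. After angular integration, each entry of the raw Gram matrix $\mathcal Q_{ik}$ becomes a finite linear combination of the moments $M^{\nu}_{R,j_1}$ in \eqref{lc:eq:moment} (or $M^I$ in \eqref{min:moment}) and of boundary moments. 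For the scalar family, $S_h$ has the form $P(s,t)\phi$, and the corrector reduces to a separated problem in harmonic degree $L$. That problem is handled later by the particular-solution-plus-boundary-response method of Lemma~\ref{min:responselemma}.

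The main obstacle will be the scalar family. There the Fermi constraints couple the coefficients of the different invariant tensors, and the boundary Gauss condition at $t=0$ mixes normal powers. Producing a clean basis that is free of redundant pure-gauge combinations, and verifying completeness by the dimension count, is the delicate step. My first attempt would be to parametrize by the radial components $h_{ab}x_ax_b$, $h_{ab}x_b$ and $\tr h$, and invert the resulting relations degree by degree.
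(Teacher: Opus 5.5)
Your outline matches the paper's proof. It uses the same generators ($s^kH_{ab}$, $s^{k-1}x_{(a}H_{b)}$, $s^{k-1}H\delta_{ab}$, $s^{k-2}Hx_ax_b$ and their vector and tensor analogues), imposes the constraints within each type, counts dimensions for completeness, and gets orthogonality from tangential rotation invariance. Your Schur-lemma formulation is cleaner than the paper's appeal to spherical integration. For the corrected form, use that $F$ does not depend on the Jacobi complement, so the corrector may be taken equivariant. Schur also gives $S_h\equiv0$ for the vector and tensor families at once, because $S_h$ is scalar-valued; it has no ``vector part''.

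\textbf{Gap: the families are not single $O(m)$-types.} As you define them, the Schur argument fails. A divergence-free harmonic vector polynomial can be $V=\nabla\phi$ with $\phi$ harmonic of degree $\ell+1$. Then $\partial_{(a}(s^kV_{b)})$ is a combination of the scalar generators $s^k\partial_a\partial_b\phi$ and $s^{k-1}x_{(a}\partial_{b)}\phi$. Likewise, a trace-free, divergence-free harmonic tensor can be $\partial_a\partial_b\phi$, or a symmetrized gradient of a vector harmonic. You must also impose $x\cdot V=0$ and $Tx=0$, as the paper does. Only then are the three types inequivalent and the sum direct. Only then does the count $\bigl(\tfrac{m(m+1)}2-1\bigr)\binom{m+r-1}{r}$ come out exactly from $h_\ell$, $v_\ell=mh_\ell-h_{\ell+1}-h_{\ell-1}$ and $t_\ell$; otherwise you overcount.

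\textbf{The ``delicate step'' is misdiagnosed.} The Gauss condition $x_ah_{ab}(x,0)=0$ involves only the $t^0$ coefficient, and $\tr h=0$ holds separately for each power $t^{j_1}$. So no normal powers mix. There is also no gauge quotient. You only need linear independence of the generators, which fails exactly at the low-degree omissions: $H_{ab}=0$ for $\ell\le1$, and $\partial_{(a}V_{b)}=0$ for $\ell=1$.

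For fixed $(\ell,k)$ the constraints are a small linear system on $(a,b,c,e)$. Write $hx=B_xs^k\nabla H+B_ys^{k-1}Hx$ with $B_x=(\ell-1)a+b$ and $B_y=\ell b+c+e$. For $j_1\ge1$ the only condition is trace-freeness, $2\ell b+mc+e=0$. At $j_1=0$ you add $B_x=B_y=0$, which leaves only \eqref{lc:eq:Sradial} for $\ell,k\ge2$ and nothing otherwise. The vector analogue is $(1,-(\ell-1))$, and the tensor family is unconstrained. Your plan to parametrize by $h_{ab}x_ax_b$, $h_{ab}x_b$, $\tr h$ and invert would work, but it is unnecessary.

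\textbf{The explicit formulas are still missing.} The proposition's content is the list of explicit formulas: $G_S$, the Laplacian action on coefficient vectors, $D_x,D_y,D_2$, the source polynomial, and the vector and tensor entries. You leave all of these unexecuted. For the derivative term the paper does not use gradient averages. It uses $\partial h_i\cdot\partial h_j=\tfrac12\Delta(h_i\cdot h_j)-\tfrac12(\Delta h_i\cdot h_j+h_i\cdot\Delta h_j)$, so only the Gram matrix and the Laplacian map are needed.
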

\begin{proof}
The scalar-derived tensors are built from a tangential harmonic polynomial, its first two derivatives, and radial polynomial factors.
The following formulas describe the full family, including tensors with nonzero divergence.
For a homogeneous harmonic polynomial $H=H_\ell$ of degree $\ell$, write $H_a=\partial_aH$, $H_{ab}=\partial_a\partial_bH$ and use
\begin{equation}\label{lc:eq:Sgen}
 s^kH_{ab},\quad s^{k-1}(x_aH_b+x_bH_a),\quad s^{k-1}H\delta_{ab},\quad
 s^{k-2}Hx_ax_b.
\end{equation}
A coefficient vector $v=(a,b,c,e)$ is trace free precisely when $2\ell b+mc+e=0$.
The tangential degree is $r=\ell-2+2k$.
A polynomial independent trace-free basis consists of
\[
 (1,0,0,0)\ (\ell\ge2,k\ge0),\quad
 (0,1,-2\ell/m,0)\ (\ell\ge1,k\ge1),\quad
 (0,0,-1/m,1)\ (\ell\ge0,k\ge2).
\]
Multiply by $t^{j_1}$ for $j_1\ge1$ and $r+j_1\le d_N^{\rm ord}$; in the umbilic case exclude $j_1=1$.
At $j_1=0$ the radial constraint leaves, for $\ell\ge2,k\ge2$, the single vector
\begin{equation}\label{lc:eq:Sradial}
 \left(1,-(\ell-1),\frac{\ell(\ell-1)}{m-1},
                 \frac{\ell(\ell-1)(m-2)}{m-1}\right).
\end{equation}
There are no $j_1=0$ scalar modes with $\ell=0,1$.

The radial contraction, divergence, and double divergence determine the raw coefficient and the scalar source.
To compute them, set
\begin{align*}
 B_x&=(\ell-1)a+b,& B_y&=\ell b+c+e,\\
 D_x&=2k(\ell-1)a+(m+\ell+2k-2)b+c,&
 D_y&=2\ell(k-1)b+2(k-1)c+(m+\ell+2k-3)e,\\
 D_2&=2\ell(k-1)D_x+(m+\ell+2k-4)D_y.
\end{align*}
Then
\[
 hx=B_xs^k\nabla H+B_ys^{k-1}Hx,\quad
 \Div h=D_xs^{k-1}\nabla H+D_ys^{k-2}Hx,\quad
 \Div\Div h=D_2s^{k-2}H.
\]
For the tensor multiplied by $t^{j_1}$, the polynomial on the right of the source equation is
\begin{align*}
 F=t^{j_1}\biggl[{}&N(N-2)(\ell B_x+B_y)s^k
 -(N-2)(\ell D_x+D_y)s^{k-1}\left(\frac{2^{\sigma(N-2)/2}}{U}\right)^{2/(N-2)}\\
 &+c_ND_2s^{k-2}\left(\frac{2^{\sigma(N-2)/2}}{U}\right)^{4/(N-2)}\biggr].
\end{align*}
Here $(2^{\sigma(N-2)/2}/U)^{2/(N-2)}$ equals $s+(t+1)^2$ in the scalar-flat case and $1+s+t^2$ in the minimal-boundary case, so the displayed expression is polynomial in either case.
Terms with zero coefficients are omitted, before any negative power could be interpreted.

The remaining angular integrals are determined by the Gram matrix and Laplacian of the generators.
With $\fint_{S^{m-1}} H^2=1$, write $u=\ell(2\ell+m-2)$ and $v=\ell(\ell-1)(2\ell+m-2)(2\ell+m-4)$.
The angular Gram matrix of \eqref{lc:eq:Sgen} is
\[
 G_S=\begin{pmatrix}
 v&2(\ell-1)u&0&\ell(\ell-1)\\
 2(\ell-1)u&2u+2\ell^2&2\ell&2\ell\\
 0&2\ell&m&1\\
 \ell(\ell-1)&2\ell&1&1
 \end{pmatrix}.
\]
The Laplacian maps the coefficient vector at $k$ to the following vector at $k-1$:
\begin{equation}
 \left(\begin{array}{l}
 2k(m+2\ell+2k-6)a+4b\\
 2(k-1)(m+2\ell+2k-4)b+2e\\
 2(k-1)(m+2\ell+2k-4)c+2e\\
 2(k-2)(m+2\ell+2k-2)e
 \end{array}\right).
\end{equation}
\medskip
\paragraph{Vector-derived tensors.}
The vector-derived family is obtained in the same way from a tangential harmonic vector polynomial.
Its contractions below show that the scalar source vanishes, so only the raw quadratic coefficient is needed.
Let $V_\ell$ be a harmonic homogeneous vector polynomial with $x\cdot V=0$, $\Div V=0$, and $\fint_{S^{m-1}}|V|^2=1$.
Use
\[
 s^k(\partial_aV_b+\partial_bV_a),\qquad s^{k-1}(x_aV_b+x_bV_a),
\]
of tangential degree $r=\ell-1+2k$.
The first exists for $\ell\ge2,k\ge0$, the second for $\ell\ge1,k\ge1$.
At $j_1\ge1$ and $r+j_1\le d_N^{\rm ord}$, use both available vectors (excluding $j_1=1$ in the umbilic case).
At $j_1=0$, $\ell\ge2,k\ge1$, the radial vector is $(1,-(\ell-1))$.
For coefficients $(a,b)$,
\[
 hx=((\ell-1)a+b)s^kV,\qquad
 \Div h=[2k(\ell-1)a+(m+\ell+2k-1)b]s^{k-1}V.
\]
The scalar source is identically zero.
The angular Gram matrix and Laplacian coefficients are
\[
 G_V=\begin{pmatrix}2(\ell-1)(m+2\ell-2)&2(\ell-1)\\2(\ell-1)&2\end{pmatrix},
\]
\[
 (a,b)\mapsto
 \big(2k(m+2\ell+2k-4)a+2b,\ 2(k-1)(m+2\ell+2k-2)b\big).
\]

\medskip
\paragraph{Tensor-derived tensors.}
For the tensor-derived family, both the radial contraction and divergence vanish.
Its quadratic coefficient therefore reduces to the derivative term.
Let $T_\ell$ be harmonic homogeneous of degree $\ell\ge2$, trace free, divergence free, with $T_\ell x=0$ and $\fint_{S^{m-1}}|T_\ell|^2=1$.
Use $s^kt^{j_1}T_\ell$ with $2k+j_1+\ell\le d_N^{\rm ord}$, again excluding $j_1=1$ for umbilic jets.
Both $hx$ and the scalar source vanish.
For the scalar-flat equation, the normalized energy entry for $(k,j_1)$ and $(h,j_2)$ is
\begin{align*}
 E_{(k,j_1),(h,j_2)}={}&-\frac{(2k+\ell)(2h+\ell)+\ell(\ell+m-2)}{4\mu_N}
 M_{2\ell+2k+2h-2,j_1+j_2}^{N-2}\\
 &-\frac{j_1j_2}{4\mu_N}M_{2\ell+2k+2h,j_1+j_2-2}^{N-2},
\end{align*}
with the second term omitted when $j_1j_2=0$.
For the minimal-boundary equation, replace $M_{R,j_1}^{\nu}/\mu_N$ by $M^I_{R,j_1,\nu}/\mu_N^I$ in this formula.

\medskip
\paragraph{Completeness of the tensor decomposition.}
We check that these families exhaust the polynomial space before imposing the boundary radial constraint.
The harmonic dimensions are
\begin{align*}
 h_\ell&=\binom{m+\ell-1}{\ell}-\binom{m+\ell-3}{\ell-2},\qquad
 v_\ell=mh_\ell-h_{\ell+1}-h_{\ell-1},\\
 t_\ell&=\frac{m(m-3)(\ell+m-1)(\ell-1)(2\ell+m-2)(\ell+m-4)!}
 {2(m-2)!(\ell+1)!}.
\end{align*}
Negative-index binomial terms and $h_{-1}$ are zero.
Harmonic polynomial decomposition, spherical Hodge decomposition, and tangential TT decomposition give these three types.
Their displayed polynomial reconstructions are injective after exactly the low-degree omissions above.
Nonzero blocks of different types are orthogonal, while the Gram matrices give independence of the multiplicity vectors.
For each tangential polynomial degree $r$, the dimensions add to
\[
 \left(\frac{m(m+1)}2-1\right)\binom{m+r-1}{r}.
\]
This identity also follows by inserting the three dimension formulas and their multiplicities.
Boundary radial contraction is imposed separately at $j_1=0$ by \eqref{lc:eq:Sradial} and its vector analogue.

For scalar and vector modules, one obtains the tangential gradient pairing from
\[
 \partial h_i\cdot\partial h_j
 =\tfrac12\Delta(h_i\cdot h_j)-\tfrac12(\Delta h_i\cdot h_j+h_i\cdot\Delta h_j).
\]
Its spherical average uses the displayed Gram and Laplacian formulas.
Normal derivatives contribute $j_1j_2\,t^{j_1+j_2-2}$.
\end{proof}

For a basis $\{h_i\}$, let $E_{ij}:= F(h_i,h_j)/C_N^{\rm alg}$ in the scalar-flat case and $E_{ij}:= F(h_i,h_j)/C_N^I$ in the minimal-boundary case.
The positive factors are defined in \eqref{lc:eq:normalization} and \eqref{min:eq:normalization} below.
If $d_i$ is the total polynomial degree of $h_i$, set $\mathsf D:=\operatorname{diag}(d_i)$.
The normalized ordinary scale matrix is
\begin{equation}\label{lc:eq:scale}
 P=-(\mathsf D E+E\mathsf D).
\end{equation}
This is the homogeneous scale-derivative identity \cite[(4.28)]{PartI}, with the negative sign used here.

\subsection{The linearized scalar-flat problem}\label{lc:sec:moments}
Only the scalar-derived tensors require a correction.
We first construct a particular solution that solves the interior linearized equation exactly.
It generally leaves a Robin residual on the boundary.
The solution of the homogeneous interior equation with this boundary data then gives the remaining contribution to the inverse pairing in \eqref{eq:quadratic-action}.

\medskip
\paragraph{Bulk and boundary integrals.}
The raw coefficient and the particular-solution pairings use Euler's beta integral \cite[(5.12.1)]{DLMF}.
Here $U=(s+(t+1)^2)^{-(N-2)/2}$, and we set $z:=t+1$.
For $N>4$, divide out the common positive factor
\begin{equation}\label{lc:eq:normalization}
 C_N^{\rm alg}:=|S^{N-2}|\mu_N,\qquad
 \mu_N:=\frac{B((N-1)/2,(N-3)/2)}{2(N-4)}.
\end{equation}
Angular averages are normalized to have total mass one.
The radial integrals are \eqref{lc:eq:moment}; a boundary integral omits the last beta factor.
If $R=2j$ and $\nu=N-2+j_2$, the normalized tangential beta factor is
\[
 (N-4)\frac{((N-1)/2)_j((N-3)/2)_{j_2-j}}{(N-2)_{j_2}}.
\]
For the bulk, multiply by
\[
 \frac{j_1!\,(N-5+2j_2-R-j_1)!}{(N-4+2j_2-R)!}.
\]
Every argument used by the ordinary computations lies in the convergent range.
These ratios are rational, for odd as well as even $N$.

\medskip
\paragraph{Particular solutions and boundary residuals.}
Let $H_\ell(x)$ be a harmonic homogeneous polynomial of degree $\ell$, and put $\Phi_\ell:=H_\ell U/(s+z^2)$.
For a polynomial $G(s,z)$, define $\mathcal T_\ell$ on the basis $(s+z^2)^az^b$ by
\[
 \mathcal T_\ell\bigl((s+z^2)^az^b\bigr) ={}2(a+b+\ell-1)(N-2a)(s+z^2)^az^b -b(b-1)(s+z^2)^{a+1}z^{b-2}.
\]
Let $B_J:=\partial_\nu-NU^{2/(N-2)}$.
The following identity extends the polynomial calculation in \cite[Lemma~5.2]{PartI} to every harmonic degree used here.
\begin{lemma}
For every polynomial $G(s,z)$,
\begin{equation}
 \begin{cases}
 -\Delta(G\Phi_\ell)=\dfrac{\Phi_\ell}{s+z^2}\mathcal T_\ell G &\text{in }\R^N_+,\\
 B_J(G\Phi_\ell)=-(G_z)_{z=1}\Phi_\ell &\text{on }\partial\R^N_+.
 \end{cases}
\end{equation}
\end{lemma}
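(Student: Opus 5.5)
The plan is to prove both identities by direct computation. Both sides are linear in $G$, so it suffices to treat a single basis element $G=(s+z^2)^az^b$. Write $\rho:=s+z^2=|x|^2+(t+1)^2$. Then $U=\rho^{-(N-2)/2}$, and
\[
 \Phi_\ell=H_\ell\rho^{-N/2},\qquad G\Phi_\ell=H_\ell\,z^b\rho^{c},\qquad c:=a-\tfrac N2 .
\]
On $\R^N_+$ we have $\partial_z=\partial_t$. The three factors depend respectively on $x$ only (harmonic and homogeneous of degree $\ell$), on $z$ only, and radially on $y+e_N$. So the Laplacian splits into the individual Laplacians plus pairwise gradient cross terms. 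We will use $\nabla\rho=2(x,z)$, $\Delta\rho=2N$, and $x\cdot\nabla H_\ell=\ell H_\ell$.

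\textbf{Interior identity.} First, $\Delta\rho^c=2c(2c+N-2)\rho^{c-1}$ and $\Delta H_\ell=0$. The cross terms give:
\begin{itemize}
 \item $2\nabla H_\ell\cdot\nabla\rho^c=4c\ell H_\ell\rho^{c-1}$;
 \item $2\partial_z z^b\,\partial_z\rho^c=4bc\,z^b\rho^{c-1}$ (times $H_\ell$);
 \item $\nabla H_\ell\cdot\nabla z^b=0$.
\end{itemize}
The $z$-Laplacian contributes $b(b-1)z^{b-2}\rho^cH_\ell$. Collecting terms,
\[
 \Delta(G\Phi_\ell)=H_\ell\rho^{c-1}z^b\,2c(2c+N-2+2\ell+2b)+b(b-1)H_\ell z^{b-2}\rho^{c}.
\]
Substituting $c=a-N/2$ gives $2c(2c+N-2+2\ell+2b)=-2(N-2a)(a+b+\ell-1)$. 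Since $H_\ell\rho^{c-1}z^b=\frac{\Phi_\ell}{\rho}\rho^az^b$ and $H_\ell z^{b-2}\rho^c=\frac{\Phi_\ell}{\rho}\rho^{a+1}z^{b-2}$, changing sign yields exactly $\frac{\Phi_\ell}{s+z^2}\mathcal T_\ell\bigl((s+z^2)^az^b\bigr)$.

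\textbf{Boundary identity.} On $\{t=0\}=\{z=1\}$ the outward normal derivative is $\partial_\nu=-\partial_z$. We have
\[
 \partial_z\Phi_\ell=-\tfrac N2\cdot 2z\,\rho^{-1}\Phi_\ell=-\frac{Nz}{\rho}\Phi_\ell,
\]
which equals $-N(s+1)^{-1}\Phi_\ell$ at $z=1$. Since $U^{2/(N-2)}=(s+1)^{-1}$ on the boundary, it follows that
\[
 B_J(G\Phi_\ell)=-G_z\Phi_\ell+NG U^{2/(N-2)}\Phi_\ell-NGU^{2/(N-2)}\Phi_\ell=-(G_z)_{z=1}\Phi_\ell .
\]

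Both identities are elementary, and the only point requiring care is the exponent bookkeeping that produces the factor $(N-2a)(a+b+\ell-1)$.
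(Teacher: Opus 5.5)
Your proposal is correct and follows essentially the paper's route: a direct computation on the basis element $(s+z^2)^az^b$, written as $H_\ell z^b(s+z^2)^{a-N/2}$. On the boundary, the derivative of $\Phi_\ell$ cancels the Robin coefficient, leaving $-(G_z)_{z=1}\Phi_\ell$. The only difference is bookkeeping: you apply the product rule with the $N$-dimensional radial formula for $\Delta\rho^c$, while the paper uses the reduced operator $\Delta(H_\ell f)=H_\ell\{4sf_{ss}+2(m+2\ell)f_s+f_{zz}\}$. Both give the factor $2(N-2a)(a+b+\ell-1)$.
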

\begin{proof}
Apply $\Delta(H_\ell f)=H_\ell\{4s f_{ss}+2(m+2\ell)f_s+f_{zz}\}$ to $f=(s+z^2)^{a-N/2}z^b$.
On $z=1$, the terms differentiating $U/(s+z^2)$ cancel the Robin coefficient, leaving $-G_z\Phi_\ell$.
\end{proof}

Thus for a source $S=(\Phi_\ell/(s+z^2))F$, expand $F$ in the same basis and solve $\mathcal T_\ell G=-F$ by triangular elimination.
On each monomial, divide by $2(a+b+\ell-1)(N-2a)$ and then pass to $(a+1,b-2)$.
For every basis element used in Table~\ref{tab:endpoint-certificates}, each coefficient requiring elimination has a nonzero denominator.
This is checked exactly when assembling the finite matrices.
The resulting particular solution $w=G\Phi_\ell$ is in the energy space, but generally has a nonzero Robin defect $b=B_Jw$.

If $\Psi$ is harmonic with $B_J\Psi=b$, then $v=w-\Psi$ is the corrector with zero Robin boundary data.
The pairing identity \cite[Lemma~5.1]{PartI} reads
\begin{equation}\label{lc:eq:schur}
  F(h_i,h_j)=\mathcal Q_{ij}+c_N^{-1}\left(\int_{\R^N_+} S_iw_j+\int_{\partial\R^N_+}b_jw_i
                                      -\int_{\partial\R^N_+}b_i\Psi_j\right).
\end{equation}
Dividing \eqref{lc:eq:schur} by $C_N^{\rm alg}$ produces $E_{ij}$.
The first two pairings in parentheses are evaluated by \eqref{lc:eq:moment}, and the last by \eqref{lc:eq:response}.
Symmetry also follows from the self-adjointness of the inverse.

\medskip
\paragraph{The harmonic Robin problem on the ball.}
\begin{proposition}[Scalar-flat boundary pairings]\label{prop:scalar-flat-response}
For each scalar component in Proposition~\ref{prop:polynomial-decomposition}, the normalized boundary-response pairing is \eqref{lc:eq:response}, with the kernel terms and constant mode specified below.
Its value is rational for the integer parameters in the ordinary range.
\end{proposition}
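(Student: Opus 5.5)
We need a plan for computing the boundary response pairing for the harmonic Robin problem, where $\Psi$ is harmonic in $\R^N_+$ with $B_J\Psi=b$ and $b=-(G_z)_{z=1}\Phi_\ell$ on $t=0$. The plan is to pass to the ball by Kelvin inversion, solve the Robin problem mode by mode in spherical harmonics, and evaluate the pairing as a telescoping finite sum.

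\emph{Step 1: transfer to the unit ball.} Use the inversion centred at $(0,-1)$ (equivalently the point $-e_N$ reflected), which maps $\R^N_+$ to a ball $B$ and the bubble $U=(s+z^2)^{-(N-2)/2}$ to a constant multiple of the Kelvin factor. Under this map harmonic functions correspond to harmonic functions after multiplication by $|y-y_0|^{2-N}$. The operator $B_J=\partial_\nu-NU^{2/(N-2)}$ becomes $\partial_\nu-c$ on $\partial B$ for a fixed constant $c$, since the constant mean curvature of the sphere absorbs the conformal term. With the ball normalized to radius $1$, I expect this to become $\partial_r-1$ (or $\partial_r-(N-2)/2-\ldots$ depending on normalization). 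I would check the constant by requiring that the Jacobi fields $Z_a$ and $Z_0$ correspond to degree-one harmonics in the kernel.

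\emph{Step 2: solve by modes.} On $\partial B$, write the transformed boundary data $\tilde b$, which is $H_\ell$ times a polynomial in the transformed variables, as a finite sum of spherical harmonics of degree $L\ge\ell$. The polynomial $G$ has finite degree and $\Phi_\ell$ carries the harmonic factor, so finitely many $L$ occur and each has the same tangential type as $H_\ell$ (zonal functions times $H_\ell$). The solution is $\tilde\Psi=\sum_L \tilde b_L r^L/(L-1)$.
\begin{itemize}
\item The factor $(L-1)$ vanishes at $L=1$; these are the kernel terms. Compatibility, namely orthogonality of $b$ to $Z_a$ (and to $Z_0$), follows from the Jacobi stationarity in Lemma~\ref{lem:stationarity}, since the mixed derivative annihilates the kernel. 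The degree-one component is then chosen by the projection onto $Y$, and I would show it drops out of the symmetric pairing $\int b_i\Psi_j$.
\item The constant mode $L=0$ gives $-\tilde b_0$, which is the negative direction. It is retained, with response $1/(0-1)$.
\end{itemize}
The pairing is then $\int_{\partial\R^N_+}b_i\Psi_j=\sum_L \langle\tilde b_{i,L},\tilde b_{j,L}\rangle_{S^{N-1}}/(L-1)$, which is formula \eqref{lc:eq:response}.

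\emph{Step 3: rationality.} The boundary data restricted to the sphere are polynomials in the coordinates with rational coefficients. Projecting onto degree-$L$ harmonics of the form $H_\ell\cdot C^{(\cdot)}_{L-\ell}(\cos\theta)$ uses Gegenbauer normalizations, which are rational in $N$ for integer parameters. The ratios of spherical moments $\fint x^{2k}$ are likewise rational. All $\pi$ and Gamma factors are common and are absorbed into $C_N^{\rm alg}$, leaving a rational value.

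\textbf{Main obstacle.} The hardest step is the exact bookkeeping of the Kelvin transform of the Robin data, together with confirming that after normalization $B_J$ becomes $\partial_r-1$ with no leftover term. This is needed so that the response is exactly $1/(L-1)$ on every mode. I would verify it first on $Z_a$, which must lie in the kernel, and on the constant mode, which must give the negative direction, before applying it generally.
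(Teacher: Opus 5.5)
Your overall route is the paper's: compactify by the inversion centred at $-e_N$, so that $B_J(U\hat\Psi\circ\zeta)=2U^{N/(N-2)}(\partial_\nu\hat\Psi-\hat\Psi)\circ\zeta$. Your guess $\partial_r-1$ is correct. You then invert mode by mode with denominator $L-1$, drop the degree-one modes by stationarity, and keep the constant mode with its negative denominator.

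The gap is in Step~2, where you assert that the transformed defect is $H_\ell$ times a polynomial, so that only finitely many harmonic degrees occur. This is false. On $t=0$ one has $1+s=2/(1+\zeta_N)$ and $x=\zeta^T/(1+\zeta_N)$, so the defect $b_a=(1+s)^a\Phi_\ell$ becomes
\[
 f_a(\zeta)=\frac{2^{a-1}H_\ell(\zeta^T)}{(1+\zeta_N)^{a+\ell}}.
\]
This function is singular at the south pole $\zeta_N=-1$, the image of the point at infinity of the boundary plane. Its Gegenbauer expansion in $\zeta_N$, with parameter $\Lambda=\ell+(N-2)/2$, has infinitely many nonzero coefficients whenever $a+\ell\ge1$. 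That is why \eqref{lc:eq:response} is an infinite series over $j\ge0$. Only the case $\ell=0$ with $a=0$ or $b=0$ collapses to the constant term.

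Consequently Step~3 does not prove rationality. Each Gegenbauer coefficient is rational, up to the prefactor $K_{N,\ell,a,b}$, whose half-integer $\Gamma$ factors cancel in pairs for even $N$. But an infinite sum of rationals need not be rational. The missing idea is an exact summation of the series:
\begin{itemize}
\item For integer parameters, the summand is, after cancellation, a rational function of $j$ with only simple poles at integer points, $\sum_x c_x/(j+x)$.
\item Absolute convergence forces $\sum_x c_x=0$, so the logarithmic parts of the partial sums cancel.
\item The series therefore telescopes into harmonic numbers: $-\sum_x c_xH_{x-1}$ for $\ell\ge2$, a shifted starting index for $\ell=1$ (translations omitted), and the $j=1$ term removed but $j=0$ retained for $\ell=0$.
\end{itemize}
Without this step, or an equivalent closed-form evaluation of the infinite series, your argument establishes neither the identification with \eqref{lc:eq:response} nor the rationality assertion.
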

\begin{proof}
To evaluate the last pairing in \eqref{lc:eq:schur}, we compactify the half-space and decompose the boundary data into spherical harmonics.
The conformal map $\zeta=2(y+e_N)/(|x|^2+(t+1)^2)-e_N$ sends the half-space to the unit ball and pulls its Euclidean metric back to $4U^{4/(N-2)}\delta$.
Writing $\Psi(y)=U(y)\hat\Psi(\zeta(y))$ gives
\[
 B_J\Psi=2U^{N/(N-2)}(\partial_\nu\hat\Psi-\hat\Psi)\circ\zeta.
\]
Thus a boundary defect $b_a=(1+s)^a\Phi_\ell$ becomes
\[
 f_a(\zeta)=\frac{b_a}{2U^{N/(N-2)}}=
 \frac{2^{a-1}H_\ell(\zeta^T)}{(1+\zeta_N)^{a+\ell}}.
\]
Here $\nu$ is the outward normal of the unit ball; the inverse is that of $\partial_\nu-1$ on harmonic functions.
The half-space pairing $\int_{\partial\R^N_+}b_i\Psi_j$ is $2^{2-N}$ times the corresponding unit-sphere pairing.
A spherical harmonic of degree $j+\ell$ therefore has denominator $j+\ell-1$.
Gegenbauer expansion in $\varpi=\zeta_N$, with parameter $\Lambda=\ell+(N-2)/2$ and the orthogonality normalization of \cite[Section~18.3]{DLMF}, gives the normalized response
\begin{equation}\label{lc:eq:response}
 \mathcal R_{\ell}(a,b)=K_{N,\ell,a,b}
 \sum_{j\in\mathcal I_\ell}
 \frac{(N+2\ell-2)_j(a+\ell)_j(b+\ell)_j}
 {j!(N+\ell-1-a)_j(N+\ell-1-b)_j}
 \frac{2j+N+2\ell-2}{(N+2\ell-2)(j+\ell-1)},
\end{equation}
where, with $A=(N-1)/2$ and $B=A+\ell$,
\[
 K_{N,\ell,a,b}=\frac{N-4}{2}
 \frac{B(A-a,B)B(A-b,B)}{B(B,B)B(A,A-1)}.
\]
This assumes $\fint_{S^{m-1}} H_\ell^2=1$.
For $H_2=Q_A$ with $|A|=1$, multiply by $2/[m(m+2)]$.
For $\ell\ge2$, $\mathcal I_\ell=\{0,1,\ldots\}$.
For $\ell=1$, omit $j=0$ (translations).
For $\ell=0$, omit $j=1$ (dilation), but retain $j=0$ with its negative denominator.
If $\ell=0$ and $a=0$ or $b=0$, only the constant term remains and the response is $-K_{N,0,a,b}$.
The assembled defects annihilate the omitted kernel: test the particular equation against the corresponding Jacobi field and use Lemma~\ref{lem:stationarity}.
This cancellation is imposed before evaluating a singular degree-one response.

To justify the normalization, the zeroth Gegenbauer projection of $f_a$ is proportional to $2^{N+\ell-3}B(A-a,B)$, while the zeroth Gegenbauer norm is $2^{N+2\ell-2}B(B,B)$.
Multiplication by $2^{2-N}$ and division by $\mu_N$ gives $K$.
The ratio of the $j$th and zeroth coefficients is $(-1)^j(2\Lambda)_j(a+\ell)_j/[j!(N+\ell-1-a)_j]$, and the Gegenbauer norm ratio is $(2\Lambda)_j\Lambda/[j!(j+\Lambda)]$.
These identities give \eqref{lc:eq:response}.

For the integer parameters appearing here, each summand is a rational function of $j$ with simple poles, after cancellation.
Write it as $\sum_x c_x/(j+x)$; absolute convergence gives $\sum_xc_x=0$.
For $\ell\ge2$, its sum is
\[
 -\sum_xc_x H_{x-1},
\]
where $H_k=\sum_{v=1}^k1/v$.
For $\ell=1$, summation starts at one and gives $-\sum_xc_xH_x$.
For $\ell=0$, after omitting $j=1$, the value is
\[
 -c_{-1}+\sum_{x>0}c_x\left(-H_{x-1}-\frac1{x+1}\right).
\]
Thus \eqref{lc:eq:response} is computed exactly using rational arithmetic.
The constant-mode check is particularly simple:
\[
 B_J(-U/2)=U^{N/(N-2)},\qquad
 \mathcal R_0(0,0)=-\tfrac12\frac{M_{0,0}^{N-1,\partial}}{\mu_N}.
\]
\end{proof}

\subsection{The linearized problem with minimal boundary}
For minimal boundary, the same decomposition leaves a Neumann residual.
The compactification is now a hemisphere, and the homogeneous interior operator differs from the harmonic operator in the scalar-flat case.
The hemisphere inverse is recalled in Lemma~\ref{min:responselemma}; here we compute the particular solutions and boundary pairings for the full tensor decomposition.

\medskip
\paragraph{Particular solutions and boundary residuals.}
Use the source and raw form in \eqref{lc:eq:rawsource}--\eqref{lc:eq:rawpolar} with $U=(2/(1+s+t^2))^{(N-2)/2}$.
Write $J:=-\Delta-\tfrac{N(N+2)}4U^{4/(N-2)}$ and $B_J:=\partial_\nu$.
Let $H_\ell(x)$ be homogeneous harmonic and $\Phi_\ell:=H_\ell U/(1+s+t^2)$.
\begin{lemma}
For every polynomial $G(s,t)$,
\begin{equation}
 \begin{cases}
 J(G\Phi_\ell)=\dfrac{\Phi_\ell}{1+s+t^2}\mathcal T^I_\ell G &\text{in }\R^N_+,\\
 B_J(G\Phi_\ell)=-(G_t)_{t=0}\Phi_\ell &\text{on }\partial\R^N_+,
 \end{cases}
\end{equation}
where
\begin{equation}\label{min:T}
 \begin{aligned}
 \mathcal T^I_\ell\bigl((1+s+t^2)^at^b\bigr)
 ={}&2(a+b+\ell-1)(N-2a)(1+s+t^2)^at^b\\
 &+4a(a-N-1)(1+s+t^2)^{a-1}t^b
 -b(b-1)(1+s+t^2)^{a+1}t^{b-2}.
 \end{aligned}
\end{equation}
\end{lemma}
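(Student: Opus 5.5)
This is a direct computation, parallel to the scalar-flat lemma. The plan is to reduce the interior statement to a one-variable radial identity, evaluate it on the basis $(1+s+t^2)^at^b$, and then read off the Neumann trace at $t=0$.

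\textbf{Setup.} Write $\rho:=1+s+t^2$, so that
\[
U=2^{(N-2)/2}\rho^{-(N-2)/2},\qquad U^{4/(N-2)}=4\rho^{-2},\qquad \Phi_\ell=2^{(N-2)/2}H_\ell\,\rho^{-N/2}.
\]
By linearity it suffices to treat $G=\rho^at^b$, that is, $G\Phi_\ell=2^{(N-2)/2}H_\ell f$ with $f=\rho^{a-N/2}t^b$. We then use the identity
\[
\Delta(H_\ell f)=H_\ell\{4sf_{ss}+2(m+2\ell)f_s+f_{tt}\},
\]
valid because $H_\ell$ is harmonic and homogeneous of degree $\ell$: the cross term $2\nabla H_\ell\cdot\nabla f=4\ell H_\ell f_s$ comes from Euler's relation.

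\textbf{Interior equation.} Put $c=a-N/2$. Then
\[
f_s=c\rho^{c-1}t^b,\qquad f_{ss}=c(c-1)\rho^{c-2}t^b,
\]
and
\[
f_{tt}=4c(c-1)t^2\rho^{c-2}t^b+2c\rho^{c-1}t^b+4cb\,\rho^{c-1}t^b+b(b-1)\rho^ct^{b-2}.
\]
In the combination $4sf_{ss}+f_{tt}$ we replace $s+t^2=\rho-1$, which gives
\[
4c(c-1)(\rho-1)\rho^{c-2}t^b.
\]
The $\rho^{c-1}t^b$ coefficient then collects to
\[
4c(c-1)+2c(m+2\ell)+2c+4cb=2c\,(2c+N+2\ell+2b-2)=-2(N-2a)(a+b+\ell-1),
\]
using $m+1=N$. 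The $\rho^{c-2}t^b$ coefficient is $-4c(c-1)$.

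The potential term adds $-\tfrac{N(N+2)}{4}\cdot4\rho^{-2}f$. Together with the previous term this gives the $\rho^{c-2}$ coefficient
\[
-\bigl[4c(c-1)+N(N+2)\bigr]=-4a(a-N-1).
\]
This follows from the expansion $4c(c-1)=4a^2-4a(N+1)+N^2+2N$, which is the step I would double-check first. Multiplying by $-1$ for $J=-\Delta-\cdots$ and rewriting in terms of $\Phi_\ell/\rho=2^{(N-2)/2}H_\ell\rho^{-N/2-1}$ reproduces exactly the three terms of \eqref{min:T}, with the shifts $(a,b)$, $(a-1,b)$ and $(a+1,b-2)$.

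\textbf{Boundary equation.} The outward normal is $-\partial_t$, so $B_J(G\Phi_\ell)=-\partial_t(G\Phi_\ell)$ at $t=0$. Since $\partial_t\rho=2t$ vanishes on $t=0$, we have $\partial_t\Phi_\ell=0$ there. Hence
\[
B_J(G\Phi_\ell)=-(G_t)_{t=0}\Phi_\ell
\]
on $\partial\R^N_+$. This is simpler than the Robin case because the Neumann coefficient vanishes.

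\textbf{Main obstacle.} The only real obstacle is the bookkeeping of the $\rho^{c-2}$ coefficient, namely combining $s+t^2=\rho-1$ with the potential term so that the constant $-4a(a-N-1)$ emerges and no stray $t^{b+2}$ terms survive. Every $t^2$ is absorbed into $\rho$ before collecting coefficients, so I expect the computation to close.
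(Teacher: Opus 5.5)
Your route is the paper's own: apply $\Delta(H_\ell f)=H_\ell\{4sf_{ss}+2(m+2\ell)f_s+f_{tt}\}$ to $f=\rho^{a-N/2}t^b$, absorb $s+t^2=\rho-1$, collect three monomials, and get the boundary identity from $\partial_t\rho=0$ at $t=0$. Your formula for $f_{tt}$, the $\rho^{c-1}$ coefficient $-2(N-2a)(a+b+\ell-1)$, the expansion $4c(c-1)=4a^2-4a(N+1)+N^2+2N$, and the boundary step are all correct.

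\textbf{The slip.} There is a sign error at exactly the step you flagged. You add $-N(N+2)\rho^{-2}f$ to the Laplacian coefficient $-4c(c-1)$ before multiplying by $-1$. You then assert $-[4c(c-1)+N(N+2)]=-4a(a-N-1)$. With your own (correct) expansion, the left side equals $-4a(a-N-1)-2N(N+2)$, so this identity is false. Carried out literally, your steps compute the coefficient for $-\Delta+N(N+2)\rho^{-2}$, namely $4a(a-N-1)+2N(N+2)$; you only land on the stated formula because of the false equality.

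\textbf{The fix.} Since $J=-\Delta-N(N+2)\rho^{-2}$, the correct bookkeeping is $J(H_\ell f)/H_\ell=-\bigl[\Delta(H_\ell f)/H_\ell+N(N+2)\rho^{-2}f\bigr]$. The $\rho^{c-2}t^b$ coefficient is then $4c(c-1)-N(N+2)=4a(a-N-1)$, and the other two coefficients are as you found. With this correction your computation proves the lemma as stated.
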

\begin{proof}
For a radial coefficient $f(s,t)$,
\[
 \Delta(H_\ell f)=H_\ell\{4s f_{ss}+2(m+2\ell)f_s+f_{tt}\}.
\]
Insert $f=(1+s+t^2)^{a-N/2}t^b$ and subtract $N(N+2)(1+s+t^2)^{-2}f$.
Collection of the three monomials gives \eqref{min:T}.
At $t=0$, both $\partial_t(1+s+t^2)$ and $\partial_tU$ vanish, proving the boundary identity.
\end{proof}

Write $S_h=\Phi_\ell F(s,t)/(1+s+t^2)$, expand $F$ in the basis $(1+s+t^2)^at^b$, and solve $\mathcal T^I_\ell G=-F$.
Process the weights $2a+b$ in descending order, and within a weight process $b$ in descending order.
The two off-diagonal monomials in \eqref{min:T} occur later in this ordering.
The factor $2(a+b+\ell-1)(N-2a)$ is nonzero whenever an actual retained source coefficient has to be eliminated.
The program \texttt{verify\_compactness.py} verifies both this property and the exact zero remainder for every generated source.

Set $w_h=G\Phi_\ell$ and $b_h=B_Jw_h$.
The particular solution is in the energy space in the ordinary range.
Solve $J\Psi_h=0$, $B_J\Psi_h=b_h$, and put $v_h:=w_h-\Psi_h$.
The unnormalized pairing again satisfies \eqref{lc:eq:schur}; division by $C_N^I$ gives the minimal-boundary entries $E_{ij}$.
The assembled boundary defects annihilate the Jacobi modes: test $Jw=-S$ against a Jacobi field and use Lemma~\ref{lem:stationarity}.
The verifier also checks these beta-integral compatibility identities independently.

\medskip
\paragraph{Integrals for the scalar correction.}
\begin{proposition}[Minimal-boundary pairings]\label{prop:minimal-response}
The scalar-correction pairing is given by \eqref{lc:eq:schur} and \eqref{min:series}, with the normalization \eqref{min:eq:normalization}.
For every parameter tuple used in Theorem~\ref{thm:ordinary-endpoints}, its value is a rational polynomial of degree at most two in $\pi^{-1}$.
\end{proposition}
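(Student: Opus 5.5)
We mirror the scalar-flat computation in Proposition~\ref{prop:scalar-flat-response}, now on the hemisphere. By \eqref{lc:eq:schur}, the entry $F(h_i,h_j)$ splits into three pieces.

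The first is the raw coefficient $\mathcal Q_{ij}$. The second is the pair of particular-solution pairings $\int S_iw_j+\int_{\partial}b_jw_i$. The third is the response pairing $-\int_{\partial}b_i\Psi_j$.

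The first two pieces are integrals of polynomials in $(s,t)$ against powers of $(1+s+t^2)$, multiplied by $H_\ell^2$. After tangential angular averaging they reduce to the moments \eqref{min:moment} and their boundary analogues. We normalize these by $C_N^I$, a fixed beta-type factor to be defined in \eqref{min:eq:normalization}, chosen as $|S^{N-2}|$ times the moment with the smallest admissible exponents.

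The ratio of \eqref{min:moment} to the normalizing moment is a ratio of Gamma functions. Its $\Gamma((j_1+1)/2)$ factor, together with the shifted $\Gamma(\nu-\cdot)$, yields rational numbers when $j_1$ is even. When $j_1$ is odd, each half-integer shift introduces one factor $\Gamma(1/2)^{\pm2}$ relative to the even case, that is, a factor of $\pi^{\pm1}$. This happens whether $N$ is odd or even. We will arrange the normalization so that the bulk pairings are rational plus rational multiples of $\pi^{-1}$. The boundary pairings at $t=0$ have no $j_1$ factor and are rational.

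The third piece comes from the stereographic compactification $w=Uv$. Under it, the Neumann defect $b_a=(1+s)^a\Phi_\ell$ becomes boundary data $f_a$ on the equator of $S^N_+$. We expand $f_a$ in equatorial spherical harmonics of degree $L=\ell+j$ using Gegenbauer polynomials with parameter $\Lambda=\ell+(N-3)/2$, as in the scalar-flat case. Lemma~\ref{min:responselemma} gives the response $r_LY_L$, so the pairing becomes a series \eqref{min:series}:
\[
\sum_j c_j(a)c_j(b)\,r_{\ell+j}\,\|Y\|^2 .
\]

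The key observation concerns the factor $r_L$. It contains $\Gamma((L-1)/2)/\Gamma(L/2)$ multiplied by $\Gamma((L+N)/2)/\Gamma((L+N+1)/2)$. For each fixed parity of $L$, and $N$ fixed, this is a rational function of $L$ times $\pi^{\pm1}$. Indeed, it is a product of two half-integer Gamma ratios whose parities are correlated, so the net power of $\pi$ is $-1$, $0$, or $+1$ depending on the parities of $L$ and $N$.

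We therefore split the series by the parity of $j$. On each parity class the summand is $\pi^{\epsilon}$ times a rational function of $j$, once the Gegenbauer coefficients are rewritten via Pochhammer symbols. After partial fractions, the sum telescopes. The procedure is the one in the proof of Proposition~\ref{prop:scalar-flat-response}, with step $2$ in place of step $1$ on each parity class.

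The telescoped sums consist of finite harmonic-type sums, which are rational, together with possibly a constant such as $\sum(-1)^j/(2j+1)=\pi/4$ or $\log 2$. These constants arise when the parity split leaves a nonzero sum of residues on a class.

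The main obstacle is this last point. Absolute convergence gives $\sum_x c_x=0$ only for the full series, not for each parity class separately. We must check that the residue sums on each class also vanish, or at least that the leftover digamma differences $\psi(x+\tfrac12)-\psi(x)$ produce only $\pi$, never $\log 2$, once combined with the $\pi^{\pm1}$ prefactors.

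We plan to verify this by pairing the even and odd classes through the Gegenbauer structure. The half-integer shifts of $(N+L)/2$ should make the two classes carry opposite $\pi$ powers. With the rational factors from the normalization, the total becomes a polynomial in $\pi^{-1}$. Its degree is at most two: one $\pi^{-1}$ comes from $r_L$, and one comes from the normalization of the Gegenbauer coefficients of $f_a$, which involve odd moments in $\zeta_N$.

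The exclusions of $L=1$ (the Jacobi modes, whose defect vanishes by Lemma~\ref{lem:stationarity}) and $L=0$ (zero response, by Lemma~\ref{min:responselemma}) are imposed before summing. Finally, we record that every parameter tuple used in Theorem~\ref{thm:ordinary-endpoints} lies in the convergent range of \eqref{min:moment}.
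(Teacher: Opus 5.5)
\textbf{There is a genuine gap.} The step you call the key observation is false for odd $N$, and the plan breaks there. In $r_L$ the arguments $(L-1)/2$ and $(L+N+1)/2$ differ by the integer $(N+2)/2$ only when $N$ is even. For odd $N$ the only integer pairing puts $\Gamma((L-1)/2)$ and $\Gamma((L+N)/2)$ together in the numerator, so each parity class carries the square of a non-rational Gamma ratio. For example, for $N=5$ and $L=2n$,
\[
 r_{2n}=\Bigl(\frac{\Gamma(n-\frac12)}{\Gamma(n)}\Bigr)^2
 \frac{(n-\frac12)(n+\frac12)(n+\frac32)}{2n(n+1)(n+2)}
 =\pi\binom{2n-2}{n-1}^2 16^{1-n}\,
 \frac{(n-\frac12)(n+\frac12)(n+\frac32)}{2n(n+1)(n+2)}.
\]
This is a genuine hypergeometric term, not $\pi^{\epsilon}$ times a rational function of $n$. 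So there is no partial-fraction decomposition, and the harmonic-number summation of Proposition~\ref{prop:scalar-flat-response} does not apply. Sums of such terms against rational weights can produce constants outside $\mathbb Q[\pi^{\pm1}]$, e.g.\ $\sum_{n\ge0}\binom{2n}{n}^2 16^{-n}/(2n+1)=4G/\pi$ with $G$ Catalan's constant.

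For even $N$ your rationality claim is correct, and the obstacle you flag is real, though slightly misplaced. The total residue of a single class does vanish by absolute convergence. What is needed is cancellation separately on the two cosets $\mathbb Z$ and $\tfrac12+\mathbb Z$ where the poles in $k$ lie; otherwise a multiple of $\psi(\tfrac12)-\psi(1)=-2\log 2$ survives. ``Pairing the even and odd classes through the Gegenbauer structure'' is a hope, not an argument, and your final degree count rests on it.

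\textbf{What closes the gap.} The missing ingredient is a per-tuple summation certificate, which is what the paper uses. For each parity class $j=2k+j_0$ and each of the finitely many retained parameter tuples, Gosper's algorithm produces a rational $g(k)$ with $t(k+1)g(k+1)-t(k)g(k)=t(k)$, checked by exact polynomial multiplication. This treats the hypergeometric terms (odd $N$) and the rational terms (even $N$) uniformly, and gives each class sum as $\lim_K t(K)g(K)-t(0)g(0)$. For odd $N$ the limit is zero, since $g(k)=O(k)$ while $t(k)$ decays faster than $k^{-1}$. For even $N$ it is a rational constant read off from leading coefficients. Thus each class sum lies in $\mathbb Q\,t(0)+\mathbb Q$. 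The $\pi$-content of the prefactor $K^I_{N,\ell,a,b}$ and of $r_{\ell+j_0}$ inside $t(0)$ then gives the degree-two bound.

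\textbf{Smaller slips.} The Gegenbauer parameter is $\Lambda=\ell+(N-2)/2$, not $\ell+(N-3)/2$. For odd $N$ the normalized boundary moments carry a factor $\pi^{-1}$ rather than being rational. For even $N$ an odd $j_1$ does not change the power of $\pi$ in the bulk moments.
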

\begin{proof}
We evaluate the three pairings in \eqref{lc:eq:schur} using the following integrals and the response in Lemma~\ref{min:responselemma}.
All matrices in this part divide out the common positive factor
\begin{equation}\label{min:eq:normalization}
 C_N^I:=|S^{N-2}|2^{N-2}\mu_N^I,\qquad
 \mu_N^I:=\frac{\Gamma((N-1)/2)\sqrt\pi\,\Gamma((N-4)/2)}{4\Gamma(N-2)}
 \quad(N>4).
\end{equation}
For a harmonic of tangential spherical average square one, use the radial integrals in \eqref{min:moment} and the boundary integrals in Section~\ref{sec:source}.
Every invoked integral is in its convergent range.

A defect $b_a=(1+s)^a\Phi_\ell$ transforms on the equator to
\[
 f_a(\zeta)=\frac{2^{a-1}H_\ell(\zeta^T)}{(1+\zeta_N)^{a+\ell}}.
\]
Let $A=(N-1)/2$, $B=A+\ell$, $2\Lambda=N+2\ell-2$, and define
\[
 K^I_{N,\ell,a,b}=\frac{B(A-a,B)B(A-b,B)}{4\mu_N^I B(B,B)}.
\]
The same Gegenbauer coefficients derived in Section~\ref{lc:sec:moments}, but now with the response of Lemma~\ref{min:responselemma}, give
\begin{equation}\label{min:series}
 \mathcal R^I_\ell(a,b)=K^I_{N,\ell,a,b}
 \sum_{j\in\mathcal I_\ell}
 \frac{(2\Lambda)_j(a+\ell)_j(b+\ell)_j}
 {j!(N+\ell-1-a)_j(N+\ell-1-b)_j}
 \frac{2j+2\Lambda}{2\Lambda}\,r_{j+\ell}.
\end{equation}
The set $\mathcal I_\ell$ omits only $j+\ell=1$; the $j+\ell=0$ term has response zero.
If $\ell=0$ and $a=0$ or $b=0$, the response is zero by the constant-mode observation.
For comparison, the scalar-flat Robin response is $-K_{N,0,a,b}$ in \eqref{lc:eq:response}.

\medskip
\paragraph{Summation of the boundary spectral series.}
For every parameter tuple used here, each parity admits a finite telescoping evaluation, with the term at infinity included as follows.
Separate $j=2k+j_0$ into its two parities, with starting indices $(0,1)$ for $\ell\ge2$, $(2,1)$ for $\ell=1$, and $(2,3)$ for $\ell=0$.
For each retained parameter tuple, the term $t(k)$ admits a rational function $g(k)$ such that
\[
 t(k+1)g(k+1)-t(k)g(k)=t(k).
\]
The code uses Gosper's algorithm \cite{Gosper} to find the rational function $g$, and checks the displayed identity by exact polynomial multiplication.
Thus
\[
 \sum_{k=0}^\infty t(k)=\lim_{K\to\infty}t(K)g(K)-t(0)g(0).
\]
For odd $N$, $g(k)=O(k)$ and $t(k)=O(k^{-[N+1-2(a+b+\ell)]})$, with the bracket strictly greater than one, so the endpoint is zero.
For even $N$, the term is a rational function of $k$.
Its rational antidifference can have a nonzero limit at infinity: the program computes that constant from the leading numerator and denominator coefficients and retains it in the exact sum.

Every final ordinary entry is a polynomial of degree at most two in $\pi^{-1}$ with rational coefficients.
The scalar-flat entries are rational by Proposition~\ref{prop:scalar-flat-response}.
\end{proof}

\subsection{Positive definiteness of the scale-derivative matrices}
The assembly in Section~\ref{sec:ordinary-endpoints} determines the energy matrix, and \eqref{lc:eq:scale} determines its negative scale derivative.
The remaining step is a finite positivity check on all retained blocks.
\begin{theorem}[Computer-assisted positivity]\label{thm:ordinary-endpoints}
For $(\sigma,\kappa)=(0,2)$ the matrix $P$ in \eqref{lc:eq:scale} is positive definite on every nonzero block of the full ordinary Fermi jet space for general boundary in $5\le N\le14$ and umbilic boundary in $5\le N\le21$.
For $(\sigma,\kappa)=(1,0)$ the corresponding ranges are $5\le N\le14$ and $5\le N\le20$.
\end{theorem}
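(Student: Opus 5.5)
The proof will be a finite exact computation organized by the block structure of Proposition~\ref{prop:polynomial-decomposition}. Since distinct tangential harmonic degrees and the three tensor types are orthogonal for both the raw and corrected forms, and $\mathsf D$ is diagonal in any basis of homogeneous tensors, the matrix $P=-(\mathsf DE+E\mathsf D)$ in \eqref{lc:eq:scale} is block diagonal. For each $N$ in the stated ranges and each boundary class, I would enumerate the blocks.

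\emph{Scalar-derived type.} For each harmonic degree $\ell$, collect all basis elements $s^kt^{j_1}\times$(trace-free vector) with total degree $r+j_1\le d_N^{\rm ord}$. The $j_1=0$ elements use only the radial vector \eqref{lc:eq:Sradial}, and $j_1=1$ is dropped in the umbilic case.

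\emph{Vector-derived and tensor-derived types.} Form the analogous lists from the vector and tensor families in the same proposition.

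\emph{Scalar-derived entries.} For each block I assemble $E$ exactly. The raw part $\mathcal Q_{ij}$ comes from \eqref{lc:eq:rawpolar}, using the contractions $B_x,B_y,D_x,D_y,D_2$, the angular Gram matrix $G_S$, the Laplacian map (for the $\partial h_i\cdot\partial h_j$ term), and the moments \eqref{lc:eq:moment} or \eqref{min:moment}. I then add the correction via \eqref{lc:eq:schur}:
\begin{enumerate}
\item solve $\mathcal T_\ell G=-F$ (or $\mathcal T^I_\ell G=-F$) by triangular elimination, checking that every pivot is nonzero;
\item compute the bulk and boundary pairings by beta integrals;
\item evaluate the boundary response through \eqref{lc:eq:response}, summed by the harmonic-number formula, or through \eqref{min:series}, summed by Gosper telescoping including the limit term at infinity for even $N$.
\end{enumerate}
Before any degree-one or dilation response is used, I would verify that the assembled defects annihilate the omitted Jacobi modes; otherwise the response is ill defined. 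The vector and tensor blocks have zero scalar source, so they need only the raw term; for tensor blocks this is the explicit formula $E_{(k,j_1),(h,j_2)}$.

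\emph{Exact arithmetic.} All scalar-flat entries are rational after dividing by $C_N^{\rm alg}$. All minimal-boundary entries lie in $\mathbb Q[\pi^{-1}]$ of degree at most two after dividing by $C_N^I$, by Propositions~\ref{prop:scalar-flat-response} and~\ref{prop:minimal-response}. Hence $P$ can be formed exactly. I would certify positive definiteness by exact $LDL^{\mathsf T}$ (Sylvester) elimination, showing every pivot is positive. In the minimal-boundary case each pivot is a rational function of $\pi^{-1}$, so its sign would be certified by evaluating it with rigorous rational enclosures of $\pi$, for instance $3.14159<\pi<3.14160$, and checking that the sign is stable over the whole enclosure interval. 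Dimensions $N=3,4$ have $d_N^{\rm ord}=0$, so there is nothing to check there, consistent with the statement starting at $N=5$. The \texttt{verify\_compactness.py} program would carry out exactly this enumeration and certification, and the proof would consist of describing it and stating its output. To guard against bookkeeping errors, I would check the dimension count from the completeness identity in Proposition~\ref{prop:polynomial-decomposition}, confirming that the blocks exhaust $\mathscr J_d$ (or $\mathscr J_d^{\rm u}$) for each $d\le d_N^{\rm ord}$.

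The main obstacle is the scalar correction in the higher-multiplicity scalar blocks. These mix several normal powers $t^{j_1}$ with the radial-constrained $j_1=0$ mode, and their positivity margin is smallest near the endpoint dimensions $14$, $20$, and $21$. The infinite spectral sums must be evaluated exactly: in particular, the nonzero limit at infinity of the antidifference for even $N$ cannot be dropped, since a small error there could flip a pivot. I would first test the pipeline on the constant-mode identity $\mathcal R_0(0,0)=-\tfrac12 M^{N-1,\partial}_{0,0}/\mu_N$ and on the Jacobi-compatibility identities. I would also cross-check a few entries against truncated numerical sums before trusting the exact certificates.
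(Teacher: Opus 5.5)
Your proposal matches the paper's proof: block-by-block exact assembly of $E$ from Proposition~\ref{prop:polynomial-decomposition}, the beta moments, and the boundary-response series, followed by an $LDL^{\mathsf T}$ pivot-positivity certificate, with rational arithmetic in the scalar-flat case and a rigorous enclosure of $\pi$ in the minimal-boundary case. The only difference is in implementation: the paper encloses $\pi$ via Machin's identity to width below $2^{-256}$ and propagates the pivots in outward-rounded 256-bit interval arithmetic, rather than forming symbolic pivots in $\mathbb Q(\pi^{-1})$; a five-digit enclosure like your example may be too coarse near the endpoint dimensions, but your sign-stability check would detect that and force a refinement.
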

\begin{proof}
Proposition~\ref{prop:polynomial-decomposition}, the integrals \eqref{lc:eq:moment} and \eqref{min:moment}, and the boundary pairings in Propositions~\ref{prop:scalar-flat-response} and~\ref{prop:minimal-response} determine every matrix entry.
For the scalar-flat equation, their entries and $LDL^{\mathsf T}$ pivots are rational.
For the minimal-boundary equation, first store every entry as a rational polynomial in $\pi^{-1}$.
Machin's identity
\[
 \pi=16\arctan(1/5)-4\arctan(1/239)
\]
and alternating Taylor bounds give a rational enclosure narrower than $2^{-256}$.
The program then uses outward-rounded dyadic interval arithmetic with 256 fractional bits to enclose every $LDL^{\mathsf T}$ pivot.
At each division, the preceding pivot interval has a positive lower endpoint.
All resulting pivot lower bounds are strictly positive.

Table~\ref{tab:endpoint-certificates} specifies the complete list of nonempty matrix groups.
The group at a fixed dimension consists of all scalar, vector, and tensor multiplicity spaces allowed by \eqref{crit:eq:fermi}.
The displayed counts can be reconstructed from the basis restrictions and are checked during matrix reconstruction.
The scalar-flat count is $302$ and the minimal-boundary count is $276$.
The ancillary program \texttt{verify\_compactness.py} reconstructs these matrices from the integration and inverse formulas and computes their rational pivots or pivot intervals.
From the submission directory, the command \texttt{python anc/verify\_compactness.py full} checks all $578$ matrices and the independent finite identities.
The shorter \texttt{quick} command checks those identities and six sample matrices.
\end{proof}
\begin{table}[ht]
\centering\small
\begin{tabular}{@{}lccr@{}}\toprule
Boundary class & Dimensions & Blocks per dimension & Total\\ \midrule
General, both equations & $5,6$ & $1$ & $2$\\
 & $7,8$ & $5$ & $10$\\
 & $9,10$ & $10$ & $20$\\
 & $11,12$ & $13$ & $26$\\
 & $13,14$ & $16$ & $32$\\ \midrule
Umbilic, both equations & $7,8$ & $2$ & $4$\\
 & $9,10$ & $6$ & $12$\\
 & $11,12$ & $11$ & $22$\\
 & $13,14$ & $14$ & $28$\\
 & $15,16$ & $17$ & $34$\\
 & $17,18$ & $20$ & $40$\\
 & $19,20$ & $23$ & $46$\\
Umbilic, scalar-flat only & $21$ & $26$ & $26$\\\bottomrule
\end{tabular}
\caption{Positive-definite scale-derivative matrices, grouped by dimension and boundary geometry. The umbilic polynomial spaces in dimensions five and six are empty in the ordinary degree range.}
\label{tab:endpoint-certificates}
\end{table}

\section{Coercivity for arbitrary constant mean curvature}\label{sec:ordinary}
The endpoint calculations give the sign for fixed scalar-flat or minimal-boundary data.
For arbitrary constant mean curvature through dimension eight, only degree-one and degree-two ordinary tensors occur.
We prove their scale inequality directly, keeping the boundary response in the mixed-degree block.

\subsection{The linearized problem and the quadratic form}
Throughout the degree-two calculation, let $N=7$ or $8$; the degree-one conclusion will also be proved for $N=5,6$.
Put $m=N-1$, $y=(x,t)\in\R^m\times[0,\infty)$, and $L=-\kappa\in\R$.
Set
\[
 U=\left(\frac{2}{1+|x|^2+(t-L)^2}\right)^{(N-2)/2}.
\]
The outward normal, energy, and Jacobi form are those of Section~\ref{sec:source}.

We use the corrected quadratic energy $F(k)$ in \eqref{eq:quadratic-action}.
The notation used for its translated and scaled version is
\begin{equation}
  F_h(\xi,\varepsilon):= F(h_{\xi,\varepsilon}),\qquad
 h_{\xi,\varepsilon}(x,t):=h(\xi+\varepsilon x,\varepsilon t).
\end{equation}
Here $\xi\in\R^{N-1}$ and $\varepsilon>0$.
These are the components of the logarithmic metric after translating and dilating coordinates and removing the constant Euclidean scale factor.
Conformal invariance of the energy identifies this definition with the quadratic coefficient evaluated at the bubble $\varepsilon^{-(N-2)/2}U((x-\xi)/\varepsilon,t/\varepsilon)$ in the original coordinates.
In particular, $F_h= F_h(0,1)= F(h)$ below.
A degree-two model is ordinary precisely when $N>6$.
All scale derivatives keep $\kappa$ fixed.
Auxiliary symbols for polynomials and integrals are local to the corresponding calculation.

\medskip
Let $W$ be a tangential algebraic Weyl tensor and $H^W_{ab}(x)=W_{aibj}x_ix_j$.
Let $A,B\in\operatorname{Sym}^2_0(\R^m)$ and
\[
 C_{ab,c}=C_{ba,c},\qquad \sum_a C_{aa,c}=0,\qquad
 C(x)_{ab}=C_{ab,c}x_c.
\]
All normal components of the following tensor are zero:
\begin{equation}\label{ord:eq:jet}
 h=H^W(x)+tA+t^2B+tC(x).
\end{equation}
\begin{theorem}\label{ord:thm:main}
For $N=7,8$ and each fixed $\kappa\in\R$, there is $c_{N,\kappa}>0$ such that the corrected quadratic model of \eqref{ord:eq:jet} satisfies
\[
 -\varepsilon\partial_\varepsilon F_h(0,\varepsilon)
 \ge c_{N,\kappa}\left[
 \varepsilon^2|A|^2+
 \varepsilon^4\bigl(|B|^2+|C|^2+|W|^2\bigr)\right]
 \qquad(\varepsilon>0).
\]
The constants can be chosen uniformly on compact intervals of $\kappa$.
\end{theorem}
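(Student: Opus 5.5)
The plan is to use homogeneity to reduce the inequality to a statement about a few finite quadratic forms, and then to check signs block by block.

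\textbf{Reduction by homogeneity and orthogonality.} Write $h=J_1+J_2$ with $J_1=tA$ homogeneous of degree one and $J_2=H^W+tC+t^2B$ homogeneous of degree two. Then $h_{0,\varepsilon}=\varepsilon J_1+\varepsilon^2J_2$. Since $F$ is quadratic, and extends to these ordinary polynomials by cutoff convergence when $N=7,8$,
\[
 F_h(0,\varepsilon)=\varepsilon^2F(J_1)+2\varepsilon^3F(J_1,J_2)+\varepsilon^4F(J_2).
\]
Consequently
\[
 -\varepsilon\partial_\varepsilon F_h(0,\varepsilon)
 =-2\varepsilon^2F(J_1)-6\varepsilon^3F(J_1,J_2)-4\varepsilon^4F(J_2),
\]
which is exactly the form $P=-(\mathsf DE+E\mathsf D)$ of \eqref{lc:eq:scale} evaluated on $(\varepsilon J_1,\varepsilon^2J_2)$.

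Next I would decompose according to Proposition~\ref{prop:polynomial-decomposition}; only its orthogonality statement and the generator formulas are needed, and these do not depend on $\kappa$.
\begin{itemize}
\item $tA$ and $t^2B$ are scalar-derived with $\ell=2$, $k=0$, since $A_{ab}=\partial_a\partial_bQ_A$ with $Q_A$ harmonic.
\item $H^W$ satisfies $H^Wx=0$ and $\Div H^W=0$ because $W$ is trace free with the Weyl symmetries, so it is tensor-derived and has zero scalar source.
\item $tC(x)$ splits into scalar ($\ell=1$, and $\ell=3$ from the harmonic part), vector and tensor pieces, all of tangential degree one.
\end{itemize}
By orthogonality of distinct harmonic types, $F(J_1,J_2)=F(tA,t^2B)$. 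Hence the claim reduces to three statements, each with a constant uniform for $\kappa$ in compact sets:
\begin{itemize}
\item[(i)] $-F(tC)\ge c|C|^2$ and $-F(H^W)\ge c|W|^2$;
\item[(ii)] $-F(tA)\ge c|A|^2$, which is also the degree-one statement for $N=5,6$;
\item[(iii)] the $2\times2$ form $\begin{pmatrix}-2F(tA)&-3F(tA,t^2B)\\-3F(tA,t^2B)&-4F(t^2B)\end{pmatrix}$, restricted to a common $\ell=2$ harmonic $Q_A\propto Q_B$ by Schur's lemma, is positive definite.
\end{itemize}

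\textbf{Sign of the scalar correction.} For a source in a harmonic sector $\ell\ge2$, the corrector $v_h$ lies in the sector where $\B$ is positive. Indeed, the negative direction is radial ($\ell=0$) and the Jacobi fields have $\ell\le1$. Therefore $c_N^{-1}\int S_hv_h=-c_N^{-1}\B(v_h,v_h)\le0$, so $F\le\mathcal Q$ on these sectors. For the tensor and vector pieces in (i) the source vanishes and $F=\mathcal Q$. I would evaluate the raw form \eqref{lc:eq:rawpolar} explicitly with the shifted bubble weight $(1+s+(t-L)^2)^{-(N-2)}$, using the Gram and Laplacian formulas of Proposition~\ref{prop:polynomial-decomposition}. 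The $t$-integrals then become explicit functions of $L$, namely incomplete beta integrals on $[-L,\infty)$. I would show they are negative for every $L$, which gives (i) for those pieces. The $\ell=1,3$ scalar parts of $tC$ would be handled the same way, using $F\le\mathcal Q$; for $\ell=1$ I would first remove the Jacobi translations via Lemma~\ref{lem:stationarity}.

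\textbf{The main obstacle: the $(A,B)$ block for arbitrary $\kappa$.} Here the cross term carries a corrector pairing $-\B(v_{tA},v_{t^2B})$. The inequality $F\le\mathcal Q$ alone does not give positivity of the $2\times2$ matrix with weights $2,3,4$, because $9g_{12}^2\le8g_{11}g_{22}$ is not automatic. My first attempt would be to compute this block exactly in the single sector $\ell=2$:
\begin{itemize}
\item build particular solutions $G\Phi_2$ for the shifted bubble, by the analogue of the lemmas with $\mathcal T_\ell$ and $\mathcal T^I_\ell$, which leaves a Robin defect on $t=0$;
\item obtain the boundary response of $\partial_\nu-\tfrac N2\kappa U^{2/(N-2)}$ on the spherical cap from its Gegenbauer expansion;
\item bound the tail of the series explicitly, with the spectral gap of the $\ell\ge2$ modes giving a uniform lower bound for the Robin inverse.
\end{itemize}
This yields $F(tA)$, $F(tA,t^2B)$ and $F(t^2B)$ as explicit continuous functions of $\kappa$. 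I would then prove the two diagonal inequalities and the determinant inequality, first on compact $\kappa$-intervals by continuity, and then for $|\kappa|\to\infty$ via the limiting rescaled profiles (for large positive $\kappa$ the bubble approaches the interior sphere model). Strict positivity and continuity in $\kappa$ give constants uniform on compact intervals.

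If the exact route proves too heavy, the fallback is cruder. I would bound $g_{12}$ by Cauchy--Schwarz, $|g_{12}|\le\sqrt{g_{11}g_{22}}$, and control $g_{11},g_{22}$ from above through the projected-inverse norm of $\B$ on the $\ell=2$ sector, so that the raw part $-\mathcal Q$ dominates with room to spare in $N=7,8$.
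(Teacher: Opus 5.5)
Your reduction matches the paper: the homogeneity identity turns $-\varepsilon\partial_\varepsilon F_h$ into $P=-(\mathsf DE+E\mathsf D)$ on $(\varepsilon J_1,\varepsilon^2J_2)$, and rotation invariance splits off $H^W$, the sectors of $tC$, and the $(tA,t^2B)$ block. (There is no tensor-derived piece at tangential degree one, which is harmless.) For $H^W$ and the hook part of $tC$ the raw form is indeed negative. The gap is in the sectors with a scalar source.

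For the totally symmetric trace-free part of $tC$ ($\ell=3$) one has $\Div(tC)=0$, and a direct computation gives
\[
 \frac{\mathcal Q(tC)}{\mathfrak c_N|C|^2}
 =\int_0^\infty c(t)^{-\nu_0}\Bigl[\tfrac34t^2-\frac{c(t)}{4(N-5)}\Bigr]\dd t .
\]
At $\kappa=0$ this equals $\pi/8$ for $N=7$ and $1/6$ for $N=8$, and it is positive for every $L=-\kappa\ge0$. So $F\le\mathcal Q$ proves nothing there. You must show quantitatively that $c_N^{-1}\B(v_h,v_h)$ exceeds $\mathcal Q$ for every $\kappa$; the $C^V(v)$ sector needs the same treatment. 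The paper does this with an exact particular solution. For $\ell=3$ it is $v_p=-\frac{N-2}{12}(2t+L)\Phi_3$, which solves the bulk equation and leaves the Robin defect $\frac{N-2}{6}\Phi_3$. This is combined with \eqref{lc:eq:schur} and a Rayleigh lower bound for the remaining boundary response (trial $\Phi_3$; three trials for $C^V$). That yields \eqref{ord:eq:Sbound} and \eqref{ord:eq:Vbound}, whose bracket $L+c_0w/((N-5)M)$ is positive.

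For the $(tA,t^2B)$ block neither of your routes closes. For $\kappa\ne0$ the compactified domain is a spherical cap that is not a hemisphere. The separated responses are then hypergeometric values at a nonspecial angle, so there is no closed series like Lemma~\ref{min:responselemma}. Also, continuity in $\kappa$ only upgrades pointwise positivity to uniformity on compact intervals; it does not produce positivity.

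Your fallback fails by your own criterion: Cauchy--Schwarz gives $g_{12}^2\le g_{11}g_{22}$, whereas you need $9g_{12}^2<8g_{11}g_{22}$. Moreover, even $P_{11}>0$ requires a quantitative lower bound on the response, since for $L=-\kappa<0$ the response-free part of $E_{11}$ is $-Lw/8>0$.

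The missing idea is the following. With the explicit particular solution of Lemma~\ref{lem:normal-particular}, the whole scalar inverse enters through a single positive number, $E=E^0-\theta\mathbf d\mathbf d^{\mathsf T}$. Then $\det P$ is a concave quadratic in $\theta$ with leading coefficient $-\mathbf d_1^2\mathbf d_2^2$. It therefore suffices to do three things:
\begin{enumerate}
\item bound $\theta$ below by a Rayleigh trial, giving $\theta_-$;
\item bound $\theta$ above by explicit positive supersolutions, chosen differently for $L>0$ and $L\le0$, giving $\theta_+$;
\item certify $P_{11}>0$ and $\det P(\theta_\pm)>0$ for all $L\in\R$.
\end{enumerate}
Proposition~\ref{prop:normal-scale-positive} carries out the last step by exact polynomial identities and Bernstein-coefficient certificates, after the substitutions $x=L/\sqrt{1+L^2}$ and $\upsilon=(K+\sqrt{1+K^2})^{-1}$.
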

We first identify the angular spaces containing the scalar correction.
For \eqref{ord:eq:jet}, $\partial_i\partial_jh_{ij}=0$.
The quadratic coefficient is given by \eqref{lc:eq:rawsource}--\eqref{lc:eq:rawpolar}.
Its scalar sources have tangential harmonic degrees one, two, and three.
Metric stationarity gives orthogonality to the Jacobi kernel, so the inverse can be computed in these angular spaces without changing $F$.

\begin{lemma}\label{ord:lem:positive}
On tangential angular degree $\ell\ge2$, $\B$ is positive.
On degree one, it is nonnegative and its nullspace consists of the tangential translation Jacobi fields.
It has a bounded inverse on the corresponding kernel complements for fixed finite $L$.
\end{lemma}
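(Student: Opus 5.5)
Work on the hemisphere (or spherical cap) model given by the stereographic compactification of the bubble. Since $U$ has $\sigma=1$ and $\kappa=-L$, the metric $U^{4/(N-2)}\delta$ is the unit round metric restricted to a spherical cap $\Sigma_L\subset S^N$, whose boundary is a small sphere with constant geodesic curvature $\kappa$. Put $w=Uv$. Conformal covariance, as in the minimal-boundary paragraph of Section~\ref{sec:source}, turns $\B$ into
\[
\B(w,w)=\int_{\Sigma_L}\bigl(|\nabla v|^2-Nv^2\bigr)-\kappa'\int_{\partial\Sigma_L}v^2,
\]
with a constant $\kappa'$ depending on $L$. The equatorial $O(N-1)$ symmetry (the tangential rotations fixing the axis of the cap) is preserved. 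Decompose $v$ into tangential spherical harmonics of degree $\ell$ on $\partial\Sigma_L\cong S^{N-1}$. This makes the form a direct sum of one-dimensional Sturm--Liouville problems in the polar angle $\theta\in[0,\theta_L]$, each with a Robin condition at $\theta_L$.

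For each $\ell$, the key step is to compare with an explicit solution of the Jacobi equation. For $\ell=1$ the tangential translation fields $Z_a=\partial_aU$ give, after division by $U$, a positive solution $\phi_1(\theta)Y_1$ of the linearized equation that satisfies the Robin condition exactly. Here $\phi_1$ is a multiple of $\sin\theta$ up to the conformal factor. Positivity of $\phi_1$ on $(0,\theta_L]$ lets us write $v=\phi_1\psi$ and apply the Picone (ground-state) identity
\[
\B(w,w)=\int_{\Sigma_L}\phi_1^2|\nabla\psi|^2\ \ (\text{plus the angular excess }(\lambda_\ell-\lambda_1)\textstyle\int\phi_1^2\psi^2/\sin^2\theta).
\]
The boundary term cancels because $\phi_1$ satisfies the Robin condition. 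For $\ell=1$ this gives nonnegativity, with equality only when $\psi$ is constant in $\theta$, which means $v$ is a translation Jacobi field. For $\ell\ge2$ the extra positive term $(\lambda_\ell-\lambda_1)\int\phi_1^2\psi^2\sin^{-2}\theta$, where $\lambda_\ell=\ell(\ell+N-3)$, forces strict positivity.

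The main obstacle is to check that $\phi_1>0$ throughout the cap for every $L$, including large negative $L$ (big caps, more than a hemisphere). Near the antipodal pole this may fail if one uses $\sin\theta$ naively. I would avoid the issue by choosing the translation Jacobi field directly in Euclidean coordinates, $\partial_aU=-(N-2)x_aU/(1+|x|^2+(t-L)^2)$. Its angular profile $-(N-2)|x|U/(1+|x|^2+(t-L)^2)$ does not vanish for $x\ne0$. The Picone identity can then be applied on $\R^N_+$ itself with weight $\rho=|x|U/(1+|x|^2+(t-L)^2)$, with the angular factor handled by the spherical Laplacian on $S^{m-1}$. Integrability near $x=0$ holds because $\ell\ge1$ modes vanish there.

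For the bounded inverse, I would upgrade this spectral positivity to coercivity in $\E$. Take a contradiction sequence normalized in $\E$. Use compactness of the weighted $L^2$ and trace terms, through the Hardy inequality and the decay of $U^{4/(N-2)}$ and $U^{2/(N-2)}$, to pass to a weak limit. That limit lies in the kernel, and the kernel is excluded on the complement. This argument is the same as the projected inverse of Section~\ref{sec:source} restricted to the angular sector, and its constants are continuous in $L$.
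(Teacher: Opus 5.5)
Your final argument is the paper's proof: the ground-state transform on $\R^N_+$ with the positive radial factor $\rho\propto r(1+r^2+(t-L)^2)^{-N/2}$ of the translation fields, whose Robin condition cancels the boundary term and leaves the angular excess $[\ell(\ell+N-3)-(N-2)]\int\!\!\int w^2r^{N-4}\,dr\,dt$, together with compactness of the potential terms for the inverse (which the paper phrases as Fredholm theory on the compact cap). The preliminary cap discussion is not needed and is slightly off: harmonics on $\partial\Sigma_L\cong S^{N-1}$ come from the cap's $O(N)$ symmetry, not the tangential $O(N-1)$ degree of the lemma, and in that decomposition the degree-one kernel also contains the dilation; moreover the feared vanishing of $\sin\theta$ cannot occur, since every cap has $\theta_L<\pi$. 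None of this affects your Euclidean argument, which is complete.
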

\begin{proof}
The positive radial factor of a translation field is $\phi=r(1+r^2+(t-L)^2)^{-N/2}$, $r=|x|$.
The ground-state transform gives
\begin{align*}
 \B(wY_\ell,wY_\ell)={}&
 \int_0^\infty\!\int_0^\infty\phi^2|\nabla_{r,t}(w/\phi)|^2r^{N-2}\dd r\dd t\\
 &+[\ell(\ell+N-3)-(N-2)]\int_0^\infty\!\int_0^\infty w^2r^{N-4}\dd r\dd t,
\end{align*}
up to positive angular normalization.
The second coefficient is positive for $\ell\ge2$ and zero for $\ell=1$.
In the latter case, equality makes $w/\phi$ constant.
Fredholm invertibility on the complement follows from the conformal equivalence with a compact round cap.
\end{proof}
For any scalar trial $v_0$ in a positive sourced sector,
\[
  F_h\le\mathcal Q(h)+2c_N^{-1}\int_{\R^N_+} S_hv_0+c_N^{-1}\B(v_0,v_0).
\]
We use this bound only for a single homogeneous degree.
The mixed normal block is computed by an exact response formula.

\subsection{Terms linear and quadratic in the normal coordinate}
Initially take $h=(at+bt^2)A$ with $|A|=1$ and put
\begin{align*}
 Q_A&:=A_{ab}x_ax_b,\qquad \Phi:=\frac{Q_AU}{1+|x|^2+(t-L)^2},\\
 J&:=-\Delta-\frac{N(N+2)}4U^{4/(N-2)},\qquad
 B_0:=\partial_\nu-\frac{N\kappa}2U^{2/(N-2)}.
\end{align*}
Then $J\Phi=2N\Phi/\bigl(1+|x|^2+(t-L)^2\bigr)$, $B_0\Phi=0$, and $S_h=N(N-2)(at+bt^2)\Phi/\bigl(1+|x|^2+(t-L)^2\bigr)$.
Write $z=t-L$ and
\begin{align*}
 g(z)&=a(z/2+L)+b\left(z^2/3+Lz+L^2+\frac1{3(N-2)}\right),\\
 q_0&=\frac b{6(N-2)},\qquad
 v_p=-\frac{N-2}{2}(g+q_0\bigl(1+|x|^2+(t-L)^2\bigr))\Phi,\\
 \delta_{a,b}&=\frac{N-2}{4}a+\frac{N-3}{6}Lb.
\end{align*}
\begin{lemma}\label{lem:normal-particular}
The particular solution $v_p$ satisfies
\[
 Jv_p=-S_h,\qquad B_0v_p=\delta_{a,b}\Phi.
\]
Let $\psi$ solve $J\psi=0$, $B_0\psi=\Phi$ in the degree-two energy space.
Then the exact scalar corrector is $v=v_p-\delta_{a,b}\psi$.
\end{lemma}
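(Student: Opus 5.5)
The plan is to verify both identities for $v_p$ by a direct symbolic computation, modelled on the minimal-boundary lemma \eqref{min:T} but with the normal variable shifted by $L$. Then I would identify $v_p-\delta_{a,b}\psi$ with the corrector using the positivity of $\B$ on angular degree two (Lemma~\ref{ord:lem:positive}). Throughout, set $P:=1+s+z^2$ with $s=|x|^2$ and $z=t-L$. Then $U=(2/P)^{(N-2)/2}$, $U^{4/(N-2)}=4P^{-2}$, and $\Phi=Q_AUP^{-1}$, where $Q_A$ is a harmonic homogeneous quadratic ($\ell=2$).

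\emph{Interior equation.} Since $\partial_t=\partial_z$, the formula $\Delta(H_\ell f)=H_\ell\{4sf_{ss}+2(m+2\ell)f_s+f_{zz}\}$ applies verbatim in $(s,z)$. I would insert $f=P^{a-N/2}z^b$ and subtract $N(N+2)P^{-2}f$, exactly as in the proof of \eqref{min:T}. This yields $J(G\Phi)=\Phi P^{-1}\mathcal T G$, where $\mathcal T$ is the operator \eqref{min:T} with $\ell=2$ and $t$ replaced by $z$. The four values needed are
\[
 \mathcal T1=2N,\qquad \mathcal Tz=4Nz,\qquad \mathcal Tz^2=6Nz^2-2P,\qquad \mathcal TP=4(N-2)P-4N.
\]
Since $S_h=N(N-2)(at+bt^2)\Phi P^{-1}$ and $v_p=-\tfrac{N-2}2(g+q_0P)\Phi$, the claim $Jv_p=-S_h$ reduces to $\mathcal T(g+q_0P)=2N(at+bt^2)$.

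\emph{Checking the polynomial identity.} The $a$-part gives $a(2Nz+2NL)=2Nat$. For the $b$-part, the terms $\tfrac b3\mathcal Tz^2$, $bL\,\mathcal Tz$, and $b(L^2+\tfrac1{3(N-2)})\mathcal T1$ sum to $b\bigl(2Nz^2-\tfrac23P+4NLz+2NL^2+\tfrac{2N}{3(N-2)}\bigr)$. The term $q_0\mathcal TP=b\bigl(\tfrac23P-\tfrac{2N}{3(N-2)}\bigr)$ cancels exactly the $P$ and constant terms. What remains is $2Nb(z+L)^2=2Nbt^2$. This also explains the choice of $q_0$ and of the constant $\tfrac1{3(N-2)}$ in $g$.

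\emph{Boundary condition.} On $t=0$ the outward normal is $-\partial_t$, and $B_0\Phi=0$. Hence $B_0(G\Phi)=-(\partial_zG)\Phi$ at $z=-L$; I expect the $U^{2/(N-2)}$ Robin term to be absorbed by $B_0\Phi=0$, which I would confirm directly. With $G=-\tfrac{N-2}2(g+q_0P)$ we get
\[
 B_0v_p=\tfrac{N-2}2\bigl(g_z+2q_0z\bigr)\big|_{z=-L}\Phi
 =\Bigl[\tfrac{N-2}2\bigl(\tfrac a2+\tfrac{bL}3\bigr)-\tfrac{bL}6\Bigr]\Phi=\delta_{a,b}\Phi.
\]

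\emph{Identification with the corrector.} The function $\Phi$ has tangential angular degree two and decays like $|y|^{-N}|x|^2$. Consequently $v_p=O(|y|^{2-N})$ with $\nabla v_p=O(|y|^{1-N})$, so $v_p\in\E$, and the boundary datum $\Phi$ lies in the dual of the degree-two sector. By Lemma~\ref{ord:lem:positive}, $\B$ is positive with bounded inverse on angular degree $\ell=2$, so $\psi$ exists and is unique there. Then $v=v_p-\delta_{a,b}\psi$ satisfies $Jv=-S_h$ with $B_0v=0$; that is, $\B(v,\varphi)=-\int S_h\varphi$ for all degree-two $\varphi$. Other angular modes decouple, and the Jacobi kernel (degrees zero and one) is orthogonal to this sector. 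Therefore $v$ coincides with the corrector $v_h$ of \eqref{eq:quadratic-action}.

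The only real obstacle is the bookkeeping in deriving $\mathcal T$ with the shifted variable $z$, in particular the $4a(a-N-1)P^{a-1}$ term coming from the potential. Everything else is direct substitution.
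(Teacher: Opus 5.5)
Your verification of $Jv_p=-S_h$ and $B_0v_p=\delta_{a,b}\Phi$ is correct and is essentially the paper's proof, which is just this direct substitution. Routing it through the shifted version of \eqref{min:T} is a clean way to organize it. The values $\mathcal T1,\mathcal Tz,\mathcal Tz^2,\mathcal TP$, the cancellation leaving $2N(at+bt^2)$, and the boundary value all check out. You were right to include the $2q_0z$ term: here $\partial_tP=2z\ne0$ at $t=0$, unlike the $L=0$ case.

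One step of your identification is wrong as written, namely the decay of $v_p$. Since $\Phi=O(|y|^{2-N})$ and the multiplier $g+q_0P$ is quadratic in $y$, you get $v_p=O(|y|^{4-N})$ and $\nabla v_p=O(|y|^{3-N})$ when $b\ne0$. Thus $\int_{\R^N_+}|\nabla v_p|^2<\infty$ only for $N>6$. When $b=0$ one has $v_p=O(|y|^{3-N})$, and finite energy needs only $N>4$.

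This dimension restriction is the real reason the lemma is stated in the $N=7,8$ setting, where the paper notes that these trials have finite energy for $N>6$. It is also why Corollary~\ref{ord:cor:degreeone} uses only the linear multiplier in dimensions $5,6$. With the corrected decay, your membership $v_p\in\E$ and the uniqueness argument via Lemma~\ref{ord:lem:positive} go through.

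A minor point: for a non-symmetric cutoff $\zeta$, $v$ need not satisfy the $\theta_j$ constraints defining $Y$. So $v$ agrees with $v_h$ only up to Jacobi fields. This is harmless, because $\int S_hZ_j=0$ by angular orthogonality and $F$ does not depend on the choice of complement.
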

\begin{proof}
Differentiate the displayed polynomial multiplier of $\Phi$ at fixed $|x|^2$ and substitute in $J$ and $B_0$.
\end{proof}
Set
\[
 \nu_0=\frac{N-3}{2},\quad c(t)=1+(t-L)^2,\quad c_0=1+L^2,
 \quad w=c_0^{-\nu_0},\quad
 M=\int_{-L}^{\infty}(1+z^2)^{-\nu_0}\dd z,
\]
\[
 \mathfrak c_N=2^{N-2}\pi^{(N-1)/2}\frac{\Gamma((N-3)/2)}{\Gamma(N-2)},\quad
 T=\int_{\partial\R^N_+}\Phi\psi,\quad \theta=\frac{c_N^{-1}T}{\mathfrak c_N}>0.
\]
Here $\mathfrak c_N>0$ is the angular integration constant for the ordinary quadratic model.
\begin{proposition}\label{ord:prop:E}
We have
\[
  F_{(at+bt^2)A}(0,1)=\mathfrak c_N(a,b)E(a,b)^T,
 \qquad E=E^0-\theta \mathbf d\mathbf d^T.
\]
For arbitrary $A,B$, the same matrix acts on each coordinate of $\operatorname{Sym}^2_0(\R^m)$.
Here $\mathbf d=((N-2)/4,L(N-3)/6)^T$, and
\[
 E^0_7=
 \begin{pmatrix}
 -Lw/8&-w(2L^2+1)/15\\
 -w(2L^2+1)/15&2(7L^3w+11Lw-30M)/225
 \end{pmatrix},
\]
\[
 E^0_8=
 \begin{pmatrix}
 -Lw/8&-5w(5L^2+2)/216\\
 -5w(5L^2+2)/216&-5(7L^3w-8Lw+54M)/1944
 \end{pmatrix}.
\]
\end{proposition}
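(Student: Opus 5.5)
The plan is to evaluate the two pieces of \eqref{eq:quadratic-action} separately for $h=(at+bt^2)A$, with $|A|=1$, and then to identify the scalar correction as the rank-one term $-\theta\mathbf d\mathbf d^T$.

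\emph{Raw coefficient.} First compute $\mathcal Q(h)$ from \eqref{lc:eq:rawpolar}.
Because $h$ is tangential, trace free and independent of $x$, we have $\Div h=0$ and $hx\cdot\nabla U$ reduces to $(at+bt^2)\,U'Q_A$-type terms.
The derivative term $-\tfrac14|\partial h|^2U^2$ involves only $\partial_t h=(a+2bt)A$.
The $|h\nabla U|^2$ term involves $(at+bt^2)^2|Ax|^2|\partial_rU|^2$.
Angular integration replaces $|Ax|^2$ by $|x|^2/m$ times the normalized constant.
The remaining $(r,t)$ integrals are done by integrating out $r$ with the beta integral at fixed $t$.
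This produces powers $(1+(t-L)^2)^{-\nu}$, and the substitution $z=t-L$ gives one-dimensional integrals over $z\in(-L,\infty)$.
Integration by parts in $z$ reduces these to the boundary value $w=c_0^{-\nu_0}$, polynomial multiples of $L$, and the single transcendental integral $M$.
After dividing by $\mathfrak c_N$, this yields a quadratic form in $(a,b)$.

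\emph{Scalar correction.} Next compute $c_N^{-1}\int S_hv$ with $v=v_p-\delta_{a,b}\psi$ from Lemma~\ref{lem:normal-particular}.
By Green's identity, using $\B(v,\phi)=-\int S_h\phi$ and the boundary defect $B_0v_p=\delta_{a,b}\Phi$, the pairing splits as in \eqref{lc:eq:schur}:
\[
\int S_hv_p+\delta_{a,b}\int_{\partial}\Phi v_p-\delta_{a,b}^2\int_{\partial}\Phi\psi .
\]
The first two terms are again explicit moment integrals of polynomials in $z$ against powers of $1+|x|^2+z^2$, handled exactly as above.
They are combined with $\mathcal Q$ to give $E^0$.
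The last term is $-\delta_{a,b}^2T$.
Since $\delta_{a,b}=\mathbf d\cdot(a,b)$, after normalization it becomes $-\theta(a,b)\mathbf d\mathbf d^T(a,b)^T$.

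\emph{Sign of $\theta$.} We have $\theta>0$ because $T=\B(\psi,\psi)$ by testing the $\psi$ equation against $\psi$, and this is positive by Lemma~\ref{ord:lem:positive} on angular degree two.

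\emph{General $A,B$.} For arbitrary $A,B$, angular orthogonality in $\operatorname{Sym}^2_0$ makes each coordinate decouple with the same matrix.

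The main obstacle is the bookkeeping of the $z$-integrals on the half-line $(-L,\infty)$ for general $L$.
Each must be reduced, via recursions in $\nu$, to $w$ and $M$ with the correct rational coefficients in $N=7,8$.
I would do this by a reduction formula
\[
\int z^{2k}(1+z^2)^{-\nu}\,\mathrm dz\quad\text{and}\quad\int z^{2k+1}(1+z^2)^{-\nu}\,\mathrm dz
\]
that expresses each integral through boundary terms at $z=-L$ and $M$, and then check the entries symbolically.
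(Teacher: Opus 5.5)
Your proposal is correct and follows the paper's proof. The paper likewise uses Lemma~\ref{lem:normal-particular} and \eqref{lc:eq:schur} to write
$F_h=\mathcal Q(h)+c_N^{-1}\int_{\R^N_+}S_hv_p+c_N^{-1}\delta_{a,b}\int_{\partial\R^N_+}\Phi v_p-c_N^{-1}\delta_{a,b}^2T$,
evaluates the first three pieces by tangential beta integrals and the recurrences for $\int_{-L}^\infty z^j(1+z^2)^{-\nu_0}\dd z$ and $\int_{-L}^\infty z^j(1+z^2)^{-\nu_0-1}\dd z$, which reduce everything to $w$ and $M$, reads off the last piece as $-\mathfrak c_N\theta\,\delta_{a,b}^2$ with $\delta_{a,b}=\mathbf d\cdot(a,b)$, and uses rotation invariance for general $A,B$.

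One slip to fix: the raw density is $(at+bt^2)^2|Ax|^2(\partial_rU)^2/r^2$ (you dropped the factor $r^{-2}$). After the beta integral this term equals $\nu_0\mathfrak c_N\int_{-L}^\infty f^2(1+z^2)^{-\nu_0-1}\dd z$ with $f(z)=a(z+L)+b(z+L)^2$.
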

\begin{proof}
Lemma~\ref{lem:normal-particular} and the pairing identity \eqref{lc:eq:schur} give
\[
  F_h=\mathcal Q(h)+c_N^{-1}\int_{\R^N_+} S_hv_p
       +c_N^{-1}\delta_{a,b}\int_{\partial\R^N_+}\Phi v_p-c_N^{-1}\delta_{a,b}^2T.
\]
Tangential beta integration gives
\begin{align*}
 \int_{\R^{N-1}}\Phi^2\dd x
 &=\frac{\mathfrak c_N}{2(N-1)(N-2)}c(t)^{-\nu_0},\\
 \int_{\R^{N-1}}\frac{\Phi^2}{1+|x|^2+(t-L)^2}\dd x
 &=\frac{(N-3)\mathfrak c_N}{4N(N-1)(N-2)}c(t)^{-\nu_0-1},\\
 \int_{\R^{N-1}}\bigl(1+|x|^2+(t-L)^2\bigr)\Phi^2\dd x
 &=\frac{\mathfrak c_N}{(N-5)(N-2)}c(t)^{1-\nu_0}.
\end{align*}
For example, with $I_j:=\int_{-L}^\infty z^j(1+z^2)^{-\nu_0}\dd z$,
\[
 I_0=M,\quad I_1=\frac{c_0w}{N-5},\quad
 I_2=\frac{M-Lc_0w}{N-6}.
\]
For $J_j:=\int_{-L}^{\infty}z^j(1+z^2)^{-\nu_0-1}\dd z$,
\[
 J_0=\frac{(N-4)M+Lw}{N-3},\quad J_1=\frac w{N-3},
 \quad J_{j+2}=I_j-J_j.
\]
For an explicit algebraic reduction, put $f(z)=a(z+L)+b(z+L)^2$, and let $\mathcal I$ and $\mathcal J$ denote polynomial integration using the integrals $I_j$ and $J_j$, respectively.
In units of $\mathfrak c_N$, the raw coefficient, the particular bulk correction, and the particular boundary correction are, respectively,
\[
 \nu_0\mathcal J(f^2)-\tfrac14\mathcal I((f')^2),\qquad -\nu_0\mathcal J(fg)-Nq_0\mathcal I(f),\qquad \delta_{a,b}\left[-\frac{g(-L)w}{N-2} -\frac{2(N-1)q_0c_0w}{(N-2)(N-5)}\right].
\]
Their sum is the quadratic form with matrix $E^0$ in Proposition~\ref{ord:prop:E}; the remaining exact response is $-\theta \delta_{a,b}^2$.
These formulas and the recurrence relations for $I_j$ and $J_j$ determine every entry by rational arithmetic.
Rotation invariance gives the assertion for arbitrary $A,B$.
\end{proof}

\subsubsection{Estimates for the boundary correction}
\begin{lemma}[Bounds for the normal boundary response]\label{lem:normal-response-bounds}
The exact response $\theta$ satisfies \eqref{ord:eq:lower} for every $L\in\R$, \eqref{ord:eq:uppernegative} for $L>0$, and \eqref{ord:eq:upperpositive} for $L\le0$.
\end{lemma}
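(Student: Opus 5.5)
The starting point is a variational description of the response $T=\int_{\partial\R^N_+}\Phi\psi$. Since $J\psi=0$ and $B_0\psi=\Phi$, testing the equation for $\psi$ against $\psi$ gives $T=\B(\psi,\psi)$. Because $\psi$ lives in the tangential degree-two sector ($Q_A$ is a degree-two harmonic), Lemma~\ref{ord:lem:positive} says $\B$ is positive and invertible there. Hence $\psi$ is the unique maximizer of
\[
 v\mapsto 2\int_{\partial\R^N_+}\Phi v-\B(v,v)
\]
over the degree-two energy space, and
\[
 T=\sup_v\Bigl(2\int_{\partial\R^N_+}\Phi v-\B(v,v)\Bigr)>0 .
\]
This already gives $\theta>0$. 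Both the lower bound \eqref{ord:eq:lower} and the upper bounds \eqref{ord:eq:uppernegative}, \eqref{ord:eq:upperpositive} will be read off from this single formula: trial functions give lower bounds, and coercivity estimates for $\B$ give upper bounds.

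\emph{Lower bound \eqref{ord:eq:lower}.} I would insert a one- or two-parameter trial $v=(\alpha+\beta(1+|x|^2+(t-L)^2))\Phi$, or $v=\alpha\Phi+\beta v_p$-type combinations built from the particular solutions already computed. Then $\int\Phi v$ and $\B(v,v)$ reduce, via the tangential beta integrals of Proposition~\ref{ord:prop:E}, to polynomial combinations of $w$, $M$, $I_j$ and $J_j$. Optimizing in $(\alpha,\beta)$ produces an explicit rational expression in $L$, $w$ and $M$ that bounds $\theta$ from below. This bound holds for every $L\in\R$ and is locally uniform in $L$.

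\emph{Upper bounds.} By Cauchy--Schwarz, for any positive weight $\rho$ on the boundary and any constant $\mu>0$ with $\B(v,v)\ge\mu\int_{\partial}\rho v^2$ on the degree-two sector, we get
\[
 T\le\mu^{-1}\int_{\partial\R^N_+}\rho^{-1}\Phi^2 .
\]
The choice of weight depends on the sign of $L$.
\begin{itemize}
\item For $L>0$ (so $\kappa<0$), the boundary term of $\B$ is $+\tfrac{NL}{2}\int_\partial U^{2/(N-2)}v^2\ge0$. Dropping the nonnegative bulk part after the ground-state transform of Lemma~\ref{ord:lem:positive}, I take $\rho=U^{2/(N-2)}$ and $\mu=NL/2$. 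For large $L$ I add the bulk contribution to avoid degeneration as $L\downarrow0$. This yields \eqref{ord:eq:uppernegative}.
\item For $L\le0$, the boundary term has the wrong sign. Here I use the positive gap $\ell(\ell+N-3)-(N-2)>0$ at $\ell=2$ from Lemma~\ref{ord:lem:positive}, together with the trace inequality on each normal slice $\{r\text{ fixed}\}$, to dominate $\int_\partial U^{2/(N-2)}v^2$. This should give $\B(v,v)\ge\mu_N(L)\int_\partial\rho v^2$ with explicit $\mu_N(L)$, and hence \eqref{ord:eq:upperpositive}.
\end{itemize}

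The main obstacle is the case $L\le0$, especially $L\to-\infty$, where the bubble concentrates and the Robin term competes with the Dirichlet energy. The constants must be sharp enough for the later positivity of $E^0-\theta\mathbf d\mathbf d^T$. I would try the one-dimensional reduction in $t$ first: after the ground-state substitution $v=\phi\,u$, bound the boundary trace of $u$ by its $t$-derivative on $[0,\infty)$. If that proves too lossy, I would instead compute $\theta$ as a Gegenbauer series, as in \eqref{lc:eq:response}, with a cap response, and truncate it with tail bounds.
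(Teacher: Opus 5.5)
\textbf{Lower bound.} This part is fine and is the paper's argument. The single trial $\Phi$ already gives exactly \eqref{ord:eq:lower}, because
\[
 \int_{\partial\R^N_+}\Phi^2=\frac{\mathfrak c_Nw}{2(N-1)(N-2)},\qquad
 \B(\Phi,\Phi)=2N\int_{\R^N_+}\frac{\Phi^2}{1+|x|^2+(t-L)^2}=\frac{\mathfrak c_N\bigl((N-4)M+Lw\bigr)}{2(N-1)(N-2)}.
\]
Any trial space containing $\Phi$ only improves this.

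\textbf{The case $L>0$.} Your weighted Cauchy--Schwarz step works, but not for the reason you give. The ground-state transform in Lemma~\ref{ord:lem:positive} uses the translation factor, which satisfies the Jacobi Robin condition. So it rewrites all of $\B$, and says nothing about the bulk part with the Robin term removed. The correct justification is the tangential Hardy inequality on the $\ell=2$ sector,
\[
 \int|\nabla_xv|^2\ge\tfrac{(N+1)^2}{4}\int|x|^{-2}v^2,\qquad\text{together with}\qquad \tfrac{N(N+2)}4U^{4/(N-2)}\le\tfrac{N(N+2)}{4|x|^2}.
\]
This gives $\B(v,v)\ge\frac{NL}{2}\int_{\partial\R^N_+}U^{2/(N-2)}v^2$, hence $\theta\le 4(N-1)c_0w/\bigl(NL(N-5)(N-2)^2\bigr)$. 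That is below $\theta_+$, so it implies \eqref{ord:eq:uppernegative}. No extra bulk correction is needed as $L\downarrow0$, since \eqref{ord:eq:uppernegative} itself degenerates like $1/L$. The paper instead compares with the explicit supersolution $\frac{(1+|x|^2+(t-L)^2)+2}{2L}\Phi$.

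\textbf{The case $L=-K\le0$: this is the genuine gap.} The lemma asserts the specific inequality \eqref{ord:eq:upperpositive}, with $s=\sqrt{1+K^2}$ and the displayed $C$. These exact values are substituted into the later determinant checks.
\begin{itemize}
\item Your combination of the angular gap with trace inequalities on normal slices produces at best an unspecified $\mu_N(L)$. Nothing shows it reaches that constant, and you leave the step open yourself.
\item The Robin term now has the unfavourable sign, and $\theta$ is of exact order $Kw$ as $K\to\infty$, so the weight must be tuned precisely to $K$.
\item The Gegenbauer fallback is not available: for $L\ne0$ the compactified domain is not the hemisphere, so Lemma~\ref{min:responselemma} does not apply.
\end{itemize}

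The missing idea is an explicit positive radial supersolution with boundary defect $\Phi$. Take $\chi=\frac{|x|^2+(t-s)^2+C}{2s}\Phi$. Then $B_0\chi=\Phi$ and
\[
 2s\bigl(1+|x|^2+(t-L)^2\bigr)\frac{J\chi}{\Phi}
 =4(N-2)|x|^2+4(N-2)\Bigl(t-\frac{2K+Ns}{N-2}\Bigr)^2\ge0.
\]
This fits your own framework directly. For $v=Q_Af$, write $v=\chi u$ (both carry the factor $Q_A$). The ground-state identity then reads
\[
 \B(v,v)=\int_{\R^N_+}\chi^2|\nabla u|^2+\int_{\R^N_+}\chi\,(J\chi)\,u^2
 +\int_{\partial\R^N_+}\chi\Phi\,u^2\ge\int_{\partial\R^N_+}\frac{\Phi}{\chi}\,v^2 .
\]
So your Cauchy--Schwarz step with weight $\Phi/\chi$ and constant $\mu=1$ yields $T\le\int_{\partial\R^N_+}\Phi\chi$. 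This evaluates exactly to \eqref{ord:eq:upperpositive}. The paper phrases the same estimate as the comparison $\psi\le\chi$ for the positive radial coefficient.
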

\begin{proof}
The variational principle for $\psi$ gives
\begin{equation}\label{ord:eq:lower}
 \theta\ge\theta_-:=
 \frac{2w^2}{(N-2)^2((N-4)M+Lw)}.
\end{equation}
This is the trial $\Phi$: $T\ge(\int_{\partial\R^N_+}\Phi^2)^2/\B(\Phi,\Phi)$.
For $L>0$, the positive radial supersolution
\[
 \chi=\frac{\bigl(1+|x|^2+(t-L)^2\bigr)+2}{2L}\Phi
\]
has $B_0\chi=\Phi$ and $J\chi=2(N-2)\Phi/L$.
Comparison in the angular radial equation, justified by Lemma~\ref{ord:lem:positive}, gives
\begin{equation}\label{ord:eq:uppernegative}
 \theta\le\theta_+:=
 \frac{2w}{L(N-2)^2}\left(\frac{(N-1)c_0}{N-5}+1\right).
\end{equation}
The angular harmonic can change sign; comparison here is for its positive radial coefficient, not for the full sign-changing function.

For $L=-K\le0$, put $s=\sqrt{1+K^2}$ and
\[
 C=\frac{8K^2+4NKs+8+4(2K+Ns)^2/(N-2)}{2N},
 \qquad \chi:=\frac{|x|^2+(t-s)^2+C}{2s}\Phi.
\]
The multiplier of $\Phi$ is positive.
Its boundary defect is $\Phi$, and direct differentiation gives
\[
 2s\bigl(1+|x|^2+(t-L)^2\bigr)\,\frac{J\chi}{\Phi}
 =4(N-2)|x|^2+4(N-2)\left(t-\frac{2K+Ns}{N-2}\right)^2\ge0.
\]
Consequently
\begin{equation}\label{ord:eq:upperpositive}
 \theta\le\frac{w}{s(N-2)^2}
       \left(\frac{2(N-1)c_0}{N-5}+C\right).
\end{equation}
All these trials have finite energy for $N>6$.
\end{proof}

\subsubsection{The scale derivative of the quadratic energy}\label{ord:sec:actualscale}
\begin{proposition}[Positivity of the normal scale matrix]\label{prop:normal-scale-positive}
For $N=7,8$ and every fixed $L\in\R$, the matrix $P=-\mathsf DE-E\mathsf D$, where $E$ is given in Proposition~\ref{ord:prop:E} and $\mathsf D=\operatorname{diag}(1,2)$, is positive definite.
Its least eigenvalue has a positive lower bound when $L$ ranges over a compact interval.
\end{proposition}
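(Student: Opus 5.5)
We need $P=-\mathsf D E-E\mathsf D$ to be positive definite, where $E=E^0-\theta\mathbf d\mathbf d^T$ and $\mathsf D=\operatorname{diag}(1,2)$. Written out,
\[
 P=P^0+\theta\,(\mathsf D\mathbf d\mathbf d^T+\mathbf d\mathbf d^T\mathsf D),\qquad P^0:=-\mathsf DE^0-E^0\mathsf D.
\]
The plan treats $P$ as a $2\times2$ matrix whose entries are explicit functions of $L$, $w$, $M$ and the one unknown scalar $\theta$. Positivity will be proved via Sylvester's criterion: $P_{11}>0$ and $\det P>0$.

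\textbf{Structure in $\theta$.} The first entry is
\[
 P_{11}=-2E_{11}=\tfrac{Lw}{4}+2\theta d_1^2 .
\]
This is affine in $\theta$ with positive slope, so it can be bounded below by inserting the lower bound $\theta_-$ from \eqref{ord:eq:lower}. The determinant is a polynomial of degree at most two in $\theta$. Because $\mathsf D\mathbf d\mathbf d^T+\mathbf d\mathbf d^T\mathsf D$ has rank two, I first check whether the $\theta^2$ coefficient is $-(d_1d_2)^2(2-1)^2\le0$. If so, $\det P$ is concave in $\theta$. It is then enough to verify $\det P>0$ at the two endpoints of the admissible interval $[\theta_-,\theta_+]$, using \eqref{ord:eq:uppernegative} for $L>0$ and \eqref{ord:eq:upperpositive} for $L\le0$. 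Concavity then gives positivity on the whole interval. The same concavity argument handles $P_{11}$, which is linear.

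\textbf{Elimination of $M$.} After substituting $\theta_\pm$, every quantity is an explicit expression in $L$, $w=(1+L^2)^{-\nu_0}$ and $M=\int_{-L}^\infty(1+z^2)^{-\nu_0}\,dz$. Here $\nu_0=2$ for $N=7$ and $\nu_0=5/2$ for $N=8$, so $M$ is elementary: an arctangent for $N=7$ and algebraic for $N=8$. I would factor out powers of $w$ and reduce each inequality to one in $L$ and $M$. I would then use two-sided bounds on $M$ in terms of $w$ and $L$. Examples are $M\ge Lw$-type comparisons coming from $J_0>0$ and $I_2>0$. Where the identities $I_j$ show it is needed, I would also use the asymptotics $M\sim c\,|L|^{1-2\nu_0}$ as $L\to-\infty$ and $M\to\int_\R$ as $L\to+\infty$. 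With these bounds the inequalities become one-variable polynomial or rational inequalities in $L$. I expect to verify them by splitting $\R$ into a compact middle range and two tails.

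\textbf{Main obstacle and uniformity.} The hard step is the regime $L\to\pm\infty$, where several terms degenerate at the same rate. For $L\to-\infty$, $P_{11}=Lw/4+\dots$ has a negative first term. There positivity must come from the $\theta$ contribution, so the sharp lower bound $\theta_-$ is essential. I would compare leading orders in $|L|$ explicitly. For $L\to+\infty$, the upper bound $\theta_+\sim w/L$ is what controls the determinant. On the compact middle range I would use exact or interval evaluation, matching the verification philosophy of Theorem~\ref{thm:ordinary-endpoints}.

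All entries depend continuously on $L$. In particular $\theta$ depends continuously on $L$ through the inverse of Lemma~\ref{ord:lem:positive}. Hence the least eigenvalue is continuous in $L$ and positive at every $L$, which gives a positive lower bound on each compact interval.
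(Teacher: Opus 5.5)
Your outline has the same structure as the paper's proof: Sylvester's criterion, $P_{11}=Lw/4+2\theta d_1^2$ controlled by $\theta_-$, concavity of $\det P$ in $\theta$ with leading coefficient $-d_1^2d_2^2$, evaluation at the two ends of $[\theta_-,\theta_+]$, and continuity for the compact-interval claim. \textbf{The gap is at the lower endpoint for $N=8$, $L<0$.} There the single-trial bound \eqref{ord:eq:lower} is too weak, and $\det P(\theta_-)$ is actually negative.

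To see this, take $L=-K$ with $K\to\infty$.
\begin{itemize}
\item One has $w\sim K^{-5}$. After the leading terms cancel, $4M+Lw\sim\frac56K^{-6}$, so $\theta_-=w^2/(18(4M+Lw))\sim K^{-4}/15$.
\item The term $-\frac{35}{1944}L^3w\sim+\frac{35}{1944}K^{-2}$ of $E^0_{22}$ now has the unfavourable sign. For $N=7$ the corresponding term $\frac{14}{225}L^3w$ is negative, which is why that case goes through.
\item Substituting into Proposition~\ref{ord:prop:E} gives $E_{11}\sim-\frac1{40}K^{-4}$, $E_{12}\sim-\frac7{216}K^{-3}$ and $E_{22}\sim-\frac{55}{1944}K^{-2}$.
\end{itemize}
Hence
\[
 \det P(\theta_-)=8E_{11}E_{22}-9E_{12}^2\sim-\frac{177}{46656}K^{-6}<0 .
\]
An exact rational evaluation, in the variable $x=K/\sqrt{1+K^2}=3/5$, shows the sign is already negative at $L=-3/4$. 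For $N=8$ the quantity $M$ is explicit, so no sharpening of your bounds on $M$ and no tail asymptotics can repair this. Once one endpoint value is negative, concavity gives nothing.

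What is missing is a stronger lower bound on the response $\theta$ for $N=8$, $L\le0$. The paper enlarges the Rayleigh--Ritz trial space to $\Phi$, $t\Phi$ and $(1+|x|^2+(t-L)^2)\Phi$. It computes the $3\times3$ Gram matrix $G$ and the load $b_\partial$, and uses $\theta\ge c_N^{-1}b_\partial^{\mathsf T}G^{-1}b_\partial=\theta_{-,3}$. This bound behaves like $\frac{23}{306}K^{-4}$ rather than $\frac1{15}K^{-4}$, and it makes the lower-endpoint determinant positive for all $L\le0$.

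A secondary point: leading-order asymptotics on the two tails are not by themselves bounds valid for all large $|L|$. The paper avoids a separate tail analysis by compactifying the parameter.
\begin{itemize}
\item For $N=8$ it substitutes $x=|L|/\sqrt{1+L^2}$, so that $M=(2\pm3x\mp x^3)/3$ is a cubic.
\item For $N=7$, $L>0$, it writes $M=3/4+X$ with $X\ge0$ free.
\item For $N=7$, $L\le0$, it substitutes $\upsilon=(K+\sqrt{1+K^2})^{-1}$ and uses truncated arctangent series.
\end{itemize}
These substitutions turn every endpoint inequality into a finite polynomial positivity statement, certified by explicit positive decompositions or Bernstein coefficients.
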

\begin{proof}
Let $\mathsf D=\operatorname{diag}(1,2)$.
The scale matrix is
\[
 P=-\mathsf D E-E\mathsf D,
\]
because the two tensors have degrees one and two.
Its determinant is a concave quadratic polynomial in $\theta$; its quadratic coefficient is $-\mathbf d_1^2\mathbf d_2^2$.
Also $P_{11}>0$.
For $L\ge0$, this follows from $E_{11}=-Lw/8-\theta(N-2)^2/16$.
For $L=-K<0$, \eqref{ord:eq:lower} and
\[
 \int_K^\infty z(1+z^2)^{-\nu_0-1}\dd z
 =\frac{w}{N-3}
 >K\int_K^\infty(1+z^2)^{-\nu_0-1}\dd z
\]
again give $E_{11}<0$.
By Lemma~\ref{lem:normal-response-bounds}, it suffices to verify the determinant at the two bounds for $\theta$.

\medskip
\paragraph{Positivity of the determinants.}
For $N=7$, $L>0$, the upper endpoint simplifies to
\[
 \det P(\theta_+)=\frac{wc_0}{45L}
       \left(48M-\frac{37L}{1+L^2}\right)>0,
\]
since $M\ge\pi/4>3/4$ and $L/(1+L^2)\le1/2$.
For the lower endpoint, put $M=3/4+X$, $X\ge0$, and define
\[
 \Pi_7(L,X)=540Lc_0^4X^3+9c_0^2Q_2X^2
                    +\frac34c_0Q_1X+\frac1{16}Q_0.
\]
Each $Q_j$ has an explicitly positive decomposition:
\begin{align*}
 Q_2={}&L^3(135L^2-50L+5)+L(265L^2-18L+135)+11,\\
 Q_1={}&L^5(1215L^2-900L+200)+L^3(3445L^2-1224L+150)\\
 &+L(3095L^2-126L+1063)+198,\\
 Q_0={}&L^7(3645L^2-4050L+1200)+L^5(13380L^2-9558L+1800)\\
 &+L^3(16470L^2-6075L+600)+L(9012L^2-476L+2277)+891.
\end{align*}
Every displayed quadratic has positive leading coefficient and negative discriminant.
The exact determinant identity, including its positive prefactor, is
\begin{equation}
 \det P(\theta_-)
 =\frac{16\Pi_7(L,X)}
 {225c_0^2\bigl((12X+9)c_0^2+4L\bigr)^2}>0.
\end{equation}
Indeed $c_0=1+L^2>0$, the squared factor has a strictly positive base for $L>0$ and $X\ge0$, and the displayed decompositions give $\Pi_7(L,X)>0$.

For $N=8$, $L\ge0$, set $x=L/\sqrt{1+L^2}\in[0,1)$.
Then $M=(2+3x-x^3)/3$.
The determinant at \eqref{ord:eq:lower} is
\[
 \frac{5(1-x)^2(1+x)^3R_5(x)}{15552(3x^2-9x+8)^2},
\]
where
\[
 R_5=255x^5-1785x^4+5041x^3-5431x^2+3072.
\]
Its degree-five Bernstein coefficients on $[0,1]$ are
\[
 3072,\ 3072,\ 25289/10,\ 9734/5,\ 7364/5,\ 1152.
\]
For $L>0$, the determinant at the upper bound \eqref{ord:eq:uppernegative} is
\[
 \frac{25(1-x)^2(1+x)^3(7x^2-7x+12)}{972x}>0.
\]

For $N=8$ and $L=-K\le0$, use the improved lower bound obtained from the three scalar trials $\Phi,t\Phi,\bigl(1+|x|^2+(t-L)^2\bigr)\Phi$.
To specify it completely, set
\[
 \varpi_0=\frac1{2(N-1)(N-2)},\quad
 \varpi_1=\frac{N-3}{4N(N-1)(N-2)},\quad
 \varpi_{-1}=\frac1{(N-5)(N-2)},
\]
\[
 T_0=J_0,\qquad T_1=J_1+LJ_0,\qquad
 T_2=J_2+2LJ_1+L^2J_0.
\]
After division by $\mathfrak c_N$, the Gram matrix $G$ is symmetric, and its entries and the load vector $b_\partial$ are as follows:
\begin{align*}
 G_{11}&=2N\varpi_1T_0,\quad G_{12}=2N\varpi_1T_1,\quad G_{13}=2N\varpi_0M,\\
 G_{22}&=2N\varpi_1T_2+\varpi_0M,\quad
 G_{23}=4(N-2)\varpi_0(I_1+LM)-4N\varpi_1T_1,\\
 G_{33}&=4(N-2)\varpi_{-1}(I_2+M)-4N\varpi_0M+2L\varpi_{-1}c_0w.
\end{align*}
The load vector is $b_\partial=(\varpi_0w,0,\varpi_{-1}c_0w)^T$.
These entries follow by applying $J$ and $B_0$ to the three displayed trials and using the integrals in the proof of Proposition~\ref{ord:prop:E}.
Lemma~\ref{ord:lem:positive} makes $G$ positive definite.
The variational bound is $\theta\ge c_N^{-1} b_\partial^TG^{-1}b_\partial$.
With $x:=K/\sqrt{1+K^2}$, substitution of $N=8$, $L=-x/\sqrt{1-x^2}$, $w=(1-x^2)^{5/2}$, and $M=(2-3x+x^3)/3$ gives
\[
 \theta_{-,3}=
 \frac{(1-x)^2(1+x)^3(165x^3+645x^2+1028x+646)}
 {54(55x^3+270x^2+521x+378)}.
\]
The upper bound \eqref{ord:eq:upperpositive} is $\theta_+=(1-x^2)^2(x^2+20x+47)/216$.
Writing $Q_3=55x^3+270x^2+521x+378$, the lower determinant is
\[
 \frac{5(1-x)^3(1+x)^2}{324Q_3^2}
 (24200x^4+161315x^3+438445x^2+579997x+309771)>0.
\]
The upper determinant is
\[
 \frac{5(1-x)^3(1+x)^2}{746496}(-R_7(x)),
\]
where
\[
 R_7=5x^7+85x^6+870x^5+4150x^4+11205x^3-12107x^2-25136x-51264.
\]
The minimum degree-seven Bernstein coefficient of $-R_7$ on $[0,1]$ equals $51264$.
Formula~\eqref{ord:eq:bernsteinformula} gives this value and the remaining two-variable certificates.

Finally, for $N=7$, $L=-K\le0$, use $\upsilon=(K+\sqrt{1+K^2})^{-1}\in(0,1]$.
Then
\[
 L=\frac{\upsilon^2-1}{2\upsilon},\quad w=\frac{16\upsilon^4}{(1+\upsilon^2)^4},\quad
 M=\arctan \upsilon-\frac{\upsilon(1-\upsilon^2)}{(1+\upsilon^2)^2}.
\]
Bound $\arctan \upsilon$ below by its eight-term alternating series and above by adding $\upsilon^{17}/17$.
Write $M=M_-+Z(M_+-M_-)$, $0\le Z\le1$.
At the upper response endpoint,
\[
 \theta_+=\frac{16\upsilon^3(105\upsilon^4+320\upsilon^2+231)}{875(1+\upsilon^2)^5}.
\]
Here is a finite rational certificate that makes the numerator polynomials and their positivity independent of a supplementary program.
Put $v=\upsilon^2$ and define
\begin{align*}
 A(v,Z)&=\sum_{j=0}^7\frac{(-1)^jv^j}{2j+1}
                         +\frac{Zv^8}{17},\\
 B(v,Z)&=\frac{(1+v)^2A(v,Z)+v-1}{v},\\
 H(v,Z)&=3(1+v)^2B(v,Z)+8(v-1),\\
 C(v,Z)&=-16\{7(v-1)^3+44v(v-1)-15v(1+v)^2B(v,Z)\},\\
 T(v)&=105v^2+320v+231.
\end{align*}
The numerator defining $B$ has zero constant term, so $B$ is a polynomial.
Define further
\begin{align*}
 X_-&=2(v-1)H+64v,&
 Y_-&=8(v^2+1)H+128v(v-1),\\
 Z_-&=CH+2048v(v-1)^2,\\
 X_+&=70(v^2-1)+2T,&
 Y_+&=280(v^2+1)(1+v)+4(v-1)T,\\
 Z_+&=35(1+v)C+64(v-1)^2T.
\end{align*}
The two polynomials to be certified are
\[
 \mathcal P_-(v,Z)=\frac{X_-Z_--9Y_-^2}{v^2(1+v)^4},\qquad
 \mathcal P_+(v,Z)=X_+Z_+-9Y_+^2.
\]
Polynomial division in the first expression has zero remainder.
Their bidegrees are $(29,3)$ and $(15,1)$.
Direct substitution into the scale matrix gives the exact identities
\begin{equation}
 \begin{aligned}
 \det P(\theta_-)
   &=\frac{\upsilon^8\mathcal P_-(\upsilon^2,Z)}{225(1+\upsilon^2)^4H(\upsilon^2,Z)^2},\\
 \det P(\theta_+)
   &=\frac{\upsilon^4\mathcal P_+(\upsilon^2,Z)}{275625(1+\upsilon^2)^{10}}.
 \end{aligned}
\end{equation}
The finite coefficient calculation uses the following formula.
If $f(v,Z)=\sum c_{ab}v^aZ^b$ has bidegree $(d,e)$, its tensor-product Bernstein coefficients are
\begin{equation}\label{ord:eq:bernsteinformula}
 b_{ij}(f)=\sum_{a=0}^i\sum_{b=0}^j
 c_{ab}\frac{\binom ia}{\binom da}
        \frac{\binom jb}{\binom eb},
 \qquad 0\le i\le d,\quad0\le j\le e.
\end{equation}
Inserting the displayed rational polynomials gives the following exact certificate values (the one-variable entries are independent of $Z$):
\[
 \begin{array}{c|c|c|c}
 f&(d,e)&\hbox{number of coefficients}&\min_{i,j}b_{ij}(f)\\ \hline
 \mathcal P_-&(29,3)&120&49152/5\\
 \mathcal P_+&(15,1)&32&3599344\\
 R_5&(5,0)&6&1152\\
 -R_7&(7,0)&8&51264
 \end{array}
\]
Thus all four polynomials are strictly positive on $[0,1]^2$, since the Bernstein basis is nonnegative and sums to one.
Formula \eqref{ord:eq:bernsteinformula} with $e=0$ also gives the bound on $-R_7$ above.
All entries in this certificate are determined by finite rational sums and polynomial products displayed here.
The actual lower-end denominator is nonzero: indeed $3M+Lw=\upsilon^3H(\upsilon^2,Z)/(1+\upsilon^2)^4>0$ by the positive integral in \eqref{ord:eq:lower}.
This proves the endpoint signs.

At $L=0$, $\det P$ is increasing and linear in $\theta$, and the lower endpoint suffices.
Thus $P>0$ for both dimensions and every finite $L$.

Continuity of the response and matrix entries proves the compact-parameter assertion.
\end{proof}

\subsection{Tangential dependence of the quadratic metric terms}
\begin{proposition}[Tangential quadratic terms]
For $N=7,8$ and every fixed $L\in\R$, the trace-free hook, symmetric trace-free, and degree-one scalar components of $tC(x)$ are orthogonal for $F$.
The corrected energy is strictly negative on each nonzero component.
The estimates for the scalar components are \eqref{ord:eq:Sbound} and \eqref{ord:eq:Vbound}.
\end{proposition}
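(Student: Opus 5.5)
We decompose the trace-free tensor-valued linear form $C(x)_{ab}=C_{ab,c}x_c$ under $O(m)$, acting on the tangential variables. The space $\{C_{ab,c}=C_{ba,c},\ \sum_aC_{aa,c}=0\}$ splits into three pieces. The first is a \emph{degree-one scalar} piece $C_{ab,c}=\lambda\bigl(\delta_{ac}\beta_b+\delta_{bc}\beta_a-\tfrac2m\delta_{ab}\beta_c\bigr)$, with $\beta\in\R^m$. The second is a \emph{symmetric trace-free} piece, totally symmetric and trace free in all index pairs. This is the harmonic cubic part, which in the notation of Proposition~\ref{prop:polynomial-decomposition} is $H_{ab}$ for a harmonic cubic $H$. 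The third is a \emph{hook} piece, whose symmetrization over $(a,b,c)$ vanishes and which is orthogonal to the scalar piece; it corresponds to the vector-derived generator $\partial_aV_b+\partial_bV_a$ with $V$ quadratic and harmonic. By Schur's lemma, together with the tangential spherical orthogonality noted at the beginning of Section~\ref{sec:ordinary-endpoints}, the raw form $\mathcal Q$ is block diagonal with respect to these pieces. For the corrected form $F$, note that the scalar sources $S_h$ carry tangential harmonic degrees one, three, and (for the hook piece) none. Since the inverse of $\B$ preserves harmonic type, the correction term $\int S_h v_h$ is also block diagonal. The Jacobi compatibility for the degree-one source follows from Lemma~\ref{lem:stationarity}, so Lemma~\ref{ord:lem:positive} applies on the complement.

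For the hook and symmetric trace-free pieces the divergence structure is explicit. For the hook piece, $hx$ and the source vanish up to the vector formulas of Proposition~\ref{prop:polynomial-decomposition}. In fact $\Div h$ is proportional to $tV$-type terms with $D_x=0$ when $k=0$, because $\Div(\partial_aV_b+\partial_bV_a)=\Delta V+\nabla\Div V=0$. Hence $F=\mathcal Q$ reduces to $-\tfrac14\int|\partial h|^2U^2$ plus the first term of \eqref{lc:eq:rawpolar}. The term $\int|h\nabla U|^2$ involves only $h\,\nabla_xU$, and $h x$ equals $(\ell-1)t\,V$ with $\ell=2$. I will compute both integrals with the moment formula, now with the shifted weight $(1+r^2+(t-L)^2)$, using the integrals $I_j,J_j$ from the proof of Proposition~\ref{ord:prop:E}, and show that the resulting scalar is negative for all $L$. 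For the symmetric trace-free piece ($\ell=3$, $k=0$, generator $tH_{ab}$) the same holds with $hx=2t\nabla H$ and $\Div h=0$. The source is then $h_{ij}U_{ij}$, a multiple of $t H\,U''$-type terms of tangential degree three, on which $\B$ is positive by Lemma~\ref{ord:lem:positive}. Here the correction contributes $-c_N^{-1}\B^{-1}(S,S)\le0$, so it suffices that $\mathcal Q<0$ on that piece. Alternatively, I will use the trial bound stated just before the present subsection with an explicit $v_0=\alpha\,tH\,U/(1+|x|^2+(t-L)^2)$ and optimize $\alpha$; I expect this to give the estimate \eqref{ord:eq:Sbound}.

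The main obstacle is the degree-one scalar piece. Its source has tangential degree one, so the translation kernel must be removed, and the sign of the raw coefficient is not obvious. I plan to proceed exactly as in Lemma~\ref{lem:normal-particular}. I will find a particular solution of the form $P(t,|x|^2)\,(\beta\cdot x)\,U/(1+|x|^2+(t-L)^2)$ that solves $Jv_p=-S_h$ exactly, and compute its Robin defect $\delta\cdot\Phi_1$. The remaining response is $-\delta^2T_1$, where $T_1=\int\Phi_1\psi_1$ for $\psi_1$ solving the homogeneous problem in the complement of the translations. Since this enters with a negative sign, negativity of $F$ needs only a \emph{lower} bound on $T_1$. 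That bound comes from the variational trial $\Phi_1$ projected off the translation field, as in \eqref{ord:eq:lower}. After that, the claim $F<0$ reduces to a one-variable inequality in $L$ involving $w$ and $M$. I would certify this inequality in the substitutions $x=L/\sqrt{1+L^2}$ (for $N=8$) and $\upsilon$ (for $N=7$) by Bernstein coefficients via \eqref{ord:eq:bernsteinformula}, giving \eqref{ord:eq:Vbound}. Uniformity on compact $L$-intervals then follows by continuity of all entries.
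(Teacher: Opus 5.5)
\textbf{There is a genuine gap.} The hook sector and the Schur-lemma orthogonality are fine. Neither scalar sector goes through as planned, however, so \eqref{ord:eq:Sbound} and \eqref{ord:eq:Vbound} are not reached.

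\emph{Symmetric trace-free sector.} Your reduction ``it suffices that $\mathcal Q<0$'' rests on a false premise. With $K_C=|C|^2$ one finds $\mathcal Q(tC)=\tfrac14\mathfrak c_N|C|^2\int_0^\infty t(2t+L)c(t)^{-\nu_0}\dd t$, which is positive for $L\ge0$; for instance it equals $\tfrac{\pi}{8}\mathfrak c_N|C|^2$ when $N=7$, $L=0$. In fact the particular solution $v_p=-\frac{N-2}{12}(2t+L)\Phi_3$ satisfies $\mathcal Q(tC)+c_N^{-1}\int_{\R^N_+}S_{tC}v_p=0$ identically (integrate $t(t-L)c^{-\nu_0}$ by parts). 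So by \eqref{lc:eq:schur} the entire sign comes from the boundary: $F_{tC}=c_N^{-1}\bigl(\int_{\partial\R^N_+}B_0v_p\,v_p-T\bigr)$ with $B_0v_p=\frac{N-2}{6}\Phi_3$.

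Your fallback trial $\alpha t\Phi_3$ vanishes on the plane and cannot see this. Since $\B(v_p,t\Phi_3)=-\int S_{tC}\,t\Phi_3$, at $L=0$ the optimal $\alpha$ is exactly $-\frac{N-2}{6}$. You therefore recover $v_p$ itself and obtain only $F_{tC}\le0$. For general $L$ the optimized bound equals $\frac{\mathfrak c_N|C|^2Lc_0w}{12(N-5)}$ plus a nonnegative multiple of $L^2$ (a Schur complement of $\B$ on $\{\Phi_3,t\Phi_3\}$). It is therefore positive for $L>0$, with the opposite sign to the particular boundary term in \eqref{ord:eq:Sbound}.

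What is needed is the particular solution together with a response trial of nonzero trace, $T\ge\bigl(\int_{\partial\R^N_+}B_0v_p\,\Phi_3\bigr)^2/\B(\Phi_3,\Phi_3)$. This yields both terms of \eqref{ord:eq:Sbound}. Strict negativity then follows from $L+c_0w/((N-5)M)>0$, using $\frac{c_0w}{N-5}=\int_K^\infty z(1+z^2)^{-\nu_0}\dd z>KM$ for $L=-K<0$; no Bernstein certificate is required.

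\emph{Degree-one scalar sector.} Here the template of Lemma~\ref{lem:normal-particular} degenerates, because $\Phi_1=(v\cdot x)U/(1+|x|^2+(t-L)^2)$ is itself a translation Jacobi field. Testing $Jv_p=-S_{tC}$ against $\Phi_1$ and using $\int S_{tC}\Phi_1=0$ gives $\int_{\partial\R^N_+}B_0v_p\,\Phi_1=0$. Two consequences follow:
\begin{itemize}
\item a Robin defect of the form $\delta\Phi_1$ must have $\delta=0$;
\item ``$\Phi_1$ projected off the translation field'' is identically zero, so it gives no lower bound on any response.
\end{itemize}
The paper's particular solution instead has defect $\frac{N-3}{4(N+1)}\Xi$, where $\Xi=\bigl((1+|x|^2+(t-L)^2)-\frac{2(N-1)c_0}{N-3}\bigr)\Phi_1$. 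This is a kernel-orthogonal boundary function, not a multiple of $\Phi_1$. The lower bound on its response $T_\Xi$ requires genuinely different trials, such as $(1+|x|^2+(t-L)^2)\Phi_1$, $t\Phi_1$, $t^2\Phi_1$. With these, \eqref{ord:eq:Vbound} carries the same bracket $L+c_0w/((N-5)M)$, and its sign follows from the same elementary integral comparison.

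As written, your proposal proves strict negativity only in the hook sector.
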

\begin{proof}
Put $V_b=C_{ab,a}$ and $K_C=C_{ab,c}C_{ac,b}$.
Substitution of $h=tC(x)$ into \eqref{lc:eq:rawsource}--\eqref{lc:eq:rawpolar} gives
\begin{equation}\label{ord:eq:Craw}
 \frac{\mathcal Q(tC)}{\mathfrak c_N}
 =\int_0^\infty c(t)^{-\nu_0}
 \left[t^2\left(\frac{|C|^2}{4}+\frac{K_C}{2}\right)
       -\frac{c(t)|C|^2}{4(N-5)}\right]\dd t.
\end{equation}
The curvature term here is $-\tfrac14(|C(x)|^2+t^2|C|^2)-\tfrac12t^2|V|^2$.
The kinetic term cancels the $|V|^2$ term and yields \eqref{ord:eq:Craw}.
The source is
\[
 S_{tC}=N(N-2)t\frac{C_{ab,c}x_ax_bx_c}{\bigl(1+|x|^2+(t-L)^2\bigr)^2}U
                -(N-2)t\frac{V\cdot x}{\bigl(1+|x|^2+(t-L)^2\bigr)}U.
\]

The orthogonal decomposition is into totally symmetric trace-free tensors, trace-free hooks, and the vector tensors
\[
 C^V_{ab,c}(v)=
 \frac{m}{(m-1)(m+2)}(\delta_{ac}v_b+\delta_{bc}v_a)
 -\frac{2}{(m-1)(m+2)}\delta_{ab}v_c.
\]
Here the trace-free hook is vector-derived, whereas the totally symmetric part and $C^V(v)$ are scalar-derived, with harmonic degrees three and one.
For this normalization, $C^V_{ab,a}=v_b$, $|C^V|^2=2m|v|^2/((m-1)(m+2))$, and $K_{C^V}=(m-2)|v|^2/((m-1)(m+2))$.

\medskip
\paragraph{Trace-free hook tensors.}
If $C$ is trace-free hook, its cyclic symmetrization and $V$ vanish, $K_C=-|C|^2/2$, and $S_{tC}=0$.
Therefore
\[
  F_{tC}=-\frac{\mathfrak c_N|C|^2}{4(N-5)}
           \int_0^\infty c(t)^{1-\nu_0}\dd t<0.
\]

\medskip
\paragraph{Totally symmetric trace-free tensors.}
Suppose $C$ is totally symmetric and trace-free.
Set $Q_3=C_{abc}x_ax_bx_c$, $\Phi_3=Q_3U/\bigl(1+|x|^2+(t-L)^2\bigr)$.
Then
\[
 J\Phi_3=4N\Phi_3/\bigl(1+|x|^2+(t-L)^2\bigr),\quad B_0\Phi_3=0,\quad
 v_p=-\frac{N-2}{12}(2t+L)\Phi_3,
\]
\[
 Jv_p=-S_{tC},\qquad B_0v_p=\frac{N-2}{6}\Phi_3.
\]
Apply the decomposition \eqref{lc:eq:schur} and the variational principle from Lemma~\ref{ord:lem:positive}, with trial function $\Phi_3$, to obtain
\begin{equation}\label{ord:eq:Sbound}
  F_{tC}\le
 -\frac{\mathfrak c_N|C|^2c_0w}{12(N-5)}
 \left[L+\frac{c_0w}{(N-5)M}\right].
\end{equation}
For example, the particular boundary contribution is $-\mathfrak c_N|C|^2Lc_0w/(12(N-5))$; the Rayleigh correction is at least $\mathfrak c_N|C|^2c_0^2w^2/(12(N-5)^2M)$.

\medskip
\paragraph{The scalar harmonic component of degree one.}
For $C=C^V(v)$, put $\Phi_1=(v\cdot x)U/\bigl(1+|x|^2+(t-L)^2\bigr)$.
This is a translation Jacobi field.
With $z=t-L$, define
\begin{align*}
 b(z)&=-\frac{N-3}{4(N-2)}z-\frac L2,\\
 g(z)&=\frac{z^3}{3}+\frac L2z^2+\frac{N-1}{2(N-2)}z
       -\frac{L^3}{6}+\frac{N-1}{2(N-2)}L,\\
 v_p&=\frac{N-2}{N+1}(g+b\bigl(1+|x|^2+(t-L)^2\bigr))\Phi_1.
\end{align*}
Then $Jv_p=-S_{tC}$ and
\[
 B_0v_p=\frac{N-3}{4(N+1)}\Xi,\qquad
 \Xi=\left(\bigl(1+|x|^2+(t-L)^2\bigr)-\frac{2(N-1)c_0}{N-3}\right)\Phi_1.
\]
The boundary functional $\Xi$ is orthogonal to the translation kernel.
Let $T_\Xi$ be its positive boundary response.
The exact particular coefficient is
\[
 -\frac{\mathfrak c_N|v|^2(N-1)^2Lc_0w}{4(N+1)(N-5)(N-2)^2},
\]
and the response is subtracted with multiplier $(N-1)(N-3)^2/[4(N-2)(N+1)^2]$.
The trials $\bigl(1+|x|^2+(t-L)^2\bigr)\Phi_1,t\Phi_1,t^2\Phi_1$, projected off the kernel if necessary, give
\[
 \frac{T_\Xi}{\mathfrak c_N|v|^2}\ge
 \frac{c_0^2w^2(N-1)(N+1)}{M(N-5)^2(N-3)^2(N-2)}.
\]
For a complete inverse calculation, put $\varpi=(N-3)/(4(N-1)(N-2))$.
Their Gram matrix $G_V$ and load $b_V$, normalized by $\mathfrak c_N|v|^2$, are
\begin{align*}
 (G_V)_{11}&=\frac{(N+2)M-Lw}{(N-1)(N-2)},&
 (G_V)_{12}&=2\varpi J_1,\\
 (G_V)_{13}&=4\varpi(J_2+LJ_1),&
 (G_V)_{22}&=\varpi J_0,\\
 (G_V)_{23}&=2\varpi(J_1+LJ_0),&
 (G_V)_{33}&=4\varpi(J_2+2LJ_1+L^2J_0),
\end{align*}
with the remaining entries determined by symmetry, and
\[
 b_V=\left(\frac{c_0w(N+1)}{(N-5)(N-3)(N-2)},0,0\right)^T.
\]
The entries follow from $\B(f\Phi_1,g\Phi_1)=\int_{\R^N_+}\Phi_1^2\nabla f\cdot\nabla g$.
The three trials are linearly independent modulo the translation kernel, so $G_V$ is positive definite.
Solving $G_Vz=b_V$ and using the recurrence relations for $I_j$ and $J_j$ gives
\[
 b_V^TG_V^{-1}b_V
 =\frac{c_0^2w^2(N-1)(N+1)}{M(N-5)^2(N-3)^2(N-2)}.
\]
This proves the displayed response lower bound.
Hence
\begin{equation}\label{ord:eq:Vbound}
  F_{tC^V(v)}\le
 -\frac{\mathfrak c_N|v|^2(N-1)^2c_0w}{4(N+1)(N-5)(N-2)^2}
 \left[L+\frac{c_0w}{(N-5)M}\right].
\end{equation}

The bracket $L+c_0w/((N-5)M)$ is positive.
This is immediate for $L\ge0$.
For $L=-K<0$, use
\[
 \frac{c_0w}{N-5}
 =\int_K^\infty z(1+z^2)^{-\nu_0}\dd z
 >K\int_K^\infty(1+z^2)^{-\nu_0}\dd z=KM.
\]
\end{proof}

\begin{proof}[Proof of Theorem~\ref{ord:thm:main}]
Tangential inversion separates $tC(x)$ from the even tensors $H^W,tA,t^2B$.
The Weyl representation and $\operatorname{Sym}^2_0$ representation are orthogonal for any invariant quadratic form.
For $H^W\ne0$, its scalar source vanishes and
\[
  F_{H^W}(0,1)=-\frac14\int_{\R^N_+} U^2|\nabla H^W|^2<0.
\]
Within the $C$ space the three irreducible sectors are orthogonal.
The scalar-derived sectors have source harmonic degrees three and one, while the hook sector has zero source; hence the corrected mixed pairings also vanish.
All these degree-two sectors have scale derivative $4 F$.
For the remaining block $tA,t^2B$, Proposition~\ref{prop:normal-scale-positive} gives a positive scale matrix.
Finite-dimensional norm equivalence proves Theorem~\ref{ord:thm:main}.
Continuity of the scalar inverse and the coefficients for finite $L$ yields compact-parameter uniformity.
\end{proof}

\begin{corollary}\label{ord:cor:degreeone}
For $N=5,6$ and each fixed $\kappa\in\R$, the model $h=tA$, with $A\in\operatorname{Sym}^2_0(\R^{N-1})$, satisfies
\[
 -\varepsilon\partial_\varepsilon F_{tA}(0,\varepsilon)
 \ge c_{N,\kappa}\varepsilon^2|A|^2.
\]
The positive constant is uniform for $\kappa$ in a compact interval.
\end{corollary}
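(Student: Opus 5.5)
The plan is to reduce the statement to a single sign, $F_{tA}(0,1)<0$, and read that sign off the $(1,1)$ entry of the normal-block computation in Proposition~\ref{ord:prop:E}, now redone for $N=5,6$.

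\emph{Scaling.} Since $h=tA$ is homogeneous of degree one, $h_{0,\varepsilon}=\varepsilon h$, and $F$ is a quadratic form. Hence
\[
F_{tA}(0,\varepsilon)=\varepsilon^2F_{tA}(0,1),\qquad -\varepsilon\partial_\varepsilon F_{tA}(0,\varepsilon)=-2\varepsilon^2F_{tA}(0,1).
\]
This is the $1\times1$ case of \eqref{lc:eq:scale} with $\mathsf D=1$. Degree one is ordinary for $N\ge5$, because $2d=2<N-2$. So $F(tA)$ is given by \eqref{eq:quadratic-action} with finite $\N$-norm, and the corollary reduces to $F_{tA}(0,1)<0$ for every fixed $L=-\kappa$. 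The bound $c_{N,\kappa}|A|^2$ then follows by rotation invariance: the same scalar acts on each coordinate of $\operatorname{Sym}^2_0$.

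\emph{Exact formula.} I would rerun the argument of Lemma~\ref{lem:normal-particular} and Proposition~\ref{ord:prop:E} with $b=0$ and $N$ kept general. Then $q_0=0$, $g=a(z/2+L)$, $v_p=-\tfrac{N-2}{2}g\Phi$, and $\delta_{a,0}=\tfrac{N-2}{4}a$. None of these formulas used $N>6$. The tangential beta integrals $\int\Phi^2$ and $\int\Phi^2/(1+|x|^2+(t-L)^2)$, and the one-dimensional integrals $I_0=M$, $J_0$, $J_1$, converge as soon as $\nu_0=(N-3)/2>1/2$, that is $N\ge5$. The excluded formulas involving $(N-5)$ or $(N-6)$, namely $I_1$, $I_2$ and the third beta integral, enter only through $b$ or $q_0$, so they drop out.

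Collecting the three contributions
\[
\nu_0\mathcal J(f^2)-\tfrac14\mathcal I(1),\qquad -\nu_0\mathcal J(fg),\qquad -\delta_{a,0}\,g(-L)w/(N-2),
\]
with $f=a(z+L)$, should give $E_{11}=E^0_{11}-\theta(N-2)^2/16$. I expect $E^0_{11}$ to be $-Lw/8$, or an explicit rational-in-$N$ multiple $-\gamma_NLw$ with $\gamma_N>0$, matching the $N=7,8$ displays. This requires $\nu_0J_0$ to cancel against $\tfrac14M$ via $J_0=((N-4)M+Lw)/(N-3)$; this algebraic verification is the first step.

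\emph{Sign.} The response $\theta$ is well defined. By Lemma~\ref{ord:lem:positive}, $\B$ is positive on angular degree two, and $\Phi\in\E$ for all $N\ge3$ since $\Phi=O(r^{2-N})$ with the corresponding gradient decay. So $\psi$ exists and $\theta>0$.
\begin{itemize}
\item For $L\ge0$, both terms of $E_{11}$ are nonpositive and the second is strictly negative, so $E_{11}<0$.
\item For $L=-K<0$, I would use the trial lower bound \eqref{ord:eq:lower}, $\theta\ge\theta_-$, exactly as in the proof of Proposition~\ref{prop:normal-scale-positive}. This reduces $E_{11}<0$ to a strict inequality between $w$ and $(N-4)M+Lw$. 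That inequality is $\int_K^\infty z(1+z^2)^{-\nu_0-1}\dd z>K\int_K^\infty(1+z^2)^{-\nu_0-1}\dd z$, which is again valid for every $N\ge5$.
\end{itemize}

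This last reduction is where I expect the only real obstacle. One must check that, once the correct $N$-dependent constant in $E^0_{11}$ is inserted, the inequality $E^0_{11}<\theta_-(N-2)^2/16$ becomes exactly this integral comparison for $N=5,6$. If the constants do not match cleanly, I would instead improve the lower bound on $\theta$ with the two-trial Rayleigh quotient on $\Phi$ and $t\Phi$, as was done for $N=8$.

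\emph{Uniformity.} $M$, $w$ and $\theta$ depend continuously on $L$ by continuity of the projected inverse. So $-2F_{tA}(0,1)/|A|^2$ is a continuous positive function of $\kappa$, and its minimum over a compact $\kappa$-interval gives the uniform $c_{N,\kappa}$.
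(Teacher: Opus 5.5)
Your proposal is correct and is essentially the paper's proof. Homogeneity reduces the claim to the sign of $E_{11}=-Lw/8-\theta(N-2)^2/16$; the sign is immediate for $L\ge0$, follows for $L<0$ from the trial bound \eqref{ord:eq:lower} and the comparison $\int_K^\infty z(1+z^2)^{-\nu_0-1}\dd z>K\int_K^\infty(1+z^2)^{-\nu_0-1}\dd z$, and continuity gives uniformity.

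The constant check you flagged goes through for every $N\ge5$, so no two-trial refinement is needed. The raw and bulk particular terms cancel via $J_2=M-J_0$, $J_1=w/(N-3)$ and $J_0=((N-4)M+Lw)/(N-3)$, leaving $E^0_{11}=-Lw/8$. Under $\theta\ge\theta_-$, the condition $E_{11}<0$ is then exactly $KJ_0<J_1$. The one point you did not state, which the paper records, is that $v_p$ is a linear multiple of $\Phi$ and has finite Dirichlet energy for $N>4$; this is needed for the pairing decomposition.
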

\begin{proof}
The formula $F_{tA}/(\mathfrak c_N|A|^2)=-Lw/8-\theta(N-2)^2/16$ and the trial \eqref{ord:eq:lower} remain valid in these dimensions when used on the degree-one sector alone.
The particular solution is a linear multiplier of $\Phi$ and has finite Dirichlet energy for $N>4$; all integrals needed for this formula converge.
None of the degree-two integrals with a factor $(N-5)^{-1}$ is used.
The integral inequality in Section~\ref{ord:sec:actualscale} gives strict negativity of the displayed coefficient also when $L<0$.
Homogeneity gives the asserted factor $\varepsilon^2$, and continuity of the scalar inverse gives compact-parameter uniformity.
\end{proof}

\section{The logarithmic term in the energy expansion}\label{sec:critical}
The ordinary integrals cease to converge when $2d=N-2$.
For a cutoff tensor of this degree, we compute the coefficient of $\log R$ and then combine it with the ordinary inequalities of Theorems~\ref{thm:ordinary-endpoints} and~\ref{ord:thm:main}, together with Corollary~\ref{ord:cor:degreeone}.
The proof has three steps: solve the leading scalar equation at infinity, use Kelvin inversion and a boundary-preserving gauge to identify the quadratic coefficient, and exclude the kernel using the polynomial Fermi conditions.
The final cutoff estimates make this calculation uniform for moving coefficient vectors.

\subsection{The linearized equation at the critical degree}
Use the full Fermi spaces in \eqref{crit:eq:fermi}.
Use the cutoff $\chi_R$ fixed in Section~\ref{sec:preliminaries}.
\begin{theorem}[The logarithmic coefficient]\label{crit:thm:critical}
Let $N=2d+2$, $d\ge1$.
There is a linear map
\[
 T_d:\mathscr J_d\rightarrow C^\infty(S^{N-1}_+;\operatorname{Sym}^2\R^N)
\]
which is injective and independent of $(\sigma,\kappa)$, such that
\begin{equation}\label{crit:eq:residue}
 \lim_{R\to\infty}\frac{F(\chi_Rh)}{\log R}
 =-\frac{2^{\sigma(N-2)}}4\int_{S^{N-1}_+}
       \bigl(|\nabla_S T_dh|^2+d^2|T_dh|^2\bigr)\dd\theta.
\end{equation}
Here derivatives of $T_dh$ are taken componentwise in a fixed Cartesian frame.
In particular, the right-hand side is at most $-c(N)\norm h^2$ for some $c(N)>0$.
For a homogeneous scaled coefficient and a differentiated cutoff, the logarithmic residue of the negative scale derivative is $-2d$ times \eqref{crit:eq:residue}, and hence is strictly positive on $\mathscr J_d\setminus\{0\}$.
\end{theorem}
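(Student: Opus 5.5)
The plan is to compute $F(\chi_Rh)$ as $R\to\infty$, isolating the contributions from the annulus $1\ll|y|\lesssim R$, where $h$ is homogeneous of degree $d=(N-2)/2$ and the bubble is asymptotically $U\sim 2^{\sigma(N-2)/2}|y|^{2-N}$. In the raw form $\mathcal Q$ of \eqref{eq:quadratic-action}, every integrand on this region is homogeneous of degree $-N$. Examples are $|h\nabla U|^2$, $U^2|\partial h|^2$, and $U^2|\Div h|^2$, each with corrections of relative order $|y|^{-1}$. Polar integration then gives $\log R$ times an angular integral over $S^{N-1}_+$, plus $O(1)$. The boundary terms and the divergence term $\partial_i(h_{ij}(\Div h)_j)U^2$ contribute only $O(1)$ after integration by parts, up to a homogeneous angular term that I will track. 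The scalar correction requires more care. The source $S_h$ is asymptotically homogeneous of degree $d-N$, so I solve the leading-order linearized equation at infinity with the ansatz $v_h\sim 2^{\sigma(N-2)/2}|y|^{d+2-N}\phi(\theta)$, where $\phi$ solves an angular ODE or PDE on $S^{N-1}_+$ with Neumann data. The Jacobi potential terms decay faster and drop out, so this step is independent of $(\sigma,\kappa)$. The pairing $\int S_hv_h$ is then also logarithmic, with an angular coefficient that is quadratic in $h$. This gives the factor $2^{\sigma(N-2)}$ and the independence of $T_d$ from the curvature pair.

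To identify the combined angular coefficient as a negative sum of squares, I use Kelvin inversion $y\mapsto y/|y|^2$, which preserves $\R^N_+$ and its boundary. Under this map the conformally weighted metric perturbation $U^{4/(N-2)}h$ at infinity becomes a perturbation near the origin, homogeneous of degree $-d$ in the inverted variable. The corrected quadratic energy is the second variation of the total scalar curvature functional $\mathcal S$, with the boundary term from \eqref{crit:eq:EHfirst}, on flat half-space. By Lemma~\ref{lem:stationarity} and the conformal covariance, the scalar correction is exactly the conformal-factor part of the variation. I would then gauge-fix with a boundary-preserving Bianchi gauge: solve for a vector field $X$ tangent to $\partial\R^N_+$ so that $k=h'+\mathcal L_Xg$ is divergence-free and satisfies the appropriate boundary condition. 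For divergence-free trace-adjusted $k$, the second variation of Einstein--Hilbert reduces to $-\tfrac14\int|\nabla k|^2$. Define $T_dh$ as the angular profile of the gauged, scalar-corrected tensor. Writing $k=r^{-d}T_dh(\theta)$ and applying $|\nabla k|^2=r^{-2d-2}(d^2|T|^2+|\nabla_ST|^2)$ gives \eqref{crit:eq:residue}. The gauge and conformal terms contribute only exact divergences, whose spherical and planar boundary fluxes are $O(1)$ in $\log R$, and these fluxes must be checked on $t=0$ using $h_{Ni}=0$.

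Injectivity is proved as follows. If $T_dh=0$, then the gauged tensor vanishes. This means $h$ is pure gauge plus conformal, that is, $h=\mathcal L_Xg+fg$ with $X,f$ homogeneous polynomials. The Fermi conditions \eqref{crit:eq:fermi} ($h_{Ni}=0$, $\tr h=0$, $x_ah_{ab}(x,0)=0$) then force $X$ to be a conformal Killing field preserving the half-space, which is impossible at degree $d+1\ge2$, so $h=0$. This is Lemma~\ref{lc:lem:kernel}. Since $\mathscr J_d$ is finite-dimensional, the bound $-c(N)\norm h^2$ follows. For the scale derivative, replace $h$ by $h_{0,\varepsilon}=\varepsilon^dh$ and note that $\varepsilon\partial_\varepsilon\chi_R$ contributes only on the annulus of scale $R$, giving $O(1)$. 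This yields $-2d$ times the residue. I expect the main obstacle to be the gauge step: constructing a tangential $X$ that achieves divergence-free gauge while keeping the boundary term in \eqref{crit:eq:EHfirst} controlled, and verifying that all planar boundary terms cancel exactly rather than contributing to the logarithm.
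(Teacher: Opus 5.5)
Your outline follows the paper's route: the homogeneous Neumann cone correction $r^{-d}\phi_h$, Kelvin inversion, a boundary-preserving Bianchi gauge, the reduction to $-\tfrac14\int|\partial K|^2$, and homogeneity. The gap is the injectivity of $T_d$, which is what gives the bound $-c(N)\norm h^2$ and the strict positivity of the scale residue; you do not actually prove it.

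Saying that the Fermi conditions ``force $X$ to be a conformal Killing field'' is equivalent to the conclusion. Since $\tr h=0$, $h=\mathcal L_X\delta+f\delta$ vanishes exactly when $\mathcal L_X\delta$ is pure trace. The reason you give is also false for $d=1$: $X=|y|^2(b,0)-2(b\cdot x)y$ with $f=4b\cdot x$ is homogeneous of degree two, tangent to the plane, and satisfies $\mathcal L_X\delta+f\delta=0$. This is the exceptional case the paper subtracts.

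More importantly, the implication depends essentially on the Gauss-lemma condition $x_ah_{ab}(x,0)=0$, and you never say how it enters. Without it, $X=(X^T(x),0)$ with $\Div_xX^T=0$ gives nonzero $h$ with $h_{Ni}=0$, $\tr h=0$ and $T_dh=0$. The missing argument is a spectral computation:
\begin{itemize}
\item From $h_{Ni}=0$ and $\tr h=0$ one gets $f=-2\partial_tX_N$, $\partial_tX_a=-\partial_aX_N$, $\Div_xX^T=m\partial_tX_N$, and hence $\Delta_xX_N+m\partial_{tt}X_N=0$.
\item On $t=0$ the radial condition reads $dX^T+\nabla_x\Psi+fx=0$ with $\Psi=x\cdot X^T$. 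Contracting with $x$ and inserting into $2\Div_xX^T+mf=0$ gives $-\Delta_{S^{m-1}}\psi=2d^3\psi$ for $\Psi=r^{d+2}\psi$.
\item For $d\ge3$ this eigenvalue exceeds $(d+2)(3d+1)$, the largest eigenvalue available to the restriction of a degree-$(d+2)$ polynomial. For $d=2$, $16\notin\{0,10,28\}$. For $d=1$ only the conformal Killing modes above survive.
\item Hence $X^T=f=0$ on the plane. Writing $X_N=\sum_jt^ja_j$, the recursion $a_{j+2}=-\Delta_xa_j/(m(j+1)(j+2))$ with $a_0=a_1=0$ gives $X=0$.
\end{itemize}
You also assert that $X$ and $f$ are polynomials. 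Reversing the inversion and the gauge only gives homogeneous functions that are smooth away from the origin; the paper obtains polynomiality from $P'_\delta(h)=-\tfrac12\nabla^2f$.

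Two further steps are only named. First, the reduction to $-\tfrac14\int|\partial K|^2$ needs $\tr K=0$ as well as the divergence condition, because in Bianchi gauge the trace enters the second variation with the opposite sign. This comes from the constraints $R'_\delta(k)=H'_\delta(k)=0$ created by the scalar correction. In Bianchi gauge $R'(K)=-\tfrac12\Delta\tr K$, and $K_{aN}=0$, $H'(K)=0$ and the normal Bianchi equation give $\partial_\nu\tr K=0$ on the plane. So $\tr K$ is a harmonic Neumann function homogeneous of degree $-d$, and it vanishes because $-\Delta_S+d^2>0$. Second, for $d=1$ the gauge equation has a constant Neumann obstruction in the tangential components; the paper removes it using the parity of $h=tA$.

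Finally, the logarithm must be computed for the exact projected Jacobi corrector, not for the cone ansatz. The cutoff cone function has residuals bounded in $L^{2N/(N+2)}(\R^N_+)\times L^{2(N-1)/N}(\partial\R^N_+)$ uniformly in $R$. The difference therefore has bounded energy and costs only $O(\sqrt{\log R}+1)$. The bound $R\partial_RF(\chi_Rh)=O(1)$ does not follow from the support of $\partial_R\chi_R$, since the corrector is nonlocal. It follows by using self-adjointness of the inverse to write the derivative as a pairing with the annular source.
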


We first construct the homogeneous scalar correction and the tensor $T_dh$ used in the theorem.
First divide out the leading coefficient $2^{\sigma(N-2)/2}$ of $U$.
Set
\[
 N=2d+2,\qquad w(y)=|y|^{2-N}=r^{-2d}.
\]
For a trace-free tangential tensor $h$, the first scalar source is
\begin{equation}\label{crit:eq:source}
 S_h=h_{ij}\partial_{ij}w+(\partial_i h_{ij})\partial_jw
                    +c_N(\partial_i\partial_jh_{ij})w.
\end{equation}
The sign convention is that the scalar corrector satisfies $-\Delta v=-S_h$.
Because $h$ has degree $d$,
\[
 S_h=r^{-d-2}s_h(\theta).
\]
The linearized boundary mean curvature of $\delta+s h$ is zero: its normal components vanish and $\partial_t\tr h=0$.
Also $\partial_\nu w=0$ on the flat boundary away from the origin.
Therefore the leading scalar boundary condition is Neumann.

\begin{lemma}\label{crit:lem:cone}
There is a unique homogeneous correction $v=r^{-d}\phi_h(\theta)$ satisfying
\[
 \begin{cases}
 -\Delta v=-S_h &\text{in }\R^N_+,\\
 \partial_\nu v=0 &\text{on }\partial\R^N_+\setminus\{0\},
 \end{cases}
\]
and its angular factor solves
\begin{equation}\label{crit:eq:angular}
 \begin{cases}
 (-\Delta_{S^{N-1}}+d^2)\phi_h=-s_h &\text{in }S^{N-1}_+,\\
 \partial_{\nu_S}\phi_h=0 &\text{on }\partial S^{N-1}_+.
 \end{cases}
\end{equation}
\end{lemma}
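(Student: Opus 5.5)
Substituting the separated ansatz $v=r^{-d}\phi(\theta)$ reduces the statement to a self-adjoint Neumann eigenvalue problem on the hemisphere. The key point is that $d^2$ lies strictly below the Neumann spectrum shifted in the right way, so the angular operator is invertible.

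\textbf{Step 1: homogeneity of the source.} Since $w=r^{-2d}$ and $h$ is homogeneous of degree $d$, every term in \eqref{crit:eq:source} is homogeneous of degree $d-2d-2=-d-2$. This gives $S_h=r^{-d-2}s_h(\theta)$ with $s_h$ smooth on $\overline{S^{N-1}_+}$.

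\textbf{Step 2: the angular equation.} For $v=r^{-d}\phi(\theta)$ in dimension $N$,
\[
 \Delta v=r^{-d-2}\bigl(\Delta_{S^{N-1}}\phi+(-d)(-d+N-2)\phi\bigr)=r^{-d-2}\bigl(\Delta_{S^{N-1}}\phi+d(d-N+2)\phi\bigr).
\]
With $N-2=2d$ we get $d(d-N+2)=-d^2$, so $-\Delta v=r^{-d-2}(-\Delta_S+d^2)\phi$. Hence $-\Delta v=-S_h$ is equivalent to the first line of \eqref{crit:eq:angular}.

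For the boundary condition, the outward normal on $\partial\R^N_+\setminus\{0\}$ is $-\partial_t$. At $t=0$, $\partial_t$ is $r^{-1}$ times the angular derivative in the polar direction at the equator, and it does not act on the $r$ factor there. So $\partial_\nu v=0$ is equivalent to $\partial_{\nu_S}\phi=0$ on $\partial S^{N-1}_+$.

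\textbf{Step 3: existence and uniqueness.} The operator $-\Delta_S+d^2$ with Neumann condition on the hemisphere is self-adjoint and nonnegative, with first eigenvalue $d^2\ge1>0$. It is therefore coercive on $H^1(S^{N-1}_+)$. Lax--Milgram gives a unique weak solution $\phi_h$, and elliptic regularity up to the smooth boundary of the hemisphere makes it smooth.

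Uniqueness among homogeneous corrections of degree $-d$ follows from the same injectivity. Within the degree-$(-d)$ homogeneous class, the only other solutions would be homogeneous harmonic Neumann functions of that degree, that is, Neumann eigenfunctions with $\lambda=-d^2<0$, and there are none. Equivalently, the equivalence in Step 2 reduces uniqueness directly to the injectivity of $-\Delta_S+d^2$.

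\textbf{Expected obstacle.} The main thing to check carefully is the boundary translation in Step 2. One must confirm that the Neumann condition on the flat boundary corresponds exactly to the spherical Neumann condition, with no lower-order term coming from the radial factor, and that the given source is consistent with it. The source compatibility reflects that $\partial_\nu w=0$ and that the linearized mean curvature vanishes, as noted before the lemma; the Neumann formulation needs no solvability condition anyway, because $d^2>0$. Linearity of $h\mapsto s_h\mapsto\phi_h$ then gives a linear dependence on $h$, which is what the later definition of $T_d$ requires.
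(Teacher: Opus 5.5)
Your proposal is correct and follows the paper's proof essentially step for step. Both arguments reduce via $\Delta(r^a\phi)=r^{a-2}(\Delta_S\phi+a(a+N-2)\phi)$ with $a=-d$ and $N-2=2d$, then use strict positivity of the Neumann form $\int_{S^{N-1}_+}(|\nabla_S\phi|^2+d^2\phi^2)$ for existence and uniqueness. They finish with elliptic regularity up to the equator and the observation that homogeneity turns the flat Neumann condition into the spherical one. Your explicit conormal computation at the equator and your remark that uniqueness reduces to the absence of Neumann eigenvalue $-d^2$ are accurate expansions of steps the paper states in one line.
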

\begin{proof}
For any real exponent $a$, $\Delta(r^a\phi)=r^{a-2}(\Delta_S\phi+a(a+N-2)\phi)$.
For $a=-d$ and $N-2=2d$, the second coefficient is $-d^2$.
The Neumann quadratic form of the operator in \eqref{crit:eq:angular} is
\[
 \int_{S^{N-1}_+}(|\nabla_S\phi|^2+d^2\phi^2),
\]
which is strictly positive.
Elliptic regularity gives smoothness up to the equator.
Homogeneity turns the Euclidean Neumann condition into the stated spherical condition.
\end{proof}

\subsubsection{The leading term at infinity}
\begin{lemma}\label{lem:critical-leading-term}
Replacing the bubble by its leading homogeneous term changes the critical quadratic densities only by integrable terms.
After division by $2^{\sigma(N-2)}$, the resulting logarithmic coefficient is independent of the curvature pair.
\end{lemma}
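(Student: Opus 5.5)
The plan is to compare each density in $F(\chi_Rh)$ with the same density computed with $U$ replaced by $2^{\sigma(N-2)/2}w$, and to show that the difference, integrated over $B_{2R}^+$, stays bounded as $R\to\infty$. Dividing by $\log R$ then shows the difference does not affect the limit. The basic input is the expansion recorded in Section~\ref{sec:source}:
\[
U=2^{\sigma(N-2)/2}w+O(|y|^{1-N}),
\]
with the corresponding differentiated bounds $\nabla^kU=2^{\sigma(N-2)/2}\nabla^kw+O(|y|^{1-N-k})$. These hold for both bubbles in \eqref{eq:bubble}; they follow by expanding $(|x|^2+(t+1)^2)^{-(N-2)/2}$ or $(1+|x|^2+(t+\kappa)^2)^{-(N-2)/2}$ in $1/|y|$. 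For the region $|y|\le1$ the densities are bounded, so only the exterior region matters.

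The raw part $\mathcal Q(\chi_Rh)$ is handled term by term from \eqref{lc:eq:rawpolar}. Each density there is bilinear in $h$ (or $\partial h$) and quadratic in $U$ or $\nabla U$. With $h$ homogeneous of degree $d=(N-2)/2$, the leading products have size $r^{2d}\cdot r^{-2N+2}\cdot r^{-2}\sim r^{-N}$ (for example $|h\nabla U|^2$ and $|\partial h|^2U^2$). Replacing $U$ by its leading term changes each product by a factor $(1+O(r^{-1}))$, so the error density is $O(r^{-N-1})$, which is integrable on $\R^N_+\setminus B_1$. Terms containing derivatives of $\chi_R$ are supported in $R\le|y|\le2R$, where the leading densities are $O(r^{-N})$; they contribute $O(1)$ uniformly in $R$, and the same holds for their errors. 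The divergence term $\partial_i(h_{ij}(\Div h)_j)$ in $R^{(2)}$ is treated directly in the polarized form \eqref{lc:eq:rawpolar}, so no boundary term is needed.

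The scalar-correction part $c_N^{-1}\int S_{\chi_Rh}v_{\chi_Rh}$ is where the real work lies. I would write the exact corrector as
\[
v=2^{\sigma(N-2)/2}\chi_R\,r^{-d}\phi_h+\rho_R,
\]
where $r^{-d}\phi_h$ is the homogeneous solution of Lemma~\ref{crit:lem:cone}, and aim to prove $\|\rho_R\|_{\E}=O(1)$ uniformly in $R$. To do this I compute the residual of the approximate corrector against the true equation. Interior errors come from two sources. The first is $S_h-2^{\sigma(N-2)/2}r^{-d-2}s_h=O(r^{-d-3})$. The second is the potential term, since $\sigma U^{4/(N-2)}$ gives $O(r^{-4})\cdot r^{-d}$. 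The boundary error is the Robin term $\kappa U^{2/(N-2)}v=O(r^{-2-d})$ on $t=0$, together with the cutoff commutators on the annulus $R\le|y|\le2R$.

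Pairing these residuals with $\psi\in\E$ via the Hardy and trace bounds gives quantities like $\|r^{1}\cdot r^{-d-3}\|_{L^2}$. Since $2d+4-2>N$, such norms are finite uniformly in $R$; in particular the $O(r^{-d-3})$ defect is one power better than the critical $r^{-d-2}$, which is exactly what makes it integrable. The cutoff commutators are of critical size but live only on the annulus, so they are bounded uniformly in $R$. The Jacobi projection is harmless, because $F$ is independent of the complement and the mixed derivative annihilates the kernel by Lemma~\ref{lem:stationarity}. With the projected inverse $\B:Y\to Y^*$ this gives $\|\rho_R\|_{\E}\le C$.

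The pairing then splits into three pieces. The leading homogeneous pairing is of order $\log R$. The cross terms $\int S\rho_R$ are bounded by $C\,\N(\chi_Rh)$ times $O(1)$, but since $\N(\chi_Rh)^2\sim\log R$ this only shows they are $O(\sqrt{\log R})$, which suffices for the limit. The quadratic remainder $\B(\rho_R,\rho_R)=O(1)$. Finally, every leading density carries exactly the factor $(2^{\sigma(N-2)/2})^2$, so after dividing by $2^{\sigma(N-2)}$ the remaining coefficient involves only $w$, $h$, $\phi_h$ and the Neumann condition. None of these depends on $(\sigma,\kappa)$: the Robin term $\kappa U^{2/(N-2)}$ and the potential decay faster and were absorbed into the errors.

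The main obstacle is the corrector remainder estimate. The $\sqrt{\log R}$ cross-term bound is crude, and if a sharper statement is needed the fallback is to test the $\rho_R$-equation against $r^{-d}\phi_h$ and check that the $O(r^{-d-3})$ residuals give an $O(1)$ pairing.
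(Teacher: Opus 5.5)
Your argument is correct in substance, but it takes a longer route than the paper's proof of this lemma.

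\textbf{What the paper does.} The proof works only at the level of densities. It records $U=2^{\sigma(N-2)/2}r^{-2d}(1+O_\kappa(r^{-1}))$ with differentiated bounds, and observes that a relative $O(r^{-1})$ factor turns a critical $r^{-1}\dd r$ density into an integrable one. For the homogeneous corrector $v=r^{-d}\phi$, the interior Jacobi potential $O(r^{-4})$ and the boundary Robin coefficient $O(r^{-2})$ give potential energies of radial order $r^{-3}\dd r$ and $r^{-2}\dd r$, so neither produces a logarithm. It does not show here that the exact projected corrector is close to the cut-off cone solution. That comparison is deferred to the proof of Theorem~\ref{crit:thm:critical}. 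There the residuals of $2^{\sigma(N-2)/2}\zeta\chi_R r^{-d}\phi_h$ are bounded in the dual Sobolev and trace norms $L^{2N/(N+2)}$ and $L^{2(N-1)/N}$, and the projected inverse yields \eqref{crit:eq:cutasympt}.

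\textbf{What your route does differently.} You fold that later comparison into the lemma, using Hardy and trace-Hardy weighted $L^2$ bounds instead of Sobolev duality. This gives a self-contained statement about the exact corrector; you essentially obtain \eqref{crit:eq:cutasympt} already at this point. The paper's split keeps the lemma elementary and performs the analytic comparison once. It does so together with the cutoff-derivative estimate \eqref{crit:eq:cutderivative} and the mixed-degree pairings, which your argument does not address but this lemma does not need.

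\textbf{Small repairs.}
\begin{itemize}
\item Your ansatz must also be cut off near the origin, as the paper does with $\zeta$. Since $|\nabla(r^{-d}\phi_h)|^2\sim r^{-N}$, its Dirichlet energy diverges logarithmically at $0$ as well as at infinity. Without the cutoff, $\rho_R\notin\E$, and $S^U-S^w$ is not in the dual space near $0$. The extra residual created by the cutoff is compactly supported and bounded.
\item Your justification of the Jacobi projection is aimed at the wrong point. Lemma~\ref{lem:stationarity} is what lets the exact corrector equation be tested on all of $\E$. What makes the projection harmless is different: projecting the ansatz onto $Y$ subtracts bounded multiples of the $Z_j$. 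These lie in the kernel of $\B$, so the residual is unchanged.
\item Two exponent slips. First, $|h\nabla U|^2\sim r^{2d}r^{2-2N}=r^{-N}$, whereas your displayed product $r^{2d}r^{2-2N}r^{-2}$ equals $r^{-N-2}$. Second, the Hardy convergence condition for the $r^{-d-3}$ residual is $2d+4>N$, not $2d+2>N$; the latter is false when $2d=N-2$.
\end{itemize}
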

\begin{proof}
For either family in \eqref{eq:bubble},
\[
 U=2^{\sigma(N-2)/2} r^{-2d}(1+O_\kappa(r^{-1})),
\]
with the corresponding differentiated estimates.
The interior Jacobi potential is $O_\kappa(r^{-4})$, and its boundary Robin coefficient is $O_\kappa(r^{-2})$.
For $v=r^{-d}\phi$, these potential energies have radial orders $r^{-3}\dd r$ in the interior and $r^{-2}\dd r$ on the boundary.
Neither contributes a logarithm.
A factor $O_\kappa(r^{-1})$ in a critical quadratic metric density changes $r^{-1}\dd r$ to an integrable density.
Thus, after division by $2^{\sigma(N-2)}$, the logarithmic coefficient is independent of the curvature pair.
These bounds are uniform on compact parameter sets on which the bubble denominator remains positive and the Jacobi inverse is uniformly bounded.
\end{proof}

\subsection{Kelvin inversion and the second variation}
Let $I(z)=z/|z|^2$ and write
\[
 w_s=w+s v,
 \qquad \bar g_s=w_s^{4/(N-2)}(\delta+s h+O(s^2)).
\]
The metric $I^*\bar g_0$ is Euclidean.
Denote its first variation by $k$:
\[
 I^*\bar g_s=\delta+s k+O(s^2).
\]
It is homogeneous of degree $-d$.
By conformal covariance and Lemma~\ref{crit:lem:cone},
\begin{equation}\label{crit:eq:constraints}
 R'_\delta(k)=0,\qquad H'_\delta(k)=0.
\end{equation}
At the flat boundary, $k_{aN}=0$.
This follows because inversion preserves the boundary plane and its differential preserves the splitting into normal and tangential directions there.

Seek a vector field $V$ of degree $1-d$ satisfying
\begin{equation}\label{crit:eq:gauge}
 \begin{cases}
 \Delta V_j=\partial_i k_{ij}-\tfrac12\partial_j\tr k &\text{in }\R^N_+,\\
 V_N=0,\quad\partial_NV_a=0 &\text{on }\partial\R^N_+\setminus\{0\}.
 \end{cases}
\end{equation}
Then $K=k-\mathcal L_V\delta$ satisfies $\partial_iK_{ij}-\tfrac12\partial_j\tr K=0$ and $K_{aN}=0$ on the boundary.
Since $V_N=0$ on the plane, the local flow of $V$ preserves the boundary; no normal displacement of the boundary is allowed.

\begin{lemma}\label{crit:lem:gauge}
Equation \eqref{crit:eq:gauge} is solvable for every $d\ge2$.
For $d=1$ and $h\in\mathscr J_1$, it is solvable modulo tangential constant vector fields.
\end{lemma}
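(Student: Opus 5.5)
The plan is to decouple \eqref{crit:eq:gauge} into scalar problems and solve each by separation of variables, using homogeneity. The source $f_j:=\partial_ik_{ij}-\tfrac12\partial_j\tr k$ is homogeneous of degree $-d-1$, smooth on $\overline{\R^N_+}\setminus\{0\}$, since $k$ is homogeneous of degree $-d$. The system is diagonal. The normal component $V_N$ solves a Dirichlet problem $\Delta V_N=f_N$, $V_N=0$ on the plane. Each tangential component $V_a$ solves a Neumann problem $\Delta V_a=f_a$, $\partial_NV_a=0$. Write $f_j=r^{-d-1}\tilde f_j(\theta)$ and seek $V_j=r^{1-d}\psi_j(\theta)$. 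By the identity used in Lemma~\ref{crit:lem:cone}, with $a=1-d$ and $N-2=2d$, we get $a(a+N-2)=1-d^2$. Hence the angular equations read
\[
 (-\Delta_{S^{N-1}}+d^2-1)\psi_j=-\tilde f_j\quad\text{on }S^{N-1}_+,
\]
with the Dirichlet condition for $j=N$ and the Neumann condition for $j=a$ on $\partial S^{N-1}_+$.

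Next comes the spectral check. On the hemisphere, the Dirichlet eigenfunctions are restrictions of spherical harmonics odd in $t$, and the Neumann eigenfunctions are those even in $t$. Both have eigenvalues $\ell(\ell+2d)$, with $\ell\ge1$ in the Dirichlet case and $\ell\ge0$ in the Neumann case. The shifted eigenvalues are therefore $\ell(\ell+2d)+d^2-1=(\ell+d)^2-1$. This is positive unless $\ell+d=1$, which happens only for the Neumann problem with $\ell=0$ and $d=1$.

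Hence for $d\ge2$ all angular operators are invertible. Elliptic regularity up to the equator gives smooth $\psi_j$, and $V$ is homogeneous of degree $1-d$. With $V_N=0$ and $\partial_NV_a=0$, the tensor $K=k-\mathcal L_V\delta$ satisfies the Bianchi gauge. It also satisfies $K_{aN}=\partial_aV_N+\partial_NV_a-k_{aN}\cdot 0=0$ on the plane, using $k_{aN}=0$ there. This proves the first assertion.

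For $d=1$ ($N=4$), the only obstruction is the Neumann kernel consisting of constants in the $V_a$ equations. By the Fredholm alternative, solvability requires $\int_{S^{3}_+}\tilde f_a\dd\theta=0$ for each $a$, and the solution is then unique modulo constants, i.e.\ modulo tangential constant vector fields. I expect verifying this compatibility to be the main step. I would prove it by a parity argument rather than a flux computation: the latter only reproduces the scaling identity and gives no information.

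First I identify $\mathscr J_1$. A linear $h$ with $h_{Ni}=0$ has the form $tA+L(x)$, where $L_{ab}(x)=L_{ab,c}x_c$ and $L_{ab,c}$ is symmetric in $(a,b)$. The condition $x_aL_{ab}(x)=0$ makes $L_{ab,c}$ antisymmetric in $(a,c)$, and a tensor symmetric in one pair and antisymmetric in an overlapping pair vanishes. So $\mathscr J_1=\{tA\}$.

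Now let $P(x,t)=(-x,t)$. Then $P^*h=h$ and $w\circ P=w$, so $S_h$ is $P$-invariant. By uniqueness in Lemma~\ref{crit:lem:cone}, $v$ is $P$-invariant too. The inversion $I$ commutes with $P$, so $P^*k=k$. In components, $k_{ab}$ and $k_{NN}$ are even in $x$, and $k_{aN}$ is odd. Consequently each $f_a=\partial_bk_{ba}+\partial_Nk_{Na}-\tfrac12\partial_a\tr k$ is odd under $P$. Since $P$ preserves $S^3_+$ and its measure, $\int_{S^3_+}\tilde f_a=0$.

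This gives a solution $V$ with $V_a$ homogeneous of degree $0$, unique up to adding constants, and the gauge properties of $K$ follow exactly as for $d\ge2$.
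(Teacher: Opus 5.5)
Your proposal is correct and follows the paper's proof essentially step by step: the angular operator $-\Delta_S+d^2-1$ with Neumann conditions for tangential and Dirichlet for normal components, positivity for $d\ge2$, and for $d=1$ the reduction to $h=tA$ with the $x\mapsto-x$ parity argument giving zero hemispherical mean of the tangential source; you also supply details the paper leaves implicit (the spectrum $(\ell+d)^2-1$, the proof that $\mathscr J_1=\{tA\}$, and why $k$ inherits the parity via uniqueness in Lemma~\ref{crit:lem:cone} and $I\circ P=P\circ I$). The only blemish is the garbled boundary formula, which should read $K_{aN}=k_{aN}-\partial_aV_N-\partial_NV_a$; each of these three terms vanishes on the plane.
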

\begin{proof}
The angular operator for $V=r^{1-d}v(\theta)$ is $-\Delta_S+d^2-1$.
For tangential Cartesian components, use Neumann conditions; for the normal component, use Dirichlet conditions.
If $d\geq2$, both operators are strictly positive.
If $d=1$, the sole possible obstruction is the constant Neumann mode of a tangential component.
In this case \eqref{crit:eq:fermi} implies $h=tA$, $A$ constant tangential and trace free.
Under $x\mapsto-x$, the corrected inverted tensor has even $k_{ab},k_{NN}$ and odd $k_{aN}$.
Consequently $\partial_i k_{ia}-\tfrac12\partial_a\tr k$ is odd and has zero hemispherical mean.
The normal Dirichlet problem has no kernel.
Tangential constants do not affect $\mathcal L_V\delta$.
\end{proof}

\begin{lemma}\label{crit:lem:tt}
The tensor $K$ satisfies
\[
 \tr K=0,\qquad\partial_iK_{ij}=0,
\]
and on the boundary
\begin{equation}
 K_{aN}=0,\qquad \partial_\nu K_{NN}=0,
 \qquad\partial_\nu\tr_TK=0.
\end{equation}
\end{lemma}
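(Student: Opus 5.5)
The plan is to derive everything from three facts: the gauge equation \eqref{crit:eq:gauge}, the linearized constraints \eqref{crit:eq:constraints}, and homogeneity of degree $-d$. First I check that the constraints pass from $k$ to $K$. The background $\delta$ is flat with totally geodesic boundary, and $V$ is tangent to $\partial\R^N_+$ because $V_N=0$ there. So $\mathcal L_V\delta$ is the first variation along a boundary-preserving flow of diffeomorphisms, and such a flow leaves $R_\delta=0$ and $H_\delta=0$ invariant. Hence $R'_\delta(\mathcal L_V\delta)=0$ and $H'_\delta(\mathcal L_V\delta)=0$, and therefore $R'_\delta(K)=H'_\delta(K)=0$. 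In the case $d=1$, a tangential constant ambiguity in $V$ does not change $\mathcal L_V\delta$, by Lemma~\ref{crit:lem:gauge}. Elliptic regularity for the homogeneous problem \eqref{crit:eq:gauge} makes $V$, and hence $K$, smooth on $\overline{\R^N_+}\setminus\{0\}$ and homogeneous of degree $-d$.

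Next come the boundary identity $K_{aN}=0$ and the trace. We know $k_{aN}=0$ on the plane, and $(\mathcal L_V\delta)_{aN}=\partial_aV_N+\partial_NV_a=0$ there by the boundary conditions in \eqref{crit:eq:gauge}; this gives $K_{aN}=0$. The flat formula $R'(K)=\partial_i\partial_jK_{ij}-\Delta\tr K$ combines with the Bianchi gauge $\partial_iK_{ij}=\tfrac12\partial_j\tr K$ to give $0=R'(K)=-\tfrac12\Delta\tr K$. For the boundary, I use the first-variation identity for $2H'_{\tr}$ from Section~\ref{sec:action}. Here $\mathrm{II}=0$, and $X=K(\nu,\cdot)^T=0$ because $K_{aN}=0$, so only the first term survives. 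The gauge condition gives $(\Div K)(\nu)=\tfrac12\partial_\nu\tr K$, hence $0=2H'_{\tr}(K)=\tfrac12\partial_\nu\tr K$.

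Thus $f:=\tr K$ is harmonic, satisfies a Neumann condition on $\partial\R^N_+\setminus\{0\}$, and equals $r^{-d}\phi(\theta)$. The identity used in Lemma~\ref{crit:lem:cone} gives $\Delta(r^{-d}\phi)=r^{-d-2}(\Delta_S\phi-d^2\phi)$, so $(-\Delta_S+d^2)\phi=0$ on $S^{N-1}_+$ with a Neumann condition. Its quadratic form $\int(|\nabla_S\phi|^2+d^2\phi^2)$ is strictly positive, so $\phi=0$. Consequently $\tr K=0$, and the gauge condition then gives $\partial_iK_{ij}=0$.

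Finally, take the $j=N$ component of the divergence on the plane: $\partial_aK_{aN}+\partial_NK_{NN}=0$. Since $K_{aN}$ vanishes identically on the plane, its tangential derivatives vanish, so $\partial_\nu K_{NN}=0$. Then $\tr_TK=\tr K-K_{NN}$ gives $\partial_\nu\tr_TK=0$. The step I expect to need the most care is the first one: justifying that the Lie-derivative term contributes nothing to $H'$ while the signs and normalizations in the $H'_{\tr}$ formula still reduce exactly to $\tfrac12\partial_\nu\tr K$. As a fallback, I would verify this directly from the flat formula, using $V_N=0$ and $\partial_NV_a=0$.
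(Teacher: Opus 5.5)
Your proof is correct and follows essentially the same route as the paper. It uses invariance of the linearized constraints under the boundary-preserving gauge, then $R'(K)=-\tfrac12\Delta\tr K$ in Bianchi gauge, then the boundary variation of $H$ to obtain a Neumann condition for $\tr K$, then vanishing of the homogeneous Neumann trace, and finally divergence freeness. The only difference is the order of the boundary steps. The paper first gets $\partial_\nu\tr_TK=0$ from $H'(K)=0$ and then $\partial_\nu K_{NN}=0$ from the normal Bianchi equation, whereas you insert the gauge into $H'_{\tr}$ to obtain $\tfrac12\partial_\nu\tr K=0$ directly and recover $\partial_\nu K_{NN}=0$ afterwards from $\partial_iK_{iN}=0$.
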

\begin{proof}
The constraints \eqref{crit:eq:constraints} are invariant under a boundary-preserving infinitesimal diffeomorphism.
In Bianchi gauge, $0=R'(K)=-\tfrac12\Delta\tr K$.
Since $K_{aN}=0$ on the boundary, $H'(K)=0$ gives $\partial_\nu\tr_TK=0$.
The normal Bianchi equation gives $\partial_\nu K_{NN}=\partial_\nu\tr_TK=0$.
Thus $\tr K$ is homogeneous of degree $-d$, harmonic, and Neumann.
Its angular factor is annihilated by $-\Delta_S+d^2$ and is zero.
Bianchi gauge now gives divergence freeness.
\end{proof}
Define $T_dh=K|_{S^{N-1}_+}$.
This constructs the linear map on $\mathscr J_d$.
Its injectivity uses the last normal-form condition in \eqref{crit:eq:fermi} and is proved below.

\medskip
\paragraph{The scalar-curvature functional with boundary term.}
For the tensor in Lemma~\ref{crit:lem:tt}, we use the second variation of $\mathcal S_\Omega$ defined in Section~\ref{sec:action}.
Use half-annuli $\Omega_R=\{R^{-1}<|z|<1,\ z_N>0\}$, and define $\mathcal S_{\Omega_R}$ by integrating over its three smooth boundary faces.
There is no corner-angle term in this definition.
To avoid a corner variation convention, the proof below first localizes the metric variation on the smooth half-space, then compares the two functionals.
Spherical faces and endpoint collars are retained as bounded terms.

\begin{lemma}[Second variation at the Euclidean metric]\label{crit:lem:EH}
Let $K$ be homogeneous of degree $-d$, $N=2d+2$, and satisfy Lemma~\ref{crit:lem:tt}.
Then
\begin{equation}\label{crit:eq:EH}
 [s^2]\mathcal S_{\Omega_R}(\delta+sK+O(s^2))
 =-\frac14\int_{\Omega_R}|\partial K|^2\dd z+O(1),
\end{equation}
provided the second-order path term has its natural degree $-2d$.
The $O(1)$ may depend on the fixed angular data but not on $R$.
\end{lemma}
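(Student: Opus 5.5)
We compute the $s^2$ coefficient of $\mathcal S_{\Omega_R}(\delta+sK+s^2L)$ directly from the standard second-variation formulas at the flat metric. Write $g_s=\delta+sK+s^2L$ with $L$ homogeneous of degree $-2d$.

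\emph{Bulk term.} The $s^2$ coefficient of $R_{g_s}\,dv_{g_s}$ at $\delta$ is
\[
R^{(2)}(K)+\tfrac12\tr K\,R'(K)+R'(L),
\]
where $R^{(2)}(K)$ is the quadratic part of the scalar curvature. The formula \eqref{lc:eq:rawsource} derived there for trace-free $K$ gives
\[
R^{(2)}(K)=-\tfrac14|\partial K|^2+\tfrac12|\Div K|^2-\partial_i(K_{ij}(\Div K)_j).
\]
We use that formula as stated, or rederive it for general $K$ and then impose Lemma~\ref{crit:lem:tt}. Since $\tr K=0$ and $\Div K=0$, only $-\tfrac14|\partial K|^2$ survives, together with total divergences.

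For the path term, $R'(L)=\partial_i\partial_jL_{ij}-\Delta\tr L$ is a pure divergence. Its integral over $\Omega_R$ reduces to fluxes of $\partial_jL_{ij}-\partial_i\tr L$, which is homogeneous of degree $-2d-1=1-N$, across the spherical faces and the flat face.

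\emph{Boundary term.} The second variation of $2\int H_{\tr}\,d\sigma$ on the flat face is computed similarly. Here $K_{aN}=0$, the second fundamental form of the plane vanishes, and the normal derivatives $\partial_\nu K_{NN}=0=\partial_\nu\tr_T K$ hold. The only quadratic terms left are contractions $K\ast\partial K$ on the plane. The first-variation boundary term of $L$ is $(\dd\tr L-\Div L)(\nu)-\Div_\partial X_L$. Its flat-face integral cancels the flat-face flux from $R'(L)$, exactly as in the derivation of \eqref{crit:eq:EHfirst}, leaving only $\partial D_R$ terms on the two spherical edges.

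\emph{Scaling.} Every remaining error is a flux on $|z|=1$ or $|z|=R^{-1}$, or its equatorial edge, of a quantity homogeneous of degree $1-N$. Examples are $K\ast\partial K$, $\partial L$, and the spherical-face $H_{\tr}$ terms. By homogeneity such a flux over $S_\rho^+$ is independent of $\rho$, hence $O(1)$.

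Pure divergences in the bulk, such as $\partial_i(K_{ij}(\Div K)_j)$, vanish identically here. For the planar quadratic boundary terms there are two possibilities. Either they vanish pointwise, which we check using $K_{aN}=0$ and $\partial_\nu K_{NN}=\partial_\nu\tr_TK=0$, so that each term contains a vanishing factor. Otherwise they are tangential divergences on $D_1\setminus D_{R^{-1}}$, which reduce to $O(1)$ edge integrals by the same homogeneity.

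\emph{Main obstacle.} The delicate point is the flat-face bookkeeping and the edge/corner contributions. In particular, one must show that the quadratic part of $2\int H_{\tr}\,d\sigma$ on the plane does not produce a genuine $\int_{D_1\setminus D_{R^{-1}}}r^{1-N}$ density, which would give a spurious $\log R$. I would first try to prove that every planar quadratic term contains one of $K_{aN}$, $\partial_\nu K_{NN}$, $\partial_\nu\tr_TK$, $\tr K$. Any term that does not could be rewritten via $\Div K=0$ restricted to the plane: the normal component gives $\partial_aK_{aN}+\partial_NK_{NN}=0$, and the tangential components give $\partial_bK_{ab}=-\partial_NK_{aN}$. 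This would turn the term into a tangential divergence. As the paper indicates, to avoid corner conventions I would carry this out after multiplying $K$ by a radial cutoff on the smooth half-space and comparing with $\mathcal S_{\Omega_R}$. The cutoff collars contribute bounded terms by homogeneity, so the bulk $-\tfrac14\int_{\Omega_R}|\partial K|^2$ is the only $R$-dependent contribution.
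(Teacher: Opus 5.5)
Your route differs from the paper's. The paper differentiates \eqref{crit:eq:EHfirst} once more, so the path term drops out because $D\mathcal S_\delta=0$. It then uses $\Ric'(K)=-\tfrac12\Delta K$ and $\mathrm{II}'_{ab}(K)=\tfrac12\partial_\nu K_{ab}$, and cancels the planar flux $\tfrac12\int K_{ab}\partial_\nu K_{ab}$ from $\tfrac12\int K\cdot\Delta K$ against $-\int\langle\mathrm{II}'(K),K^T\rangle$. Your direct expansion is viable, but the step you call the main obstacle is left open, and with your bookkeeping it fails. The reason is that \eqref{lc:eq:rawsource} is the $s^2$-coefficient of $R_{\exp(sK)}$, not of $R_{\delta+sK}$. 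Along $\delta+sK$ the quadratic part is $R^{(2)}(K)-R'(\tfrac12K^2)$. The extra divergence $-R'(\tfrac12K^2)$ has planar flux $-\int K_{ab}\partial_tK_{ab}\,dx$, using $K_{aN}=0$ and $\partial_tK_{NN}=0$. You discard it, yet you expand the boundary term along the same linear path, where $\log\det\gamma=s\tr_TK-\tfrac{s^2}{2}|K^T|^2+O(s^3)$. Since $2H_{\tr}\,d\sigma=-g_{NN}^{-1/2}\sqrt{\det\gamma}\,\partial_t\log\det\gamma\,dx$ when $g_{aN}=0$ on the plane, the planar $s^2$-coefficient is $K_{ab}\partial_tK_{ab}$. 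This density contains none of $K_{aN}$, $\partial_\nu K_{NN}$, $\partial_\nu\tr_TK$, $\tr K$. It is also not a tangential divergence: on the plane, $\Div K=0$ only gives $\partial_tK_{aN}=-\partial_bK_{ab}$ and $\partial_tK_{NN}=0$, and says nothing about $\partial_tK_{ab}$. So your proposed mechanism cannot remove it. The mixed bookkeeping leaves a spurious term $\log R\int_{S^{N-2}}K_{ab}\partial_tK_{ab}$, which need not vanish.

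The repair is to keep one path throughout. One option is to retain $-R'(\tfrac12K^2)$ and note that its planar flux cancels $K_{ab}\partial_tK_{ab}$ exactly; this is the analogue of the paper's cancellation. The simpler option is to write $g_s=\exp(sK)+s^2L'$ with $L'$ of degree $-2d$. Then the bulk density is exactly $-\tfrac14|\partial K|^2+R'(L')$, and $\exp(sK)$ is block diagonal on the plane. Its tangential block satisfies $\log\det\gamma=s\tr_TK+\tfrac{s^2}{2}\sum_aK_{aN}^2+O(s^3)$. Hence $\partial_t\log\det\gamma=s\,\partial_t\tr_TK+s^2\sum_aK_{aN}\partial_tK_{aN}+O(s^3)$, which vanishes to second order on $t=0$ by Lemma~\ref{crit:lem:tt}. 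Thus the planar quadratic term is identically zero. Your treatment of the remaining pieces then completes the proof: the $L'$ fluxes cancel on the plane against $2H'_{\tr}(L')$ up to edge terms, and the spherical faces and edges are bounded by homogeneity. Corrected this way, your argument is more direct than the paper's: it needs no integration by parts, and no cutoff, since you only expand integrands face by face. The paper's second differentiation of \eqref{crit:eq:EHfirst} has the advantage of being manifestly independent of the second-order path term.
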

\begin{proof}
Apply the smooth-boundary first-variation formula \eqref{crit:eq:EHfirst} to a compactly supported cutoff of the variation.
Write the second-order path term in the statement as $s^2M$, where $M$ has degree $-2d$.
Choose a smooth radial $\varpi_R$ equal to one on $R^{-1}\leq r\leq1$, supported in $(2R)^{-1}<r<2$, with fixed rescaled profiles on its two transition collars.
Apply \eqref{crit:eq:EHfirst} to
\[
 g_{s,R}=\delta+sL_R+s^2M_R,\qquad
 L_R=\varpi_RK,\quad M_R=\varpi_R^2M.
\]
All variations are compactly supported away from the origin.
The interior and boundary first-variation tensors vanish at the flat metric, so $D\mathcal S_\delta[M_R]=0$ exactly.
Differentiating \eqref{crit:eq:EHfirst} therefore gives
\[
 \left.\partial_s^2\mathcal S(g_{s,R})\right|_{s=0} ={}-\int_{\R^N_+} \langle\Ric'(L_R)-\tfrac12R'(L_R)\delta,L_R\rangle -\int_{\partial\R^N_+} \langle\mathrm{II}'(L_R)-H'_{\tr}(L_R)\delta^T,L_R^T\rangle.
\]
On the central half-annulus $L_R=K$, and the trace-free and divergence-free conditions imply $\Ric'(K)=-\tfrac12\Delta K$, $R'(K)=0$.
On its planar face, $K_{aN}=0$ gives $\mathrm{II}'_{ab}(K)=\tfrac12\partial_\nu K_{ab}$ and $H'_{\tr}(K)=0$.
Integration by parts on the central half-annulus gives the bulk contribution
\[
 -\tfrac12\int_{\Omega_R}|\partial K|^2
 +\tfrac12\int_{\partial\Omega_R}K_{ij}\partial_\nu K_{ij}.
\]
On the plane, the tangential part of the last integral cancels the integral involving the boundary first-variation tensor.
Its mixed part is zero because $K_{aN}=0$; its normal part is zero because $\partial_\nu K_{NN}=0$.

For completeness, every discarded piece is an endpoint term of bounded size.
On a transition collar of radius $a\in\{R^{-1},1\}$,
\[
 |\partial^jL_R|\leq Ca^{-d-j}\ (0\leq j\leq2),\qquad
 |\partial^jM_R|\leq Ca^{-2d-j}\ (0\leq j\leq2).
\]
The bulk second-variation densities are bounded by $C(|L_R||\partial^2L_R|+|\partial L_R|^2)$, of size $Ca^{-2d-2}$ on a set of volume $O(a^N)$.
The planar densities have size $Ca^{-2d-1}$ on a set of area $O(a^{N-1})$.
Both integrals are $O(a^{N-2d-2})=O(1)$.
The same estimates, with $|\partial^2M_R|$ in the bulk and $|\partial M_R|$ on the plane, bound the quadratic coefficient when comparing the localized energy with the central energy.
On a spherical face of radius $a$, the quadratic density of the boundary mean-curvature term and the displayed integration-by-parts flux have size $Ca^{-2d-1}$ and area $O(a^{N-1})$, hence are also $O(1)$.

Thus the coefficient of the compactly supported half-space energy differs from $[s^2]\mathcal S_{\Omega_R}(\delta+sK+s^2M)$ by $O(1)$, and all the cutoff errors in its second variation are $O(1)$ as well.
No first-variation formula on a corner has been used.
Dividing the second derivative by two proves \eqref{crit:eq:EH}.
\end{proof}
Since $K=r^{-d}T_dh$,
\begin{equation}\label{crit:eq:dirichlet}
 \int_{\Omega_R}|\partial K|^2
 =\log R\int_{S^{N-1}_+}
       \bigl(|\nabla_ST_dh|^2+d^2|T_dh|^2\bigr).
\end{equation}

\subsubsection{Invariance of the logarithmic coefficient}
\begin{lemma}\label{lem:critical-invariance}
Kelvin inversion and the boundary-preserving change generated by $V$ do not change the corrected logarithmic coefficient.
For the homogeneous correction, that coefficient is the right-hand side of \eqref{crit:eq:residue}.
\end{lemma}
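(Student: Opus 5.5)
The proof has three steps. First, reduce the corrected energy of $\chi_R h$ to a scalar-curvature functional of a conformally related metric. Second, show that Kelvin inversion and the gauge change alter this functional only by $O(1)$. Third, read off the logarithm from Lemma~\ref{crit:lem:EH} and \eqref{crit:eq:dirichlet}.

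For the first step, use Lemma~\ref{lem:critical-leading-term} to replace $U$ by $2^{\sigma(N-2)/2}w$ in all critical quadratic densities. Then use the conformal identity of Lemma~\ref{lem:stationarity}, with $\sigma=0$ and $\kappa=0$ in the leading-order model. The raw coefficient $\mathcal Q(\chi_Rh)$ plus the corrector pairing then equals, up to $O(1)$, the $s^2$ coefficient of $\mathcal S_{\Omega_R}(\bar g_s)$ on the half-annulus $\{1<|y|<R\}$. Here $\bar g_s=w_s^{4/(N-2)}(\delta+sh)$ and $w_s=w+sv$, with $v$ the homogeneous corrector of Lemma~\ref{crit:lem:cone}.

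The point is that the corrected energy is the minimum over scalar perturbations of the second variation. The homogeneous $v$ solves the Euler--Lagrange equation exactly away from the origin, with Neumann condition on the plane. So replacing the actual corrector $v_{\chi_Rh}$ by $\chi_R v$ costs only a cutoff error supported on the collars $\{R<|y|<2R\}$ and near $|y|\sim1$. By homogeneity of degree $-2d$ in $N=2d+2$ dimensions, each such error is $O(1)$. The mismatch between $v_{\chi_Rh}$ and $\chi_Rv$ has bounded energy and enters the quadratic form only quadratically; I will check this with \eqref{eq:quadratic-bounded}. The factor $c_N^{-1}$ and the normalization $\A_g=c_N^{-1}E_g$ are arranged so that the scalar-curvature coefficient is one, matching $\mathcal S$.

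For the second step, the inversion $I$ maps $\{1<|y|<R\}$ to $\Omega_R$, preserves the planar boundary face, and is an isometry from $\bar g_s$ to $I^*\bar g_s$. Since $\mathcal S_\Omega$ is diffeomorphism invariant, including its boundary term on all three faces, its value is unchanged. The first variation becomes $k$. The second-order path term changes, but it has natural degree $-2d$. By \eqref{crit:eq:EHfirst} at the flat metric, its first-variation contribution is zero up to the $O(1)$ collar terms handled in the proof of Lemma~\ref{crit:lem:EH}.

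The gauge change $k\mapsto K=k-\mathcal L_V\delta$ comes from the flow of $V$, which preserves the boundary because $V_N=0$. Pulling back by this flow leaves $\mathcal S$ on the moving domain unchanged. The only discrepancy is the flux of $V$ through the two spherical faces. Since $V$ has degree $1-d$, the quadratic flux densities are of size $a^{-2d-1}$ on area $a^{N-1}$, so this is again $O(1)$. In case $d=1$, the tangential constants left by Lemma~\ref{crit:lem:gauge} satisfy $\mathcal L_V\delta=0$, so they have no effect.

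For the third step, combine Lemma~\ref{crit:lem:EH} with \eqref{crit:eq:dirichlet} and restore the factor $2^{\sigma(N-2)}$. This gives \eqref{crit:eq:residue}.

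I expect the main obstacle to be the first step: verifying that the corrector pairing at the bubble equals the conformal second variation with the homogeneous $v$ up to $O(1)$. The concern is a hidden logarithm from cross terms between the $O(r^{-1})$ bubble correction and the $r^{-1}\dd r$ critical density. Lemma~\ref{lem:critical-leading-term} is meant to control this, but the Jacobi potential acting on the corrector also needs checking. I would first verify this directly on the degree-one example $h=tA$ with $N=4$.
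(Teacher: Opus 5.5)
Your proposal follows the paper's proof essentially step for step: the conformal transformation identity turns the critical energy into $\mathcal S$ up to bounded spherical endpoint terms, the nonlinear multiplier terms of $\A$ contribute no logarithm, Kelvin inversion is an exact change of variables, the flow of $V$ fixes the flat face and costs only bounded spherical-face terms by critical homogeneity, and Lemma~\ref{crit:lem:EH} with \eqref{crit:eq:dirichlet} and the factor $2^{\sigma(N-2)}$ gives \eqref{crit:eq:residue}. The comparison of the exact corrector with $\chi_Rv$ that you fold into your first step is not needed for this lemma, which concerns only the homogeneous correction; the paper does that comparison in the proof of Theorem~\ref{crit:thm:critical}, using dual-norm bounds on bubble--cone residuals that are not confined to the collars. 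Note also that the corrected energy there is a critical value, not a minimum, since $\B$ keeps a negative direction on $Y$; your observation that the corrector error enters only quadratically is what actually makes that comparison work.
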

\begin{proof}
The conformal transformation law gives the exact identity
\[
 \mathcal S_\Omega(u^{4/(N-2)}g)
 =\int_\Omega(c_N^{-1}|\nabla u|_g^2+R_gu^2)\dd v_g
       +2\int_{\partial\Omega}H_{\tr,g}u^2\dd\sigma_g.
\]
When only the flat part of the boundary is included in the original model, the difference is a spherical endpoint term.
Its quadratic critical coefficient is bounded.
Kelvin inversion is an exact change of variables.
The diffeomorphism generated by $V$ preserves the flat boundary.
Moving the spherical boundary by that diffeomorphism changes the quadratic energy by bounded endpoint terms, again by critical homogeneity.
Hence neither operation changes the logarithmic coefficient.
The volume and boundary-area multiplier terms in $\mathcal A$ are integrable at the critical end and do not contribute a logarithm.
Restoring the asymptotic factor $U\sim 2^{\sigma(N-2)/2}w$ multiplies the quadratic energy by $2^{\sigma(N-2)}$.
Equations \eqref{crit:eq:EH} and \eqref{crit:eq:dirichlet} now give the right-hand side of \eqref{crit:eq:residue}.
\end{proof}

\subsection{Strict negativity of the logarithmic coefficient}
\begin{lemma}\label{lc:lem:kernel}
The map $T_d$ in \eqref{crit:eq:residue} is injective for every $d\ge1$.
\end{lemma}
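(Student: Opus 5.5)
We argue by contradiction: suppose $h\in\mathscr J_d$ and $T_dh=0$, so $K\equiv0$ by homogeneity. Then the inverted first variation is pure gauge, $k=\mathcal L_V\delta$, with $V$ homogeneous of degree $1-d$ and $V_N=0$ on the plane. Undoing the Kelvin inversion, the corrected first variation $\delta+sh$ together with the conformal factor $w_s=w+sv$ is infinitesimally a conformal change composed with a boundary-preserving diffeomorphism. Concretely, there are a vector field $X=I_*V$ (tangent to $\partial\R^N_+$, homogeneous of degree $d+1$) and a scalar $f$ such that
\[
 h=\mathcal L_X\delta+f\,\delta,\qquad f=-\frac{4}{N-2}\,\frac{v}{w}+(\text{terms from }X\cdot\nabla\log w).
\]
The trace-free condition in \eqref{crit:eq:fermi} turns this into $h=\mathcal L_X\delta-\frac2N(\Div X)\delta$, the conformal Killing operator applied to $X$. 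The condition $h_{Ni}=0$ gives $\partial_NX_a+\partial_aX_N=0$ for all $a$, and also $2\partial_NX_N=\frac2N\Div X$.

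The next step is to show that these equations force $X$ to be a polynomial conformal-Killing-type field of degree $d+1$ whose deformation vanishes. We first handle the normal component. Differentiating the mixed equations shows that $\partial_N$ acts on $X_a$ by $-\partial_aX_N$, which reduces everything to the boundary data $X|_{t=0}$ and $X_N$. On the boundary, $X_N=0$, so $h_{ab}(x,0)=\partial_aX_b+\partial_bX_a-\frac2N(\Div X)\delta_{ab}$. The radial Gauss-lemma condition $x_ah_{ab}(x,0)=0$ then gives, by Euler homogeneity (degree $d+1$), an ODE along rays. Together with the tangential trace-free condition at $t=0$, this says that the tangential part $\bar X=X^T(x,0)$ satisfies $\bar x\cdot\nabla\bar X+\nabla(x\cdot\bar X)=c(\Div X)x$. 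The second step is to solve this: contract with $x$ and use homogeneity to show that $x\cdot\bar X$ and then $\bar X$ vanish for $d\ge1$. The mechanism is that homogeneous degree $d+1\ge2$ vector fields cannot be radial-geodesic generators, since the only solutions are rotations and dilations, which are linear.

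Once $X|_{t=0}=0$ and $\partial_NX_a|_{t=0}=-\partial_aX_N|_{t=0}=0$, we propagate into the interior. The conformal Killing equations with the normal block vanishing form an overdetermined system: $\partial_N X_a=-\partial_aX_N$, and $\partial_NX_N$ is determined by $\Div X$. By Cauchy--Kovalevskaya-type uniqueness for polynomials, or more directly by induction on the power of $t$ in the Taylor coefficients, every normal Taylor coefficient of $X$ vanishes. Hence $X=0$ and $h=0$. This also gives injectivity in the umbilic subspace. For $d=1$ we must additionally account for the tangential constants left free in Lemma~\ref{crit:lem:gauge}; they invert to special conformal fields, which are quadratic in $y$ and generate no $h$ modulo the trace term. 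We check this directly, using that $h=tA$ can be pure gauge only if $A=0$.

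The main obstacle is the first step: making precise that $K=0$ implies $h$ is a conformal Killing deformation. This requires tracking the scalar corrector $v$ through the inversion. The gauge field $V$ lives on the inverted side, while the tracefree and $h_{Ni}=0$ normalizations live on the original side, so the trace part absorbed by $v$ has to be identified carefully. We would first verify that $\tr$-freeness of $h$ and of $K$ exactly determines $f$, so that no residual scalar term survives.
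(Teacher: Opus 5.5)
Your outline has the same architecture as the paper's proof: write $h=\mathcal L_X\delta+f\delta$ with $X_N(x,0)=0$, show that the boundary data $X^T(x,0)$ and $f(x,0)$ vanish, then propagate inward through $\partial_tX^T=-\nabla_xX_N$, $m\,\partial_tX_N=\Div_xX^T$. That last step is fine once $X$ is a polynomial. The genuine gap is the boundary step, not the first step you flag.

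\textbf{The boundary step.} Contracting the radial identity with $x$ cannot force $\Psi:=x\cdot X^T$ to vanish. At $t=0$ the identity reads $dX^T+\nabla_x\Psi+fx=0$. Your version loses the term $-X^T$ coming from $x_a\partial_bX_a=\partial_b\Psi-X_b$. Contracting with $x$ only yields $f|x|^2=-(2d+2)\Psi$, which expresses $f$, and then $X^T$, in terms of $\Psi$.

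What is missing is to take the divergence of this identity and insert the tangential trace condition $2\Div_xX^T+mf=0$. With $\Psi=r^{d+2}\psi$ this gives
\[
 -\Delta_{S^{m-1}}\psi=d^2(m-1)\psi=2d^3\psi,\qquad m=2d+1.
\]
The conclusion $\psi=0$ is then a spectral fact. For $d\ge3$, $2d^3$ exceeds $(d+2)(3d+1)$, the largest eigenvalue available to the restriction of a polynomial of degree $d+2$. For $d=2$, the value $16$ is not among $0,10,28$.

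Your stated mechanism (``the only solutions are rotations and dilations, which are linear'') is incorrect. Because of the conformal term, the boundary-tangent special conformal fields $X=|y|^2(b,0)-2(b\cdot x)y$, $f=4b\cdot x$, are quadratic and satisfy every constraint. They are exactly the degree-one eigenfunctions at $d=1$. So $X^T(x,0)=0$ fails for $d=1$, and these fields must be subtracted before propagating.

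Your fallback at $d=1$, that ``$h=tA$ can be pure gauge only if $A=0$'', is false unless $X_N(x,0)=0$ is imposed: $tA=\mathcal L_X\delta$ for $X=(\tfrac12tAx,-\tfrac14x^{\mathsf T}Ax)$. With that constraint it is precisely the assertion to be proved.

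\textbf{Polynomiality.} The field $X=I_*V$ and the factor $f$ are a priori only smooth and homogeneous away from the origin. The spectral step needs them to be polynomials, as does your Taylor induction in $t$, and you never prove this. It is not a formality. If $\psi$ could contain harmonics of arbitrary degree, then at $d=6$, $m=13$, the value $2d^3=432=16\cdot27$ is the eigenvalue of degree-$16$ harmonics on $S^{12}$, and the eigenvalue argument alone would exclude nothing.

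The paper obtains polynomiality from $P'_\delta(h)=-\tfrac12\nabla^2f$. Since $h$ is a polynomial, so is $\nabla^2f$, hence the homogeneous function $f$ is a polynomial. Then $2\partial_i\partial_jX_k=\partial_i(h-f\delta)_{jk}+\partial_j(h-f\delta)_{ik}-\partial_k(h-f\delta)_{ij}$ shows that $X$ is a polynomial as well.
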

\begin{proof}
If $T_dh=0$, reversing inversion and the scalar correction gives
\[
 h=\mathcal L_X\delta+f\delta,
\]
where $X$ is homogeneous of degree $d+1$, $f$ of degree $d$, and $X_N(x,0)=0$.
Initially they are smooth only away from zero.
They are in fact polynomial.
Indeed, for the Schouten tensor $P_g$,
\[
 P'_\delta(h)=-\tfrac12\nabla^2 f.
\]
The left side is a polynomial of degree $d-2$ (zero when $d=1$), so $f$ is polynomial up to an affine term; homogeneity preserves polynomiality.
The identity $2\partial_i\partial_jX_k=\partial_i(h-f\delta)_{jk}+\partial_j(h-f\delta)_{ik}-\partial_k(h-f\delta)_{ij}$ then shows that $X$ is polynomial too.

Tangential trace and Fermi normal components imply
\[
 f=-2\partial_tX_N,\quad\partial_tX_a=-\partial_aX_N,\quad
 \Div_xX^T=m\partial_tX_N,\quad
 \Delta_xX_N+m\partial_{tt}X_N=0.
\]
On the boundary, let $\Psi=x\cdot X^T$, a polynomial of degree $d+2$.
The radial condition gives
\[
 dX^T+\nabla_x\Psi+fx=0.
\]
Contract with $x$, then substitute in $2\Div_xX^T+mf=0$.
Writing $\Psi=r^{d+2}\psi$ yields
\begin{equation}
 -\Delta_{S^{m-1}}\psi=d^2(m-1)\psi=2d^3\psi,
 \qquad m=2d+1.
\end{equation}
But a polynomial of degree $d+2$ restricts to spherical harmonics of degrees at most $d+2$, whose eigenvalues are at most
\[
 (d+2)(d+m)=(d+2)(3d+1).
\]
For $d\ge3$,
\[
 2d^3-(d+2)(3d+1)=(d+1)(2d^2-5d-2)>0.
\]
Hence $\psi=0$.
When $d=2$, the possible polynomial harmonic degrees are $0,2,4$ with eigenvalues $0,10,28$, not the required $16$.
Thus again $\Psi=0$, and the boundary formulas give $X^T=f=0$ there.
Write $X_N(x,t)=\sum_jt^ja_j(x)$.
Its equation and zero Dirichlet and Neumann data imply
\[
 a_0=a_1=0,\qquad
 a_{j+2}=-\frac{\Delta_xa_j}{m(j+2)(j+1)}.
\]
Thus $X_N=0$.
Now $\partial_tX_a=0$ and the zero tangential boundary data imply $X=0$, hence $h=0$.

For $d=1$, the surviving boundary degree-one harmonics are generated by
\[
 X(y)=|y|^2(b,0)-2(b\cdot x)y,\qquad f=4b\cdot x.
\]
This is a conformal Killing variation with $\mathcal L_X\delta+f\delta=0$.
Subtract it, and the same zero-data argument gives $h=0$.
\end{proof}
\subsection{Comparison with the exact scalar correction}
\begin{proof}[Proof of Theorem~\ref{crit:thm:critical}]
Lemmas~\ref{crit:lem:cone}, \ref{crit:lem:gauge}, and~\ref{crit:lem:tt} construct $T_dh$.
Lemmas~\ref{lem:critical-leading-term}, \ref{crit:lem:EH}, and~\ref{lem:critical-invariance} identify its homogeneous logarithmic coefficient, and Lemma~\ref{lc:lem:kernel} proves injectivity.
By finite-dimensional norm equivalence, the right-hand side of \eqref{crit:eq:residue} is at most $-c\|h\|^2$.
It remains to compare the homogeneous correction with the exact Jacobi correction and to control cutoff differentiation.

The comparison requires a uniform bound on both residuals.
Restore the coefficient $2^{\sigma(N-2)/2}$ of \eqref{eq:bubble}, and choose a fixed radial $\zeta$ equal to zero for $r\leq1$ and one for $r\geq2$.
Set
\[
 \widetilde v_R:=2^{\sigma(N-2)/2}\zeta\chi_Rr^{-d}\phi_h.
\]
The Jacobi interior and boundary operators are
\[
 \mathcal L:=-\Delta-\frac{\sigma N(N+2)}4U^{4/(N-2)},\qquad
 B_J:=\partial_\nu-\frac{N\kappa}2U^{2/(N-2)}.
\]
Write $S_a^U$ for \eqref{crit:eq:source} with $h,w$ replaced by $a,U$.
For $a=\chi_Rh$, its first-order metric boundary source is exactly zero: $a_{Ni}=0$ and $\tr a=0$ give both zero normal variation and zero linearized mean curvature.
Hence the residual of $\widetilde v_R$ is
\[
 f_R=\mathcal L\widetilde v_R+S_{\chi_Rh}^U,
 \qquad g_R=B_J\widetilde v_R.
\]
Constants below include $\norm h$ and may depend on a compact interval of $\kappa$ values.
The differentiated expansion $\partial^j(U-2^{\sigma(N-2)/2}w)=O_\kappa(r^{-2d-1-j})$, $0\leq j\leq2$, and the cone equation imply
\begin{equation}
 \begin{aligned}
 |f_R|&\leq C\mathbf 1_{\{r<2\}}
       +Cr^{-d-3}\mathbf 1_{\{2<r<2R\}}
       +CR^{-d-2}\mathbf 1_{\{R<r<2R\}},\\
 |g_R|&\leq C\mathbf 1_{\{r<2\}}
       +Cr^{-d-2}\mathbf 1_{\{2<r<2R\}}\quad(t=0).
 \end{aligned}
\end{equation}
For the boundary estimate, radial cutoffs have zero normal derivative on the plane and the conical factor is Neumann, so $g_R=-\tfrac{N\kappa}2U^{2/(N-2)}\widetilde v_R$ exactly.
For the interior estimate, the potential term has the faster order $r^{-d-4}$; the $r^{-d-3}$ term is the bubble--cone source difference.
Derivatives of $\chi_R$ in both the scalar and metric source terms have annular order $R^{-d-2}$.

Let $p_*=2N/(N+2)$ and $q_*=2(N-1)/N$, the dual bulk and trace Sobolev exponents.
The annular bulk norm is bounded since $R^{-d-2}R^{N/p_*}=1$.
The remaining tails are uniformly integrable:
\[
 (d+3)p_*>N,\qquad (d+2)q_*>N-1.
\]
Thus
\[
 \norm{f_R}_{L^{p_*}(\R^N_+)}+
 \norm{g_R}_{L^{q_*}(\partial\R^N_+)}\leq C.
\]
The Sobolev inequality and the trace inequality \cite{EscobarTrace} bound the associated functional on $\dot H^1$ uniformly in $R$.
Meanwhile $\norm{\widetilde v_R}_{\dot H^1}=O(\sqrt{\log R}+1)$.
Projection onto the chosen complement subtracts bounded multiples of Jacobi fields: the compact tests defining that projection see a uniformly bounded function on their fixed support.
It does not alter the residual.
Applying the fixed projected inverse now shows that the exact scalar corrector differs from this projected conical cutoff by a function with uniformly bounded Dirichlet norm.
Consequently
\begin{equation}\label{crit:eq:cutasympt}
  F(\chi_Rh)=\mathfrak r(h)\log R+O_\kappa(\sqrt{\log R}+1),
\end{equation}
where $\mathfrak r(h)$ is the right-hand side of \eqref{crit:eq:residue}.

There is also the useful estimate
\begin{equation}\label{crit:eq:cutderivative}
 R\partial_R F(\chi_Rh)=O_\kappa(1).
\end{equation}
The differentiated metric source is supported in the cutoff annulus and has bulk size $O(R^{-d-2})$ and bounded dual norm; its boundary source is still zero.
Its pairing with the conical cutoff is bounded because
\[
 R^{-d-2}R^{-d}R^N=1.
\]
Pairing it with the bounded-norm correction is bounded by duality.
Self-adjointness of the projected Jacobi inverse expresses the derivative of the scalar-correction quadratic term by this source pairing, so no estimate on the derivative of the corrector is being assumed.
The raw quadratic derivative is bounded by the same annular calculation.
This proves \eqref{crit:eq:cutderivative} without differentiating an unspecified error in \eqref{crit:eq:cutasympt}.

For a lower-degree tensor $h_j$, $j<d$, its mixed pairing with $h_d$ is bounded as $R\to\infty$.
Pair the lower source with the conical correction: the radial exponent is $j+d+1-N<-1$.
The remaining scalar pairing is bounded by the bounded inverse and the ordinary lower-degree source norm.
More explicitly, the differentiated degree-$j$ source has annular dual norm $O(R^{j-d})$, while the differentiated critical source has bounded dual norm.
Pair these, respectively, with the conical-plus-bounded critical corrector and the bounded-norm lower-degree corrector.
The conical annular pairing has order $R^{j-d}$; all other pairings are bounded.
The raw pairing and its cutoff derivative satisfy the same bounds.
\end{proof}

\subsection{Cutoff estimates and coercivity of the scale derivative}
\begin{lemma}[Convergence of the cutoff quadratic forms]\label{ord:lem:cutoff}
Let $h_1,\ldots,h_b$ be ordinary homogeneous Fermi tensors of degrees $d_i\le d_N^{\rm ord}$.
Let $E_R$ and $E_\infty$ be the matrices of $F(\chi_R\sum_i a_ih_i)$ and $F(\sum_i a_ih_i)$, respectively.
Both are unnormalized; the positive factors divided out in Section~\ref{sec:ordinary-endpoints} are restored here.
Then
\[
 \|E_R-E_\infty\|+\|R\partial_RE_R\|\le C R^{\max_i d_i+1-N/2}.
\]
The constants are uniform on compact parameter sets with a bounded projected Jacobi inverse.
\end{lemma}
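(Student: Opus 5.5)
By bilinearity it suffices to treat a single pair $h_i,h_j$ of degrees $d_i,d_j$ and the polarized entry $F(\chi_Rh_i,\chi_Rh_j)-F(h_i,h_j)$, split as in \eqref{eq:quadratic-action} into the raw part $\mathcal Q$ and the scalar-correction part $c_N^{-1}\int S v$. The target rate is $R^{(d_i+d_j)/2+1-N/2}$, which is at most $R^{\max_id_i+1-N/2}$. Every such exponent is negative because $2d_i<N-2$.

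For the raw part, $\chi_Rh_i-h_i$ is supported in $\{r>R\}$. Every density in \eqref{lc:eq:rawpolar} is bounded by $CU^2(|\partial h_i||\partial h_j|+r^{-2}|h_i||h_j|)$, plus $|\nabla\chi_R|$ terms of the same order on the annulus $R<r<2R$. Since $U\le Cr^{2-N}$, the tail integrand is $O(r^{d_i+d_j-2N+2})$. Integrating over $r>R$ against $r^{N-1}\dd r$ gives $O(R^{d_i+d_j+2-N})$, which is the square of the allowed rate and hence better than needed. The derivative $R\partial_R$ only falls on $\chi_R$ or on $R\partial_R\chi_R$. Both are supported in the annulus and are bounded by the same homogeneous quantity, so the same estimate holds.

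For the correction part, I will use the projected inverse $\B^{-1}:Y^*\to Y$, which is uniformly bounded on compact parameter sets. Denote by $\ell_i$ and $\ell_i^R$ the functionals $\psi\mapsto\int S_{h_i}\psi$ and $\psi\mapsto\int S_{\chi_Rh_i}\psi$. The boundary sources are zero because $h_{Ni}=0$ and $\tr h=0$. The correction pairing is $-c_N^{-1}\ell_j(\B^{-1}\ell_i)$, and the difference from its cutoff version is controlled by
\[
 \|\ell_i-\ell_i^R\|_{\E^*}\,\|\ell_j\|_{\E^*}+\|\ell_i^R\|_{\E^*}\,\|\ell_j-\ell_j^R\|_{\E^*}.
\]
The key estimate is $\|\ell_i-\ell_i^R\|_{\E^*}\le CR^{d_i+1-N/2}$. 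To prove it, bound $S_{h_i}-S_{\chi_Rh_i}$ on $\{r>R\}$ by $Cr^{d_i-N}$ using \eqref{lc:eq:rawsource}, $|\partial^kU|\le Cr^{2-N-k}$, and the $\nabla\chi_R$ terms. Then either take the $L^{p_*}$ norm with $p_*=2N/(N+2)$, or apply the Hardy inequality with weight $r^{-1}$:
\[
 \Big|\int_{r>R} S\psi\Big|\le \Big(\int_{r>R}r^2|S|^2\Big)^{1/2}\|\psi\|_{\E},
\]
and $\int_R^\infty r^{2d_i-2N+2}r^{N-1}\dd r=O(R^{2d_i+2-N})$. The dual norms $\|\ell_j\|_{\E^*}$ and $\|\ell_j^R\|_{\E^*}$ are uniformly bounded by the same computation over all of $\R^N_+$. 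This gives the bound on $\|E_R-E_\infty\|$.

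For $R\partial_RE_R$, use self-adjointness of $\B^{-1}$ as in the proof of \eqref{crit:eq:cutderivative}:
\[
 R\partial_R\big[\ell_j^R(\B^{-1}\ell_i^R)\big]=(R\partial_R\ell_j^R)(\B^{-1}\ell_i^R)+\ell_j^R(\B^{-1}R\partial_R\ell_i^R).
\]
The functional $R\partial_R\ell_i^R$ has source supported in the annulus and bounded by $CR^{d_i-N}$, so its dual norm is $O(R^{d_i+1-N/2})$ by the same Hardy estimate. No derivative of the corrector is needed.

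The main obstacle I expect is justifying that the uncut ordinary quantities are well defined, and checking that the Jacobi projection and the stationarity orthogonality are compatible with cutoffs. Since $\ell_i^R$ need not annihilate the Jacobi kernel exactly, I would fix the inverse on the complement $Y$ defined by \eqref{eq:dual}, as in \eqref{eq:quadratic-action}. By that proposition $F$ is independent of the complement, and the projection acts by bounded finite-rank operators whose coefficients are cutoff-stable, because $\theta_j$ has compact support. Uniformity in $\kappa$ then follows from the uniform bound on the projected inverse and on $U$.
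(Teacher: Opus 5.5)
Your proposal is correct and is essentially the paper's argument. The paper writes the entry differences by bilinearity as pairings involving $(1-\chi_R)h_i$ and $(R\partial_R\chi_R)h_i$, bounds their $\N$-norms by $CR^{d_i+1-N/2}$, and applies the bilinear bound \eqref{eq:quadratic-bounded} together with $R\partial_R F(\chi_Rh)=2F(\chi_Rh,(R\partial_R\chi_R)h)$; you instead prove the two halves of that bilinear bound by hand, using local tail integrals for the raw part and dual-norm source bounds through the fixed projected inverse for the correction.

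Two minor remarks. First, your correction estimate gives $R^{\max(d_i,d_j)+1-N/2}$ per entry, not the $R^{(d_i+d_j)/2+1-N/2}$ you announce; this is harmless for the lemma. Second, $\ell_i^R$ annihilates the Jacobi kernel exactly by Lemma~\ref{lem:stationarity}, because $\chi_Rh_i$ is compactly supported, so the obstacle you anticipate does not arise.
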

\begin{proof}
The weighted norm \eqref{eq:Nnorm} satisfies
\[
 \N((1-\chi_R)h_i)+\N((R\partial_R\chi_R)h_i)
 \le C R^{(2d_i+2-N)/2},\qquad \N(\chi_Rh_i)\le C.
\]
Indeed, its exterior radial integral is bounded by $C\int_R^\infty r^{2d_i+1-N}\dd r$; cutoff derivatives have the same annular bound.
Apply the bilinear estimate \eqref{eq:quadratic-bounded}.
Differentiation gives $R\partial_R F(\chi_Rh)=2 F(\chi_Rh,(R\partial_R\chi_R)h)$.
This proves both estimates, including all divergence terms created by the cutoff and the full scalar inverse.
\end{proof}

Let $\mathsf D$ be the diagonal matrix of retained homogeneous degrees.
Under dilation the coefficient vector has entries $\varepsilon^dJ_d$, and $R=\rho/\varepsilon$.
Thus the actual scale matrix is
\begin{equation}
 P_R=-\mathsf D E_R-E_R\mathsf D+R\partial_RE_R.
\end{equation}
If $\mathcal P_R(H)$ denotes its quadratic form, then
\begin{equation}
 \mathcal P_R\left(\sum_d\varepsilon^dJ_d\right)
 =-\varepsilon\partial_\varepsilon
  F\left(\chi_{\rho/\varepsilon}\sum_d\varepsilon^dJ_d\right),
\end{equation}
where $\rho$ and $J_d$ are held fixed.
\begin{theorem}[Coercivity of the scale derivative]
\label{thm:localcoercivity}
For every dimension, boundary class, and curvature pair in Theorem~\ref{thm:main}, there are $c>0$ and $R_0$ such that, for $R=\rho/\varepsilon\ge R_0$,
\begin{equation}\label{eq:Qcoercive}
 -\varepsilon\partial_\varepsilon
  F\left(\chi_R\sum_{d=1}^{d_N}\varepsilon^dJ_d\right)
 \ge c\mathscr N(\varepsilon,\rho),
\end{equation}
where
\begin{equation}\label{eq:Jnorm}
 \mathscr N(\varepsilon,\rho):=
 \sum_{2d<N-2}\varepsilon^{2d}\|J_d\|^2
 +\mathbf 1_{\{N\text{ even}\}}\varepsilon^{N-2}
            \log(\rho/\varepsilon)\|J_{d_N}\|^2.
\end{equation}
The coefficient vectors are arbitrary in the full Fermi spaces, or their umbilic subspaces as applicable.
In dimension three, the sum is empty and $\mathscr N=0$.
Constants are uniform on compact subsets of the stated parameter ranges.
\end{theorem}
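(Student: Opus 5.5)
We reduce \eqref{eq:Qcoercive} to a block matrix inequality for the quadratic form of $P_R$ evaluated at the vector $(\varepsilon^dJ_d)_d$. Let $E_R$ be the unnormalized matrix of $F(\chi_R\sum_d a_dJ_d)$ on the retained Fermi space, split into the ordinary block (degrees $2d<N-2$) and, for even $N$, the critical block (degree $d_N$). We have the identity $\mathcal P_R(\sum_d\varepsilon^dJ_d)=-\varepsilon\partial_\varepsilon F(\cdots)$, and $P_R=-\mathsf DE_R-E_R\mathsf D+R\partial_RE_R$. It therefore suffices to bound the ordinary block, the critical block and the mixed terms separately, uniformly for $R\ge R_0$.

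\emph{Ordinary block.} By Lemma~\ref{ord:lem:cutoff}, $E_R\to E_\infty$ and $R\partial_RE_R\to0$ on the ordinary degrees, with rate $R^{\max d_i+1-N/2}$; this exponent is negative. The limit $-\mathsf DE_\infty-E_\infty\mathsf D$ is positive definite in every case of Theorem~\ref{thm:main}:
\begin{itemize}
\item for $(0,2)$ and $(1,0)$, by Theorem~\ref{thm:ordinary-endpoints} (after restoring the positive normalizations $C_N^{\rm alg},C_N^I$);
\item for $(1,\kappa)$ with $N\le8$, by Theorem~\ref{ord:thm:main} and Corollary~\ref{ord:cor:degreeone}. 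Here Proposition~\ref{prop:polynomial-decomposition}, or the tangential orthogonality in the proof of Theorem~\ref{ord:thm:main}, identifies the full ordinary space with the jets \eqref{ord:eq:jet};
\item for $N=3,4$, the ordinary space is empty ($d_N^{\rm ord}=0$).
\end{itemize}
Hence for $R\ge R_0$ the ordinary block of $P_R$ is bounded below by $c\,\mathrm{diag}$, which gives $\sum_{2d<N-2}\varepsilon^{2d}\|J_d\|^2$. Uniformity on compact parameter sets comes from continuity of the entries and the uniform projected inverse.

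\emph{Critical block ($N$ even).} We use \eqref{crit:eq:cutasympt}, \eqref{crit:eq:cutderivative} and Theorem~\ref{crit:thm:critical}. They give
\[
-2d_N F(\chi_RJ)+R\partial_RF(\chi_RJ)\ge 2d_Nc(N)\|J\|^2\log R-C\bigl(\sqrt{\log R}+1\bigr)\|J\|^2.
\]
Because $J\mapsto F(\chi_RJ)$ is a quadratic form on a finite-dimensional space, the error constants are uniform over unit $J$. Multiplying by $\varepsilon^{2d_N}=\varepsilon^{N-2}$ and taking $R_0$ large yields a lower bound $c\,\varepsilon^{N-2}\log(\rho/\varepsilon)\|J_{d_N}\|^2$.

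\emph{Mixed terms, the main obstacle.} The off-diagonal entries between a degree $j<d_N$ and $d_N$ must be absorbed. The end of the proof of Theorem~\ref{crit:thm:critical} shows that these pairings and their $R\partial_R$ derivatives are $O(1)$ (the conical annular pairing is $O(R^{j-d})$). The scaled cross term is then at most
\[
C\varepsilon^{j+d_N}\|J_j\|\|J_{d_N}\|\le\delta\varepsilon^{2j}\|J_j\|^2+C_\delta\varepsilon^{N-2}\|J_{d_N}\|^2 .
\]
The second term is dominated by $\varepsilon^{N-2}\log R\|J_{d_N}\|^2$ once $\log R\ge\log R_0$ is large, and the first is absorbed by the ordinary coercivity when $\delta$ is small. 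The cross terms among ordinary degrees are already part of the positive-definite ordinary matrix. The care needed is to check that the $O(1)$ bounds on mixed pairings (including the $R\partial_R$ derivative, via the self-adjoint projected inverse) hold uniformly in the coefficient vectors; bilinearity and finite dimension give this. Choosing $\delta$ first and then $R_0$ completes \eqref{eq:Qcoercive}. The case $N=3$ is trivial.
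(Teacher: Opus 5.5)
Your proposal is correct and follows essentially the same approach as the paper's proof. Ordinary positivity comes from Theorems~\ref{thm:ordinary-endpoints}, \ref{ord:thm:main} and Corollary~\ref{ord:cor:degreeone}, transferred to the cutoff matrices by Lemma~\ref{ord:lem:cutoff}; the logarithmic critical block comes from Theorem~\ref{crit:thm:critical} with \eqref{crit:eq:cutasympt}--\eqref{crit:eq:cutderivative}; and the $O(1)$ cross-block entries are absorbed by Young's inequality, choosing the weight before $R_0$. You phrase the absorption after inserting the scaled vector $(\varepsilon^dJ_d)_d$, while the paper states it directly as a block-matrix bound for $P_R$, but this is only a difference of presentation.
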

\begin{proof}
For the two endpoint equations, use Theorem~\ref{thm:ordinary-endpoints}.
For arbitrary finite $\kappa$ in dimensions five through eight, use Theorem~\ref{ord:thm:main} and Corollary~\ref{ord:cor:degreeone}.
Lemma~\ref{ord:lem:cutoff} gives convergence of all ordinary scale matrices.
In even dimensions, Theorem~\ref{crit:thm:critical} supplies the full positive critical logarithmic block.
In an even dimension, the critical calculation and the ordinary cutoff lemma give
\[
 P_R=\begin{pmatrix}
 P_{\rm ord}+o(1)&O(1)\\
 O(1)&A\log R+O(\sqrt{\log R}+1)
 \end{pmatrix},\qquad P_{\rm ord}>0,\quad A>0.
\]
An absent ordinary block is omitted.

If the ordinary and critical lower bounds are $a|X|^2$ and $b\log R\,|Y|^2$, and the cross-block norm is at most $M$, use $2M|X||Y|\le a|X|^2/4+4M^2|Y|^2/a$.
For large $R$, the logarithmic block absorbs the last term and its $O(\sqrt{\log R}+1)$ remainder.
Dimension four has only the critical degree-one block, and odd dimensions have only ordinary blocks.
These arguments apply uniformly to moving coefficient vectors.
\end{proof}

\begin{proposition}\label{prop:local-continuation}
The inequality \eqref{eq:Qcoercive} also holds in the parameter ranges of Corollary~\ref{thm:continuation}.
\end{proposition}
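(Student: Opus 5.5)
The plan is to treat \eqref{eq:Qcoercive} for $(\sigma,\kappa)=(1,\kappa)$ as a perturbation of the two endpoint cases already settled in Theorem~\ref{thm:localcoercivity}. We handle the ordinary matrices and the critical block separately. For the critical degree, Theorem~\ref{crit:thm:critical} already holds for every $\kappa$: the residue \eqref{crit:eq:residue} does not depend on the curvature pair. It only needs the projected Jacobi inverse to be bounded uniformly, and this holds on compact $\kappa$-intervals. So the critical logarithmic block is positive throughout both parameter ranges. By the absorption argument in the proof of Theorem~\ref{thm:localcoercivity}, together with Lemma~\ref{ord:lem:cutoff}, it then suffices to show that the ordinary scale matrix $P_{\rm ord}(\kappa)=-(\mathsf D E(\kappa)+E(\kappa)\mathsf D)$ is positive definite on the full ordinary Fermi space (or its umbilic subspace) for the stated $\kappa$.

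\emph{Near zero.} The entries of $E(\kappa)$ come from the raw form \eqref{lc:eq:rawpolar} and the corrected pairing \eqref{lc:eq:schur}. With the bubble $U_\kappa$ of \eqref{eq:bubble}, they depend continuously on $\kappa$ near $0$. The raw integrals converge uniformly in the ordinary range since $2d<N-2$. The projected inverse $\B_\kappa^{-1}:Y^*\to Y$ is locally uniform in $\kappa$, as recorded after \eqref{eq:dual}. Since the blocks are finite-dimensional, $P_{\rm ord}(\kappa)\to P_{\rm ord}(0)$. At $\kappa=0$, Theorem~\ref{thm:ordinary-endpoints} gives positivity for $N\le14$ (general boundary) and $N\le20$ (umbilic boundary). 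Positive definiteness is an open condition, so this yields $\delta_N$, and uniformity on compact subsets follows from continuity.

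\emph{Large positive $\kappa$.} We rescale to compare with the scalar-flat endpoint. Set $U_\kappa(y)=\kappa^{-(N-2)}\,\widetilde U_\kappa(y/\kappa)$, where $\widetilde U_\kappa=\bigl(2\kappa^{-2}/(\kappa^{-2}+|x|^2+(t+1)^2)\bigr)^{(N-2)/2}$. As $\kappa\to\infty$, $\widetilde U_\kappa$ tends, after normalizing the constant, to the scalar-flat bubble with curvature pair $(0,2)$. The interior potential $\sigma N(N+2)U^{4/(N-2)}/4$ then carries a factor $\kappa^{-2}$, and the boundary coefficient $N\kappa U^{2/(N-2)}/2$ tends to the scalar-flat Robin coefficient $NU^{2/(N-2)}$. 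Homogeneous tensors of degree $d$ pick up the factor $\kappa^{d}$ under this dilation. Hence $\kappa^{-(d_i+d_j)}E_{ij}(\kappa)$, up to a fixed positive normalization, converges to the scalar-flat matrix. The scale matrix transforms by the same diagonal congruence, since $\mathsf D$ is diagonal. So positivity for large $\kappa$ follows from Theorem~\ref{thm:ordinary-endpoints} in the scalar-flat case, which covers $N\le14$ and $N\le21$; this gives $K_N$.

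\emph{Main obstacle.} The hardest step is justifying the $\kappa\to\infty$ limit of the scalar correction. The projected inverse for the rescaled Jacobi operator must stay uniformly bounded while the interior potential vanishes and the Jacobi kernel deforms from the round-cap kernel to the harmonic Robin kernel. I would first establish convergence of the kernels $Z_j$ and of the Gram matrix $\mathsf G$. I would then argue by contradiction: take a sequence with $\|w_n\|_{\E}=1$, $w_n\in Y$ and $\B_{\kappa_n}w_n\to0$. A weak limit then lies in the scalar-flat kernel complement and must vanish. Concentration is excluded by the Hardy and trace bounds together with the $r^{-4}$ decay of the potential. Once this is done, strong convergence of the correctors, and hence of $E$, follows from the source estimate \eqref{eq:quadratic-bounded}.
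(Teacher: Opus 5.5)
Your proposal is correct and follows the paper's proof: continuity from the minimal-boundary endpoint for small $|\kappa|$, and for large $\kappa$ the dilation by $\kappa$, which turns the $(1,\kappa)$ problem into the $(4/\kappa^2,2)$ family with bubble $(\kappa^{-2}+|x|^2+(t+1)^2)^{-(N-2)/2}$ (the paper's $U_\sigma$ with $\sigma=4/\kappa^2$) converging to the scalar-flat endpoint, with the critical block handled by Theorem~\ref{crit:thm:critical} in both cases. One harmless slip: with your $\widetilde U_\kappa$ one has $U_\kappa(y)=\widetilde U_\kappa(y/\kappa)$, since the factor $\kappa^{-(N-2)}$ is counted twice, and the energies differ by the $\kappa$-dependent positive scalar $2^{N-2}\kappa^{-(N-2)}=(4/\kappa^2)^{(N-2)/2}$ rather than a fixed one; this does not affect your diagonal-congruence argument.
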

\begin{proof}
At fixed dimension, there are finitely many ordinary blocks, all strictly positive at the relevant endpoint.
Near $(1,0)$, use the family \eqref{eq:bubble}.
Near $(0,2)$, use, for $\sigma\ge0$ small,
\[
 U_\sigma(x,t):=\bigl(\sigma/4+|x|^2+(t+1)^2\bigr)^{-(N-2)/2}.
\]
This family solves \eqref{eq:PDE} on the Euclidean half-space with curvature pair $(\sigma,2)$ and converges to the scalar-flat bubble as $\sigma\to0$.
The source maps and the projected Jacobi inverse are continuous in operator norm, after identifying the complements by \eqref{eq:dual}.
The integrals and their first two scale derivatives are dominated by the same integrable powers in the ordinary range.
Hence the positive ordinary gaps persist in both parameter neighborhoods.
The proof of Theorem~\ref{crit:thm:critical} applies to both families.
For $U_\sigma$, the expansion $U_\sigma(y)=|y|^{2-N}+O(|y|^{1-N})$ as $|y|\to\infty$ shows that its logarithmic coefficient is the right-hand side of \eqref{crit:eq:residue} with factor $1$ in place of $2^{\sigma(N-2)}$.
Near $(1,0)$, the factor remains $2^{N-2}$, so the logarithmic forms agree after division by these positive constants.
The bounded remainders and the absorption in the proof of Theorem~\ref{thm:localcoercivity} are uniform in these neighborhoods.

For $\sigma>0$, multiplication of a solution by $\sigma^{(N-2)/4}$ sends $(\sigma,2)$ to $(1,2/\sqrt\sigma)$.
For the displayed bubbles, it changes the physical scale $\varepsilon$ to $\sqrt\sigma\varepsilon/2$ and multiplies the energy by $\sigma^{(N-2)/2}>0$.
Thus the scale inequality holds for every fixed $\kappa>2/\sqrt{\sigma_0}$, where $\sigma_0>0$ is small enough for the scalar-flat neighborhood.
\end{proof}

\section{The nonlinear correction and reduced energy}\label{sec:nonlinear-family}
We now pass from the corrected quadratic energy to a nonlinear projected solution.
We localize the full conformal Fermi metric, retaining its high-order trace and mean-curvature terms.
Thus the compactly supported logarithmic metric in this section is allowed to have nonzero trace.
Theorem~\ref{thm:tame} and Proposition~\ref{prop:nonlinear} control the scale derivative of the reduced energy; Proposition~\ref{prop:pointwise} supplies the pointwise bounds used in Section~\ref{sec:comparison}.
\subsection{Expansion of the reduced energy}
Throughout this section, $g=\exp(k)$, where $k_{Ni}=0$ and $k$ is smooth and compactly supported.
Put $X(k):=\norm k_{L^\infty}+\norm{\nabla k}_{L^N}$.
The weighted norm $\N$ is defined in \eqref{eq:Nnorm}.
Define the weak equation residual and its first metric variation by
\[
 \mathscr R_k(v)[\psi]:=\frac{c_N}{2}D_v\A_{\exp(k)}(v)[\psi],\qquad
 \Lambda[k]:=\left.\frac{d}{ds}\mathscr R_{s k}(U)\right|_{s=0}.
\]
\begin{lemma}[Metric coefficient estimates]\label{lem:metric-coefficients}
For the compactly supported tensors above with $X(k)$ sufficiently small, the weighted trace estimate is
\begin{equation}\label{eq:weightedtrace}
 \int_{\partial\R^N_+}\frac{U^2|k|^2}{1+|x|}\le C\N(k)^2.
\end{equation}
The first metric derivative and the nonlinear residual satisfy
\begin{equation}\label{eq:coeffest}
 \begin{aligned}
 \norm{\Lambda[k]}_{Y^*}&\le C\N(k),\\
 \norm{D_v\mathscr R_k(U)-D_v\mathscr R_0(U)}_{\E\to\E^*}&\le C(X(k)+X(k)^2),\\
 \norm{\mathscr R_k(U)-\Lambda[k]}_{Y^*}&\le CX(k)\N(k).
 \end{aligned}
\end{equation}
The constants are independent of the support radius.
\end{lemma}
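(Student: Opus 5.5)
We will prove the three estimates by expanding the energy density of $\exp(k)$ explicitly in coordinates and bounding each term against the bubble weight. Because $k_{Ni}=0$, the metric $g=\exp(k)$ has Gaussian normal block form. Its coefficients $g^{ij}$, $\sqrt{\det g}$, the boundary density $\sqrt{\det g^T}$, the scalar curvature $R_g$ and the mean curvature $H_g$ are therefore smooth functions of $k$ and $\partial k$ (and $\partial^2k$ for $R_g$). For $\norm k_{L^\infty}$ small we have the pointwise bounds
\[
 |g^{ij}-\delta_{ij}+k_{ij}|+|\sqrt{\det g}-1-\tfrac12\tr k|\le C|k|^2,\qquad |H_g|\le C|\partial k|,
\]
and an analogous statement for $R_g$. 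In $R_g$ we keep the second-derivative terms in divergence form, $R_g=\partial_i\partial_jk_{ij}-\Delta\tr k+O(|\partial k|^2+|k||\partial^2k|)$, where the remainder is itself a divergence plus $O(|\partial k|^2)$.

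\emph{Weighted trace estimate.} For \eqref{eq:weightedtrace}, apply the fundamental theorem of calculus in $t$, or equivalently the divergence theorem for the vector field $U^2|k|^2(1+|x|)^{-1}e_N$ weighted by a cutoff in $t$. The boundary integral is then controlled by
\[
 \int_{\R^N_+}|\partial_t(U^2|k|^2(1+|x|)^{-1})|.
\]
For either bubble family we have $|\nabla U|\le CU/(1+|y|)$, so this integrand is bounded by
\[
 C\,U^2\bigl(|k||\partial k|/(1+|y|)+|k|^2/(1+|y|)^2\bigr),
\]
where the comparison $(1+|x|)^{-1}\lesssim$ weights is handled by splitting into $t\le|x|$ and $t>|x|$. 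Cauchy--Schwarz then gives $C\N(k)^2$. The same argument, applied to $U\psi$, is what lets boundary pairings be dominated by $\N(k)\norm\psi_\E$.

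\emph{Bound on $\Lambda[k]$.} Differentiate $\mathscr R_{sk}(U)[\psi]$ at $s=0$. This produces bulk terms of the form $\int k_{ij}\partial_iU\partial_j\psi$, $\int(\tr k)(\nabla U\cdot\nabla\psi)$, $\int R'(k)U\psi$ and $\sigma\int(\tr k)U^p\psi$, together with boundary terms $\int H'(k)U\psi$ and $\kappa\int(\tr k^T)U^q\psi$. In the term $R'(k)=\partial_i\partial_jk_{ij}-\Delta\tr k$ we integrate by parts once, moving one derivative onto $U\psi$. The boundary flux produced by this integration combines with $H'(k)$, because $2H'_{\tr}=(\partial_\nu\tr k-\partial_ik_{i\nu})$ minus the tangential divergence, and $k_{Ni}=0$. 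Every resulting term then has the form (a first derivative of $k$, or $k$ times $\nabla U$) multiplied by $U\psi$ or $\nabla(U\psi)$. The estimates $|\nabla U|\lesssim U/(1+|y|)$, $U^{p-1}\lesssim(1+|y|)^{-4}$ and $U^{q-1}\lesssim(1+|y|)^{-2}$, combined with the Hardy and trace inequalities on $\E$ and the weighted trace bound above, give $|\Lambda[k][\psi]|\le C\N(k)\norm\psi_\E$.

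\emph{Operator difference and nonlinear remainder.} For the second estimate, write the difference of the quadratic forms, $D_v\mathscr R_k(U)-D_v\mathscr R_0(U)$, as
\[
 \int(g^{ij}\sqrt g-\delta_{ij})\partial v\partial w+c_N\int R_g vw\sqrt g+\tfrac{N-2}2\int H_g vw
\]
plus the potential terms weighted by $\sqrt g-1$. The first term is bounded by $C\norm k_{L^\infty}\norm v_\E\norm w_\E$. The curvature terms, after integration by parts in the $\partial^2k$ part, are bounded using Hölder with $\norm{\partial k}_{L^N}$ and the Sobolev exponent $2N/(N-2)$; the term $|\partial k|^2vw$ is handled by $L^{N/2}$. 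On the boundary we use the trace Sobolev exponent $2(N-1)/(N-2)$, and $\partial k\in L^N(\R^N_+)$ controls the boundary pairing via $\int_{\partial}|\partial k||v||w|\le\int_{\R^N_+}|\partial_t(\cdots)|$. This yields $C(X(k)+X(k)^2)$.

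For the third estimate, Taylor-expand in $s$ up to the path $s\mapsto sk$, giving
\[
 \mathscr R_k(U)-\Lambda[k]=\int_0^1(1-s)\,\partial_s^2\mathscr R_{sk}(U)\,ds.
\]
Each term of the second derivative is quadratic in $(k,\partial k,\partial^2k)$ and linear in $U$ times $\psi$. After the same integration by parts, every such term contains at least one factor $k$ or $\partial k$ that can be placed in $L^\infty$ or $L^N$, and one factor carrying the $\N$ weight. Hölder, Hardy and trace inequalities give $CX(k)\N(k)$.

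\emph{Main obstacle.} The main difficulty is the second-derivative curvature term $|k||\partial^2k|$, which is not controlled by $X(k)$ directly. We plan to handle it by keeping the second variation of $R_g\sqrt g$ in exact divergence form modulo $|k||\partial k|^2$-type terms, using the identity $\sqrt g\,R_g=\partial_i(\cdots)+O(|\partial k|^2)$, that is, the Einstein--Hilbert $\Gamma\Gamma$ form. The boundary flux of the divergence part cancels against $2\sqrt{g^T}H_g$ by Lemma~\ref{lem:stationarity}'s structure, which is exactly the Gibbons--Hawking cancellation. After this, only first derivatives of $k$ remain, and since the inequalities involved are scale invariant, the constants do not depend on the support radius.
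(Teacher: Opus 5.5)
Your plan is correct and matches the paper's proof. The weighted trace bound comes from integrating a $\partial_t$-derivative followed by Cauchy--Schwarz. The coefficient bounds come from writing $\sqrt{\det g}\,R_g=\partial_iW^i+Q$: in Fermi block form the planar flux $W^N=2H_{\tr}\sqrt{\det g^T}$ cancels the mean-curvature term exactly. What remains are terms $A(k)\nabla v\nabla w$, $A(k)\partial k\,\nabla(vw)$ and $A(k)(\partial k)^2vw$, which Hölder, Sobolev, Hardy and trace inequalities control.

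Two small corrections are needed. First, discard the bound $\int_{\partial\R^N_+}|\partial k||v||w|\le\int_{\R^N_+}|\partial_t(\cdots)|$ in your operator-difference paragraph. It produces $\partial^2k$, which $X(k)$ does not control; handling the $H_g$ term is precisely what the cancellation in your last paragraph is for. Second, in the trace estimate use the weight $(1+|y|)^{-1}$, as the paper does, since its $t$-derivative is already $O((1+|y|)^{-2})$. With $(1+|x|)^{-1}$ and a "cutoff in $t$", the argument only works if the cutoff is adapted, e.g.\ $\eta(t/(1+|x|))$.
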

\begin{proof}
To prove \eqref{eq:weightedtrace}, integrate $\partial_t\bigl(U^2|k|^2/(1+|y|)\bigr)$ over the half-space.
The derivative of the weight is bounded by $CU^2/(1+|y|)^2$, and Cauchy--Schwarz bounds the term containing $k\partial_tk$ by $C\N(k)^2$.
To estimate the metric dependence of the equation, integrate the divergence part of scalar curvature together with the mean-curvature term.
In Fermi block form, the normal curvature flux cancels the $2(N-1)H$ term.
The remaining quadratic scalar form is a sum of terms of the types
\[
 A(k)\nabla v\nabla w,\qquad
 A(k)\partial k\,(v\nabla w+w\nabla v),\qquad
 A(k)(\partial k)^2vw.
\]
Here $A(k)$ denotes smooth coefficient tensors, which may differ between terms, for $\norm k_\infty$ small.
The volume and boundary densities are smooth functions of $k$.
The Sobolev, trace, and H\"older inequalities give \eqref{eq:coeffest}.
Integrating the curvature divergence makes these estimates depend only on the displayed norms of $k$ and its first derivatives; an $L^{N/2}$ bound on its second derivatives is unnecessary.
\end{proof}

For general $k$, define $F(k)$ as the coefficient of $s^2$ in $\A_{\exp(s k)}(U+s v(k))$, where $v(k)=-\B_Y^{-1}\Lambda[k]$ is the linear scalar correction.
This agrees with \eqref{eq:quadratic-action} when $k$ is trace-free.
The nonlinear projected equation is
\begin{equation}\label{eq:nonlinear-projected}
 \mathscr R_k(U+w(k))[\psi]=0\quad(\psi\in Y),\qquad w(k)\in Y.
\end{equation}
For its small solution, define $\mathscr A(k):=\A_{\exp(k)}(U+w(k))$.
The following estimate controls the derivative of the difference between $\mathscr A$ and its quadratic expansion.
\begin{theorem}[Derivative estimate for the reduced energy]\label{thm:tame}
For $X(k)$ small, there is a unique small projected branch $w(k)$, with
\[
 \norm{w(k)}_\E\le C\N(k).
\]
Set $\gamma=\min\{1,2/(N-2)\}$.
The expansion $\mathscr A(k)=\A_\delta(U)+ F(k)+\operatorname{Rem}(k)$ satisfies, for every smooth compactly supported direction $\ell$,
\begin{equation}\label{eq:tame}
 |D\operatorname{Rem}(k)[\ell]|
 \le C\left[(X(k)+\N(k)^\gamma)\N(k)\N(\ell)
                  +X(\ell)\N(k)^2\right].
\end{equation}
The constant is independent of the support radius.
It is uniform for $\kappa$ in a compact interval.
\end{theorem}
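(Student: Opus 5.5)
The plan is a Lyapunov--Schmidt reduction on $Y$ followed by a Taylor expansion of the reduced energy, in which every remainder is measured in the weighted norm $\N$ instead of $X$.

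\emph{Step 1: existence of the branch.} Write the projected equation \eqref{eq:nonlinear-projected} as a fixed-point problem
\[
 w=-\B_Y^{-1}\Pi_Y\bigl[\mathscr R_k(U)+(D_v\mathscr R_k(U)-D_v\mathscr R_0(U))w+\mathcal N_k(w)\bigr],
\]
where $\Pi_Y$ is restriction to test functions in $Y$, $\B_Y^{-1}$ is the projected inverse (locally uniform in $\kappa$), and $\mathcal N_k(w)$ is the superlinear part of the residual. By \eqref{eq:coeffest}, the source satisfies $\norm{\mathscr R_k(U)}_{Y^*}\le C\N(k)$, and the linear perturbation has operator norm $C(X+X^2)$, which is small. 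For the nonlinear term we use the H\"older continuity of $v\mapsto v_+^{p}$ and $v\mapsto v_+^{q}$: on $\E$ it satisfies $\norm{\mathcal N_k(w)}_{\E^*}\le C(\norm w^{2}+\norm w^{\min(p,q)})$. If $p<2$, replace this by $C\norm w^{1+\gamma}$ with $\gamma=\min\{1,2/(N-2)\}=\min\{1,q-1\}$; note that $q-1=2/(N-2)$ and $p-1=4/(N-2)\ge q-1$. The contraction principle on a ball of radius $C\N(k)$ gives existence, uniqueness, and $\norm{w(k)}_\E\le C\N(k)$. The implicit function theorem then gives $C^1$ dependence on $k$ in the $X$-topology, with
\[
 Dw(k)[\ell]=-\B_Y^{-1}\Pi_Y\bigl(\Lambda[\ell]+\text{small}\bigr),\qquad \norm{Dw(k)[\ell]}\le C(\N(\ell)+X(\ell)\N(k)).
\]

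\emph{Step 2: differentiating the reduced energy.} By the envelope principle, $D\mathscr A(k)[\ell]=D_g\A_{\exp(k)}(U+w)[\ell]+D_v\A(U+w)[Dw\,\ell]$. The second term pairs the residual with $Dw\,\ell$. The residual vanishes on $Y$, so only its kernel components survive. These are of the form $\sum_j c_j\theta_j$, where the $c_j$ are obtained by testing the residual against the $Z_j$. By Lemma~\ref{lem:stationarity}, the mixed derivative annihilates the Jacobi kernel, so $c_j=O(X\N(k)+\N(k)^{1+\gamma})$. Pairing with $Dw\,\ell$ then gives an admissible contribution to \eqref{eq:tame}.

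For the first term, expand $D_g\A_{\exp(k)}(U+w)[\ell]$ to second order in $(k,w)$. The zeroth order vanishes by Lemma~\ref{lem:stationarity}. The first-order part in $k$ and $w$ equals $DF(k)[\ell]$ once $w$ is replaced by its linearization $v(k)$; this uses the definition of $F$ for general $k$ as the $s^2$ coefficient, including trace terms. The difference $w(k)-v(k)$ is bounded by Step 1 as $C(X+\N^\gamma)\N(k)$. The third-order Taylor terms are bounded by trilinear estimates, in the same way as the proof of Lemma~\ref{lem:metric-coefficients}. Terms with two factors of $k$ and one of $\ell$ give $X(k)\N(k)\N(\ell)$ or $X(\ell)\N(k)^2$, depending on which factor carries $L^\infty$ or $L^N$. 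Terms with $w$ superlinearly give $\N^\gamma\N\N(\ell)$.

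\emph{Main obstacle.} The key difficulty is to keep every bound in terms of $\N$, so that the constants are independent of the support radius, and never in terms of $L^2$ norms of $\partial^2k$. I would handle this as in Lemma~\ref{lem:metric-coefficients}: integrate the scalar-curvature divergence against the boundary mean-curvature term, which leaves only first derivatives of $k$. I would then use Hardy and the weighted trace \eqref{eq:weightedtrace} to absorb the factors of $U$. The less regular nonlinearity $v_+^{q+1}$ on the boundary, which is what forces $\gamma=2/(N-2)$ in large dimensions, requires care in the derivative estimate. There I would use the pointwise inequality $|a_+^{q}-b_+^{q}-qb_+^{q-1}(a-b)|\le C|a-b|^{q}$ together with the trace Sobolev inequality.
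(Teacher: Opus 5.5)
Your proposal follows essentially the paper's route. It uses a contraction in $Y$ on a ball of radius $C\N(k)$, the bound $\norm{w(k)-v(k)}_\E\le C(X(k)+\N(k)^\gamma)\N(k)$ and the envelope identity. It identifies the bilinear part of $D_k\A_{\exp(k)}(U+w)[\ell]$ with $DF(k)[\ell]$ at cost $C\N(\ell)\norm{w-v}_\E$, and bounds the remaining terms trilinearly, with second metric derivatives removed by integrating the curvature divergence against the mean-curvature term.

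Two small corrections are needed. First, $Dw(k)[\ell]\in Y$ and $\mathscr R_k(U+w(k))$ vanishes on $Y$, so the term $D_v\A_{\exp(k)}(U+w)[Dw(k)[\ell]]$ is exactly zero and no estimate of the multipliers $c_j$ is needed. Second, the metric-derivative terms quadratic in $w$, such as $\int\ell_{ij}\partial_iw\,\partial_jw$ or $\int(\tr\ell)U^{p-1}w^2$, must be bounded by $CX(\ell)\norm{w}_\E^2\le CX(\ell)\N(k)^2$, not by $\N(k)^{1+\gamma}\N(\ell)$: $\N(\ell)$ gives no pointwise control of $\ell$ where $w$ may concentrate. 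The factor $\N(k)^\gamma$ in \eqref{eq:tame} comes from replacing $v(k)$ by $w(k)$.
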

\begin{proof}
The maps defined by the interior and boundary nonlinearities are $C^{1,\gamma}$ from $\E$ to its dual.
The critical Sobolev and trace inequalities give this assertion directly; the smaller H\"older exponent comes from $q-1=2/(N-2)$.
The projected inverse following Lemma~\ref{lem:model-kernel} and the coefficient bounds of Lemma~\ref{lem:metric-coefficients} give the projected branch by the implicit function theorem, or by a contraction on a ball of radius $C\N(k)$.

Let $v(k)=-\B_Y^{-1}\Lambda[k]$ be the linear correction.
Subtracting its equation from the projected nonlinear equation gives
\begin{equation}\label{eq:wminusz}
 \norm{w(k)-v(k)}_\E
 \le C\bigl(X(k)+\N(k)^\gamma\bigr)\N(k).
\end{equation}
All powers are estimated in the dual Sobolev or trace norm, so the sign of $\kappa$ causes no change in the estimate.

Since $D w(k)[\ell]\in Y$ and the derivative of the energy with respect to the scalar variable vanishes on $Y$, the envelope identity is exact:
\[
 D\mathscr A(k)[\ell]=D_k\A_{\exp(k)}(U+w(k))[\ell].
\]
Expand the right-hand side in $k,w$.
First metric stationarity removes the constant term.
Its quadratic terms are the derivative of the raw metric quadratic coefficient and the pairing of the first scalar residual $\Lambda[\ell]$ with $v(k)$, namely $D F(k)[\ell]$.
Replacing $v(k)$ by $w(k)$ costs at most $C\N(\ell)\norm{w-v}_\E$.

The remaining term types have the following bounds.
Terms linear in $\ell$ and quadratic in $w$ are bounded by $CX(\ell)\norm w_\E^2$.
Terms with one $k$, one $\ell$, and one $w$ are bounded by
\[
 C\{X(k)\N(\ell)+X(\ell)\N(k)\}\norm w_\E.
\]
Terms of metric degree at least three, differentiated in $\ell$, are bounded by
\[
 C\{X(k)\N(k)\N(\ell)+X(\ell)\N(k)^2\}.
\]
These follow from the coefficient bounds in \eqref{eq:coeffest}.
For example, a term with $\partial\ell\,w\nabla w$ is bounded by $\norm{\partial\ell}_N\norm w_{2N/(N-2)}\norm{\nabla w}_2$, and a term $\ell(\partial k)^2U^2$ by $\norm\ell_\infty\N(k)^2$.
Terms from the volume and area densities are treated by the same inequalities and \eqref{eq:weightedtrace}.
The higher scalar-power terms are bounded by these expressions times $1+\norm w_\E^\gamma$.

Combining these bounds with \eqref{eq:wminusz} proves \eqref{eq:tame}.
The constants depend only on the displayed norms and the projected Jacobi inverse at the fixed curvature pair $(\sigma,\kappa)$, so they are independent of the support radius.
\end{proof}
\medskip
\paragraph{The localized metric.}
We apply the expansion to the localized metric at a boundary center $p$.
Let $h_p^\rho:=\chi(|y|/\rho)h_p$, and let $\mathscr N_p$ denote \eqref{eq:Jnorm} with $J_d=J_d(p)$.
Set
\begin{equation}\label{eq:localized-metric}
 k_{p,\varepsilon}(y)=h_p^\rho(\varepsilon y),\qquad R=\rho/\varepsilon,
 \qquad \mathscr A_p^\rho(\varepsilon)=\mathscr A(k_{p,\varepsilon}).
\end{equation}
The physical metric $\exp(h_p^\rho)$ agrees with $g_p$ on $B_\rho^+$ and with $\delta$ outside $B_{2\rho}^+$.

\begin{lemma}[Localized Taylor estimates]\label{lem:localized-tail}
The metric in \eqref{eq:localized-metric} satisfies
\begin{equation}\label{eq:tailN}
 \begin{aligned}
 \N(\chi_Rq_p(\varepsilon\,\cdot))^2+
 \N\bigl(\varepsilon\partial_\varepsilon[\chi_Rq_p(\varepsilon\,\cdot)]\bigr)^2
 &\le C\varepsilon^{N-2}\rho^{\omega_N},\\
 \N(k_{p,\varepsilon})^2+\N(\varepsilon\partial_\varepsilon k_{p,\varepsilon})^2
 &\le C\{\mathscr N_p(\varepsilon,\rho)+\varepsilon^{N-2}\rho^{\omega_N}\},
 \end{aligned}
\end{equation}
where $\omega_N:=2d_N+4-N$ equals $1$ for odd $N$ and $2$ for even $N$.
In addition,
\begin{equation}\label{eq:smallX}
 X(k_{p,\varepsilon})+X(\varepsilon\partial_\varepsilon k_{p,\varepsilon})\le C\rho.
\end{equation}
The constants are uniform for centers in a compact coordinate chart.
\end{lemma}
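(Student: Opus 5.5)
The plan is to write $h_p=\sum_{d=1}^{d_N}J_d(p)+q_p$ and estimate the rescaled pieces one at a time. Each degree-$d$ piece becomes $\varepsilon^dJ_d(p)$ after rescaling, and the norm $\N$ is just a weighted radial integral, so everything reduces to integrals of the form $\int r^{\alpha}U^2 r^{N-1}\dd r$ over the cutoff region $|y|\le 2R$, with $R=\rho/\varepsilon$.

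First I handle the Taylor remainder, which carries the first line of \eqref{eq:tailN}. Since $q_p=O(|y|^{d_N+1})$ with differentiated bounds, the rescaled tensor $\chi_Rq_p(\varepsilon y)$ satisfies pointwise
\[
 |q_p(\varepsilon y)|\le C\varepsilon^{d_N+1}|y|^{d_N+1},\qquad
 |\partial_y[q_p(\varepsilon y)]|\le C\varepsilon^{d_N+1}|y|^{d_N}.
\]
Cutoff derivatives contribute $R^{-1}$ on the annulus $R<|y|<2R$, which is comparable to $|y|^{-1}$ there, so they obey the same bound. The same reasoning applies to $\varepsilon\partial_\varepsilon$ of the localized tensor. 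This derivative equals $y\cdot\nabla$ applied to the uncut tensor plus $-R\partial_R\chi_R$ times $q_p$, and both are of the same size. Using $U^2\le C(1+|y|)^{4-2N}$, I obtain
\[
 \N^2\le C\varepsilon^{2d_N+2}\int_0^{2R}(1+r)^{2d_N+2-2N+4-2}r^{N-1}\dd r
 \le C\varepsilon^{2d_N+2}R^{2d_N+4-N}.
\]
The exponent $2d_N+4-N=\omega_N\in\{1,2\}$ is positive, so the integral is dominated by the upper limit. Substituting $R=\rho/\varepsilon$ gives $C\varepsilon^{2d_N+2-\omega_N}\rho^{\omega_N}=C\varepsilon^{N-2}\rho^{\omega_N}$, as required.

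Second, I treat the jets, which carry the second line of \eqref{eq:tailN}. For an ordinary degree, $2d<N-2$, the homogeneous tensor $J_d$ has finite $\N$-norm (Section~\ref{sec:ordinary-endpoints}), so $\N(\chi_R\varepsilon^dJ_d)\le C\varepsilon^d\|J_d\|$. This uses $\N(\chi_R h)\le C$ from the proof of Lemma~\ref{ord:lem:cutoff}, together with the fact that cutoff derivatives are annular terms of smaller order. For the critical degree, when $N$ is even and $d=d_N$, the radial integral is $\int_1^{2R}r^{-1}\dd r\approx\log R$, which gives $\varepsilon^{N-2}\log(\rho/\varepsilon)\|J_{d_N}\|^2$ plus a bounded term. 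The scale derivative $\varepsilon\partial_\varepsilon$ multiplies $\varepsilon^dJ_d(\varepsilon y)$ by $d$ and adds the cutoff derivative term, so it satisfies the same bounds. Summing by the triangle inequality, with cross terms absorbed by Cauchy--Schwarz, produces $\mathscr N_p+\varepsilon^{N-2}\rho^{\omega_N}$.

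The one place requiring care is the $O(\varepsilon^{N-2})$ bounded part of the critical term that is not multiplied by the logarithm. I will absorb it into $\varepsilon^{N-2}\rho^{\omega_N}$ when $\rho$ is fixed, or into $\mathscr N_p$ once $R\ge R_0$ so that $\log R\ge1$. This is the step I expect to need the most attention, although it is routine. Finally, \eqref{eq:smallX} follows from scale invariance. Because $h_p=O(|y|)$ on the support $|y|\le 2\rho$, we have $\norm{k}_\infty\le C\rho$. The quantity $\norm{\nabla k}_{L^N}$ is dilation invariant, and $|\nabla h_p^\rho|\le C$ on a half-ball of volume $C\rho^N$, so $\norm{\nabla k}_{L^N}\le C\rho$. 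The scale derivative $y\cdot\nabla k$ is treated the same way, using $|y\cdot\nabla h_p|\le C|y|$. Uniformity over centers follows from the smooth dependence of the charts recorded in Section~\ref{sec:quadratic}.
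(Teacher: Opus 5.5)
Your proposal is correct and follows the paper's proof. The Taylor bound on $q_p$ reduces the first line of \eqref{eq:tailN} to the radial integral $\varepsilon^{2d_N+2}\int_1^{2\rho/\varepsilon}r^{2d_N+3-N}\dd r\le C\varepsilon^{N-2}\rho^{\omega_N}$, the second line comes from adding the homogeneous jet terms (with the logarithm at the critical degree), and \eqref{eq:smallX} comes from dilation invariance of $\|\nabla k\|_{L^N}$; your absorption of the non-logarithmic $O(\varepsilon^{N-2}\|J_{d_N}\|^2)$ piece into $\mathscr N_p$ once $\log(\rho/\varepsilon)\ge1$ is a detail the paper leaves implicit.
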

\begin{proof}
Taylor's theorem and \eqref{eq:weightedtrace} give the first estimate from
\[
 \varepsilon^{2d_N+2}\int_1^{2\rho/\varepsilon}r^{2d_N+3-N}\dd r
 \le C\varepsilon^{N-2}\rho^{2d_N+4-N}.
\]
The coefficient on the right is bounded by a constant times the square of the uniform derivative bound for the Taylor remainder $q_p$.
The second estimate follows by adding the homogeneous terms.
Scale invariance of the $L^N$ gradient norm gives \eqref{eq:smallX}.
\end{proof}

\begin{proposition}[Scale derivative of the reduced energy]\label{prop:nonlinear}
For $\rho>0$ sufficiently small and then $\varepsilon/\rho$ sufficiently small,
\begin{equation}\label{eq:nonlinear}
 -\varepsilon\partial_\varepsilon \mathscr A_p^\rho(\varepsilon)
 \ge c\mathscr N_p(\varepsilon,\rho)-C\varepsilon^{N-2}\rho^{\omega_N}.
\end{equation}
The constants are uniform for centers in a compact coordinate chart.
For a sequence of rescaled metrics converging smoothly to the flat metric, the coefficient of the last term tends to zero.
\end{proposition}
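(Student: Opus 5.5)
The plan is to differentiate the expansion of Theorem~\ref{thm:tame} along the scaling path. With $k=k_{p,\varepsilon}$ and direction $\ell:=\varepsilon\partial_\varepsilon k_{p,\varepsilon}$,
\[
 -\varepsilon\partial_\varepsilon\mathscr A_p^\rho(\varepsilon)
 =-\varepsilon\partial_\varepsilon F(k_{p,\varepsilon})-D\operatorname{Rem}(k_{p,\varepsilon})[\ell].
\]
The constant $\A_\delta(U)$ drops out. I will treat the quadratic term and the remainder separately.

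\emph{Quadratic term.} Split $k_{p,\varepsilon}=\chi_R\sum_{d=1}^{d_N}\varepsilon^dJ_d(p)+\chi_Rq_p(\varepsilon\,\cdot)$. The first summand is the scaled jet part, with $R=\rho/\varepsilon$ and the cutoff written as $\chi_R$. Note that $F(k)$ is defined for general $k$ as the $s^2$ coefficient, and the jets lie in $\mathscr J_d$, so they are trace free.

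For the trace part of $q_p$, which is $O(|y|^K)$, the contribution to $F$ and its scale derivative is bounded by $\N^2$ of a tail of order $K>N+4$. It is therefore absorbed in $C\varepsilon^{N-2}\rho^{\omega_N}$.

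Expand $F$ bilinearly. The pure jet part is handled by Theorem~\ref{thm:localcoercivity}, which gives $\ge c\mathscr N_p(\varepsilon,\rho)$ once $R\ge R_0$. The cross terms are $2F(\text{jet},\text{tail})$ and their scale derivatives. By \eqref{eq:quadratic-bounded} and Lemma~\ref{lem:localized-tail}, these are bounded by
\[
C\sqrt{\mathscr N_p+\varepsilon^{N-2}\rho^{\omega_N}}\,\varepsilon^{(N-2)/2}\rho^{\omega_N/2}.
\]
Young's inequality then gives $\le \tfrac c4\mathscr N_p+C\varepsilon^{N-2}\rho^{\omega_N}$. The pure tail term is $\le C\varepsilon^{N-2}\rho^{\omega_N}$ directly.

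One point needs care: the critical jet block is only logarithmically controlled. The pairing of the critical jet with the tail must be bounded by $\N(\text{jet})\N(\text{tail})$, and $\N$ of the critical jet squared is indeed $\sim\varepsilon^{N-2}\log(\rho/\varepsilon)\|J_{d_N}\|^2$, which matches $\mathscr N_p$. So Young's inequality works uniformly.

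\emph{Remainder.} Apply \eqref{eq:tame} with $X(k)+X(\ell)\le C\rho$ from \eqref{eq:smallX}. Also $\N(k)^2$ and $\N(\ell)^2$ are at most $C(\mathscr N_p+\varepsilon^{N-2}\rho^{\omega_N})$. This gives
\[
|D\operatorname{Rem}[\ell]|\le C\bigl(\rho+\N(k)^\gamma\bigr)\bigl(\mathscr N_p+\varepsilon^{N-2}\rho^{\omega_N}\bigr).
\]
Since $\N(k)$ is small when $\rho$ is small and $\varepsilon/\rho$ is small, the prefactor is at most $c/4$. Here I use $\|J_d\|\le C$ uniformly, so $\N(k)^2\le C\rho^{2}+C\varepsilon^{N-2}\log(\rho/\varepsilon)$, which tends to zero.

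This is the main obstacle: making $\N(k)$ small requires choosing $\rho$ small first, then $\varepsilon/\rho$ small, in that order. Combining the two estimates yields \eqref{eq:nonlinear}.

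\emph{Uniformity and flat limit.} Uniformity in $p$ follows from the uniform bounds in Section~\ref{sec:quadratic}. For rescaled metrics converging smoothly to $\delta$, the $C^{d_N+1}$ norm of the Taylor remainder $q_p$ tends to zero. The constant in \eqref{eq:tailN} is proportional to its square, as noted in the proof of Lemma~\ref{lem:localized-tail}, and the same holds for every term producing $C\varepsilon^{N-2}\rho^{\omega_N}$ above. Hence that coefficient tends to zero.
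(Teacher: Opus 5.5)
Your proposal is correct and is essentially the paper's proof. You apply Theorem~\ref{thm:tame} with $\ell=\varepsilon\partial_\varepsilon k_{p,\varepsilon}$ and use Lemma~\ref{lem:localized-tail} to get the remainder bound $C(\rho+\N(k)^\gamma)\{\mathscr N_p+\varepsilon^{N-2}\rho^{\omega_N}\}$; you control the quadratic pairings involving $q_p$ by the tail estimate and Young's inequality; and you absorb everything into \eqref{eq:Qcoercive}, choosing $\rho$ first and then $\varepsilon/\rho$. The only difference is cosmetic: the paper bounds all pairings with $q_p$, trace defect included, directly through the coefficient bounds of Lemma~\ref{lem:metric-coefficients}, and your separate treatment of the trace part implicitly needs that same bound, since \eqref{eq:quadratic-bounded} is stated only for trace-free tensors.
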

\begin{proof}
Apply Theorem~\ref{thm:tame} with $\ell=\varepsilon\partial_\varepsilon k$.
By Lemma~\ref{lem:localized-tail}, its error is
\[
 C(\rho+\N(k)^\gamma)\{\mathscr N+\varepsilon^{N-2}\rho^{\omega_N}\}.
\]
Choose $\rho$ to absorb the first coefficient of $\mathscr N$; then $\N(k)\to0$ uniformly as $\varepsilon\to0$.
For any $\delta>0$, the coefficient bounds in Lemma~\ref{lem:metric-coefficients}, polarization, \eqref{eq:tailN}, and Young's inequality bound the quadratic pairings involving $q_p$ by $\delta\mathscr N+C_\delta\varepsilon^{N-2}\rho^{\omega_N}$.
Choose $\delta$ small and absorb $\delta\mathscr N$ in \eqref{eq:Qcoercive}.
This proves \eqref{eq:nonlinear}.
The Taylor remainder $q_p$ and the full energy retain the trace defects and boundary mean-curvature terms.
Under smooth convergence to the flat metric, the Taylor-tail derivative bounds tend to zero, proving the last assertion.
\end{proof}

\subsection{Pointwise estimates for the projected solutions}\label{sec:pointwise}
The comparison with an exact solution requires pointwise control at the mass scale.
We obtain this control for the full localized metric $k=k_{p,\varepsilon}$ by solving its projected equation in weighted spaces.
We first specify these spaces and the corresponding inverse estimate.

Fix $0<\alpha<1$, put $s_y=1+|y|$, and set
\[
 B_y=B_{s_y/8}(y)\cap\overline{\R^N_+},\qquad
 \Delta_x=B_{s_x/8}(x)\cap\partial\R^N_+.
\]
For a function on a set $B$ and a length $s$, write
\[
 |f|_{j,\alpha;s,B}
 =\sum_{a=0}^j s^a\|\nabla^a f\|_{L^\infty(B)}
       +s^{j+\alpha}[\nabla^j f]_{\alpha;B}.
\]
Boundary derivatives in this definition are tangential.
Define
\begin{equation}
 \|w\|_{\mathcal C_U^{2,\alpha}}
 :=\sup_{y\in\overline{\R^N_+}}
        U(y)^{-1}|w|_{2,\alpha;s_y,B_y}.
\end{equation}
Weights $U$ and $s$ are comparable to their values at the center on these sets, uniformly for $\kappa$ in a compact interval.
Use shells
\[
 A_0=\{|y|\le2\},\qquad
 A_j=\{2^j\le |y|\le2^{j+1}\}\quad(j\ge1),
\]
intersected with the half-space.
For a bulk source $f$ and boundary source $b$, put
\begin{equation}\label{eq:sourcenorm}
 \begin{aligned}
 \|(f,b)\|_{\mathcal S_\alpha}
 :=\sum_{j\ge0}\bigg\{
 &\sup_{y\in A_j}\frac{s_y^2}{U(y)}
                |f|_{0,\alpha;s_y,B_y}\\
 &+\sup_{x\in A_j\cap\partial\R^N_+}\frac{s_x}{U(x)}
                |b|_{1,\alpha;s_x,\Delta_x}\bigg\}.
 \end{aligned}
\end{equation}
Empty suprema are zero.
We also use
\[
 \|k\|_{\mathcal M_\alpha}
 =\sum_{j\ge0}\sup_{y\in A_j}|k|_{2,\alpha;s_y,B_y}.
\]
The choice of overlapping local balls only changes these norms by fixed factors.

\begin{lemma}\label{lem:weighted}
Let $J:=-\Delta-\tfrac{\sigma N(N+2)}4U^{4/(N-2)}$ and $B_J:=\partial_\nu-\tfrac{N\kappa}2U^{2/(N-2)}$.
For $\|(f,b)\|_{\mathcal S_\alpha}<\infty$ there is a unique $w\in Y$ and vector $d$ satisfying
\[
 \begin{cases}
 Jw=f+\sum_jd_j\theta_j &\text{in }\R^N_+,\\
 B_Jw=b &\text{on }\partial\R^N_+,
 \end{cases}
 \qquad \int_{\R^N_+}\theta_jw=0\quad(0\le j<N).
\]
Equivalently, $\B(w,\varphi)=\int_{\R^N_+} f\varphi+\int_{\partial\R^N_+} b\varphi$ for $\varphi\in Y$.
The solution obeys
\begin{equation}\label{eq:weightedgreen}
 \|w\|_{\mathcal C_U^{2,\alpha}}+\sum_j|d_j|
       \le C\|(f,b)\|_{\mathcal S_\alpha}.
\end{equation}
The constant is independent of source support and locally uniform in $\kappa$.
Its bulk and boundary representation kernels satisfy
\[
 |\mathcal G_\perp(y,z)|
 \le C\bigl(|y-z|^{2-N}+U(y)U(z)\bigr).
\]
\end{lemma}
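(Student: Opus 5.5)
Existence and uniqueness come from the energy-space theory, and the weighted Hölder bound \eqref{eq:weightedgreen} from a Green-kernel estimate followed by local Schauder estimates.

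\textbf{Existence and uniqueness.} I would start in the energy space $\E$. A finite $\mathcal S_\alpha$ norm gives $|f|\le C\|(f,b)\|_{\mathcal S_\alpha}U s^{-2}$ and $|b|\le C\|(f,b)\|_{\mathcal S_\alpha}U s^{-1}$. Since $U\sim s^{2-N}$, we have $f\in L^{2N/(N+2)}$ and $b\in L^{2(N-1)/N}(\partial\R^N_+)$. By Sobolev and the trace inequality, $\varphi\mapsto\int f\varphi+\int b\varphi$ is therefore in $Y^*$. The projected invertibility of $\B:Y\to Y^*$ (Section~\ref{sec:source}) then gives a unique $w\in Y$ with the weak identity for $\varphi\in Y$.

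The multipliers are recovered by testing against the kernel, using \eqref{eq:dual}. Put $d_j:=\B(w,Z_j)-\int fZ_j-\int bZ_j$. Since $\B(w,Z_j)=0$ by Lemma~\ref{lem:model-kernel}, this is $d_j=-\int fZ_j-\int bZ_j$. Split a general test function as $\varphi=\varphi_Y+\sum_j(\int\theta_j\varphi)Z_j$. This yields the equation with the $\theta_j$ terms, and it also gives $|d_j|\le C\|(f,b)\|_{\mathcal S_\alpha}$, because $|Z_j|\le CU$ and $\int U\cdot Us^{-2}$ is finite. Uniqueness of the pair $(w,d)$ follows from invertibility on $Y$.

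\textbf{Kernel bound.} The key step is the kernel estimate $|\mathcal G_\perp(y,z)|\le C(|y-z|^{2-N}+U(y)U(z))$. I would get it by conformal compactification, as in Section~\ref{sec:source}. For $\sigma=1$ the map goes to a round spherical cap, and for $\sigma=0$ to the unit ball. Writing $w=U\hat w$, the problem becomes $(-\Delta_{\hat g}+c)\hat w=\hat f$ with a Robin or Neumann condition on a compact smooth manifold with boundary, with a finite-dimensional kernel removed.

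On that compact manifold, the projected Green function is $O(d_{\hat g}^{2-N})$ plus a smooth finite-rank correction coming from the kernel projection and the compactly supported $\theta_j$. Pulling back, the metric distance satisfies $d_{\hat g}(\zeta(y),\zeta(z))^{2-N}U(y)U(z)\approx|y-z|^{2-N}$. The smooth part becomes $CU(y)U(z)$.

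Uniformity in $\kappa$ on compact intervals comes from smooth dependence of the cap and of the boundary coefficient on $\kappa$, together with locally uniform invertibility. This step is the main obstacle: the compactified Robin problem must be checked to have Green bounds that are uniform up to the boundary, including near the image of infinity. That point is a regular boundary point of the cap, so I expect the standard Neumann/Robin parametrix to suffice.

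\textbf{Weighted sup bound.} Next I would prove $|w(y)|\le CU(y)\|(f,b)\|_{\mathcal S_\alpha}$ by integrating the kernel against $|f|\lesssim Us^{-2}$, $|b|\lesssim Us^{-1}$ and $|\theta_j|$. The $U(y)U(z)$ part of the kernel gives $U(y)$ times convergent integrals. For the $|y-z|^{2-N}$ part, I would split each integral into the regions $|z|<|y|/2$, $|z-y|<|y|/2$ and $|z|>2|y|$. On the first region the decay $Us^{-2}\sim s^{-N}$ gives a contribution of order $|y|^{2-N}\log$. The shell sum in \eqref{eq:sourcenorm} is an $\ell^1$ sum over dyadic shells rather than a supremum. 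This removes the logarithm, since each shell contributes $|y|^{2-N}$ times its own shell coefficient. The near region gives $s_y^2\cdot Us_y^{-2}=U$, and the far region decays. The boundary sources are handled the same way with one fewer power of $s$.

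\textbf{Upgrading to Hölder.} Finally I would rescale each ball $B_y$ to unit size. There the potential terms $U^{4/(N-2)}$ and $U^{2/(N-2)}$ rescale to bounded smooth coefficients, and the equation has an oblique (Robin) boundary condition. Interior and boundary Schauder estimates, combined with the sup bound, then give $|w|_{2,\alpha;s_y,B_y}\le CU(y)\|(f,b)\|_{\mathcal S_\alpha}$. Taking the supremum over $y$ gives \eqref{eq:weightedgreen}. All constants are independent of the support of the sources.
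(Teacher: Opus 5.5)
Your proposal is correct and follows essentially the same route as the paper. Both arguments use the embedding of the source norm into $Y^*$ together with the projected inverse, recover the multipliers by testing against $Z_j$, obtain the kernel bound by conformal compactification to the cap or ball, prove a three-region potential estimate in which the $\ell^1$ shell sum removes the logarithm, and finish with scaled interior and Robin Schauder estimates. The only slip is cosmetic: your three regions miss the part $\{|y|/2\le|z|\le2|y|,\ |z-y|\ge|y|/2\}$ of the comparable annulus, but there $|y-z|^{2-N}\le C|y|^{2-N}$ and the shell's source mass gives the same bound $CU(y)\|(f,b)\|_{\mathcal S_\alpha}$.
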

\begin{proof}
The source norm embeds into the dual energy space.
Indeed, on a shell of radius $s\ge1$ the bulk bound is $Cs^{-N}$ times its shell coefficient, and the boundary bound is $Cs^{1-N}$ times that coefficient.
Their $L^{2N/(N+2)}$ and $L^{2(N-1)/N}$ norms, respectively, are bounded by that coefficient times $s^{-(N-2)/2}$.
Summation and the core estimate prove the assertion.
Thus the inverse of $\B$ on $Y$ gives an energy solution.
Testing against $Z_k$ gives
\[
 d_k=-\int_{\R^N_+} fZ_k-\int_{\partial\R^N_+} bZ_k,
\]
which also bounds the multipliers.

Compactification by $U^{4/(N-2)}\delta$ gives a smooth Robin problem on a spherical cap when $\sigma=1$, and on a Euclidean ball of radius $1/2$ when $\sigma=0$.
Its inverse on the constrained complement has a kernel bounded by $C(\operatorname{dist}^{2-N}+1)$; the finite-rank projection has smooth kernel.
Conformal covariance and comparability of cap distance with chordal distance give the displayed Euclidean kernel bound.
Boundary representation is its boundary trace.
The constraints become smooth functions vanishing near the omitted point, so this argument introduces no singular projection there.

For the zeroth-order bound, split the potential into source radii below $s_y/2$, comparable with $s_y$, and above $2s_y$.
In the first region a bulk shell contributes at most its source coefficient times $U(y)$, since its density is $O(s^{-N})$; a boundary shell with density $O(s^{1-N})$ has the same bound.
On comparable shells, integrate the locally integrable Newton singularity.
On larger shells, the additional ratio is bounded by $C(s_y/s)^{N-2}$.
Summing bounds the solution by $CU(y)\|(f,b)\|_{\mathcal S_\alpha}$.
The smooth finite-rank term has the same estimate.

Apply interior or Robin Schauder estimates on the local balls, using slightly enlarged balls to control the supremum.
After scaling by $s_y$, the Jacobi coefficients have uniform smooth bounds and the source norms are exactly those in \eqref{eq:sourcenorm}.
Compact multiplier sources affect only finitely many core balls.
This proves the derivative and H\"older terms in \eqref{eq:weightedgreen}.
For points whose local ball meets the plane, use a boundary half-ball of comparable radius; a fixed enlargement suffices.
Approximation by compactly supported sources proves the assertion for finite source norm.
All cap inverses, local Schauder constants, and weight comparisons are uniform for $\kappa$ in a compact interval.
\end{proof}

\begin{proposition}\label{prop:pointwise}
For $\rho$ sufficiently small and $\varepsilon/\rho$ sufficiently small, the projected family satisfies
\begin{equation}\label{eq:Vpositive}
 V_{p,\varepsilon}=U+w_{p,\varepsilon}>0,\qquad
 \|w_{p,\varepsilon}\|_{\mathcal C_U^{2,\alpha}}\le C\rho.
\end{equation}
Moreover $V_{p,\varepsilon}\to U$ locally in $C^2$ as $\varepsilon\to0$, and
\begin{equation}\label{eq:residual}
 \mathscr R_{k_{p,\varepsilon}}(V_{p,\varepsilon})[\varphi]
 =\sum_{j=0}^{N-1}c_j(p,\varepsilon)\int_{\R^N_+}\theta_j\varphi.
\end{equation}
Thus the boundary equation holds exactly, and the interior residual is supported in the fixed support of the $\theta_j$.
Constants are uniform in the local ellipticity and derivative bounds specified in Section~\ref{sec:quadratic}.
\end{proposition}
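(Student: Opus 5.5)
We will obtain $w_{p,\varepsilon}$ by a contraction in the weighted space $\mathcal C_U^{2,\alpha}\cap Y$, using the inverse of Lemma~\ref{lem:weighted}, and then identify it with the energy-space branch of Theorem~\ref{thm:tame} by uniqueness. Write the equation for $V=U+w$ with metric $g=\exp(k)$, $k=k_{p,\varepsilon}$, in the form
\[
 Jw=f_0(k)+\mathcal M_k(w)+\mathcal N_k(w)+\sum_jd_j\theta_j,\qquad
 B_Jw=b_0(k)+\mathcal M^\partial_k(w)+\mathcal N^\partial_k(w).
\]
Here $(f_0,b_0)$ is the residual of $U$ in the metric $g$, namely $\Delta_gU-\Delta U-c_NR_gU$ and the corresponding Robin defect. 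The terms $\mathcal M_k,\mathcal M^\partial_k$ are linear in $w$ and have coefficients built from $k,\partial k,\partial^2k$. The terms $\mathcal N,\mathcal N^\partial$ are the superlinear remainders of $V^p$ and $V^q$ about $U$. Let $\mathcal G$ denote the solution operator of Lemma~\ref{lem:weighted}. We seek a fixed point of $w\mapsto\mathcal G(\text{right-hand sides})$ in the ball $\{\|w\|_{\mathcal C_U^{2,\alpha}}\le C_1\rho\}$.

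\textbf{Source estimates.} Since $k$ is supported in $|y|\le2R$ and satisfies $|\nabla^ak(y)|\le C\rho\, s_y^{-a}$ uniformly in $\varepsilon$, we have $\|k\|_{\mathcal M_\alpha}$-type local seminorms of size $C\rho$ on each shell. Indeed $h_p^\rho(\varepsilon y)$ has $\varepsilon$-scaled derivatives bounded by $(\varepsilon|y|)^{1-a}\varepsilon^a\lesssim\rho\,|y|^{-a}$ on the support; at the boundary, only the terms with $a\ge1$ carry a factor $\varepsilon$. The difficulty is that the source norm \eqref{eq:sourcenorm} sums over shells, so a bound $C\rho$ per shell would only give $C\rho\log R$. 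To avoid this, we use the polynomial structure. For $|y|\sim 2^j$ we have $|k|\lesssim \sum_d(\varepsilon 2^j)^d$, and $(\varepsilon2^j)^d$ summed over $2^j\le 2\rho/\varepsilon$ is dominated by its top term, which is $\lesssim\rho^d$ with $d\ge1$. Hence the shell sum is a geometric series bounded by $C\rho$. The linearized operator $J$ already absorbs $U$'s equation. The first-order source $S_k$ then has shell coefficient $s^2U^{-1}|\partial^2k\,U|+\dots\lesssim|k|_{2,\alpha;s,B}$, and the boundary source is controlled by $\partial k$ times $U$, using the Fermi form $k_{Ni}=0$ and the high-order trace normalization, which makes the mean-curvature defect small. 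Thus $\|(f_0,b_0)\|_{\mathcal S_\alpha}\le C\rho$.

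\textbf{Contraction.} For the linear terms, $\|(\mathcal M_k w,\mathcal M^\partial_k w)\|_{\mathcal S_\alpha}\le C\rho\|w\|_{\mathcal C_U^{2,\alpha}}$ by the same geometric shell summation, since the coefficients carry the decaying factor. For the nonlinear terms, $|\mathcal N(w)|\lesssim U^{p-2}w^2+|w|^p$, and $U^{-1}s^2U^{p-2}w^2\lesssim s^2U^{p}\|w\|^2$. This is summable over shells because $s^2U^{4/(N-2)}\sim s^{-2}$. The boundary term, with $U^{q}\sim s^{-N}$, is handled similarly. Hölder seminorms of $|w|^p$ use $w\ll U$, which also gives $V>0$ once $C_1\rho<1/2$. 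So for $\rho$ small the map is a contraction, giving the fixed point with $\|w\|\le C\rho$. Since the weighted space embeds in $\E$ with $w\in Y$, uniqueness of the small projected branch in Theorem~\ref{thm:tame} (for $\N(k)$ small, which holds when $\varepsilon/\rho$ is small) identifies this fixed point with $w(k)$. The residual identity \eqref{eq:residual} is exactly the multiplier equation from Lemma~\ref{lem:weighted}, with the boundary equation holding exactly.

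\textbf{Local convergence.} As $\varepsilon\to0$ with $\rho$ fixed, $k\to0$ in $C^{2,\alpha}$ on compact sets, since the degree-$d$ term is $O(\varepsilon^d)$ there. The source norms restricted to the core tend to zero. Splitting the source into its part on $|y|\le L$ and the tail, the Green bound of Lemma~\ref{lem:weighted} gives $w\to0$ locally in $C^2$. I expect the main obstacle to be the shell-summed source bound for the full metric, including the Taylor remainder $q_p$ and the non-trace-free, nonzero-mean-curvature terms. The cure I would attempt first is to use the Taylor bound $|\nabla^a q_p(\varepsilon y)|\lesssim (\varepsilon|y|)^{d_N+1-a}\varepsilon^a$, which again sums geometrically to $C\rho^{d_N+1}$, and to use $\tr h_p=O(|y|^K)$ to make the mean-curvature defect negligible.
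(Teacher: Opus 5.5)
Your proposal follows essentially the same route as the paper: a per-shell bound of order $\varepsilon 2^j$ for $k_{p,\varepsilon}$ summed geometrically to $\|k_{p,\varepsilon}\|_{\mathcal M_\alpha}\le C\rho$, source and linear-perturbation bounds in $\mathcal S_\alpha$, quadratic remainder bounds via $U+w\ge U/2$, a contraction using the inverse of Lemma~\ref{lem:weighted}, identification with the branch of Theorem~\ref{thm:tame} through the embedding into $\mathcal E$, and the residual identity from the annihilator of $Y$. The only difference is the local convergence step: the paper uses $\|w\|_{\mathcal E}\le C\,\N(k)\to0$ together with the uniform weighted $C^{2,\alpha}$ bounds, whereas in your core/tail splitting the claim that the core source tends to zero is not immediate for the quadratic remainder $\mathcal N_k(w)$ and needs a short bootstrap.
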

\begin{proof}
Taylor's theorem, the fixed smooth cutoff, and its differentiated bounds give
\[
 \sup_{y\in A_j}|k_{p,\varepsilon}|_{2,\alpha;s_y,B_y}
 \le C\varepsilon 2^j
\]
for $j\ge1$ when $2^j\le C\rho/\varepsilon$, a bound $C\varepsilon$ on the core, and zero above a fixed multiple of the last shell.
Consequently
\begin{equation}
 \|k_{p,\varepsilon}\|_{\mathcal M_\alpha}\le C\rho.
\end{equation}
These bounds include the transition annulus, where each derivative of the cutoff contributes its inverse radius.

Express the weak residual relative to Euclidean bulk and boundary measures.
Write its restriction to $Y$ at $U+w$ as the fixed Jacobi form plus a source pair.
The metric residual at $U$, denoted $\mathcal R_k$, and the linear metric perturbation $\mathcal P_k w$ satisfy
\begin{equation}\label{eq:pointwiseperturbation}
 \begin{aligned}
 \|\mathcal R_k\|_{\mathcal S_\alpha}&\le C\|k\|_{\mathcal M_\alpha},\\
 \|\mathcal P_k w\|_{\mathcal S_\alpha}
 &\le C\|k\|_{\mathcal M_\alpha}
                  \|w\|_{\mathcal C_U^{2,\alpha}}.
 \end{aligned}
\end{equation}
For example, the bulk terms are combinations of $k\partial^2w$, $\partial k\partial w$, and $(\partial^2k+(\partial k)^2)w$ with smooth small-coefficient factors.
Their local scaled norms, multiplied by $s^2/U$, are bounded by the metric shell coefficient times the solution norm.
Boundary terms use one metric derivative and the boundary $C^{1,\alpha}$ norm.
Density factors obey the same estimates.
Summing these local estimates proves \eqref{eq:pointwiseperturbation}.

On the ball $\|w\|_{\mathcal C_U^{2,\alpha}}\le\delta<1/2$ the scalar remainders obey
\[
 \|\mathcal T_k(w)\|_{\mathcal S_\alpha}\le C\delta^2,
 \qquad
 \|\mathcal T_k(w)-\mathcal T_k(\widetilde w)\|_{\mathcal S_\alpha}
 \le C\delta\|w-\widetilde w\|_{\mathcal C_U^{2,\alpha}}.
\]
Indeed, $U+w\ge U/2$ allows Taylor expansion in $w/U$; the bulk shell factors are $s^2U^{p-1}=O(s^{-2})$ and the boundary shell factors are $sU^{q-1}=O(s^{-1})$.
Both are summable, also after the scaled derivatives required in the norms.
The sign of $\kappa$ changes only a bounded coefficient.

Lemma~\ref{lem:weighted} now makes the projected equation a contraction in $Y\cap\mathcal C_U^{2,\alpha}$ on a ball of radius $C\rho$, after decreasing $\rho$.
This proves positivity and \eqref{eq:Vpositive}.
The weighted norm controls the energy norm because $\int_{\R^N_+} U^2s^{-2}<\infty$.
Hence this branch lies in the fixed small energy neighborhood where Theorem~\ref{thm:tame} gives uniqueness, and is the same branch.
The annihilator of $Y$ consists of the displayed interior functionals, giving \eqref{eq:residual}.
Finally $\|w\|_\E\le C\N(k)\to0$, and local elliptic estimates give the stated convergence.
\end{proof}

\section{Blow-up estimates and comparison with Green functions}\label{sec:local-comparison}
We compare the projected family with an isolated simple blow-up and identify the scale derivative of its energy with the regular part of a Green function difference.
This proves the vanishing of the retained metric coefficients and the local sign restriction used to exclude accumulation of blow-up points in Section~\ref{sec:compactness}.
The mass estimate at the end of the section will convert that sign into the global contradiction.

Throughout this section, the dimension and parameters lie in the ranges of Theorem~\ref{thm:localcoercivity} and Proposition~\ref{prop:local-continuation}.
For varying metrics, common constants require uniform finite derivative bounds in the normalized charts.
The estimates allow arbitrary relative rates of metric convergence and bubble concentration.
When $\sigma=1$, $U_{\kappa_i}$ denotes \eqref{eq:bubble} with $\kappa=\kappa_i$ varying in a fixed compact interval.
When $\sigma=0$, we take $\kappa_i=2$ and use the fixed scalar-flat bubble.
\subsection{Estimates near isolated simple blow-up points}
\begin{lemma}\label{lem:fermisimple}
Let $g_i$ be smooth Fermi metrics on a fixed half-ball $B^+_{r_0}$, with $g_i(0)=\delta$, $(g_i)_{NN}=1$, $(g_i)_{aN}=0$, common ellipticity and finite derivative bounds as in Section~\ref{sec:quadratic}.
Let $\kappa_i$ belong to a fixed compact interval, with $\kappa_i=2$ if $\sigma=0$, and let $u_i>0$ solve \eqref{eq:PDE} there, including its boundary equation on the flat disk.
Suppose, for $\varepsilon_i\downarrow0$, that
\[
 u_i(z)\le C_0|z|^{-(N-2)/2}\quad(0<|z|<r_0),\qquad
 \varepsilon_i^{(N-2)/2} u_i(\varepsilon_i y)-U_{\kappa_i}(y)
       \rightarrow0\quad\hbox{in }C^2_{\rm loc}(\overline{\R^N_+}),
\]
and that
\[
 W_i(r):=r^{(N-2)/2}\overline u_i(r),\qquad
 \overline u_i(r):=\frac1{|S^{N-1}_+|}
       \int_{S^{N-1}_+}u_i(r\omega)\,\dd\omega,
\]
has exactly one critical point in $(0,r_0)$.
Then there are fixed $\rho>0$, $L>1$ and $C>1$ such that, for all sufficiently large $i$,
\begin{equation}\label{eq:fermiupper}
 u_i(z)\le C\varepsilon_i^{(N-2)/2}(\varepsilon_i+|z|)^{2-N}
                 \quad (z\in B^+_\rho).
\end{equation}
Let $G_i$ be the positive Green function on $B^+_{2\rho}$, with pole at zero, zero Dirichlet data on the hemispherical boundary, $B_{g_i}G_i=0$ on the flat boundary away from zero, and $|z|^{N-2}G_i(z)\to1$.
Then
\begin{equation}\label{eq:fermilower}
 u_i(z)\ge C^{-1}\varepsilon_i^{(N-2)/2} G_i(z)
       \quad (L\varepsilon_i\le |z|\le\rho).
\end{equation}
The constants depend only on the displayed local bounds, $C_0$, $r_0$ and the compact parameter interval.
In particular, they are uniform for the smoothly varying and rescaled backgrounds used below.
\end{lemma}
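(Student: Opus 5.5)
This is the standard isolated-simple estimate of Khuri--Marques--Schoen \cite[Propositions~2.3--2.5]{KMS}, in the boundary form of \cite{AW,KMW}. I would adapt it to the Fermi chart in three steps: a Harnack inequality on half-annuli, a decay estimate beyond the bubble scale obtained with a barrier, and a comparison with $G_i$.

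\emph{Step 1: Harnack on annuli.} The isolation bound $u_i\le C_0|z|^{-(N-2)/2}$ makes the coefficients $u_i^{4/(N-2)}$ and $u_i^{2/(N-2)}$ of size $O(r^{-2})$ and $O(r^{-1})$ on the half-annulus $\{r/2\le|z|\le2r\}$. After rescaling by $r$, the equation becomes a linear equation with Robin boundary condition and uniformly bounded coefficients. The boundary Harnack inequality therefore gives $\max_{|z|=r}u_i\le C\min_{|z|=r}u_i$, with derivative bounds. So $u_i$ is comparable to $\overline u_i(r)$ pointwise for $0<r<r_0/2$.

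\emph{Step 2: decay beyond the bubble scale.} The $C^2_{\rm loc}$ convergence to $U_{\kappa_i}$ shows that $W_i$ reaches its unique critical point at some $r_i\simeq R_0\varepsilon_i$, for a fixed $R_0$ chosen large. Since the critical point is unique, $W_i$ is decreasing on $(r_i,r_0)$. Combined with Step 1, this gives $u_i(z)\le C\varepsilon_i^{(N-2)/2}|z|^{-(N-2)/2}\cdot(R_0\varepsilon_i/|z|)^{(N-2)/2}$ for $|z|\ge R_0\varepsilon_i$. In particular the potentials $u_i^{4/(N-2)}$ and $u_i^{2/(N-2)}$ are small multiples of $|z|^{-2}$ and $|z|^{-1}$ there, once $R_0$ is fixed large.

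I expect the main obstacle to be upgrading this to the sharp power $|z|^{2-N}$, and I would handle it by maximum-principle comparison with barriers. On $\{R_0\varepsilon_i\le|z|\le\rho\}$, consider the barrier $\varphi=M\varepsilon_i^{(N-2)/2}\bigl(\varepsilon_i^{\lambda}|z|^{2-N+\lambda}\cdots\bigr)+m_i|z|^{-\lambda}$, with a small $\lambda>0$. The function $|z|^{-\lambda}$ is a supersolution of $L_{g_i}-c\,u_i^{4/(N-2)}$, because the Fermi metric deviates from $\delta$ only by $O(|z|)$. In the boundary condition, the mean curvature term and the potential $u_i^{2/(N-2)}$ are dominated because $\partial_\nu|z|^{-\lambda}=0$ on the flat disk. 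I would therefore add a correction $t\,|z|^{-\lambda-1}$ to absorb the $O(1)$ mean-curvature term and the $o(|z|^{-1})$ potential. Here $m_i=\max_{|z|=\rho}u_i$. The maximum principle is valid because the linear operator has a positive supersolution.

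This gives a preliminary decay $u_i\le C\varepsilon_i^{(N-2)/2-\lambda}|z|^{2-N+\lambda}+m_i\rho^{\lambda}|z|^{-\lambda}$. To bound $m_i\le C\varepsilon_i^{(N-2)/2}$, I would use that $\rho^{(N-2)/2}\overline u_i(\rho)\le W_i(r)$ for $r<\rho$, together with the Pohozaev identity (\ref{eq:Pohozaev}) or Harnack applied to $u_i(\rho e)^{-1}u_i$, which converges to a positive singular harmonic (Robin) function. Then I would bootstrap from $\lambda$ to the exact exponent: represent $u_i$ through the Green function $G_i$ with potential source $u_i^{p}$ and boundary source $u_i^{q}$, and insert the improved decay. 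The integrals $\int u_i^{p}(y)|z-y|^{2-N}$ then produce exactly $\varepsilon_i^{(N-2)/2}|z|^{2-N}$, as in \cite[Proposition~2.5]{KMS}. Inside $|z|\le R_0\varepsilon_i$, the bound (\ref{eq:fermiupper}) is immediate from the bubble convergence.

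\emph{Step 3: lower bound.} On $|z|=L\varepsilon_i$ the bubble convergence gives $u_i\ge c\,\varepsilon_i^{(N-2)/2}(L\varepsilon_i)^{2-N}\ge c'\varepsilon_i^{(N-2)/2}G_i$, using $G_i\simeq|z|^{2-N}$ near the pole. On the hemisphere $|z|=2\rho$ we have $G_i=0\le u_i$. Since $u_i$ is a supersolution of $L_{g_i}v=0$ with $B_{g_i}v\ge0$ (the nonlinear terms have sign $\ge0$ when $\sigma\ge0$). When $\kappa_i<0$, I would instead use that the boundary term is small by Step 2, and compare with a slightly modified Green function $G_i(1-\delta|z|)$. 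The maximum principle on the region between these two boundaries, where the linear operator is coercive for $\rho$ small, gives (\ref{eq:fermilower}) on $L\varepsilon_i\le|z|\le\rho$. All constants depend only on the chart bounds, $C_0$, $r_0$, and the $\kappa$ interval, so they are uniform for the rescaled backgrounds.
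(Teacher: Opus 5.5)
The genuine gap is the lower bound \eqref{eq:fermilower} when $\kappa_i<0$, a case the lemma must cover because its Type-I applications allow any fixed $\kappa$.

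For $\kappa_i\ge0$ your comparison of $u_i$ with $c\,\varepsilon_i^{(N-2)/2}G_i$ on $\{L\varepsilon_i\le|z|\le2\rho\}$ is correct, since then $L_{g_i}u_i\ge0$ and $B_{g_i}u_i\ge0$. It is simpler than the paper's argument.

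For $\kappa_i<0$, write the boundary equation as $\widetilde B_iu_i=0$ with $\widetilde B_i:=B_{g_i}+\frac{N-2}{2}|\kappa_i|u_i^{2/(N-2)}$. A lower comparison function must then satisfy $\widetilde B_i\phi\le0$. Because $\partial_\nu|z|=0$ on the plane,
$\widetilde B_i\bigl(G_i(1-\delta|z|)\bigr)=\frac{N-2}{2}|\kappa_i|u_i^{2/(N-2)}G_i(1-\delta|z|)>0$. So your modified Green function has the wrong sign on the flat boundary. The factor $1-\delta|z|$ only helps the interior inequality.

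The smallness from Step 2 is also insufficient. It gives only $u_i^{2/(N-2)}\le\eta|z|^{-1}$. For a model boundary potential of exactly that size, the decaying positive harmonic solutions have homogeneity $2-N-\gamma(\eta)$ with $\gamma>0$. Any comparison using only this information therefore loses a factor $(L\varepsilon_i/|z|)^{\gamma}$.

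You need the sharp upper bound \eqref{eq:fermiupper} first, which gives $u_i^{2/(N-2)}\le C\varepsilon_i|z|^{-2}$. Then use a correction at that scale, for instance
\[
 \phi=G_i\Bigl(1+\eta\,\frac{\varepsilon_i}{|z|}\Bigl(1+\varpi\,\frac{z_N}{|z|}\Bigr)\Bigr).
\]
Since $z_N|z|^{-N}$ is harmonic and $-\Delta|z|^{1-N}=-(N-1)|z|^{-N-1}$, this function has the following properties:
\begin{itemize}
\item $L_{g_i}\phi\le0$ for small $\rho$;
\item $\widetilde B_i\phi\le G_i\varepsilon_i|z|^{-2}(C-\eta\varpi)\le0$ once $\eta\varpi$ is large;
\item $G_i\le\phi\le2G_i$ once $L$ is large compared with $\eta$.
\end{itemize}
The paper instead uses the Green representation with the background kernel. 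There the strict positivity of the total signed charge \eqref{eq:fermisignedcharge}, valid for either sign of $\kappa$, lets the core contribution dominate the tails controlled by \eqref{eq:fermirough}.

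Your upper bound follows the paper's route: annular Harnack, monotonicity of $W_i$, a barrier with a normal correction, absorption of $m_i$, and a Green bootstrap. Several displayed steps need correcting.
\begin{itemize}
\item Monotonicity gives only $u_i(z)\le CR_0^{-(N-2)/2}|z|^{-(N-2)/2}$. Your displayed bound equals $CR_0^{(N-2)/2}\varepsilon_i^{N-2}|z|^{2-N}$ and already fails at $|z|=R_0\varepsilon_i$.
\item The inner barrier term must carry the factor $\varepsilon_i^{(N-2)/2-\lambda}$, not $\varepsilon_i^{(N-2)/2+\lambda}$.
\item Since $\nu=-\partial_t$, the normal correction must be subtracted: $|z|^{-\lambda}(1-\varpi z_N/|z|)$, with $\varpi$ small compared with $\lambda(N-2-\lambda)$. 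This is the paper's $F_\mu$.
\item The sharp bound $m_i\le C\varepsilon_i^{(N-2)/2}$ does not come from the Pohozaev identity.
\end{itemize}
For the last point, first absorb $m_i$ at the rough level, using monotonicity at a small fixed ratio $\theta$ together with your two-power bound. This gives $m_i\le C\varepsilon_i^{(N-2)/2-\lambda}$ and hence \eqref{eq:fermirough}. Take $\lambda<\min\{(N-2)/2,2/p,1/q\}$ so that the nonlinear charges are $O(\varepsilon_i^{(N-2)/2})$.

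You can then close your normalized-limit argument with the flux identity over $B_r^+$. Alternatively, as the paper does, insert \eqref{eq:fermiendpointpre} into the same absorption. That avoids a contradiction argument and gives explicit uniform constants.
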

\begin{proof}
We use the annular barrier and Green function argument for the classical simple-point estimate; compare \cite[Proposition~3.9]{AW}.
We give the argument in the chosen Fermi coordinates so that the simplicity assumption agrees with Definition~\ref{def:fermisimple}.

Annular Harnack inequalities apply with uniform constants.
Indeed, after dilation of an annulus of radius $r$, the isolation bound controls the interior coefficient $r^2u_i^{p-1}$ and the Robin coefficient $ru_i^{q-1}$.
Consequently
\begin{equation}\label{eq:fermiharnack}
 \max_{S_r^+}u_i\le C\overline u_i(r)
                  \le C\min_{S_r^+}u_i.
\end{equation}
For every $\omega\in S^{N-1}_+$,
\begin{align*}
 r^{(N-2)/2} U(r\omega)=
 \begin{cases}
 (r+r^{-1}+2\omega_N)^{-(N-2)/2}, &(\sigma,\kappa)=(0,2),\\
 \displaystyle\left(\frac{2}{r+(1+\kappa^2)/r+2\kappa\omega_N}\right)^{(N-2)/2}, &\sigma=1.
 \end{cases}
\end{align*}
Its derivative has the sign of $1-r^2$ in the scalar-flat case and $1+\kappa^2-r^2$ when $\sigma=1$.
Choose a fixed, large $L_0$, uniformly for the parameter interval.
Bubble convergence and the single-critical-point hypothesis give $W_i'(r)<0$ for $L_0\varepsilon_i\le r<r_0$.
They also give, by \eqref{eq:fermiharnack},
\begin{equation}
 r^{(N-2)/2} u_i(z)\le C L_0^{-(N-2)/2}
       \quad (r=|z|\ge L_0\varepsilon_i).
\end{equation}
Increasing the fixed $L_0$ makes both scaled nonlinear coefficients arbitrarily small on this annulus.

Fix
\[
 0<\vartheta<\min\{\frac{N-2}{2},2/p,1/q\}.
\]
We first prove the weaker, but integrable, estimate
\begin{equation}\label{eq:fermirough}
 \varepsilon_i^{(N-2)/2} u_i(\varepsilon_i y)
       \le C(1+|y|)^{2-N+\vartheta}
       \quad (|y|\le d/\varepsilon_i)
\end{equation}
for a sufficiently small fixed $d<r_0$.
Write the equation as $\mathcal P_i u_i=0$, $\mathcal Q_i u_i=0$, where
\[
 \mathcal P_i=\Delta_{g_i}-c_NR_{g_i}+
                 \sigma\tfrac{N(N-2)}4u_i^{p-1},\qquad
 \mathcal Q_i=\partial_{\nu_i}+\frac{N-2}{2} H_{g_i}
                         -\frac{N-2}{2}\kappa_i u_i^{q-1}.
\]
For $\mu=\vartheta,N-2-\vartheta$ use
\[
 F_\mu(z)=r^{-\mu}(1-\varpi z_N/r).
\]
Here $0<\varpi<1/2$ is fixed small compared with $\vartheta(N-2-\vartheta)$; next choose $d$ small and $L_0$ large.
The Fermi identities make $\nu_i=-\partial_N$ on the plane.
Since $g_i(z)-\delta=O(r)$ uniformly, direct differentiation gives
\[
 \mathcal P_iF_\mu
 \le\bigl[-\mu(N-2-\mu)+C(\varpi+d)
                         +C L_0^{-2}\bigr]r^{-\mu-2}<0,
\]
\[
 \mathcal Q_iF_\mu
 \ge\bigl[\varpi-Cd-C L_0^{-1}\bigr]r^{-\mu-1}>0
       \quad\hbox{on the plane}.
\]
These estimates include the scalar and mean curvature terms; their constants are uniform.
A positive strict supersolution also supplies the maximum principle on this mixed annulus: divide a putative negative comparison function by $F_\vartheta$.
The transformed interior zeroth-order coefficient is negative and the boundary coefficient is positive, excluding a negative minimum.

Put $m_i=\max_{S_d^+}u_i$.
Comparison at the inner and outer hemispheres, using the bubble estimate at $L_0\varepsilon_i$, now gives
\begin{equation}\label{eq:fermitwopowers}
 u_i(z)\le C\varepsilon_i^{(N-2)/2-\vartheta}r^{2-N+\vartheta}
                       +C m_i(d/r)^\vartheta
       \quad (L_0\varepsilon_i\le r\le d).
\end{equation}
For fixed $0<\theta<1$, monotonicity and \eqref{eq:fermiharnack} imply
\[
 m_i\le C\theta^{(N-2)/2}\overline u_i(\theta d)
 \le C_{d,\theta}\varepsilon_i^{(N-2)/2-\vartheta}
                         +C\theta^{(N-2)/2-\vartheta}m_i.
\]
Choose $\theta$ so the last coefficient is less than $1/2$.
Absorbing it proves $m_i\le C\varepsilon_i^{(N-2)/2-\vartheta}$.
Inserting this into \eqref{eq:fermitwopowers}, and using $2\vartheta<N-2$, proves \eqref{eq:fermirough}; the bounded bubble region follows directly from the assumed convergence.

We next recover the endpoint exponent.
On $B_d^+$, use the background operators
\[
 \mathcal L_i=-\Delta_{g_i}+c_NR_{g_i},
 \qquad \mathcal B_i=\partial_{\nu_i}+\frac{N-2}{2} H_{g_i}.
\]
For small fixed $d$, their mixed Dirichlet--Robin form is coercive by the Poincare and trace inequalities.
Its Green kernel $K_i$ is positive and satisfies $K_i(x,z)\le C|x-z|^{2-N}$; the solution of the homogeneous equation with hemispherical Dirichlet data bounded by $m_i$ is bounded by $Cm_i$.
These uniform local elliptic estimates follow after scaling to a unit half-ball: the zeroth-order interior and boundary coefficients are $O(d^2)$ and $O(d)$, respectively.
The same kernel restricted to the plane represents Robin boundary data.
Thus Green representation and absolute values give, on $B_{d/2}^+$,
\[
 u_i(x)\le Cm_i+C\int_{B_d^+}|x-z|^{2-N}u_i(z)^p\,\dd v_{g_i}
       +C\int_{D_d}|x-z|^{2-N}u_i(z)^q\,\dd S_{g_i}.
\]
In the bubble variables, the two source majorants supplied by \eqref{eq:fermirough} decay as $(1+|y|)^{-N-2+p\vartheta}$ and $(1+|y|)^{-N+q\vartheta}$.
Our choice of $\vartheta$ makes each integrable in its respective dimension.
Splitting the integrals into $|z|<|x|/2$, $|z|\asymp|x|$, and $|z|>2|x|$ gives
\begin{equation}\label{eq:fermiendpointpre}
 u_i(x)\le Cm_i+C\varepsilon_i^{(N-2)/2}(\varepsilon_i+|x|)^{2-N}
                      \quad (|x|\le d/2).
\end{equation}
For $|x|\ge\varepsilon_i$, divide the middle and outer contributions by $\varepsilon_i^{(N-2)/2}|x|^{2-N}$.
Their bounds are, respectively,
\[
 C(\varepsilon_i/|x|)^{2-p\vartheta}\quad\hbox{in the bulk},\qquad
 C(\varepsilon_i/|x|)^{1-q\vartheta}\quad\hbox{on the boundary}.
\]
Using monotonicity and Harnack once more, now with a fixed $\theta<1/2$, yields
\[
 m_i\le C\theta^{(N-2)/2}\overline u_i(\theta d)
       \le C\theta^{(N-2)/2} m_i+C_{d,\theta}\varepsilon_i^{(N-2)/2}.
\]
Choose $\theta$ small and absorb.
Equation \eqref{eq:fermiendpointpre} proves the upper bound on $B_{d/2}^+$.

For the lower bound, we keep track of the signed boundary source so that the argument also applies when $\kappa_i<0$.
Use the background Green representation on $B_{d/2}^+$, whose homogeneous extension of the positive outer data is nonnegative.
Its kernel is denoted again by $K_i$.
Set $\rho=d/4$ and let $G_i$ be as in the statement.
The normalization of a boundary pole gives
\begin{equation}\label{eq:fermipolekernel}
 K_i(x,0)=\frac{G_i(x)}{(N-2)|S^{N-1}_+|},\qquad
 G_i(x)\asymp |x|^{2-N}\quad (|x|\le\rho).
\end{equation}
Local boundary estimates away from the pole also give
\[
 |K_i(x,z)-K_i(x,0)|\le C|z|\,|x|^{1-N}
                    \quad (|z|\le |x|/2, |x|\le\rho).
\]
These statements follow from the normalized half-space fundamental solution and local elliptic estimates; all constants are uniform.

The total signed source of the flat bubble is strictly positive:
\begin{equation}\label{eq:fermisignedcharge}
 \sigma\frac{N(N-2)}4\int_{\R^N_+}U^p
       +\frac{N-2}{2}\kappa\int_{\partial\R^N_+}U^q
       =(N-2)2^{\sigma(N-2)/2}|S^{N-1}_+|>0.
\end{equation}
Indeed, integration of its equation and boundary equation over a half-ball identifies the left side with the flux through its hemisphere; $U=2^{\sigma(N-2)/2}r^{2-N}+O(r^{1-N})$ gives the displayed limit.
This calculation is valid for either sign of $\kappa$.

Choose a large fixed $A$.
Bubble convergence implies that the signed source in $|z|\le A\varepsilon_i$, divided by $\varepsilon_i^{(N-2)/2}$, differs by $o(1)$ from the corresponding truncated expression in \eqref{eq:fermisignedcharge} with $\kappa=\kappa_i$.
The omitted absolute source is uniformly small as $A\to\infty$, by \eqref{eq:fermirough}.
More precisely, for $r=|x|\ge L\varepsilon_i$, $L\ge2A$, the absolute potential from $|z|>A\varepsilon_i$ is at most
\[
 C\varepsilon_i^{(N-2)/2} r^{2-N}
 \left(A^{-2+p\vartheta}+A^{-1+q\vartheta}
       +L^{-2+p\vartheta}+L^{-1+q\vartheta}\right).
\]
This is the same three-region integral estimate used above, now with the inner region truncated at $A\varepsilon_i$.
By \eqref{eq:fermipolekernel}, replacing the core kernel by $K_i(x,0)$ has error at most $C(A/L)\varepsilon_i^{(N-2)/2} r^{2-N}$.
First choose $A$ so the source tails are small compared with the positive constant in \eqref{eq:fermisignedcharge}, then choose $L$ large compared with $A$, and finally take $i$ large.
The positive core contribution dominates all these errors.
Green representation, including its nonnegative homogeneous term, gives $u_i(x)\ge c\varepsilon_i^{(N-2)/2} G_i(x)$ for $L\varepsilon_i\le |x|\le\rho$, proving \eqref{eq:fermilower}.
\end{proof}
\subsection{Comparison with the projected solutions}\label{sec:comparison}
We first assume that a boundary-type blow-up is isolated simple in the fixed conformal Fermi coordinates of Definition~\ref{def:fermisimple}.
This assumption will be removed in Section~\ref{sec:selection}.
Lemma~\ref{lem:fermisimple} first supplies the sharp pointwise estimates.
\begin{lemma}[Choice of the center and scale]\label{lem:center-scale}
For $V_i$ as in \eqref{eq:Vpositive}, the boundary centers and scales can be adjusted by $o(\varepsilon_i)$ and a factor tending to one so that \eqref{eq:modulate} holds for the rescaled profiles $v_i$.
The resulting profiles satisfy \eqref{eq:sharpdecay}, and the two-sided bounds in Lemma~\ref{lem:fermisimple} remain valid on a smaller fixed half-ball.
\end{lemma}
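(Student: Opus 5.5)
The plan is a finite-dimensional implicit function (or degree) argument for the $N$ modulation parameters, followed by transfer of the pointwise bounds of Lemma~\ref{lem:fermisimple} to the nearby centers. I read \eqref{eq:modulate} as the $N$ orthogonality conditions
\[
 \int_{\R^N_+}\theta_j\,\bigl(v_i-V_{p,\varepsilon}\bigr)=0,\qquad 0\le j<N,
\]
where $v_i$ is the solution rescaled at a boundary center $p$ and scale $\varepsilon$ in the smoothly chosen conformal Fermi chart. I read \eqref{eq:sharpdecay} as $C^{-1}U\le v_i\le CU$ on the rescaled half-ball, with the lower bound holding outside a fixed core.

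\textbf{Setting up the modulation map.} For boundary centers $p$ with $d_g(p,p_i)\le\delta\varepsilon_i$ and scales $\varepsilon=\lambda\varepsilon_i$ with $|\lambda-1|\le\delta$, I define
\[
 \Theta_i(\xi,\lambda)_j:=\int_{\R^N_+}\theta_j\,\bigl(v_{i,\xi,\lambda}-V_{p,\varepsilon}\bigr),
\]
where $\xi\in\R^{N-1}$ is the Fermi coordinate of $p$ in units of $\varepsilon_i$. The $\theta_j$ have fixed compact support. By hypothesis the rescaled solution converges to $U_{\kappa_i}$ in $C^2_{\rm loc}$. By Proposition~\ref{prop:pointwise}, $V_{p,\varepsilon}\to U$ locally in $C^2$, uniformly in $p$. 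Since the Fermi charts and their normalizations depend smoothly on the center, the charts at nearby centers differ, on compact sets in rescaled variables, by maps converging to the identity. Hence $\Theta_i\to\Theta_\infty$ in $C^1$, with
\[
 \Theta_\infty(\xi,\lambda)_j=\int_{\R^N_+}\theta_j\,\bigl(\lambda^{(N-2)/2}U(\lambda y+\xi)-U(y)\bigr).
\]
By the duality \eqref{eq:dual} with the Jacobi fields $Z_0,Z_a$ of Lemma~\ref{lem:model-kernel}, $\Theta_\infty$ has a nondegenerate zero at $(0,1)$: its Jacobian there is the identity up to sign. The inverse function theorem, or a Brouwer degree argument on a small box, then gives zeros $(\xi_i,\lambda_i)\to(0,1)$. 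These are exactly the asserted $o(\varepsilon_i)$ center shift and scale factor tending to one. If $C^1$ convergence of the chart family is awkward, I would use $C^0$ convergence together with the degree of $\Theta_\infty$.

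\textbf{Transferring the bounds.} After the adjustment, I re-express $u_i$ in the Fermi chart of the new center. The two charts differ by a diffeomorphism fixing the boundary and moving points by $O(\varepsilon_i)$, and the conformal factors differ by $1+O(\varepsilon_i)$. So the upper bound \eqref{eq:fermiupper} and the lower bound \eqref{eq:fermilower} hold in the new chart on $B^+_{\rho/2}$, with constants changed by bounded factors, since $|z-z'|\le C\varepsilon_i\ll\varepsilon_i+|z|$. The Green functions $G_i$ for the two centers are comparable to $|z|^{2-N}$, so the lower bound survives with a new constant. Rescaling by $\varepsilon_i\lambda_i$ turns these into $v_i\le CU$ and $v_i\ge C^{-1}U$ for $L\le|y|\le\rho/\varepsilon_i$. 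Inside $|y|\le L$ the lower bound follows from local convergence to $U_{\kappa_i}$, which gives \eqref{eq:sharpdecay}.

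\textbf{Main obstacle.} I expect the main obstacle to be the uniformity of the chart comparison. The center shift is only $o(\varepsilon_i)$ in physical units, while the estimates are needed out to $|y|\sim\rho/\varepsilon_i$. I would handle this with the derivative bounds of the center-dependent Fermi maps from Section~\ref{sec:quadratic}, which give $|\Psi_p^{-1}\circ\Psi_{p'}(z)-z-(p-p')|\le C|p-p'|\,|z|$. This multiplicative error is harmless for power-type weights. I would also check that the boundary Gauss-lemma normalization in \eqref{eq:fermi-normalization} depends smoothly on the center.
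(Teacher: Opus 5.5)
Your proposal is correct and follows essentially the same route as the paper: you solve the $N$ modulation equations \eqref{eq:modulate} by the implicit function theorem, with Jacobian in the tangential center and logarithmic scale equal to the duality matrix \eqref{eq:dual} up to $o(1)$ (the center dependence of the Fermi charts contributing only $o(1)$). You then transfer \eqref{eq:fermiupper} and \eqref{eq:fermilower} through the smooth transition maps, the comparability of $\varepsilon_i+|z|$, and $G_i\asymp|z|^{2-N}$ on a smaller half-ball, exactly as the paper does.

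Two small remarks. First, the paper observes that $\int_{\R^N_+}\theta_jV_{p,\varepsilon}=\int_{\R^N_+}\theta_jU$ exactly, because $w_{p,\varepsilon}\in Y$. The equations therefore reduce to $\int_{\R^N_+}\theta_j(v_i-U)=0$, so neither a parameter-derivative of $V_{p,\varepsilon}$ nor your degree fallback is needed. Second, \eqref{eq:sharpdecay} is the upper bound $0<v_i,V_i\le C(1+|y|)^{2-N}$. Its $V_i$ part, which you did not address, follows at once from $|w_{p,\varepsilon}|\le C\rho U$ in \eqref{eq:Vpositive}.
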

\begin{proof}
Write, in conformal Fermi coordinates,
\[
 v_i(y)=\varepsilon_i^{(N-2)/2} u_i(\Psi_{p_i}(\varepsilon_i y)),\qquad
 v_i\to U\quad\hbox{locally in }C^2,
\]
where the notation incorporates the conformal change of the solution.
The scales are comparable to $(\max u_i)^{-2/(N-2)}$.
For a small fixed $\rho$, let $V_i:=V_{p_i,\varepsilon_i}$ be the projected solution from \eqref{eq:Vpositive}.
Impose
\begin{equation}\label{eq:modulate}
 \int_{\R^N_+}\theta_j(v_i-V_i)=0\qquad(0\le j<N).
\end{equation}
These equations are just $\int_{\R^N_+}\theta_j(v_i-U)=0$, since $w_i\in Y$.
Their Jacobian in the tangential center and logarithmic scale variables is the nonsingular matrix in \eqref{eq:dual}, up to $o(1)$.
The implicit function theorem solves \eqref{eq:modulate} for the centers and scales; the smooth dependence of the conformal Fermi charts contributes only $o(1)$ to this Jacobian.
The centers move by $o(\varepsilon_i)$ and the scale ratio tends to one, so the old and new quantities $\varepsilon_i+|z|$ are uniformly comparable.
The smooth transition maps and positive conformal factors preserve \eqref{eq:fermiupper}.
On a smaller fixed half-ball, the local Green functions are comparable to $|z|^{2-N}$ outside the bubble neighborhood, so \eqref{eq:fermilower} also remains valid.
Together with Proposition~\ref{prop:pointwise}, these estimates imply
\begin{equation}\label{eq:sharpdecay}
 0<v_i(y),V_i(y)\le C(1+|y|)^{2-N}\quad
 \text{for }|y|<\rho/\varepsilon_i.
\end{equation}
\end{proof}
Put $e_i=v_i-V_i$ and $R_i=\rho/\varepsilon_i$.
Because the two metrics agree exactly on this domain, subtracting their equations gives
\begin{equation}\label{eq:diffeq}
 \mathcal L_i e_i=-\sum_jc_{ij}\theta_j\quad\hbox{weakly in }B_{R_i}^+,
\end{equation}
with the homogeneous secant Robin condition on the flat boundary.
The weak convention absorbs the volume density in the right-hand side.
Here $\mathcal L_i$ is the actual linear conformal operator minus the secant derivatives of the interior and boundary nonlinearities.
In particular, it converges locally to the Jacobi operator.
Since the projected model uses the full localized metric, the right-hand side of \eqref{eq:diffeq} consists exactly of its compactly supported multipliers.

\begin{lemma}\label{lem:expanding}
For the profiles obtained in Lemma~\ref{lem:center-scale},
\begin{equation}\label{eq:expanding}
 \|e_i\|_{L^\infty(B_{R_i}^+)}+\sum_j|c_{ij}|
 \le C\|e_i\|_{L^\infty(S_{R_i}^+)}.
\end{equation}
The constant is uniform as $R_i\to\infty$, after $\rho$ has been fixed sufficiently small.
Consequently,
\begin{equation}\label{eq:massscaleerror}
 \|e_i\|_\infty+\sum_j|c_{ij}|\le C_\rho\varepsilon_i^{N-2}.
\end{equation}
More precisely, take any boundary or interior ball of radius $s/8$ with $s=1+|y|$, whose fixed enlargement avoids the outer spherical boundary.
Then
\[
 |e_i|_{2,\alpha;s,B_{s/8}(y)\cap\overline{\R^N_+}}
 \le C\left(\|e_i\|_{L^\infty(B_{R_i}^+)}+\sum_j|c_{ij}|\right).
\]
The multipliers affect only core balls.
The constants are uniform in the local ellipticity and derivative bounds of Section~\ref{sec:quadratic}, for fixed isolation radius and sharp-decay constant; they are independent of $R_i$.
In particular, the assertion applies to smoothly flattening backgrounds with the same bounds.
\end{lemma}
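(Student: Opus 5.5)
We argue by contradiction through a blow-up (compactness) argument for the maximum estimate \eqref{eq:expanding}, and then insert the sharp decay \eqref{eq:sharpdecay} at the outer sphere to obtain \eqref{eq:massscaleerror}.

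\emph{Contradiction setup.} Suppose \eqref{eq:expanding} fails along a subsequence. Normalize
\[
 \tilde e_i:=e_i/\bigl(\|e_i\|_{L^\infty(B_{R_i}^+)}+\sum_j|c_{ij}|\bigr),\qquad \tilde c_{ij}:=c_{ij}/\bigl(\|e_i\|_{L^\infty(B_{R_i}^+)}+\sum_j|c_{ij}|\bigr),
\]
so that $\|\tilde e_i\|_\infty+\sum_j|\tilde c_{ij}|=1$ and $\|\tilde e_i\|_{L^\infty(S_{R_i}^+)}\to0$. The operator $\mathcal L_i$ is the conformal operator minus secant nonlinearities. By \eqref{eq:sharpdecay}, its potential is bounded by $C(1+|y|)^{-4}$ in the interior and its Robin coefficient by $C(1+|y|)^{-2}$ on the plane. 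The metric coefficients are uniformly bounded, and after $\rho$ is small they are close to $\delta$ in the scaled norms of Proposition~\ref{prop:pointwise}. The scaled Schauder estimates on balls $B_{s/8}(y)$ then give the stated local $C^{2,\alpha}$ bound, since the multipliers live only in the core. This is the final displayed assertion, and it also provides compactness.

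\emph{Preventing mass from escaping to infinity.} This is the main obstacle: we must exclude $\|\tilde e_i\|_\infty$ being attained at points $|y_i|\to\infty$. For this I would use the isolated-simple Green-function barrier of Lemma~\ref{lem:fermisimple}, in the weighted form of Lemma~\ref{lem:weighted}. On the annulus $L_0\le|y|\le R_i$, the positive supersolutions $F_\mu$ with $\mu=\vartheta$ and $\mu=N-2-\vartheta$ from that proof remain strict supersolutions for $\mathcal L_i$. This holds because the potential terms are $O(L_0^{-2})r^{-2}$ and $O(L_0^{-1})r^{-1}$, and the background metric differs from $\delta$ by $O(\rho)$ after scaling. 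The maximum principle on the mixed annulus then gives
\[
 |\tilde e_i(y)|\le C\max_{|y|=L_0}|\tilde e_i|\,(L_0/|y|)^{N-2-\vartheta}+C\|\tilde e_i\|_{L^\infty(S_{R_i}^+)}.
\]
So the supremum is comparable to the supremum on the core $|y|\le L_0$. Note that no sign of $\kappa$ is used, because the barriers handle the Robin term through $\varpi$.

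\emph{Passing to the limit.} By local compactness, $\tilde e_i\to e$ in $C^2_{\rm loc}$ and $\tilde c_{ij}\to c_j$. The limit satisfies $Je=-\sum_j c_j\theta_j$ with the Jacobi boundary condition, and $|e|\le C|y|^{2-N+\vartheta}$, so $e\in\E$. By \eqref{eq:modulate} we have $\int\theta_je=0$, hence $e\in Y$. Testing the equation against $Z_k$ gives $c_k=0$ via \eqref{eq:dual}. The invertibility of $\B$ on $Y$ (Lemma~\ref{lem:weighted}) then forces $e=0$. Consequently $\tilde e_i\to0$ on the core, and by the barrier estimate $\tilde e_i\to0$ uniformly. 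The multipliers also tend to zero: test \eqref{eq:diffeq} against cut-off Jacobi fields $\chi_{R_i/2}Z_k$, use the uniform smallness, and note that the error terms are $O(R_i^{-?})$ from the decaying coefficient differences. This contradicts the normalization.

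\emph{Mass-scale bound.} For \eqref{eq:massscaleerror}, apply \eqref{eq:sharpdecay} at $|y|=R_i=\rho/\varepsilon_i$:
\[
 \|e_i\|_{L^\infty(S_{R_i}^+)}\le C(\rho/\varepsilon_i)^{2-N}=C_\rho\varepsilon_i^{N-2}.
\]
All constants depend only on the barrier data, the Schauder constants and the projected inverse. These are uniform under the bounds of Section~\ref{sec:quadratic}, and in particular for smoothly flattening backgrounds.
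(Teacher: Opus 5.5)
\textbf{Verdict.} The overall structure matches the paper: normalize, obtain local $C^{2,\alpha}$ compactness, take a limit in $Y$ that vanishes by Jacobi nondegeneracy, and finish with \eqref{eq:sharpdecay} on $S_{R_i}^+$. The gap is in the step you yourself call the main obstacle, which does not go through as written.

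\textbf{The exterior estimate.} The barriers $F_\mu=r^{-\mu}(1-\varpi z_N/r)$, $0<\mu<N-2$, all decay outward. To dominate the outer data you must use $\|\tilde e_i\|_{L^\infty(S_{R_i}^+)}R_i^{\mu}F_\mu$. Comparison on $L_0\le|y|\le R_i$ therefore only gives
\[
 |\tilde e_i(y)|\le C\max_{S_{L_0}^+}|\tilde e_i|\,(L_0/|y|)^{N-2-\vartheta}
 +C\|\tilde e_i\|_{L^\infty(S_{R_i}^+)}\,(R_i/|y|)^{\vartheta},
\]
not your inequality with a bounded outer coefficient. You have no lower bound on $\|e_i\|_\infty+\sum_j|c_{ij}|$ in terms of $R_i^{2-N}$; that is what is being proved. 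So the last term need not be small, or even bounded, at intermediate radii $L_0\ll|y|\ll R_i$. As a result you get neither the decay $|e|\le C|y|^{2-N+\vartheta}$ of the limit (hence not $e\in\E$) nor the exclusion of the normalized maximum escaping to infinity.

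\textbf{What is missing.} You need an exterior estimate in which the outer data enter with a bounded factor. The paper obtains it as follows.
\begin{itemize}
\item It inverts the metric Laplacian alone on the half-annulus $R_0<r<R_i$, with Dirichlet data on both spheres and Neumann data on the plane.
\item There the maximum principle bounds the extension of outer data by their supremum.
\item The function $(R_0/r)^\vartheta$ with $\vartheta\in((N-2)/2,N-2)$ bounds the extension of inner data.
\item All zero-order terms are then treated as sources and removed by a Neumann series. Its norm $C_\vartheta(R_0^{-1}+\rho+\rho^2)$ is small in both the weighted and the unweighted sup norm, which yields \eqref{eq:annular}.
\end{itemize}

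\textbf{Omitted curvature terms.} Your potential bounds leave out the rescaled scalar and mean curvatures of the background, of size $O(\varepsilon_i^2)$ in the interior and $O(\varepsilon_i)$ on the plane. On $|y|<R_i$ these have critical size $\rho^2|y|^{-2}$ and $\rho|y|^{-1}$.
\begin{itemize}
\item They are harmless for $F_\mu$.
\item They are what makes a bounded supersolution delicate: a bounded barrier has no Hardy-type gap at large radii. Putting the $\varpi$ factor on its constant part also creates an interior error of order $\varpi|y|^{-2}$ that nothing absorbs.
\item The paper controls them through $\varepsilon_i^2R_i^2=\rho^2$ and $\varepsilon_iR_i=\rho$ in the Green potential.
\end{itemize}
A carefully constructed bounded supersolution, absorbing both these terms and the Robin potential, would also repair your argument. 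It has to be built, though; it does not follow from the barriers of Lemma~\ref{lem:fermisimple}.

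\textbf{What is fine.} The remaining steps are correct: $e\in Y$ from \eqref{eq:modulate}, $c_k=0$ by testing with $Z_k$, $e=0$, and \eqref{eq:massscaleerror} from \eqref{eq:sharpdecay}. The cut-off Jacobi fields at radius $R_i/2$ are unnecessary. Convergence $\tilde c_{ij}\to c_j=0$ already follows from the limit, and the paper simply tests with the fixed functions $\zeta_1Z_k$.
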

\begin{proof}
We prove a uniform exterior estimate and then use Jacobi nondegeneracy to control the solution on the core.
The principal metric is $O(\rho)$ close to the Euclidean metric, with $r|\partial g_i|\le C\rho$.
In the exterior of a fixed large ball, the absolute values of the zero-order interior and boundary coefficients are bounded respectively by
\[
 C\{(1+r)^{-4}+\varepsilon_i^2\},\qquad
 C\{(1+r)^{-2}+\varepsilon_i\}.
\]
The terms involving $\varepsilon_i$ are the rescaled scalar and mean curvatures of the original smooth metric.
Against the mixed Dirichlet--Neumann Green kernel on an annular half-ball, their potential norms are bounded by
\begin{equation}
 C(R_0^{-2}+\rho^2),\qquad C(R_0^{-1}+\rho),
\end{equation}
on $R_0<r<R_i$.
For example, the tail integrals at a fixed interior point have powers $r^{-3}\dd r$ and $r^{-2}\dd r$, while a bounded rescaled interior potential has norm $O(\varepsilon_i^2R_i^2)$ and a bounded boundary potential has norm $O(\varepsilon_iR_i)$.
Splitting into comparable annuli proves the same supremum estimates for all points.
The Green bounds for the close metric follow by local flattening and the uniform ellipticity estimates; after scaling they have constants independent of $R_i$.

Choose $R_0$ large, then $\rho$ small.
We obtain the exterior estimate from the mixed Green representation on the annulus.
First invert the metric Laplacian with Dirichlet data on the two spherical boundaries and homogeneous Neumann data on the flat part.
Its Green kernel has the uniform Newton bound.
This can also be seen by even reflection of the block metric across the plane, followed by the Dirichlet Green estimate for a uniformly elliptic divergence operator.
Let $\vartheta\in((N-2)/2,N-2)$.
The radial function $(R_0/r)^\vartheta$ is a supersolution for the metric Laplacian, because its strict Euclidean term $\vartheta(N-2-\vartheta)r^{-\vartheta-2}$ dominates the $O(\rho)$ coefficient errors, and its normal derivative is zero on the plane.
Thus the harmonic extension of bounded inner data and zero outer data is bounded by $C(R_0/r)^\vartheta$.
The harmonic extension of bounded outer data and zero inner data is bounded by the size of the outer data.

Treat the interior and Robin potentials as sources in this Green representation.
Their integral operator has norm at most
\[
 C_\vartheta(R_0^{-1}+\rho+\rho^2)
\]
both in the unweighted supremum norm and in the weighted norm $\sup (r/R_0)^\vartheta|e|$.
Here is the precise weighted estimate.
If $G_i^A$ is the mixed Laplacian kernel on the annulus, $h_\vartheta(y)=(R_0/|y|)^\vartheta$, and $q_i,b_i$ are the absolute interior and boundary potentials, then
\begin{equation}
 \begin{aligned}
 \sup_y h_\vartheta(y)^{-1}\bigg\{
 &\int_{B_{R_i}^+\setminus B_{R_0}^+}G_i^A(y,z)q_i(z)h_\vartheta(z)\dd z\\
 &+\int_{\{z_N=0,\ R_0<|z|<R_i\}}G_i^A(y,z)b_i(z)h_\vartheta(z)\dd S_z\bigg\}
 \le C_\vartheta(R_0^{-2}+R_0^{-1}+\rho^2+\rho).
 \end{aligned}
\end{equation}
The same estimate holds with $h_\vartheta=1$.
To verify it, use the Newton bound and split $|z|<|y|/2$, $|z|\asymp|y|$, and $|z|>2|y|$.
For example, the inner bulk tail is bounded by
\[
 C|y|^{2-N+\vartheta}
       \int_{R_0}^{|y|/2}s^{N-5-\vartheta}\dd s
 \le C_\vartheta R_0^{-2};
\]
at the zero exponent use the bounded function $a^{-2}\log a$, $a\ge1$.
The boundary tail has $s^{N-4-\vartheta}$ in place of $s^{N-5-\vartheta}$ and is $O(R_0^{-1})$.
The bounds for the two remaining regions follow by integrating the Newton singularity on a comparable ball and the powers $s^{-3-\vartheta}$ and $s^{-2-\vartheta}$ on the outer region.
The constant geometric terms give $C_\vartheta\varepsilon_i^2R_i^2$ and $C_\vartheta\varepsilon_iR_i$.
The only inner-end restriction for these estimates is $\vartheta<N-2$.
All terms are small with the stated choices.

Apply the resulting Neumann-series inverse separately to harmonic extension of inner and outer spherical data, using the weighted norm for inner data and the unweighted norm for outer data.
This yields
\begin{equation}\label{eq:annular}
 |e_i(y)|\le
 C(R_0/r)^\vartheta\sup_{S_{R_0}^+}|e_i|
 +C\sup_{S_{R_i}^+}|e_i|\qquad(R_0<r<R_i).
\end{equation}
The multipliers are supported inside $R_0$ and do not appear in this exterior estimate.
Absolute values were used in the integral-operator bounds, so the estimate is independent of the sign of $\kappa$.
Choose a fixed smooth compactly supported cutoff $\zeta_1$ equal to one on the supports of all $\theta_j$.
Then $\int_{\R^N_+}\zeta_1Z_k\theta_j=\delta_{kj}$ by \eqref{eq:dual}.
Testing \eqref{eq:diffeq} against $\zeta_1Z_k$ and transferring derivatives to this fixed test function yields $\sum_j|c_{ij}|\le C\|e_i\|_\infty$.
If \eqref{eq:expanding} fails, divide $e_i,c_i$ by $\|e_i\|_\infty$ and pass to a local limit.
The outer boundary value tends to zero.
The limit $e$ satisfies the Jacobi equation with forcing $-\sum c_j\theta_j$, the homogeneous Jacobi Robin condition, and the compact constraints.
Estimate \eqref{eq:annular} gives $e=O(r^{-\vartheta})$; elliptic estimates give its derivative decay.
Since $\vartheta>(N-2)/2$, $e\in\E$.
Testing against $Z_j$ eliminates all $c_j$.
Jacobi nondegeneracy and the constraints give $e=0$.
The same annular estimate now excludes escape of the normalized maximum to infinity, contradicting its value one.
This proves \eqref{eq:expanding}.

By \eqref{eq:sharpdecay}, the outer boundary value is $O(R_i^{2-N})=O_\rho(\varepsilon_i^{N-2})$, proving \eqref{eq:massscaleerror}.
Scaled Schauder estimates applied to \eqref{eq:diffeq} give the derivative assertions, away from the outer corner if necessary.
\end{proof}

\subsection{The scale derivative and the Green function difference}
For the profiles of Lemma~\ref{lem:center-scale}, let $g_i^\rho$ denote the physical localized metric and define
\[
 V_i^{\rm phys}(x)=\varepsilon_i^{-(N-2)/2}V_i(x/\varepsilon_i).
\]
Let $G_\rho$ be the Green function of the limiting localized metric on the whole half-space, vanishing at infinity and normalized by $\lim_{x\to0}|x|^{N-2}G_\rho(x)=1$.
Smallness of the localized conformal quadratic form ensures existence and positivity.
\begin{lemma}[Outer Green limits]\label{lem:outer-limits}
After passing to a subsequence,
\begin{equation}\label{eq:outerlimits}
 \varepsilon_i^{-(N-2)/2}u_i\rightarrow 2^{\sigma(N-2)/2} G,\qquad
 \varepsilon_i^{-(N-2)/2}V_i^{\rm phys}\rightarrow 2^{\sigma(N-2)/2} G_\rho
 \quad\hbox{locally away from }0.
\end{equation}
The functions $G$ and $G_\rho$ satisfy the same local homogeneous conformal Robin equation, each has singular coefficient one, and $G-G_\rho$ extends smoothly through the origin.
\end{lemma}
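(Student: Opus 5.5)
The plan is to prove the two limits separately, by compactness and uniqueness of normalized Green functions, and then compare the two singular parts.

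\emph{The limit of $u_i$.} Lemma~\ref{lem:fermisimple} (transported by Lemma~\ref{lem:center-scale}) gives two-sided bounds for $\varepsilon_i^{-(N-2)/2}u_i$ on $\{L\varepsilon_i\le|z|\le\rho\}$:
\[
 C^{-1}G_i\le \varepsilon_i^{-(N-2)/2}u_i\le C|z|^{2-N}.
\]
These quantities satisfy the conformal equation with nonlinear coefficients $\varepsilon_i^{2}(\varepsilon_i^{-(N-2)/2}u_i)^{p-1}$ in the bulk and $\varepsilon_i(\cdots)^{q-1}$ on the boundary. Both coefficients tend to zero locally uniformly away from $0$. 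Schauder estimates then give a subsequence converging in $C^2_{\rm loc}(\overline{B^+_\rho}\setminus\{0\})$ to a positive solution $G$ of the homogeneous equation $L_gG=0$, $B_gG=0$. The upper bound makes the singularity at $0$ at most of order $|z|^{2-N}$. A B\^ocher-type decomposition on the half-ball (reflect using the Neumann-type condition) then writes $G=aG_i^\infty+(\text{regular})$, where $G_i^\infty$ is the local Green function with pole $0$. To fix $a$, I will integrate the equation for $u_i$ over $B_r^+$: the hemispherical flux equals the total signed source. By bubble convergence together with the tail bounds from \eqref{eq:fermirough}, this source converges after normalization to the left side of \eqref{eq:fermisignedcharge}. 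Hence $a=2^{\sigma(N-2)/2}$ in the singular-coefficient-one normalization, and $G$ is recorded as $2^{\sigma(N-2)/2}$ times a function with singular coefficient one.

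\emph{The limit of $V_i^{\rm phys}$.} I will work with the global equation on the half-space. By Proposition~\ref{prop:pointwise}, $V_i$ solves the localized equation exactly except for multipliers on the fixed compact support of $\theta_j$. In physical variables, that support shrinks to the origin. The multipliers are $O(\varepsilon_i^{N-2})$ by the residual structure, and that bound is what Lemma~\ref{lem:expanding} controls for $c_{ij}$. The bound $V_i\le C(1+|y|)^{2-N}$ from \eqref{eq:Vpositive} becomes $\varepsilon_i^{-(N-2)/2}V_i^{\rm phys}\le C|x|^{2-N}$ on the whole half-space. Since the localized metric $g_i^\rho$ is Euclidean outside $B^+_{2\rho}$ and converges smoothly, the same compactness and flux argument applies on all of $\R^N_+\setminus\{0\}$. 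The limit solves the homogeneous conformal Robin equation of the limiting localized metric, has singular coefficient $2^{\sigma(N-2)/2}$, and decays at infinity. By uniqueness of the decaying Green function $G_\rho$ (which uses positivity of the localized quadratic form), the limit equals $2^{\sigma(N-2)/2}G_\rho$. The step I expect to be the main obstacle is the flux computation: the multiplier terms $\sum_j c_j\theta_j$ contribute to the source, and I will show that their normalized contribution is $O(\varepsilon_i^{N-2}\cdot\varepsilon_i^{-(N-2)/2}\varepsilon_i^{(N-2)/2})\to0$, rather than assuming it vanishes.

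\emph{Smoothness of the difference.} On $B_\rho^+$ the two limiting metrics coincide because $g_i^\rho=g_i$ there. So $G$ and $G_\rho$ satisfy the same linear equation with equal singular coefficients. Their difference therefore solves the homogeneous equation with a removable singularity: it is $o(|x|^{2-N})$, and the linear Robin problem has removable singularities below the fundamental order. Elliptic regularity then makes $G-G_\rho$ smooth through the origin. The subsequence extraction is needed only once, for both sequences simultaneously.
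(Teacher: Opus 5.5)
Your proposal is correct and follows essentially the same route as the paper: subsequential convergence from the isolated-simple bounds and local elliptic estimates, the singular coefficient fixed by the total signed bubble charge with the compact multiplier charge shown to vanish, identification of the model limit through the global bound $V_i\le CU$ and uniqueness of the decaying Green function, and removability for $G-G_\rho$. The paper also notes that this last step can be read off directly from the bounded physical difference in \eqref{eq:massscaleerror}.
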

\begin{proof}
The isolated-simple estimate and local elliptic estimates give subsequential convergence.
The singular coefficients in \eqref{eq:outerlimits} agree because the nonlinear charges of both families converge to the charge of the same normalized bubble.
In Green representation, the rescaled interior and boundary nonlinear charges converge to those of $U$: their radial tails are bounded by $Cr^{-3}\dd r$ and $Cr^{-2}\dd r$ by \eqref{eq:sharpdecay}.
The compact multiplier charge of the projected model tends to zero.
Integrating the exact bubble equation gives the total charge
\[
 (N-2)2^{\sigma(N-2)/2}|S^{N-1}_+|.
\]
This includes the signed boundary charge when $\kappa<0$.
It is the flux of $2^{\sigma(N-2)/2}r^{2-N}$, and hence proves \eqref{eq:outerlimits} with the displayed normalization.
For the model, the global bound $V_i\le CU$ and decay at infinity identify the unique decaying Green function.

The difference $G-G_\rho$ extends smoothly through the origin.
One may see this from equality of their local distributional source and elliptic removability, or directly from the bounded physical difference furnished by \eqref{eq:massscaleerror}.
\end{proof}
Put
\begin{equation}\label{eq:bdef}
 b_\rho:=(G-G_\rho)(0).
\end{equation}

\begin{lemma}\label{lem:constant}
Along a subsequence for which the limits in Lemma~\ref{lem:outer-limits} hold,
\[
 E_i=\varepsilon_i^{2-N}(v_i-V_i)\rightarrow E,\qquad
 \varepsilon_i^{2-N}c_{ij}\rightarrow\bar c_j
\]
locally in $C^2$.
The limit is bounded, satisfies
\begin{equation}\label{eq:Eeq}
 \begin{cases}
 JE=-\sum_j\bar c_j\theta_j &\text{in }\R^N_+,\\
 B_JE=0 &\text{on }\partial\R^N_+,
 \end{cases}
\end{equation}
and has the limit
\begin{equation}\label{eq:Econstant}
 \lim_{|y|\to\infty}E(y)=2^{\sigma(N-2)/2}b_\rho.
\end{equation}
\end{lemma}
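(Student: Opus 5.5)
The plan is to normalize the estimates of Lemma~\ref{lem:expanding} and then identify the constant at infinity through the outer Green limits of Lemma~\ref{lem:outer-limits}.

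\emph{Compactness of the normalized difference.} By \eqref{eq:massscaleerror}, $\|E_i\|_{L^\infty(B_{R_i}^+)}+\varepsilon_i^{2-N}\sum_j|c_{ij}|\le C_\rho$. The scaled Schauder bound in Lemma~\ref{lem:expanding} then gives uniform $C^{2,\alpha}$ bounds for $E_i$ on every fixed half-ball. Arzel\`a--Ascoli and a diagonal argument produce a subsequence along which $E_i\to E$ in $C^2_{\rm loc}(\overline{\R^N_+})$ and $\varepsilon_i^{2-N}c_{ij}\to\bar c_j$. The limit $E$ is bounded by $C_\rho$. Dividing \eqref{eq:diffeq} by $\varepsilon_i^{N-2}$ and passing to the limit yields \eqref{eq:Eeq}. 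This uses three facts: the secant operator $\mathcal L_i$ converges locally to $J$, because $v_i,V_i\to U$ in $C^2_{\rm loc}$ and the rescaled metric converges to $\delta$; the secant Robin coefficient converges to that of $B_J$; and the volume densities tend to one.

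\emph{Identification of the constant.} I would first show that $E$ has a limit at infinity. Outside the support of the $\theta_j$, $E$ is a bounded solution of the Jacobi equation, whose potentials decay like $r^{-4}$ in the interior and $r^{-2}$ on the boundary. A bounded solution therefore has the form $E=E_\infty+O(r^{-\beta})$ for some $\beta>0$. To see this, use the Green representation on exterior half-annuli as in the proof of Lemma~\ref{lem:expanding}, or Kelvin-transform $E$ to a bounded solution near the origin of a Neumann problem with integrable potentials and apply removable singularities.

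To evaluate $E_\infty$, pass to physical variables. Set $x=\varepsilon_iy$ with $\delta\le|x|\le\rho/2$. Then
\[
 \varepsilon_i^{-(N-2)/2}(u_i-V_i^{\rm phys})(x)=\varepsilon_i^{2-N}(v_i-V_i)(x/\varepsilon_i)=E_i(x/\varepsilon_i).
\]
By \eqref{eq:outerlimits}, the left side converges to $2^{\sigma(N-2)/2}(G-G_\rho)(x)$. Hence
\[
 \lim_i E_i(x/\varepsilon_i)=2^{\sigma(N-2)/2}(G-G_\rho)(x)\quad\text{for each fixed }x\ne0.
\]
Since $G-G_\rho$ is smooth at the origin, it suffices to show
\[
 \lim_{\delta\to0}\limsup_{i\to\infty}\sup_{|y|=\delta/\varepsilon_i}|E_i(y)-E_\infty|=0.
\]
Combined with $(G-G_\rho)(x)\to b_\rho$ as $x\to0$, this gives \eqref{eq:Econstant}.

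\emph{Main obstacle.} This interchange of limits is the key step: it requires uniformity of the decay of $E_i$ toward a constant on the intermediate range $R_0\le|y|\le\delta/\varepsilon_i$. I would prove it with a uniform version of the annular estimate \eqref{eq:annular}. Subtract from $E_i$ its value at the matching scale, namely the constant $c_i:=\varepsilon_i^{-(N-2)/2}(u_i-V_i^{\rm phys})$ averaged on $|x|=\delta$. Apply the mixed Green representation of $\mathcal L_i$ on $R_0<|y|<\delta/\varepsilon_i$ to $E_i-c_i$. The constant $c_i$ is not a solution of the operator, but it produces only a source $O(c_i)$ times the potentials $(1+r)^{-4}+\varepsilon_i^2$ and $(1+r)^{-2}+\varepsilon_i$. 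Their potential norms on the range $r\le\delta/\varepsilon_i$ are $O(R_0^{-1}+\delta)$ by the same calculation as in Lemma~\ref{lem:expanding}, with $\rho$ replaced by $\delta$. Together with the weighted inner bound $(R_0/r)^\vartheta$, this yields an oscillation bound on $E_i$ over $R_0\le|y|\le\delta/\varepsilon_i$ that is $o(1)$ as first $i\to\infty$, then $R_0\to\infty$ and $\delta\to0$. The limit $E_\infty$ is then squeezed between the inner values, which converge to $E$, and the outer values, which converge to $2^{\sigma(N-2)/2}(G-G_\rho)$ near $0$. This gives $E_\infty=2^{\sigma(N-2)/2}b_\rho$.
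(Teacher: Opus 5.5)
Your compactness step and the limiting equation \eqref{eq:Eeq} match the paper. For the constant at infinity, you take a genuinely different route.

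The paper writes a single Green representation for $H_i=E_i(\cdot/\varepsilon_i)$ on the fixed physical half-ball $B_\rho^+$, with Dirichlet data on the hemisphere and conformal Robin data on the disk. Two contributions arise:
\begin{itemize}
\item The homogeneous extension of the outer data converges uniformly, up to $x=0$, to $2^{\sigma(N-2)/2}(G-G_\rho)$. At $x=\varepsilon_i y$ it therefore contributes $2^{\sigma(N-2)/2}b_\rho$.
\item The secant and multiplier sources are rescaled and passed to the limit. This uses convergence of the scaled kernels to the Neumann Newton kernel and the tail bounds $O(T^{-2})$ and $O(T^{-1})$.
\end{itemize}
The outcome is an explicit formula $E=2^{\sigma(N-2)/2}b_\rho+{}$(decaying potentials). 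The same formula also delivers $r\nabla E=o(1)$, which is needed for \eqref{eq:Wronskian}.

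You instead run a neck argument. You subtract the matching constant $c_i$ and reuse the weighted annular estimate \eqref{eq:annular} on $R_0<|y|<\delta/\varepsilon_i$. The constant only produces a forcing $Tc_i$ of size $C|c_i|(R_0^{-1}+\delta)$. This avoids constructing the physical-scale kernel and proving its convergence. The cost is iterated-limit bookkeeping. Derivative decay must then be obtained separately, from scaled Schauder estimates applied to $E-E_\infty$.

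One step needs to be stated correctly. The oscillation of $E_i-c_i$ is not small on the whole range $R_0\le|y|\le\delta/\varepsilon_i$. Near $|y|=R_0$ your bound is only $C\sup_{S^+_{R_0}}|E_i-c_i|$, which is essentially the unknown $|E_\infty-c|$. As written, that claim is circular at the inner edge.

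What the estimate actually gives, using the a priori bound $\|E_i\|_\infty\le C_\rho$ from \eqref{eq:massscaleerror}, is the following. For $R_1\le|y|\le\delta/\varepsilon_i$,
\[
|E_i(y)-c_i|\le 2CC_\rho(R_0/R_1)^\vartheta+C\delta+o_i(1)+CC_\rho(R_0^{-1}+\delta).
\]
You then let $i\to\infty$, $R_1\to\infty$, $R_0\to\infty$ and $\delta\to0$, in that order. Run this way, the argument also makes your separate proof that $E_\infty$ exists unnecessary.

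On that auxiliary step: the Kelvin transform $|z|^{2-N}E(z/|z|^2)$ of a bounded $E$ is $O(|z|^{2-N})$, not bounded. That alternative would therefore need a B\^ocher-type statement rather than removability of bounded singularities. The exterior Green representation you also mention is the safer way to do it.
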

\begin{proof}
Lemma~\ref{lem:expanding} gives boundedness and local convergence.
To identify the limit at infinity, we compare the inner and outer limits through a Green representation on a fixed physical half-ball.
In physical coordinates, set
\[
 H_i(x)=\varepsilon_i^{-(N-2)/2}(u_i(x)-V_i^{\rm phys}(x))=E_i(x/\varepsilon_i).
\]
It is uniformly bounded on the fixed half-ball $B_\rho^+$.
Represent $H_i$ using the Green kernel of the linear conformal Laplacian on this half-ball, with zero Dirichlet condition on the spherical part and homogeneous conformal Robin condition on the flat part.
This mixed operator is coercive when $\rho$ is small.
The harmonic extension of its outer boundary values converges to $2^{\sigma(N-2)/2}(G-G_\rho)$, since that is the unique regular solution with the limiting outer values.
At $x=\varepsilon_i y$, its limit is therefore $2^{\sigma(N-2)/2}b_\rho$.

The remaining terms in the representation are the interior secant-potential source, the boundary secant-potential source, and the compact multiplier source.
After scaling, their coefficients are bounded by $C(1+r)^{-4}$, $C(1+r)^{-2}$, and a fixed compactly supported function, respectively.
The scaled Green kernels converge locally to the Neumann Newton kernel $\mathcal G_N$ of the half-space and obey its uniform pointwise bound.
For fixed $y$, the tails beyond radius $T>2|y|+2$ are bounded, respectively, by
\[
 C\int_T^\infty r^{-3}\dd r=O(T^{-2}),\qquad
 C\int_T^\infty r^{-2}\dd r=O(T^{-1}).
\]
These tail bounds justify convergence of the Green representation over the expanding domains.
The resulting representation is
\begin{align*}
 E(y)=2^{\sigma(N-2)/2}b_\rho
 &+\int_{\R^N_+}\mathcal G_N(y,z)
 \left[\frac{\sigma N(N+2)}4U(z)^{4/(N-2)}E(z)-\sum_j\bar c_j\theta_j(z)\right]\dd z\\
 &+\int_{\partial\R^N_+}\mathcal G_N(y,z)
 \frac{N\kappa}2U(z)^{2/(N-2)}E(z)\dd z.
\end{align*}
Every potential on the right tends to zero at infinity.
This follows by splitting at $|z|=|y|/2$ and $2|y|$: the interior potential is $O(r^{-2})$, with the usual $O(r^{2-N})$ or logarithmic modification in dimensions three and four; the boundary potential is $O(r^{-1})$, with $O(r^{-1}\log r)$ allowed in dimension three.
All tend to zero.
Their derivatives satisfy the corresponding estimates.
This proves \eqref{eq:Econstant} and justifies the Wronskian limit in \eqref{eq:Wronskian}.
\end{proof}

\begin{theorem}\label{thm:matching}
For each fixed sufficiently small $\rho$, with $b_\rho$ defined in \eqref{eq:bdef},
\begin{equation}\label{eq:matching}
 \lim_{i\to\infty}\varepsilon_i^{2-N}
 \left[\varepsilon\partial_\varepsilon \mathscr A_{p_i}^\rho(\varepsilon)\right]_{\varepsilon=\varepsilon_i}
 =C_N b_\rho,
 \qquad C_N=2(N-1)(N-2)2^{\sigma(N-2)}|S^{N-1}|>0.
\end{equation}
The limit is taken along any subsequence on which the outer Green limit has been fixed.
\end{theorem}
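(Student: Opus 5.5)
The plan is to express the scale derivative of the reduced energy as a boundary flux, namely a Pohozaev/Wronskian pairing between the Jacobi dilation field $Z_0$ and the difference $e_i=v_i-V_i$. We then evaluate that flux at infinity using Lemma~\ref{lem:constant}.

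\emph{Step 1: reduce the scale derivative to a multiplier pairing.} Dilating $\varepsilon$ amounts to pulling back the localized metric $k_{p,\varepsilon}$ by the dilation $y\mapsto\varepsilon y/\varepsilon_i$. By the diffeomorphism invariance of $\A$, the derivative $\varepsilon\partial_\varepsilon\mathscr A_p^\rho$ equals the derivative of $\A_{\exp(k)}$ along the scalar variation generated by $Z_0$-type transport, $X_V=\frac{N-2}{2}V+y\cdot\nabla V$, evaluated at $V=V_{p,\varepsilon}$. The envelope identity from the proof of Theorem~\ref{thm:tame} handles the change of $w(k)$ inside $Y$. Combining this with the residual formula \eqref{eq:residual}, we get
\[
 \varepsilon\partial_\varepsilon\mathscr A_p^\rho(\varepsilon)
 =\frac{2}{c_N}\sum_j c_j(p,\varepsilon)\int_{\R^N_+}\theta_j X_{V_{p,\varepsilon}} .
\]
Because $V\to U$ and $X_U=Z_0$, \eqref{eq:dual} reduces this to $\frac{2}{c_N}c_0(p,\varepsilon)(1+o(1))$, up to the other multipliers paired with $o(1)$ quantities.

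\emph{Step 2: compute the multipliers from the difference equation.} Divide the multipliers by $\varepsilon_i^{N-2}$. By Lemma~\ref{lem:constant} they converge to $\bar c_j$, and the limiting function $E$ solves \eqref{eq:Eeq} with $E\to2^{\sigma(N-2)/2}b_\rho$. Test \eqref{eq:Eeq} against $Z_0$ on the half-ball $B_T^+$ and integrate by parts. Since $Z_0$ satisfies the homogeneous Jacobi equation with the Jacobi Robin condition, the flat boundary terms cancel. This leaves the Wronskian identity
\[
 -\bar c_0=\int_{S_T^+}\bigl(Z_0\partial_rE-E\partial_rZ_0\bigr).
\]
Here $\int\theta_jZ_0=\delta_{j0}$.

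\emph{Step 3: evaluate the flux at infinity.} At infinity we have $Z_0=-\frac{N-2}{2}2^{\sigma(N-2)/2}r^{2-N}+O(r^{1-N})$, and the decay estimates at the end of the proof of Lemma~\ref{lem:constant} give $\partial_rE=o(r^{-1})$. Hence the first term is $o(1)$. The second tends to $-2^{\sigma(N-2)/2}b_\rho\cdot\frac{(N-2)^2}{2}2^{\sigma(N-2)/2}|S^{N-1}_+|$. Therefore $\bar c_0$ is a fixed positive multiple of $2^{\sigma(N-2)}b_\rho$. Inserting this into Step 1 and using $c_N^{-1}=4(N-1)/(N-2)$ and $|S^{N-1}|=2|S^{N-1}_+|$ produces the constant $C_N=2(N-1)(N-2)2^{\sigma(N-2)}|S^{N-1}|$, once the sign conventions are tracked.

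\emph{Main obstacle.} The hard step is Step 1: justifying an exact identity with the right normalization. The difficulty is that the derivative acts on the metric through the $\varepsilon$-dependence of $k_{p,\varepsilon}=h^\rho_p(\varepsilon\,\cdot)$, including the cutoff. The approach I would try first is to use the exact scaling identity $k_{p,\varepsilon}(y)=k_{p,\varepsilon_i}(\varepsilon y/\varepsilon_i)$. Invariance of $\A$ under dilations, in the form of the rescaled-bubble identification stated before Theorem~\ref{ord:thm:main}, then turns the metric derivative into the scalar derivative along $X_V$ with the metric fixed. After that, \eqref{eq:residual} is exact. The remaining cross terms $c_j\int\theta_j(X_V-Z_0)$ for $j\ge1$ are $o(\varepsilon_i^{N-2})$ by \eqref{eq:massscaleerror} and Proposition~\ref{prop:pointwise}.
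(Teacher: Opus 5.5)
Your route is the paper's: an exact scale-invariance identity expresses $\varepsilon\partial_\varepsilon\mathscr A_p^\rho$ through the multipliers $c_j$ paired with $X_V$, and Green's identity against $Z_0$ is then applied to the limit $E$ of Lemma~\ref{lem:constant}. However, both of your intermediate identities have the wrong sign. Your final constant comes out right only because the two errors cancel.

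\textbf{First sign error (Step 1).} With the physical metric fixed, $\varepsilon\partial_\varepsilon\bigl[\varepsilon^{-(N-2)/2}V(x/\varepsilon)\bigr]$ pulls back to $\varepsilon\partial_\varepsilon V-X_V$. In your pull-back formulation, equivalently, $\lambda\partial_\lambda\bigl[\lambda^{-(N-2)/2}V(\cdot/\lambda)\bigr]_{\lambda=1}=-X_V$. Increasing $\varepsilon$ spreads the bubble, which is the direction $-Z_0$, not $+Z_0$. Since $D_v\A=2c_N^{-1}\mathscr R$ by definition, this gives $\varepsilon\partial_\varepsilon\mathscr A_p^\rho=-2c_N^{-1}\sum_jc_j\int\theta_jX_V$, which is \eqref{eq:naturalexact}. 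Your prefactor $+2/c_N$ is wrong.

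\textbf{Second sign error (Step 2).} Use Green's second identity on $B_T^+$, with $JE=-\sum_j\bar c_j\theta_j$, $JZ_0=0$, and equal Robin coefficients on the plane. It gives $-\bar c_0=\int_{B_T^+}(E\Delta Z_0-Z_0\Delta E)=\int_{S_T^+}(E\partial_rZ_0-Z_0\partial_rE)$, which is \eqref{eq:Wronskian}. Your flux has the opposite orientation. Consequently $\bar c_0=-\frac{(N-2)^2}{2}2^{\sigma(N-2)}|S^{N-1}_+|b_\rho$ is a \emph{negative} multiple of $b_\rho$, not a positive one. Only with both corrections does $-2c_N^{-1}\bar c_0=C_Nb_\rho$ follow legitimately.

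The sign of $C_N$ is exactly what Corollary~\ref{cor:jets} uses: combined with \eqref{eq:nonlinear}, it yields $b_\rho\le C\rho^{\omega_N}$ and the vanishing of the jets. So deferring this to ``once the sign conventions are tracked'' is not enough; you need to correct both displays. The remaining ingredients match the paper:
\begin{itemize}
\item the vanishing of the $Z_0\partial_rE$ flux from $r\nabla E=o(1)$;
\item the cross terms $c_j\int\theta_j(X_V-Z_0)=o(\varepsilon_i^{N-2})$, via \eqref{eq:massscaleerror} and local convergence $V_i\to U$;
\item the normalization $c_N^{-1}=4(N-1)/(N-2)$ and $|S^{N-1}|=2|S^{N-1}_+|$.
\end{itemize}
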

\begin{proof}
On the flat boundary, the Jacobi Robin terms in Green's identity cancel.
At infinity,
\[
 Z_0=-\frac{N-2}{2} 2^{\sigma(N-2)/2} r^{2-N}+O(r^{1-N}),\qquad
 \partial_rZ_0=\frac{(N-2)^2}{2}2^{\sigma(N-2)/2}r^{1-N}+O(r^{-N}).
\]
Using \eqref{eq:Eeq} and \eqref{eq:dual}, Green's identity gives
\begin{equation}\label{eq:Wronskian}
 -\bar c_0=\lim_{R\to\infty}\int_{S_R^+}
 (E\partial_rZ_0-Z_0\partial_rE)\dd\sigma
 =\frac{(N-2)^2}{2}2^{\sigma(N-2)}|S^{N-1}_+|b_\rho.
\end{equation}
Lemma~\ref{lem:constant} gives $E-2^{\sigma(N-2)/2}b_\rho=o(1)$ and $r\nabla E=o(1)$, which remove the other flux terms.

There is also an exact identity for the derivative of the reduced energy.
Keeping the physical metric $g_{p_i}^\rho$ fixed, the logarithmic derivative of its physical profile, pulled back to $y$ coordinates, is
\[
 \varepsilon\partial_\varepsilon V_{p_i,\varepsilon}
       -(\frac{N-2}{2}+y\cdot\nabla)V_{p_i,\varepsilon}.
\]
The first term belongs to $Y$, since $\int_{\R^N_+}\theta_j(V_{p_i,\varepsilon}-U)=0$ for every $\varepsilon$ and the duals are fixed in the bubble variables.
Pairing the physical residual with this derivative and using scale invariance of the energy yields
\begin{equation}\label{eq:naturalexact}
 \varepsilon\partial_\varepsilon \mathscr A_{p_i}^\rho(\varepsilon)
 =-2c_N^{-1}\sum_jc_j(p_i,\varepsilon)
 \int_{\R^N_+}\theta_j(\frac{N-2}{2}+y\cdot\nabla)V_{p_i,\varepsilon}.
\end{equation}
The derivative keeps the chosen center and its localized background metric fixed.
Local convergence of $V_i$ to $U$ makes the last integral $\delta_{j0}+o(1)$.
Combine \eqref{eq:naturalexact} with \eqref{eq:Wronskian}.
The coefficient is
\[
 c_N^{-1}(N-2)^2\,2^{\sigma(N-2)}|S^{N-1}_+|
 =2(N-1)(N-2)2^{\sigma(N-2)}|S^{N-1}|.
\]
This proves \eqref{eq:matching} with its sign and normalization.
\end{proof}

\begin{corollary}\label{cor:jets}
At an isolated simple boundary blow-up point $p$, all retained normalized jets vanish:
\[
 J_d(p)=0\quad(1\le d\le d_N),\qquad
 g_p=\delta+O(r^{d_N+1}).
\]
For each fixed small $\rho$,
\begin{equation}\label{eq:bupper}
 b_\rho\le C\rho^{\omega_N}.
\end{equation}
\end{corollary}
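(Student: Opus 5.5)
The plan is to combine the lower bound of Proposition~\ref{prop:nonlinear} with the exact identification in Theorem~\ref{thm:matching}. By Lemma~\ref{lem:center-scale} the blow-up profiles are modulated against the projected family $V_{p_i,\varepsilon_i}$. Theorem~\ref{thm:matching} then states that $\varepsilon_i^{2-N}\,\varepsilon\partial_\varepsilon\mathscr A_{p_i}^\rho(\varepsilon_i)\to C_Nb_\rho$ with $C_N>0$. Proposition~\ref{prop:nonlinear}, applied at the centers $p_i\to p$ with constants uniform in the chart, gives
\[
 -\varepsilon_i\partial_\varepsilon\mathscr A_{p_i}^\rho(\varepsilon_i)\ge c\,\mathscr N_{p_i}(\varepsilon_i,\rho)-C\varepsilon_i^{N-2}\rho^{\omega_N}.
\]
Multiplying by $\varepsilon_i^{2-N}$ and using the convergence, we obtain
\[
 c\,\varepsilon_i^{2-N}\mathscr N_{p_i}(\varepsilon_i,\rho)\le -C_Nb_\rho+C\rho^{\omega_N}+o(1).
\]
The left side is nonnegative, so $b_\rho\le C\rho^{\omega_N}$ after absorbing $C_N$. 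This gives \eqref{eq:bupper}.

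The same inequality also gives jet vanishing. Its right side is bounded uniformly in $i$, since $b_\rho$ is a fixed number for fixed $\rho$. For an ordinary degree $2d<N-2$, the left side contains $\varepsilon_i^{2d+2-N}\|J_d(p_i)\|^2$. The prefactor diverges, so $\|J_d(p_i)\|^2=O(\varepsilon_i^{N-2-2d})\to0$. In even dimensions, the critical term is $\log(\rho/\varepsilon_i)\|J_{d_N}(p_i)\|^2$, and the logarithm also diverges. In both cases $J_d(p_i)\to0$. The smooth dependence of the conformal Fermi normalization on the center, recorded in Section~\ref{sec:quadratic}, gives $J_d(p_i)\to J_d(p)$, so $J_d(p)=0$ for $1\le d\le d_N$. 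Finally, $h_p=\sum_dJ_d(p)+q_p$ with $q_p=O(|y|^{d_N+1})$, and $\exp$ is smooth, so $g_p=\delta+O(r^{d_N+1})$.

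The main obstacle is justifying the use of both results at the moving centers $p_i$ with $\rho$ held fixed. Proposition~\ref{prop:nonlinear} requires $\rho$ small and then $\varepsilon/\rho$ small, with uniform constants. Theorem~\ref{thm:matching} requires the outer Green limit to be fixed along a subsequence. I would first fix $\rho$ small enough for Propositions~\ref{prop:nonlinear} and~\ref{prop:pointwise} and Lemma~\ref{lem:expanding}. I would then pass to a subsequence realizing Lemma~\ref{lem:outer-limits}, and only then let $i\to\infty$. The scale derivative must be taken at fixed center and localized metric, which is the convention in both \eqref{eq:naturalexact} and Proposition~\ref{prop:nonlinear}, so the two statements are compatible. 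The uniformity of the constants in $p_i$ comes from the common $C^{K+2,\alpha}$ bounds of the normalized charts. Once the jets vanish, $b_\rho$ does not depend on the subsequence beyond its definition, so the bound holds for every admissible limit.
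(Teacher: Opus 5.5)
Your proposal is correct and follows the paper's own proof. You combine \eqref{eq:nonlinear} at the moving centers with the limit \eqref{eq:matching} to obtain $0\le\limsup_i c\,\varepsilon_i^{2-N}\mathscr N_{p_i}(\varepsilon_i,\rho)\le -C_Nb_\rho+C\rho^{\omega_N}$, and then read off both \eqref{eq:bupper} and the vanishing of the jets from the divergent ordinary and logarithmic weights, using smooth dependence of the normalized charts on the center; your remarks on fixing $\rho$ before choosing the subsequence and on the fixed-center convention for the scale derivative are consistent with how the paper uses these results.
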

\begin{proof}
Combine \eqref{eq:nonlinear} and \eqref{eq:matching}:
\[
 0\le\limsup_i c\varepsilon_i^{2-N}\mathscr N_{p_i}(\varepsilon_i,\rho)
 \le -C_N b_\rho+C\rho^{\omega_N}.
\]
This proves \eqref{eq:bupper} and boundedness of the normalized jet expression.
Its ordinary weights are negative powers of $\varepsilon_i$, and its critical weight is $\log(\rho/\varepsilon_i)\to\infty$.
Thus every retained coefficient tends to zero.
Smooth dependence of the normalized coordinates on $p_i\to p$ identifies the limit with $J_d(p)$.
For $N=3$, there is no retained jet, and the last metric assertion follows from smoothness in Fermi coordinates.
The argument allows arbitrary rates of convergence of $p_i$ to $p$.
\end{proof}

\begin{proposition}[Local sign restriction for a flat limiting metric]
\label{prop:flatsign}
Fix a dimension, boundary class, and curvature pair covered by Theorem~\ref{thm:localcoercivity} or Proposition~\ref{prop:local-continuation}, and a half-ball $B_{2r_*}^+$.
Let $g_i$ be smooth metrics there, written in the conformal Fermi normalization \eqref{eq:fermi-normalization}, and suppose
\[
 g_i=\exp(h_i),\qquad h_i\rightarrow0
       \quad\hbox{in }C^{K+2,\alpha}(\overline{B_{2r_*}^+}).
\]
The centers are the origin; moving centers are included by first pulling back their normalized charts.
Let $u_i>0$ solve \eqref{eq:PDE} for $g_i$ on this half-ball, with its boundary equation on the flat part.
Suppose the origin is an isolated simple boundary-type blow-up point in the sense of Definition~\ref{def:fermisimple}, with uniform isolation constants.
Choose its center and bubble scale $\varepsilon_i\to0$ as in \eqref{eq:modulate}, so that
\[
 \varepsilon_i^{(N-2)/2} u_i(\varepsilon_i y)\rightarrow U(y)
       \quad\hbox{locally in }C^2(\overline{\R^N_+}).
\]
Write the chosen center as the origin in the displayed coordinates.
After reducing $r_*$ if needed, Lemma~\ref{lem:fermisimple} and the center and scale comparison in Section~\ref{sec:comparison} give
\begin{equation}\label{eq:flatsharp}
 u_i(x)\le C\varepsilon_i^{(N-2)/2}(\varepsilon_i+|x|)^{2-N}
       \qquad (|x|<r_*),
\end{equation}
with $C$ independent of $i$.
Every subsequential outer limit has the form
\begin{equation}\label{eq:flatouter}
 2^{-\sigma(N-2)/2}\varepsilon_i^{-(N-2)/2}u_i\rightarrow
 G(x)=|x|^{2-N}+b(x)
       \quad\hbox{locally in }C^2
       (\overline{B_{r_*}^+}\setminus\{0\}),
\end{equation}
where $b$ is smooth and harmonic across the origin, has homogeneous Neumann condition on the plane, and satisfies
\[
 b(0)\le0.
\]
In particular, this conclusion requires no relation between the rate of metric flattening and the rate $\varepsilon_i\to0$.
\end{proposition}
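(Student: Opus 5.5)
The plan is to run the comparison argument of Section~\ref{sec:comparison} uniformly along the flattening sequence $g_i$, instead of at a fixed background. The outcome should be the inequality of Corollary~\ref{cor:jets} with a vanishing error term, followed by letting $\rho\downarrow0$.

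\emph{Sharp bounds and outer limit.} Lemma~\ref{lem:fermisimple} holds with constants depending only on common ellipticity and derivative bounds, and those are uniform here because $h_i\to0$ in $C^{K+2,\alpha}$. The center and scale adjustment of Lemma~\ref{lem:center-scale} is an implicit function argument with $o(1)$ errors, so it is also uniform. Together these give \eqref{eq:flatsharp} after shrinking $r_*$, and the lower bound \eqref{eq:fermilower}. Local elliptic estimates on annuli then give a subsequential $C^2$ limit of $\varepsilon_i^{-(N-2)/2}u_i$ away from the origin. The limit equation is the flat one: harmonic inside, with homogeneous Neumann condition on the plane. The nonlinear terms disappear because $\varepsilon_i^{(N-2)/2}$ times the rescaled $u_i^{p-1}$ and $u_i^{q-1}$ tend to zero off the pole, and $R_{g_i}$, $H_{g_i}$ tend to zero. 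Convergence of charges, as in Lemma~\ref{lem:outer-limits} using \eqref{eq:fermisignedcharge}, fixes the singular coefficient at $2^{\sigma(N-2)/2}$. The even reflection of the singular part, namely $|x|^{2-N}$, is the Neumann Green function of the flat half-space. So after normalization $G=|x|^{2-N}+b$, where $b$ is harmonic with Neumann data on the punctured half-ball and bounded, hence removable. This is \eqref{eq:flatouter}.

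\emph{Sign.} I would fix a small $\rho$ and build the projected family $V_{i,\varepsilon}$ of Proposition~\ref{prop:pointwise} for the localized metrics $h_i^\rho$. Since $h_i\to0$, the localized Green functions $G_{\rho,i}$ converge to the flat half-space Green function $|x|^{2-N}$, which has zero regular part at the pole. Lemmas~\ref{lem:expanding} and~\ref{lem:constant} and Theorem~\ref{thm:matching} use only the uniform bounds, and they explicitly allow flattening backgrounds and arbitrary rates. They therefore give
\[
 \lim_i \varepsilon_i^{2-N}\varepsilon\partial_\varepsilon\mathscr A_i^\rho(\varepsilon_i)=C_N\,b_\rho,
 \qquad b_\rho=b(0)-\lim_i(\text{regular part of }G_{\rho,i})(0)=b(0).
\]
On the other side, Proposition~\ref{prop:nonlinear} gives $-\varepsilon\partial_\varepsilon\mathscr A\ge c\mathscr N-C_i\varepsilon^{N-2}\rho^{\omega_N}$, with $C_i\to0$ by its last assertion. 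Since $\mathscr N\ge0$, combining the two gives $0\le -C_Nb(0)+o(1)$, so $b(0)\le0$. Nothing in this chain compares the rate of $h_i\to0$ with $\varepsilon_i$.

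\emph{Main obstacle.} The hardest step is showing that the tail coefficient in \eqref{eq:nonlinear} really tends to zero without any rate relation. Concretely, the bound \eqref{eq:tailN} on the Taylor remainder $q_{p_i}$ and the cross pairings in Proposition~\ref{prop:nonlinear} must be controlled by $\|h_i\|_{C^{K+2,\alpha}}$ times $\varepsilon^{N-2}\rho^{\omega_N}$. I would check this first by tracing Lemma~\ref{lem:localized-tail}: its constant is the square of a $C^{d_N+1}$ bound on $q_{p_i}$, and that bound tends to zero. A second point to verify is that Lemma~\ref{lem:expanding} only needs the metric to be $O(\rho)$-close to Euclidean, so that the Jacobi nondegeneracy argument is uniform in $i$. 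If that fails, I would instead let $\rho\to0$ along a diagonal subsequence and use \eqref{eq:bupper}, $b_\rho\le C\rho^{\omega_N}$ with uniform $C$, together with $b_\rho\to b(0)$.
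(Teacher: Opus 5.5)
Your proposal is correct and follows essentially the same route as the paper. Uniform sharp bounds and charge convergence give $G=|x|^{2-N}+b$. The projected model for the localized flattening metrics has the flat Green function as its outer limit, so the matching identity yields $C_N b(0)$. The tail term in the scale inequality is $C\tau_i^2\varepsilon^{N-2}\rho^{\omega_N}$ with $\tau_i=\|h_i\|_{C^{K+2,\alpha}}\to0$, which is exactly the mechanism you extracted from Lemma~\ref{lem:localized-tail}. Your fallback would also work without any diagonal subsequence: here $b_\rho=b(0)$ for every fixed $\rho$, and the constant in \eqref{eq:bupper} is uniform under the common derivative bounds.
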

\begin{proof}
All statements are local, so fix once and for all $0<2\rho<r_*$.
Put
\[
 \tau_i=\|h_i\|_{C^{K+2,\alpha}(\overline{B_{2r_*}^+})}\rightarrow0,
 \qquad h_i=\sum_{d=1}^{d_N} J_{i,d}+q_i,
 \qquad d_N=\lfloor(N-2)/2\rfloor.
\]
Taylor's theorem gives $|\partial^j q_i(x)|\le C\tau_i|x|^{d_N+1-j}$ for $0\le j\le d_N+1$; higher derivatives have the corresponding uniform bounds.
Consequently the tail estimate \eqref{eq:tailN}, for this sequence, has the explicit coefficient
\[
 \N(\chi_{\rho/\varepsilon}q_i(\varepsilon\,\cdot))^2+
 \N\!\left(\varepsilon\partial_\varepsilon
       [\chi_{\rho/\varepsilon}q_i(\varepsilon\,\cdot)]\right)^2
 \le C\tau_i^2\varepsilon^{N-2}\rho^{\omega_N}.
\]
The small-coefficient bound is
\[
 X(k_{i,\varepsilon})+X(\varepsilon\partial_\varepsilon k_{i,\varepsilon})
 \le C\tau_i\rho,
\]
where $k_{i,\varepsilon}(y)=\chi(\varepsilon|y|/\rho)h_i(\varepsilon y)$.
The constants in the differentiated expansion and the finite-dimensional quadratic estimates can be fixed uniformly for all large $i$.
Repeating the absorption in Proposition~\ref{prop:nonlinear}, including the mixed jet--tail terms, therefore gives
\begin{equation}\label{eq:flatnonlinear}
 -\varepsilon\partial_\varepsilon \mathscr A_i^\rho(\varepsilon)
 \ge c\mathscr N_i(\varepsilon,\rho)
       -C\tau_i^2\varepsilon^{N-2}\rho^{\omega_N}
\end{equation}
for large $i$ and small $\varepsilon/\rho$.
For $N=3$, the jet expression is zero and the same estimate follows from the tail bound alone.

The estimate \eqref{eq:flatsharp} and local elliptic estimates give outer subsequential convergence.
The charge argument following \eqref{eq:outerlimits} is uniform in this sequence: the interior and boundary nonlinear charges have integrable rescaled tails, and their limits are the charges of $U$.
Thus the singular coefficient is exactly one in \eqref{eq:flatouter}.
The limiting operator is the flat Laplacian with homogeneous Neumann condition.
Its equal-source removability statement, or the local Neumann Green representation, shows that the remainder $b$ extends smoothly through zero.
Even reflection makes it an ordinary harmonic function there.

To apply the matching argument uniformly in the varying metrics, construct the full projected model for each $h_i^\rho$, and compare it with $u_i$ on the fixed physical half-ball $B_\rho^+$.
The constants in Lemma~\ref{lem:expanding} are uniform because \eqref{eq:flatsharp} is uniform and $g_i\to\delta$ smoothly.
The normalized physical difference is therefore bounded there.
The outer limit of the projected model is $2^{\sigma(N-2)/2}|x|^{2-N}$: its limiting localized metric is flat, its singular coefficient is one, and the global decay estimate selects the decaying Neumann Green function.
In the physical Green representation of Lemma~\ref{lem:constant}, the harmonic extensions of the outer difference data now converge to $2^{\sigma(N-2)/2} b(x)$.
The same uniform bulk and boundary tail bounds, $O(T^{-2})$ and $O(T^{-1})$, identify the constant at infinity of the normalized inner difference as $2^{\sigma(N-2)/2} b(0)$.
The Wronskian and exact scale-derivative identities are unchanged.
Hence
\begin{equation}\label{eq:flatmatching}
 \lim_i\varepsilon_i^{2-N}
       [\varepsilon\partial_\varepsilon \mathscr A_i^\rho(\varepsilon)]_{\varepsilon=\varepsilon_i}
       =C_N b(0).
\end{equation}
Combine \eqref{eq:flatnonlinear} and \eqref{eq:flatmatching}, keeping $\rho$ fixed throughout:
\[
 0\le\limsup_i c\varepsilon_i^{2-N}\mathscr N_i(\varepsilon_i,\rho)
       \le -C_N b(0)+
          \lim_i C\tau_i^2\rho^{\omega_N}=-C_N b(0).
\]
Since $C_N>0$, this proves the assertion for arbitrary relative rates of $\tau_i\to0$ and $\varepsilon_i\to0$.
\end{proof}

\subsection{The mass of the localized Green metric}
The local sign bounds the regular difference between the outer Green limit and the localized Green function.
To relate this difference to the global Green mass, we first show that the mass contributed by the localized background tends to zero with its localization radius.

\begin{lemma}\label{lem:auxmass}
Suppose that, in conformal Fermi coordinates,
\[
 g=\delta+O(r^{d_N+1}),\qquad d_N=\lfloor(N-2)/2\rfloor,
\]
with uniform differentiated bounds through order $K+2$ as in Section~\ref{sec:quadratic}.
Let $g_\rho$ be its full cutoff metric, equal to $g$ in $B_\rho^+$ and to $\delta$ outside $B_{2\rho}^+$, and let $G_\rho$ have singular coefficient one and decay at infinity.
Complete the Green metric $G_\rho^{4/(N-2)}g_\rho$ on the punctured half-space by adding the regular point at physical infinity.
This point becomes $z=0$ under inversion; the end at infinity in $z$ coordinates is the end arising from the pole.
The completed scalar-flat minimal-boundary metric satisfies
\begin{equation}\label{eq:auxmass}
 |m(G_\rho^{4/(N-2)}g_\rho)|\le C\rho^{\omega_N},
 \qquad \omega_N=2d_N+4-N>0.
\end{equation}
It is asymptotically flat of every order
\[
 \frac{N-2}{2}<s<\min\{d_N+1,N-2\},
\]
with the right endpoint understood as a limiting exponent.
There is a nonempty interval of such orders in every dimension under consideration.
\end{lemma}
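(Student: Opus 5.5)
The plan is to work in inverted coordinates $z=x/|x|^2$ and read off both the decay and the mass from an expansion of $G_\rho$ near the pole.

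\textbf{Step 1: expansion of $G_\rho$ near $0$.} Since $g_\rho=\delta+O(r^{d_N+1})$ in $B_\rho^+$, with differentiated bounds, the conformal Fermi normalization gives $\tr h_p=O(r^K)$. It also gives vanishing mean curvature up to high order and $R_{g_\rho}=O(r^{d_N-1})$. Build the Green function by the standard parametrix: start from $|x|^{2-N}$ and add successive corrections $\psi_j$, solving $L\psi=f$ homogeneously order by order with the Neumann condition. The errors $L_{g_\rho}|x|^{2-N}$ and $B_{g_\rho}|x|^{2-N}$ have size $O(r^{d_N-N+1})$, so the corrections are $O(r^{d_N+3-N})$ up to logarithms. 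This yields
\[
G_\rho=|x|^{2-N}\bigl(1+O(r^{d_N+1})\bigr)+A_\rho+O(r),
\]
with the error terms differentiable. Here $A_\rho$ is the regular constant term. When $d_N+1\ge N-2$ the correction is absorbed into the constant.

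\textbf{Step 2: decay of the Green metric.} In $z$ coordinates the Green metric is $(|x|^{N-2}G_\rho)^{4/(N-2)}I^*g_\rho$. Inversion turns $O(r^{d_N+1})$ into $O(|z|^{-(d_N+1)})$. The factor $1+A_\rho|z|^{2-N}+O(|z|^{-(d_N+1)})$ then gives decay of order $\min\{d_N+1,N-2\}$ minus any logarithmic loss. This is the source of the limiting endpoint in the statement. Inversion preserves the flat plane, so the end is of half-space type. Since $2d_N+4>N$, we have $d_N+1>(N-2)/2$, and also $N-2>(N-2)/2$. Hence the interval of orders is nonempty. Completing the point at physical infinity is harmless: $g_\rho=\delta$ outside $B_{2\rho}^+$ and $G_\rho\sim c|x|^{2-N}$ there, so inversion makes this point regular. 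The metric is scalar-flat and minimal-boundary by conformal covariance, since $L G_\rho=0$ and $BG_\rho=0$.

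\textbf{Step 3: the mass bound, which is the main obstacle.} Rather than computing the mass directly, I would evaluate the flux \eqref{eq:appendix-mass} on spheres $|z|=T$ and convert it back to small spheres $|x|=1/T$ in the original chart. The $A_\rho$ term contributes a multiple of $A_\rho$, as in Lemma~\ref{lem:masschange}. The metric term contributes a boundary flux of $g_\rho-\delta$ and of the homogeneous correction. Integrating the divergence identity over $\{1/T<|x|<3\rho\}$ and using $R=0$, $H=0$ for the Green metric expresses the total mass as a bulk integral of quadratic and higher terms in $g_\rho-\delta$ and $\partial g_\rho$. It also involves the source of $G_\rho$ paired against $|x|^{2-N}$. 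Each such integrand is bounded by $r^{2(d_N+1)-2}\,r^{2-N}$-type densities. Integrating, $\int_0^{2\rho}r^{2d_N+1-N}\,dr=C\rho^{\omega_N}$, which is finite because $\omega_N>0$. This is the same radial exponent as in Lemma~\ref{lem:localized-tail}, and it also controls $A_\rho$ through the representation formula
\[
A_\rho=\int G_0\,(L_{\rm flat}-L_{g_\rho})G_\rho,
\]
whose integrand has the same size.

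The delicate points are two. First, the linear terms in $g-\delta$ that are not quadratic must cancel. I expect to handle them by the trace condition, $\tr h=O(r^K)$, together with the Fermi conditions $h_{Ni}=0$ and $x_ah_{ab}(x,0)=0$. These make the linear flux $\partial_jg_{ij}-\partial_ig_{jj}$ paired with $x_i$ vanish to high order, including the equatorial term. Second, the logarithmic case in even dimensions must be checked. There the bound $r^{2d_N+1-N}=r^{1}$ remains integrable, so the estimate holds uniformly as $T\to\infty$.
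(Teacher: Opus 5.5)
Your Steps 1--2 are an acceptable qualitative route to the asymptotic order at fixed $\rho$, using a local parametrix instead of the paper's weighted Neumann series. Step 3, which carries the content of \eqref{eq:auxmass}, has a genuine gap.

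You split the mass into $4(N-1)|S^{N-1}_+|A_\rho$ plus linear fluxes of $g_\rho-\delta$ and of the parametrix corrections, and try to make each piece small: the linear fluxes through the Fermi conditions, and $A_\rho$ through $\int G_0(L_{\rm flat}-L_{g_\rho})G_\rho$. Neither works.

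\emph{The representation formula.} Its integrand is linear in $g_\rho-\delta$, of size $|y|^{2-N}|y|^{d_N+1-N}$ near the pole. The radial density is $r^{d_N+2-N}$, which is not absolutely integrable for any $N\ge3$, so it is not of the quadratic size you claim.

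\emph{The Fermi conditions.} They do not force $h_{ij}x_ix_j=0$ off the plane, and that conformal-normal-coordinate property is what your expected cancellation tacitly uses. Let $h$ be the homogeneous degree-$(N-2)$ part of $g-\delta$, so $h_{Ni}=0$ and $\tr h=0$. A direct computation shows that $I^*(|x|^{-4}h)$ has ADM flux $2\int_{S^{N-1}_+}h_{ij}\omega_i\omega_j\,d\omega$. This flux is independent of $\rho$. It is nonzero for the admissible cubic term $h_{ab}=t(x_ax_b-|x|^2\delta_{ab}/m)$ when $N=5$.

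\emph{Consequence.} The exact identity below shows that the mass has no part linear in the metric perturbation. So, to first order in the amplitude of $h$, this flux is cancelled by an $O(1)$ part of $4(N-1)|S^{N-1}_+|A_\rho$, with the degree-zero parametrix term normalized to mean zero. Thus $A_\rho$ does not tend to zero with $\rho$, and no termwise estimate can give \eqref{eq:auxmass}. The cancellation comes from $R=0$ and $H=0$ on the completed manifold, not from the normalization.

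A smaller slip: $\int_0^{2\rho}r^{2d_N+1-N}\,dr$ diverges. The density of $|\partial_z\bar g_\rho|^2\,dz$ in the radius $r=|x|$ is $r^{2d_N+3-N}\,dr$, and that is what gives $\rho^{\omega_N}$.

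The missing ingredient is a $\rho$-uniform energy bound for the conformal factor. Your remark about integrating the divergence identity points the right way. However, the resulting density is $|\partial\hat g_\rho|^2$, which contains $|\partial\phi|^2$ for the Kelvin-transformed Green function, not just $|\partial g_\rho|^2$. Your local parametrix controls neither the $\rho$-dependence of $\phi$ nor its behavior in the cutoff region and beyond.

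The paper proceeds as follows.
\begin{enumerate}
\item It inverts \emph{before} correcting. The metric $\bar g_\rho=I^*(r^{-4}g_\rho)$ is smooth, equals $\delta$ near $z=0$, and satisfies \eqref{eq:invertedenergy}.
\item It solves $L_{\bar g_\rho}\phi=0$, $B_{\bar g_\rho}\phi=0$, $\phi-1\in\dot H^1$ by Lax--Milgram. Integrating the divergence part of $R_{\bar g_\rho}$ together with the mean-curvature term makes the linear terms enter only through $\|\partial\bar g_\rho\|_{L^2}$. This gives \eqref{eq:phienergy}.
\item A weighted Neumann series gives the decay and $\|\phi-1\|_\infty\le C\rho^{d_N+1}$.
\item Finally, $R\sqrt{\det\hat g}=\partial_iW^i+Q$ is integrated over the \emph{whole} inverted half-space. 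There is no inner boundary. On the plane, $W^N=2H_{\mathrm{tr}}\sqrt{\det\hat g^T}=0$. At infinity, $W$ agrees with the ADM integrand up to errors whose flux tends to zero.
\end{enumerate}
Hence $m=-\int Q$ and $|m|\le C\int|\partial\hat g_\rho|^2\le C\rho^{\omega_N}$.
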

\begin{proof}
First invert using $w=r^{2-N}$, before solving the scalar correction.
Set
\[
 \bar g_\rho=I^*(w^{4/(N-2)}g_\rho),\qquad I(z)=z/|z|^2.
\]
It extends smoothly across $z=0$, equals $\delta$ for $|z|<(2\rho)^{-1}$, and obeys
\[
 |\partial^j(\bar g_\rho-\delta)|\le C|z|^{-d_N-1-j}
 \quad(|z|>\rho^{-1}),
\]
with the corresponding cutoff-annulus bounds.
Consequently,
\begin{equation}\label{eq:invertedenergy}
 \begin{aligned}
 \|\bar g_\rho-\delta\|_\infty+
 \|\partial\bar g_\rho\|_{L^N}&\le C\rho^{d_N+1},\\
 \|\partial\bar g_\rho\|_{L^2}^2&\le C\rho^{2d_N+4-N}.
 \end{aligned}
\end{equation}
All mixed normal--tangential coefficients vanish on the boundary plane; this is preserved by inversion.

Solve
\[
 L_{\bar g_\rho}\phi=0,\qquad B_{\bar g_\rho}\phi=0,
 \qquad \phi-1\in\dot H^1(\R^N_+).
\]
The quadratic conformal form is coercive by the first line of \eqref{eq:invertedenergy}.
Its value on the constant function one, tested against $\psi\in\dot H^1$, is a bounded functional of norm at most $C\|\partial\bar g_\rho\|_2$: integrate the scalar-curvature divergence together with the mean-curvature term, and bound the remaining quadratic coefficient by $\|\partial\bar g_\rho\|_N\|\partial\bar g_\rho\|_2\|\psi\|_{2N/(N-2)}$.
Lax--Milgram gives
\begin{equation}\label{eq:phienergy}
 \|\nabla(\phi-1)\|_2\le C\|\partial\bar g_\rho\|_2.
\end{equation}
We give a uniform weighted version of the potential argument for this correction.
Put $a=d_N+1$, $L=(2\rho)^{-1}$, and $z=L\zeta$.
The coefficient matrix of $\bar g_\rho(L\zeta)$ is $\delta+m_L(\zeta)$, where $m_L=0$ for $|\zeta|<1$ and
\[
 |\partial^j m_L(\zeta)|
 \le CL^{-a}(1+|\zeta|)^{-a-j}\quad(0\le j\le2),
\]
with the corresponding local scaled $C^{2,\alpha}$ bound.
This includes the fixed transition annulus in $\zeta$ coordinates.
Choose
\[
 (N-2)/2<s<\min(a,N-2).
\]
Using the local balls and scaled norms of Section~\ref{sec:pointwise}, define
\begin{align*}
 \|\psi\|_{2,\alpha;s}
 &=\sup_\zeta(1+|\zeta|)^s
       |\psi|_{2,\alpha;1+|\zeta|,B_\zeta},\\
 \|(F,b)\|_{\mathcal P_s}
 &=\sup_\zeta(1+|\zeta|)^{s+2}
       |F|_{0,\alpha;1+|\zeta|,B_\zeta}\\
 &\quad+\sup_{\xi\in\partial\R^N_+}(1+|\xi|)^{s+1}
       |b|_{1,\alpha;1+|\xi|,\Delta_\xi}.
\end{align*}
The Neumann Newton inverse obeys
\begin{equation}\label{eq:decayingNewton}
 \|\mathcal G_N(F,b)\|_{2,\alpha;s}
       \le C_s\|(F,b)\|_{\mathcal P_s},\qquad 0<s<N-2.
\end{equation}
For its supremum estimate, on inner source radii use $r^{2-N}\int_0^r t^{N-1}(1+t)^{-s-2}\dd t\le C_s(1+r)^{-s}$; the boundary integral has the same large-radius exponent.
The outer integral is bounded by $C_s\int_r^\infty t^{-s-1}\dd t$.
Comparable radii use the integrable Newton singularity.
Local Schauder estimates give the derivatives and H\"older seminorms.
These calculations explain both restrictions on $s$.

Write the rescaled conformal equation for $\psi=\phi(L\,\cdot)-1$ as a flat Neumann equation with its coefficient errors on the right.
The source at $\psi=0$ has norm at most $CL^{-a}$ in $\mathcal P_s$: scalar curvature is $O(L^{-a}(1+|\zeta|)^{-a-2})$ and mean curvature is $O(L^{-a}(1+|\xi|)^{-a-1})$.
The linear perturbation maps $\|\psi\|_{2,\alpha;s}$ to the source space with norm at most $CL^{-a}$, including principal, first-order, curvature, and boundary terms.
Equation~\eqref{eq:decayingNewton} therefore gives a convergent Neumann series and
\begin{equation}
 \|\phi(L\,\cdot)-1\|_{2,\alpha;s}\le C_s L^{-a}.
\end{equation}
This solution is in $\dot H^1$ because $2s>N-2$, so uniqueness identifies it with the energy solution above.
In particular $\|\phi-1\|_\infty\le C\rho^{d_N+1}$, and $\phi-1$ has the asserted differentiated decay at every order strictly below $\min(d_N+1,N-2)$.
The strict restriction on the decay order absorbs any logarithm at the endpoint.
The constants are uniform in the local ellipticity and derivative bounds of Section~\ref{sec:quadratic}.
Positivity follows from the supremum bound after decreasing $\rho$.
Thus
\[
 \hat g_\rho=\phi^{4/(N-2)}\bar g_\rho
\]
is scalar flat, has minimal boundary, and has the asserted asymptotic order.
Near $z=0$ the background is exactly flat and the equation has homogeneous Neumann data, so boundary regularity (or even reflection) gives a smooth positive extension across that point.
The extended manifold has no omitted finite-distance point and is complete: its only infinite-distance end is asymptotically a Euclidean half-space.
By inversion and uniqueness, $\hat g_\rho=I^*(G_\rho^{4/(N-2)}g_\rho)$.

We now estimate the mass by the squared gradient of the metric.
Equations \eqref{eq:invertedenergy} and \eqref{eq:phienergy} give
\[
 \|\hat g_\rho-\delta\|_\infty=o(1),\qquad
 \int_{\R^N_+}|\partial\hat g_\rho|^2\le C\rho^{\omega_N}.
\]
Use the scalar-curvature divergence identity
\[
 R_{\hat g}\sqrt{\det\hat g}
 =\partial_i W^i+Q(\hat g,\partial\hat g),\qquad
 W^i=\sqrt{\det\hat g}
 (\hat g^{jk}\Gamma^i_{jk}-\hat g^{ij}\Gamma^k_{jk}),
\]
where $|Q|\le C|\partial\hat g|^2$.
On the plane $\hat g_{aN}=0$, a direct substitution gives $W^N=2H_{\mathrm{tr},\hat g}\sqrt{\det\hat g^T}$.
The plane contribution therefore vanishes exactly, since the boundary is minimal.
At infinity, $W$ differs from the linear ADM flux by $O(|\hat g-\delta||\partial\hat g|)$; its integrated difference tends to zero because the asymptotic order exceeds $(N-2)/2$.
The noncompact-boundary corner correction also vanishes in these block coordinates.
Integrating the divergence identity, with $R_{\hat g}=0$, yields
\[
 |m(\hat g_\rho)|\le C\int_{\R^N_+}|\partial\hat g_\rho|^2.
\]
This proves \eqref{eq:auxmass}; the divergence identity accounts for the full metric contribution to the mass.
\end{proof}

\section{Blow-up points and proof of compactness}\label{sec:compactness}
We apply the blow-up arguments of \cite[Section~8]{KMS}, \cite[Propositions~5.5--5.6]{AW}, and \cite[Section~6]{KMW}, using the local estimates and sign restriction of Section~\ref{sec:local-comparison}.
We record the changes needed for the present boundary conditions and dimension ranges.
\subsection{Isolated simple blow-up points}\label{sec:selection}
Use the local maxima $P_i=\{a_{i,j}\}$ and scales $\varepsilon_{i,j}$ of Proposition~\ref{prop:basiclocal}.
The distance estimate is \eqref{eq:selectionbound}; distances are measured between the actual maxima until their $O(\varepsilon_{i,j})$ offsets from the boundary centers become negligible.

\begin{proposition}\label{prop:simple}
Every isolated boundary-type blow-up point is simple in the sense of Definition~\ref{def:fermisimple}.
This remains true for sequences of smooth backgrounds with uniform local derivative bounds, including the rescaled backgrounds below.
The corresponding statement holds at interior points using Proposition~\ref{prop:interiorlocal}.
\end{proposition}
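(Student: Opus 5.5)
We argue by contradiction, following the classical scheme of \cite[Section~8]{KMS} and \cite[Proposition~5.5]{AW}, with the local sign of Proposition~\ref{prop:flatsign} replacing the Pohozaev sign used there. Suppose the boundary point $p$ is isolated but not simple in the fixed Fermi chart of Definition~\ref{def:fermisimple}. Bubble convergence gives $W_i'<0$ on $[L_0\varepsilon_i,\, r]$ for every fixed $L_0$ large. Hence non-simplicity yields a second critical point $\mu_i\to0$ of $W_i$, with $\mu_i/\varepsilon_i\to\infty$. We fix the first such critical point after the bubble region.

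\textbf{Rescaling.} Rescale by $\mu_i$, setting $\tilde u_i(y)=\mu_i^{(N-2)/2}u_i(\mu_i y)$ on the background $\tilde g_i(y)=g_{p_i}(\mu_i y)$, read in the rescaled conformal Fermi chart. Since $\mu_i\to0$, the metrics $\tilde g_i=\exp(h_p(\mu_i\cdot))$ converge to $\delta$ in $C^{K+2,\alpha}$ on any fixed half-ball, and the curvature pair is unchanged by scale. The isolation bound survives with the same constant. By construction, $r^{(N-2)/2}\overline{\tilde u}_i(r)$ has exactly one critical point in $(0,1)$ and has a critical point at $r=1$. Thus $0$ is an isolated simple blow-up point for $\tilde u_i$, and the uniform-constant hypotheses of Lemma~\ref{lem:fermisimple} and Proposition~\ref{prop:flatsign} hold. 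The recentering of Lemma~\ref{lem:center-scale} moves the center by $o(\tilde\varepsilon_i)$, which is harmless.

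\textbf{Outer limit.} By \eqref{eq:fermiupper}, $\tilde u_i(0)\tilde u_i\to a\,G$ locally on $\overline{\R^N_+}\setminus\{0\}$, using Harnack \eqref{eq:fermiharnack} on every compact set of the whole punctured half-space. The limit $G$ is harmonic with homogeneous Neumann data on the plane, since the nonlinear terms vanish in the limit because $\tilde u_i\to0$ away from $0$. It is positive, and by the charge computation has the form $|y|^{2-N}+b(y)$ with $b$ harmonic on all of $\R^N$ after even reflection. Positivity and Liouville force $b$ to be a nonnegative constant. The critical point at $r=1$ passes to the limit, giving $\frac{d}{dr}\bigl(r^{(N-2)/2}(r^{2-N}+b)\bigr)|_{r=1}=0$, hence $b=1>0$.

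\textbf{Contradiction and extensions.} Proposition~\ref{prop:flatsign} applies to the flattening backgrounds with no rate restriction and gives $b(0)\le0$, a contradiction. The main obstacle is checking that Proposition~\ref{prop:flatsign} genuinely covers these rescaled backgrounds. This requires the following:
\begin{itemize}
\item the conformal Fermi normalization \eqref{eq:fermi-normalization} holds for the rescaled metrics with $h_i\to0$ in $C^{K+2,\alpha}$;
\item the isolated-simple estimates of Lemma~\ref{lem:fermisimple} hold uniformly;
\item for $\sigma=1$ with $\kappa$ fixed, no rescaling of $\kappa$ occurs, since scaling preserves the equation.
\end{itemize}
The same argument applies verbatim to sequences of backgrounds with uniform derivative bounds, since after rescaling they still flatten. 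For interior points, one rescales identically and uses the sign $\liminf\mathcal P_0^{\rm int}\ge0$ of Proposition~\ref{prop:interiorlocal}, which excludes $b>0$ for the limit $|y|^{2-N}+b$.
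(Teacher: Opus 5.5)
Your proposal is correct and is essentially the paper's argument: you rescale by the first critical radius after the bubble, pass to a harmonic Neumann limit $|y|^{2-N}+b$, use the critical point at $r=1$ to force $b(0)=1>0$, and contradict Proposition~\ref{prop:flatsign} (respectively Proposition~\ref{prop:interiorlocal} at interior points). The differences are minor. The paper takes the limit only up to the unit sphere and gets $\overline b(1)=b(0)$ from the mean-value property of the evenly reflected $b$, whereas you extend the limit to the whole punctured half-space by Harnack chains and invoke Liouville; also, your phrase ``$W_i'<0$ on $[L_0\varepsilon_i,r]$'' should read $[L_0\varepsilon_i,R\varepsilon_i]$ for each fixed $R$, since decrease up to a fixed radius would already be simplicity.
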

\begin{proof}
The proof is a minor modification of \cite[Lemma~8.2]{KMS}; the boundary versions are \cite[Proposition~5.5]{AW} and \cite[Lemma~6.2]{KMW}.
Here we use the average in Definition~\ref{def:fermisimple} and the sign restriction in Proposition~\ref{prop:flatsign}.
If the point were not simple, the first critical radius $r_i$ after the bubble maximum would satisfy $r_i\to0$ and $\varepsilon_i/r_i\to0$.
After dilation by $r_i$, the origin is isolated simple on every fixed half-ball of radius less than one.
Lemma~\ref{lem:fermisimple} and annular Harnack inequalities yield a normalized limit $G(y)=|y|^{2-N}+b(y)$, harmonic with homogeneous Neumann data, up to the sphere of radius one.
The identity for the rescaled average is exact, so the critical-radius condition gives
\[
 0=\left.\partial_r\bigl(r^{(N-2)/2}\overline G(r)\bigr)\right|_{r=1}
   =\frac{N-2}{2}\bigl(b(0)-1\bigr).
\]
Here the spherical mean of $b$ equals $b(0)$ by even reflection.
This contradicts Proposition~\ref{prop:flatsign}, applied on a smaller half-ball where the rescaled point is already simple.
The interior argument uses full spherical averages and Proposition~\ref{prop:interiorlocal}.
\end{proof}

\begin{lemma}[Comparison of peaks and distance to the boundary]\label{lem:peak-comparison}
At distinct isolated simple peaks in a rescaled domain, the bubble scales satisfy \eqref{eq:heightcomparison} whenever fixed annuli around them can be joined by Harnack chains avoiding the other peaks.
An interior selected maximum cannot satisfy \eqref{eq:forbiddenoffset}, where $\varepsilon_i$ is its bubble scale and $s_i$ is a lower bound for its distance to the other selected maxima.
\end{lemma}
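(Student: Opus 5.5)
The plan is to run the standard Harnack-chain and Green-function arguments of \cite[Section~8]{KMS} and \cite[Section~6]{KMW}. I read \eqref{eq:heightcomparison} as the two-sided comparability $C^{-1}\varepsilon_{i,2}\le\varepsilon_{i,1}\le C\varepsilon_{i,2}$ of the scales at the two peaks, in the rescaled units. I read \eqref{eq:forbiddenoffset} as the intermediate regime $\varepsilon_i\ll d_i\ll s_i$, where $d_i$ is the distance of the interior maximum to the boundary. Throughout, the two-sided pointwise bounds \eqref{eq:fermiupper}--\eqref{eq:fermilower} of Lemma~\ref{lem:fermisimple} are applied at each peak; their interior analogues come from \cite[Propositions~2.3--2.5]{KMS}. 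Simplicity of each peak is supplied by Proposition~\ref{prop:simple}.

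\emph{Height comparison.} Fix annuli $A_1,A_2$ of fixed radii around the two peaks. Lemma~\ref{lem:fermisimple} gives $u_i\asymp\varepsilon_{i,1}^{(N-2)/2}$ on $A_1$: the upper bound \eqref{eq:fermiupper} and the lower bound \eqref{eq:fermilower} hold there, the local Green function is comparable to one at fixed distance, and the constants are uniform for the rescaled backgrounds. Likewise $u_i\asymp\varepsilon_{i,2}^{(N-2)/2}$ on $A_2$. A Harnack chain joins the two annuli. Along it, the isolation bound keeps the rescaled coefficients $r^2u_i^{p-1}$ and $ru_i^{q-1}$ bounded on each link, so the interior and Robin Harnack inequalities hold with uniform constants on each link. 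A fixed number of links transports the comparability from $A_1$ to $A_2$, and comparing the two values of $u_i$ on $A_2$ gives $\varepsilon_{i,1}\asymp\varepsilon_{i,2}$. The only care needed is that the chain stays a fixed rescaled distance away from every other selected peak; this is exactly the hypothesis.

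\emph{Exclusion of the offset regime.} Suppose an interior maximum $a_i$ satisfies $\varepsilon_i/d_i\to0$ and $d_i/s_i\to0$. Rescale by $d_i$ around $a_i$. In the limit:
\begin{itemize}
\item the boundary becomes a flat hyperplane at distance one;
\item the other peaks escape to infinity;
\item by Proposition~\ref{prop:simple} and \eqref{eq:selectionbound}, $a_i$ becomes an isolated simple interior blow-up point $y_0$, the rescaled metric flattens, and the outer limit is $G(y)=|y-y_0|^{2-N}+h(y)$.
\end{itemize}
Here $h$ is harmonic, satisfies the homogeneous Neumann condition on the limiting plane, and is nonnegative at infinity because $u_i>0$. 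By even reflection, $G$ differs from $|y-y_0|^{2-N}+|y-y_0^*|^{2-N}$ by a nonnegative harmonic function on all of $\R^N$, where $y_0^*$ is the mirror point. Hence $h(y_0)\ge|y_0-y_0^*|^{2-N}=2^{2-N}>0$.

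This contradicts the interior local sign in Proposition~\ref{prop:interiorlocal}, which says that a flat limiting metric with $H=ar^{2-N}+b+O(r)$ and $a>0$ forces $b\le0$. The statement there carries no rate restriction, so the arbitrary relative rates of flattening and concentration cause no trouble. For the scalar-flat pair there are no interior maxima in the chosen representative, so only $\sigma=1$ occurs.

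\emph{Main obstacle.} I expect the hardest step to be justifying the outer limit in the offset regime. The rescaled half-spaces move with $i$, the scales of the escaping peaks are not yet known to be comparable, and we must exclude a singular contribution to $h$ from them. The argument I would try first is to bound $u_i$ near the far peaks by \eqref{eq:selectionbound} and Harnack, which gives $\varepsilon_i^{-(N-2)/2}u_i$ locally bounded away from $y_0$. Local elliptic estimates then extract a limit. Nonnegativity of $u_i$ gives nonnegativity of the regular remainder after subtracting the two explicit Newton kernels, and this suffices for the lower bound on $h(y_0)$ without any quantitative control of the other peaks.
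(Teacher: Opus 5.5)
Your proposal is correct and follows the paper's proof. For \eqref{eq:heightcomparison}, both use two-sided Green bounds on fixed annuli (Lemma~\ref{lem:fermisimple} at boundary peaks, \cite[Propositions~2.3--2.5]{KMS} at interior ones) transported by uniform Harnack chains. In the offset regime, both rescale by $d_i$ to a positive Neumann-harmonic outer limit whose even reflection has two equal poles, so the regular term at the interior pole is at least $2^{2-N}>0$, contradicting Proposition~\ref{prop:interiorlocal}. Your minimum-principle (Liouville) step on the reflected function is the limiting form of the paper's comparison with Dirichlet Green functions on expanding balls.
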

\begin{proof}
For isolated simple peaks with rescaled scales $\delta_{i,j}\to0$, the two-sided Green estimates on fixed annuli $A_j$ give
\begin{equation}
 c_a\delta_{i,j}^{(N-2)/2}\le u_i\le C_a\delta_{i,j}^{(N-2)/2}
 \quad\text{on }A_j,\qquad j=1,2.
\end{equation}
Harnack chains avoiding the locally finite set of peaks imply
\begin{equation}\label{eq:heightcomparison}
 C^{-1}\le\delta_{i,1}^{(N-2)/2}/\delta_{i,2}^{(N-2)/2}\le C.
\end{equation}
Use Lemma~\ref{lem:fermisimple} and Section~\ref{sec:comparison} at boundary-type peaks, and \cite[Propositions~2.3--2.5]{KMS} at interior peaks.

Also, an interior maximum $a_i$ cannot satisfy
\begin{equation}\label{eq:forbiddenoffset}
 \varepsilon_i\ll d_i:=d_g(a_i,\partial M)\ll s_i,\qquad d_i\to0,
\end{equation}
where $s_i$ is a lower bound for its distance to every other selected point.
Indeed, rescaling by $d_i$ about the nearest boundary point leaves one isolated interior peak at normal height one and sends all other peaks out of compact sets.
Proposition~\ref{prop:simple}, the Green estimates, and Harnack inequalities produce a positive harmonic limit on the punctured half-space with homogeneous Neumann data.
Its even reflection has two equal positive poles.
Comparison with their Dirichlet Green functions on expanding balls makes the regular term at either pole positive, contradicting Proposition~\ref{prop:interiorlocal}.
\end{proof}

\begin{proposition}[Separation of the selected local maxima]\label{prop:separate}
For sufficiently accurate initial bubble approximations and sufficiently large $L$, the mutual distances between distinct selected local maxima whose heights diverge are bounded below by a positive constant independent of the solution.
Consequently, the number of blow-up peaks is uniformly bounded and every blow-up point is isolated simple.
\end{proposition}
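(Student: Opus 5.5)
We argue by contradiction, following \cite[Proposition~8.3]{KMS}, \cite[Proposition~5.6]{AW} and \cite[Section~6]{KMW}. Suppose that along a subsequence two distinct selected maxima $a_{i,1},a_{i,2}$ with diverging heights satisfy $s_i:=d_g(a_{i,1},a_{i,2})\to0$, and choose the pair realizing (up to a fixed factor) the minimal mutual distance among selected points. The selection bound \eqref{eq:selectionbound} and the disjointness of the $L\varepsilon_{i,j}$ neighborhoods give $\varepsilon_{i,j}\ll s_i$. We rescale the chart by $s_i$ about $a_{i,1}$ (using conformal Fermi coordinates centered at the nearest boundary point if $a_{i,1}$ is within $O(s_i)$ of $\partial M$, and normal coordinates otherwise). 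In the rescaled domain the backgrounds converge smoothly to the flat half-space or flat space, with uniform $C^{K+2,\alpha}$ bounds. The rescaled selected points are at mutual distance at least of order one, so they form a locally finite set, and by \eqref{eq:selectionbound} each is an isolated blow-up point of the rescaled solutions.

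By Proposition~\ref{prop:simple}, applied to rescaled backgrounds, each limiting peak is isolated simple. Lemma~\ref{lem:peak-comparison} excludes interior peaks with offset \eqref{eq:forbiddenoffset}, so every peak is either boundary-type at a limiting boundary point or a genuine interior point at positive rescaled distance from the plane. Since the scales are comparable by \eqref{eq:heightcomparison} along Harnack chains, we normalize by the height at $a_{i,1}$. The lower bounds of Lemma~\ref{lem:fermisimple} and the interior estimates \cite[Propositions~2.3--2.5]{KMS} then give convergence, away from the limiting peak set $\mathcal S$, to a positive function
\[
 G=\sum_{q\in\mathcal S}\alpha_q|y-q|^{2-N}+\text{(reflected terms)}+h,
\]
which is harmonic with homogeneous Neumann data, has $\alpha_q>0$ and at least two poles, and has $h\ge0$ harmonic by the maximum principle on expanding domains.

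At the pole $a_{i,1}$, the regular part is therefore
\[
 b(0)=\sum_{q\neq 0}\alpha_q\,\text{(positive Newton or Neumann kernel at }0)+h(0)>0.
\]
In the boundary case this contradicts Proposition~\ref{prop:flatsign}, which gives $b(0)\le0$ for flattening metrics with no rate condition. In the interior case it contradicts the flat-limit sign in Proposition~\ref{prop:interiorlocal}. This proves the uniform separation. Compactness of $M$ then bounds the number of peaks, and Proposition~\ref{prop:simple} makes each blow-up point isolated simple.

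I expect the main obstacle to be the justification that Proposition~\ref{prop:flatsign} applies to the rescaled boundary peak. This requires an isolation bound uniform on a fixed half-ball even though other peaks lie at distance of order one, which calls for shrinking the radius below half the minimal separation. It also requires handling the mixed case in which some limiting peaks are boundary-type and others interior, so the reflected kernels must be summed consistently. Once simplicity holds on a small fixed half-ball, the uniform estimates \eqref{eq:flatsharp} should go through.
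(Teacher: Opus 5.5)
You follow the same route as the paper and \cite[Proposition~8.3]{KMS}: rescale by the minimal distance $s_i$, make the diverging peaks isolated simple by Proposition~\ref{prop:simple}, normalize via \eqref{eq:heightcomparison}, and contradict Proposition~\ref{prop:flatsign} or Proposition~\ref{prop:interiorlocal} using the positive regular part contributed by the second pole.

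\textbf{The gap.} You assert that \eqref{eq:selectionbound} and the disjointness of the $L\varepsilon_{i,j}$ neighborhoods give $\varepsilon_{i,j}\ll s_i$. This does not follow.
\begin{itemize}
\item With $L$ fixed, disjointness yields only $\varepsilon_{i,1}+\varepsilon_{i,2}\le s_i/L$. The rescaled heights $s_i^{(N-2)/2}u_i(a_{i,j})\approx(s_i/\varepsilon_{i,j})^{(N-2)/2}$ are therefore merely bounded below by a multiple of $L^{(N-2)/2}$.
\item \eqref{eq:selectionbound} is an upper bound away from $P_i$ and says nothing about $\varepsilon_{i,j}/s_i$.
\end{itemize}
So two selected bubbles of comparable scale at distance a fixed multiple of $L\varepsilon_i$ are not excluded by the selection. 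Yet everything after this point needs $\varepsilon_{i,j}/s_i\to0$. Without it, the two rescaled points need not be blow-up points. Propositions~\ref{prop:simple} and~\ref{prop:flatsign} then do not apply, since their rescaled bubble scale must tend to zero. The normalized limit also need not have a pole at the second point.

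\textbf{The missing step} is the exclusion of bounded rescaled heights, which the paper carries out as in \cite[Proposition~8.3]{KMS}. There are two cases.
\begin{itemize}
\item \emph{One chosen height diverges, the other stays bounded.} The diverging peak is isolated simple, so the rescaled solution tends to zero on fixed annuli around it. Harnack chains avoiding the selected set, which are legitimate because of \eqref{eq:selectionbound}, then force the solution to tend to zero near the bounded peak. But that peak's height is bounded below, a contradiction. The same argument shows that bounded and diverging selected heights cannot coexist in compact sets.
\item \emph{Both chosen heights stay bounded.} Then the solutions are locally bounded and converge to a positive solution of the flat model equation on $\R^N$ or on a half-space. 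By classification (Lemma~\ref{lem:model-kernel}, or Caffarelli--Gidas--Spruck in the whole space), this limit is a single bubble. That is incompatible with two disjoint selected bubble neighborhoods at distance one.
\end{itemize}
Only after this are both heights known to diverge, and the rest of your argument goes through.

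\textbf{A smaller ordering point.} The offset case \eqref{eq:forbiddenoffset} must be discarded before invoking Proposition~\ref{prop:simple}, not after. A peak in that regime is neither of boundary type nor a fixed interior point, so Proposition~\ref{prop:simple} does not cover it.
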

\begin{proof}
Follow \cite[Proposition~8.3]{KMS}, using Lemma~\ref{lem:peak-comparison} and Propositions~\ref{prop:flatsign} and~\ref{prop:interiorlocal} in place of the corresponding local estimates and sign restriction.
The boundary counterparts are \cite[Proposition~5.6]{AW} and \cite[Proposition~6.3]{KMW}.
We indicate the boundary adjustments.

If the least distance $s_i$ tends to zero, rescale by $s_i$ about a pair realizing it.
The limiting distance of this pair is one, the selected set is locally finite, and the rescaled domains exhaust the whole space or a half-space.
The bounded-height alternatives are excluded exactly as in \cite[Proposition~8.3]{KMS}: Harnack inequalities contradict a bounded-height peak next to a diverging one, while local boundedness yields a single classified bubble, incompatible with two disjoint selected bubble neighborhoods.
Thus both chosen heights diverge.
A maximum approaching the limiting boundary either lies within $O(\varepsilon_i)$ of its boundary center or satisfies the excluded condition \eqref{eq:forbiddenoffset}.
In the former case, replacing it by the boundary center changes the rescaled position by $o(1)$.
Every diverging peak is therefore isolated simple by Proposition~\ref{prop:simple}.

By \eqref{eq:heightcomparison}, normalization at one peak produces a positive harmonic limit with positive residues at both chosen poles and homogeneous Neumann data when a boundary is present.
Reflect across the plane in that case.
Comparison with the Dirichlet Green functions of the two poles on expanding balls shows that the regular coefficient at either pole is positive; any further poles contribute nonnegative terms.
This contradicts Proposition~\ref{prop:flatsign} for a boundary pole or Proposition~\ref{prop:interiorlocal} for an interior pole.
Uniform separation follows, and compactness of $M$ bounds the number of peaks.
Equation \eqref{eq:selectionbound} and Proposition~\ref{prop:simple} complete the proof.
\end{proof}

\subsection{Proofs of the compactness results}
\begin{proof}[Proof of Theorem~\ref{thm:main}]
Suppose the supremum of positive solutions is unbounded.
Propositions~\ref{prop:basiclocal}, \ref{prop:simple}, and~\ref{prop:separate} leave finitely many, uniformly separated, isolated simple blow-up points.
The two-sided Green estimates and Harnack inequalities yield, after normalization at one point, the positive outer limit
\begin{equation}\label{eq:globalG}
 G=G_p+\sum_{\ell\ne p}a_\ell G_{p_\ell},\qquad a_\ell>0.
\end{equation}
Each Green function has singular coefficient one at its pole; positive conformal type excludes a remaining global solution of the homogeneous equation, as in \cite[Section~6]{AW}.

At a boundary pole, Corollary~\ref{cor:jets} gives $g_p=\delta+O(r^{d_N+1})$ and $b_\rho\le C\rho^{\omega_N}$.
Put $b_\rho^p:=(G_p-G_\rho)(p)$.
Equation \eqref{eq:globalG} and positivity imply $b_\rho^p\le b_\rho$.
Lemma~\ref{lem:auxmass} and the regular difference between $G_p$ and $G_\rho$ show that the complete scalar-flat Green metric has minimal boundary and an asymptotically half-space end of order greater than $(N-2)/2$.
Lemma~\ref{lem:masschange} now gives
\begin{align*}
 m(G_p^{4/(N-2)}g)
 &=m(G_\rho^{4/(N-2)}g_\rho)+4(N-1)|S^{N-1}_+|b_\rho^p\\
 &\le C\rho^{\omega_N}.
\end{align*}
Letting $\rho\downarrow0$ and applying Theorem~\ref{thm:positive-mass-full}, we obtain zero mass and Euclidean half-space rigidity.
Conformal compactification contradicts the exclusion of the round hemisphere.

At an interior pole, Proposition~\ref{prop:interiormass} and Theorem~\ref{thm:positive-mass-full} likewise force the Green metric to be Euclidean space.
Its remaining compact minimal boundary is nonempty, so this is impossible.
The intermediate boundary-distance case was excluded in \eqref{eq:forbiddenoffset}.
Thus $\sup u$ is uniformly bounded.

The global Harnack inequality gives $\sup u\le C\inf u$.
Testing \eqref{eq:PDE} against $u$, positive conformal type and the Sobolev and trace inequalities prevent $\sup u$ from tending to zero.
Hence $\inf u\ge c>0$, and Schauder estimates and bootstrap prove the stated bounds and compactness in every fixed $C^k(M)$; compare \cite[Section~6]{AW}.
The uniform local estimates and the positive coefficient bounds on compact parameter sets give the asserted uniformity in $\kappa$.
\end{proof}

\begin{proof}[Proof of Corollary~\ref{thm:continuation}]
Use Proposition~\ref{prop:local-continuation} in place of Theorem~\ref{thm:localcoercivity} in the proof of Theorem~\ref{thm:main}.
The interior estimates remain applicable because $N\le21$, and the boundary estimates are uniform on compact parameter subintervals.
The rescaling from $(\sigma,2)$ to $(1,2/\sqrt\sigma)$ is already included in Proposition~\ref{prop:local-continuation}.
\end{proof}


\begin{thebibliography}{99}
\bibitem{AlmarazExistence}
S.~Almaraz,
An existence theorem of conformal scalar-flat metrics on manifolds with boundary,
\emph{Pacific J. Math.} \textbf{248} (2010), 1--22.

\bibitem{AlmarazCompactness}
\bysame,
A compactness theorem for scalar-flat metrics on manifolds with boundary,
\emph{Calc. Var. Partial Differential Equations} \textbf{41} (2011), 341--386.

\bibitem{AlmarazBlowup}
\bysame,
Blow-up phenomena for scalar-flat metrics on manifolds with boundary,
\emph{J. Differential Equations} \textbf{251} (2011), 1813--1840.

\bibitem{ABdL}
S.~Almaraz, E.~Barbosa and L.~Lopes de Lima,
A positive mass theorem for asymptotically flat manifolds with a non-compact boundary,
\emph{Comm. Anal. Geom.} \textbf{24} (2016), 673--715.

\bibitem{AQW}
S.~Almaraz, O.~S.~de Queiroz and S.~Wang,
A compactness theorem for scalar-flat metrics on 3-manifolds with boundary,
\emph{J. Funct. Anal.} \textbf{277} (2019), 2092--2116.

\bibitem{AW}
S.~Almaraz and S.~Wang,
A compactness theorem for conformal metrics with constant scalar curvature and constant boundary mean curvature in dimension three,
\emph{Calc. Var. Partial Differential Equations} \textbf{64} (2025), Article No.~35, 24 pp.

\bibitem{Bartnik}
R.~Bartnik,
The mass of an asymptotically flat manifold,
\emph{Comm. Pure Appl. Math.} \textbf{39} (1986), 661--693.

\bibitem{BrendleChen}
S.~Brendle and S.-Y.~S.~Chen,
An existence theorem for the Yamabe problem on manifolds with boundary,
\emph{J. Eur. Math. Soc.} \textbf{16} (2014), 991--1016.

\bibitem{BW}
S.~Brendle and Y.~Wang,
A dimension descent scheme for the positive mass theorem in arbitrary dimension,
preprint (2026), \href{https://arxiv.org/abs/2604.08473v2}{arXiv:2604.08473v2}.

\bibitem{ChenScalar}
S.-Y.~S.~Chen,
Conformal deformation to scalar flat metrics with constant mean curvature on the boundary in higher dimensions,
preprint (2009), \href{https://arxiv.org/abs/0912.1302}{arXiv:0912.1302}.

\bibitem{ChenRuanSun}
X.~Chen, Y.~Ruan and L.~Sun,
The Han--Li conjecture in constant scalar curvature and constant boundary mean curvature problem on compact manifolds,
\emph{Adv. Math.} \textbf{358} (2019), Article No.~106854, 56 pp.

\bibitem{ChenSun}
X.~Chen and L.~Sun,
Existence of conformal metrics with constant scalar curvature and constant boundary mean curvature on compact manifolds,
\emph{Commun. Contemp. Math.} \textbf{21} (2019), Article No.~1850021, 51 pp.

\bibitem{Cherrier}
P.~Cherrier,
Probl\`emes de Neumann non lin\'eaires sur les vari\'et\'es riemanniennes,
\emph{J. Funct. Anal.} \textbf{57} (1984), 154--206.

\bibitem{CM}
S.~Cordero-Misteli,
Compactness and noncompactness for the positive Yamabe--Escobar problem with minimal boundary,
preprint (2026), \href{https://arxiv.org/abs/2610.08759v1}{arXiv:2610.08759v1}.

\bibitem{DK}
M.~M.~Disconzi and M.~A.~Khuri,
Compactness and non-compactness for the Yamabe problem on manifolds with boundary,
\emph{J. Reine Angew. Math.} \textbf{724} (2017), 145--201.

\bibitem{EscobarTrace}
J.~F.~Escobar,
Sharp constant in a Sobolev trace inequality,
\emph{Indiana Univ. Math. J.} \textbf{37} (1988), 687--698.

\bibitem{EscobarJDG}
\bysame,
The Yamabe problem on manifolds with boundary,
\emph{J. Differential Geom.} \textbf{35} (1992), 21--84.

\bibitem{EscobarAnnals}
\bysame,
Conformal deformation of a Riemannian metric to a scalar flat metric with constant mean curvature on the boundary,
\emph{Ann. of Math.} (2) \textbf{136} (1992), 1--50; Addendum, \textbf{139} (1994), 749--750.

\bibitem{Escobar1996}
\bysame,
Conformal deformation of a Riemannian metric to a constant scalar curvature metric with constant mean curvature on the boundary,
\emph{Indiana Univ. Math. J.} \textbf{45} (1996), 917--943.

\bibitem{FelliAhmedou}
V.~Felli and M.~Ould Ahmedou,
Compactness results in conformal deformations of Riemannian metrics on manifolds with boundaries,
\emph{Math. Z.} \textbf{244} (2003), 175--210.

\bibitem{FelliAhmedou2005}
\bysame,
A geometric equation with critical nonlinearity on the boundary,
\emph{Pacific J. Math.} \textbf{218} (2005), 75--99.

\bibitem{GhimentiMicheletti2020}
M.~Ghimenti and A.~M.~Micheletti,
Compactness for conformal scalar-flat metrics on umbilic boundary manifolds,
\emph{Nonlinear Anal.} \textbf{200} (2020), Article No.~111992, 30 pp.

\bibitem{GhimentiMicheletti2021}
\bysame,
A compactness result for scalar-flat metrics on low dimensional manifolds with umbilic boundary,
\emph{Calc. Var. Partial Differential Equations} \textbf{60} (2021), Article No.~119, 24 pp.

\bibitem{PartI}
L.~Gong, S.~Kim, M.~Musso and J.~Wei,
On critical dimensions for compactness in the boundary Yamabe problem, I,
preprint (2026), \href{https://arxiv.org/abs/2609.39945v1}{arXiv:2609.39945v1}.

\bibitem{Gosper}
R.~W.~Gosper, Jr.,
Decision procedure for indefinite hypergeometric summation,
\emph{Proc. Natl. Acad. Sci. USA} \textbf{75} (1978), 40--42.

\bibitem{HanLi}
Z.-C.~Han and Y.~Y.~Li,
The Yamabe problem on manifolds with boundary: existence and compactness results,
\emph{Duke Math. J.} \textbf{99} (1999), 489--542.

\bibitem{HanLi2000}
\bysame,
The existence of conformal metrics with constant scalar curvature and constant boundary mean curvature,
\emph{Comm. Anal. Geom.} \textbf{8} (2000), 809--869.

\bibitem{KMS}
M.~A.~Khuri, F.~C.~Marques and R.~M.~Schoen,
A compactness theorem for the Yamabe problem,
\emph{J. Differential Geom.} \textbf{81} (2009), 143--196.

\bibitem{KWW}
M.~A.~Khuri, J.~Wang and J.~Wang,
Riemannian positive mass theorem in all dimensions in the presence of low-codimension singularities,
preprint (2026), \href{https://arxiv.org/abs/2606.23529v2}{arXiv:2606.23529v2}.

\bibitem{KMW}
S.~Kim, M.~Musso and J.~Wei,
Compactness of scalar-flat conformal metrics on low-dimensional manifolds with constant mean curvature on boundary,
\emph{Ann. Inst. H. Poincar\'e C Anal. Non Lin\'eaire} \textbf{38} (2021), 1763--1793.

\bibitem{LiZhu}
Y.~Y.~Li and M.~Zhu,
Uniqueness theorems through the method of moving spheres,
\emph{Duke Math. J.} \textbf{80} (1995), 383--417.

\bibitem{Marques2005}
F.~C.~Marques,
Existence results for the Yamabe problem on manifolds with boundary,
\emph{Indiana Univ. Math. J.} \textbf{54} (2005), 1599--1620.

\bibitem{Marques2007}
\bysame,
Conformal deformations to scalar-flat metrics with constant mean curvature on the boundary,
\emph{Comm. Anal. Geom.} \textbf{15} (2007), 381--405.

\bibitem{MayerNdiayeMinimal}
M.~Mayer and C.~B.~Ndiaye,
Proof of the remaining cases of the Yamabe boundary problem,
preprint (2015), \href{https://arxiv.org/abs/1505.06114}{arXiv:1505.06114}.

\bibitem{MayerNdiayeScalar}
\bysame,
Barycenter technique and the Riemann mapping problem of Cherrier--Escobar,
\emph{J. Differential Geom.} \textbf{107} (2017), 519--560.

\bibitem{Miao}
P.~Miao,
Positive mass theorem on manifolds admitting corners along a hypersurface,
\emph{Adv. Theor. Math. Phys.} \textbf{6} (2002), 1163--1182.

\bibitem{DLMF}
National Institute of Standards and Technology,
\emph{NIST Digital Library of Mathematical Functions},
\url{https://dlmf.nist.gov/}, accessed October 7, 2026.

\bibitem{SchoenYau}
R.~Schoen and S.-T.~Yau,
On the proof of the positive mass conjecture in general relativity,
\emph{Comm. Math. Phys.} \textbf{65} (1979), 45--76.

\bibitem{Witten}
E.~Witten,
A new proof of the positive energy theorem,
\emph{Comm. Math. Phys.} \textbf{80} (1981), 381--402.

\end{thebibliography}
\end{document}